\numberwithin{equation}{section}
\numberwithin{figure}{section}
\newtheorem{thm}{Theorem}[section]
\newtheorem{cor}[thm]{Corollary}
\newtheorem{lem}[thm]{Lemma}
\newtheorem{prop}[thm]{Proposition}
 { \theoremstyle{definition}

\newtheorem{exa}[thm]{Example}
\newtheorem{rem}[thm]{Remark} }
\def \C {\mathbb{C}}
\def \Q {\mathbb{Q}}
\def \Z {\mathbb{Z}}
\def \N {\mathbb{N}}
\def \re {\operatorname{Re}}
\def \im {\operatorname{Im}}
\def \diag {\operatorname{diag}}
\def \tri {\triangle}
\begin{document}

\newcommand{\arXivNumber}{1804.10369}

\renewcommand{\thefootnote}{}

\renewcommand{\PaperNumber}{113}

\FirstPageHeading

\ShortArticleName{PV Schlesinger-Type Equation}

\ArticleName{Three-Parameter Solutions\\ of the PV Schlesinger-Type Equation\\ near the Point at Infinity and the Monodromy Data\footnote{This paper is a~contribution to the Special Issue on Painlev\'e Equations and Applications in Memory of Andrei Kapaev. The full collection is available at \href{https://www.emis.de/journals/SIGMA/Kapaev.html}{https://www.emis.de/journals/SIGMA/Kapaev.html}}}

\Author{Shun SHIMOMURA}

\AuthorNameForHeading{S.~Shimomura}

\Address{Department of Mathematics, Keio University,\\
 3-14-1, Hiyoshi, Kohoku-ku, Yokohama 223-8522, Japan}
\Email{\href{mailto:shimomur@math.keio.ac.jp}{shimomur@math.keio.ac.jp}}

\ArticleDates{Received May 01, 2018, in final form October 03, 2018; Published online October 22, 2018}

\Abstract{For the Schlesinger-type equation related to the fifth Painlev\'e equation (V) via isomonodromy deformation, we present a three-parameter family of matrix solutions along the imaginary axis near the point at infinity, and also the corresponding monodromy data. Two-parameter solutions of (V) with their monodromy data immediately follow from our results. Under certain conditions, these solutions of (V) admit sequences of zeros and of poles along the imaginary axis. The monodromy data are obtained by matching techniques for a perturbed linear system.}

\Keywords{Schlesinger-type equation; fifth Painlev\'{e} equation; isomonodromy deformation; monodromy data}

\Classification{34M55; 34M56; 34M40; 34M35; 34E10}

\begin{flushright}
\it To the memory of Andrei A.~Kapaev
\end{flushright}

\renewcommand{\thefootnote}{\arabic{footnote}}
\setcounter{footnote}{0}

\section{Introduction}\label{sc1}

The fifth Painlev\'e equation normalised in the form
\begin{gather}
\frac{{\rm d}^2y}{{\rm d} x^2}= \left(\frac 1{2y} + \frac 1{y-1} \right) \left(\frac{{\rm d} y}{{\rm d} x} \right)^{2}- \frac 1x \frac{{\rm d} y}{{\rm d} x}\nonumber\\
{} +\frac{(y-1)^2}{8x^2} \left((\theta_0-\theta_x+\theta_{\infty}
)^2 y - \frac{(\theta_0 -\theta_x - \theta_{\infty})^2 } {y}\right) + (1-\theta_0-\theta_x) \frac{y}{x} -\frac{y(y+1)}{2(y-1)} \tag*{(V)}
\end{gather}
with $\theta_0, \theta_x, \theta_{\infty} \in \C$ is derived from the isomonodromy deformation of a two-dimensional linear system of the form
\begin{gather}\label{1.1}
\frac{{\rm d}Y}{{\rm d}\lambda} =\left( \frac{A_0(x)}{\lambda} +\frac{A_x(x)}{\lambda - x} + \frac{J} 2 \right) Y
\end{gather}
with $J=\diag [1, -1]$ under a small change of $x$, where $A_0(x)$ and $A_x(x)$ satisfy the following:
\begin{enumerate}\itemsep=0pt
\item[(a)] the eigenvalues of $A_0(x)$ and $A_x(x)$ are $\pm \theta_0/2$ and $\pm \theta_x/2$, respectively;
\item[(b)] $(A_0(x) +A_x(x))_{11} = -(A_0(x) + A_x(x))_{22} \equiv -\theta_{\infty}/2$.
\end{enumerate}
Such matrices $A_0(x)$, $A_x(x)$ may be written in the form
\begin{gather*}
 A_0(x)=\begin{pmatrix}
 z+ \theta_0/2 & -u(z+ \theta_0) \\
z/u & -z-\theta_0/2
\end{pmatrix},\\
 A_x(x)=\begin{pmatrix}
-z-(\theta_0+\theta_{\infty})/2 & uy(z+(\theta_0-\theta_x +\theta_{\infty})/2) \\
-(uy)^{-1}(z+(\theta_0+\theta_x +\theta_{\infty})/2) & z+(\theta_0+\theta_{\infty})/2
\end{pmatrix},
\end{gather*}
and then
\begin{gather}\label{1.2}
y= \frac{A_x(x)_{12} (A_0(x)_{11} + \theta_0/2)} {A_0(x)_{12} (A_x(x)_{11} + \theta_x/2)}, \qquad z= A_0(x)_{11} - \theta_0/2, \qquad u= -\frac{A_0(x)_{12} } {A_0(x)_{11} + \theta_0/2}
\end{gather}
(cf.\ Andreev and Kitaev \cite{Andreev-Kitaev}, Jimbo and Miwa \cite[Appendix~C]{JM}). The functions $y$ and $z$ are the same as those in \cite{Andreev-Kitaev, JM}, and $u$ is written as $u=x^{-\theta_{\infty}}u_{\mathrm{AK}}$, where $u_{\mathrm{AK}}$ denotes the function $u$ of \cite{Andreev-Kitaev, JM}. System \eqref{1.1} has the isomonodromy property if and only if $(A_0(x), A_x(x))$ solves the Schlesinger-type equation
\begin{gather}\label{1.3}
x\frac{{\rm d}A_0}{{\rm d}x} = [A_x, A_0] , \qquad x\frac{{\rm d}A_x}{{\rm d}x} = [A_0, A_x] + \frac x2 [J, A_x]
\end{gather}
(for more concrete setting of monodromy matrices for \eqref{1.1} invariant under a change of $x$, see Section \ref{ssc2.2}); and then $y$ as in \eqref{1.2} solves~(V). Conversely, for any solution $y$ of~(V) there exists $(A_0(x), A_x(x))$ satisfying \eqref{1.2} and~\eqref{1.3} (cf.\ \cite[Section~3]{Jimbo}, \cite[Appendix C]{JM}).

Near $x=\infty$, two-parameter families of convergent solutions of~(V) were obtained by solving the Hamiltonian system for~(V) (cf.~\cite{S1, T}). Computing monodromy matrices for a system equivalent to~\eqref{1.1} by WKB analysis, and using these matrices, which should be independent of~$x$, Andreev and Kitaev~\cite{Andreev-Kitaev} obtained asymptotic solutions of~(V) near $x=0$ and $x=\infty$ on the positive real axis, and connection formulas for these solutions. Recently it was shown that, for~(V) near $x=0$ (respectively, $x= +\infty$ or $x= {\rm i} \infty$), a series expression of the tau-func\-tion~$\tau_{\mathrm{V}}(x)$ may be given by regular (respectively, irregular) conformal blocks (cf.\ Bonelli et al.~\cite{BLMST}, Gamayun et al.~\cite{GIL}, Nagoya~\cite{N}). Furthermore, using the $s$-channel representation of the PVI tau-function and confluence procedure, Lisovyy et al.~\cite{L} gave a conjectural connection formula for~$\tau_{\mathrm{V}}(x)$ between $x=0$ and $x={\rm i}\infty$ \cite[Conjecture~C]{L} and the ratios of multipliers of~$\tau_{\mathrm{V}}(x)$ as $x\to 0, +\infty, {\rm i}\infty$ \cite[Conjecture~D]{L}.

As the first step of giving tables of critical behaviours for (V) like those of Guzzetti \cite{G-Table} for the sixth Painlev\'e equation, the author~\cite{S3} presented some families of convergent solutions of~(V) near $x=0$ and the respective monodromy data parametrised by integration constants. In this paper we present a family of matrix solutions of the Schlesinger-type equation~\eqref{1.3} parametrised by three integration constants~$c_0$,~$c_x$, $\sigma$ as $x\to\infty$ along the imaginary axis, and also the corresponding monodromy data (note that~\eqref{1.3} under the restrictions~(a) and~(b) is regarded as a nonlinear system with respect to
$(y,z,u)$). For the PVI Schlesinger equation and for~\eqref{1.3} around $x=0$, such matrix solutions have been essentially given~\cite{S2, S3}. To find solutions of~\eqref{1.3} around $x=\infty$ we need quite different techniques. As explained later the monodromy data are computable by using $(A_0(x), A_x(x))$, which is an advantage of treating solutions of~\eqref{1.3} instead of those of~(V). Each entry of the solution $(A_0(x), A_x(x))$ is a convergent series in powers of $\big({\rm e}^x x^{\sigma-1}, {\rm e}^{-x} x^{-\sigma-1}\big)$ having coefficients given by asymptotic series in~$x^{-1}$. This expression is valid in a sector-like domain with opening angle zero, where ${\rm e}^x x^{\sigma-1}$ and ${\rm e}^{-x}x^{-\sigma-1}$ are sufficiently small. This domain is larger than that known for series solutions of~(V) (cf.\ Remark~\ref{rem2.91}), which is another advantage of solutions of \eqref{1.3}. Then we easily obtain a~two-parameter family of solutions of~(V) by using~\eqref{1.2}, whose corresponding monodromy data also follow by restricting~$c_0$ to~$1$. These monodromy data make it possible to know the parametric connection formulas between the solutions of~\eqref{1.3} or~(V) mentioned above and those near $x=0$ (respectively, those along the positive real axis near $x=\infty$ by~\cite{Andreev-Kitaev}). Furthermore, by virtue of the quotient expression~\eqref{1.2}, under certain conditions, we may find sequences of zeros and of poles of solutions of~(V) in the sector-like domain mentioned above.

Our results are described in Section \ref{sc2}: families of solutions of \eqref{1.3} are given in Theorems \ref{thm2.1} and \ref{thm2.2}; the monodromy data in Theorems~\ref{thm2.3}, \ref{thm2.4} and Corollary~\ref{cor2.4a};
families of solutions of~(V) in Theorems~\ref{thm2.6} and~\ref{thm2.7}; and sequences of zeros and of poles in Theorems~\ref{thm2.8} and~\ref{thm2.9}. To our goal we make an approach different from that in~\cite{Andreev-Kitaev}: first construct a~general solution $(A_0(x), A_x(x))$ of \eqref{1.3} containing the integration constants $c_0$, $c_x$, $\sigma$; insert it into~\eqref{1.1}, which becomes a perturbed system with respect to $x^{-1}$; and finally find the monodromy matrices by matching techniques. In Section \ref{sc3}, we define the families $\mathfrak{A}$, $\mathfrak{A}_+$ and $\mathfrak{A}_-$ consisting of power series in
$\big({\rm e}^x x^{\sigma-1}, {\rm e}^{-x} x^{-\sigma-1}\big)$, in ${\rm e}^x x^{\sigma_0-1}$ with $\sigma_0=-2\theta_x-\theta_{\infty}$ and in $ {\rm e}^{-x} x^{-\sigma'_0-1}$ with $\sigma'_0= 2\theta_0+\theta_{\infty}$, respectively, whose coefficients are asymptotic series in $x^{-1}$ in suitable sectors, and show several lemmas which are used in the construction of solutions. In Sections~\ref{sc4} and~\ref{sc5}, under the restrictions (a) and (b) we transform~\eqref{1.3} into a system of integral equations, and solve it by successive approximation to obtain solutions as in Theorems~\ref{thm2.1} and~\ref{thm2.2}. Section~\ref{sc6} is devoted to the proofs of Theorems~\ref{thm2.6} through~\ref{thm2.9} on solutions of~(V). In the final section we prove Theorems~\ref{thm2.3} and~\ref{thm2.4}. Application of matchings to asymptotic solutions of the perturbed system yields monodromy matrices for~\eqref{1.1} that apparently contains $x^{-1}$, and the desired matrices are obtained by letting $x\to \infty$, which is justified by the isomonodromy property. In this procedure, we use functions that are essentially WKB solutions, but for a technical reason we treat them in a method somewhat different from that in usual WKB analysis. For other Painlev\'e equations, WKB analysis and matching technique have been employed to find connection formulas, non-linear Stokes behaviour, distribution of poles or zeros, several examples of which are described in~\cite{FIKN, IN}. To this field Andrei Kapaev made pioneering contributions by using and developing the WKB matching technique in his works including~\cite{K1, K2, K3}. For basic techniques of WKB analysis and related materials see~\cite{F, O, W2}.

Throughout this paper the following symbols are used.
\begin{enumerate}\itemsep=0pt
\item[(1)] $I$, $J$, $\Delta_+$, $\Delta_-$ denote the matrices
\begin{gather*}
I= \begin{pmatrix}
1 & 0 \\ 0 & 1 \\
\end{pmatrix},
\qquad
J= \begin{pmatrix}
1 & 0 \\ 0 & -1 \\
\end{pmatrix},
\qquad
\Delta_+ = \begin{pmatrix}
0 & 1 \\ 0 & 0 \\
\end{pmatrix},
\qquad
\Delta_- = \begin{pmatrix}
0 & 0 \\ 1 & 0 \\
\end{pmatrix}.
\end{gather*}

\item[(2)] $\mathcal{R}(\C\setminus \{0\} )$ denotes the universal covering of $\C \setminus \{0 \}$.

\item[(3)] $\Q_{*}:= \Q\big[\theta_0, \theta_x, \theta_{\infty}, c_0, c_0^{-1}, c_x, c_x^{-1},\sigma\big]$.

\item[(4)] For $k\in \N\cup \{0\}$, $\big[x^{-k}\big]$ (respectively, $\big[x^{-k}\big]_*$) denotes a holomorphic function admitting an asymptotic representation of the form $f(x) \sim x^{-k} \sum\limits_{j\ge 0} f_j x^{-j}$ with $f_j\in \Q_*$ (respectively, $f_j \in \Q(\theta_0, \theta_x, \theta_{\infty},c, \sigma)$ with $c=(c')^{-1}=c_x/c_0$) as $x\to \infty$ through a~specified sector, each~$f_j$ not being necessarily nonzero (e.g., note that $\big[x^{-k}\big]$ with $k\in \N$ may also be denoted by~$[1]$ $\big({=}\big[x^0\big]\big)$). Furthermore in Sections~\ref{sc4} and \ref{sc5}, for simplicity, we often denote by $(1)_x$ a~function given by $1+\big[x^{-1}\big]$.

\item[(5)] {\allowdisplaybreaks Sectors and domains:
\begin{gather*}
\Sigma(\sigma, \varepsilon, x_{\infty}, \delta)\colon \ \!
|\arg x - \pi/2| <\pi/2 -\delta, \!\!\quad \big|{\rm e}^x x^{\sigma-1} \big| <\varepsilon, \!\!\quad
 \big|{\rm e}^{-x} x^{-\sigma-1} \big| <\varepsilon, \!\!\quad |x|> x_{\infty}, \\
 D(B_*, \varepsilon, x_{\infty}, \delta) = \bigcup_{\sigma\in B_*}
\{\sigma \} \times \Sigma(\sigma, \varepsilon, x_{\infty}, \delta),\\
\Sigma_*(\varepsilon, x_{\infty}, \delta)\colon \ -(\pi/2-\delta)< \arg x - \pi/2 <\pi -\delta, \qquad \big|{\rm e}^x x^{\sigma_0-1} \big| <\varepsilon, \qquad |x|> x_{\infty},\\
\Sigma'_*(\varepsilon, x_{\infty}, \delta)\colon \
-(\pi-\delta) <\arg x - \pi/2 <\pi/2 -\delta, \qquad\!\! \big|{\rm e}^{-x} x^{-\sigma'_0-1} \big| <\varepsilon, \qquad\!\! |x|> x_{\infty},\!\!\\
\Sigma_{**}(x_{\infty}, \delta)\colon \ |\arg x - \pi/2| <\pi -\delta, \qquad |x|> x_{\infty}, \\
\Sigma_{0}(x_{\infty}, \delta)\colon \ |\arg x - \pi/2| <\pi/2 -\delta, \qquad |x|> x_{\infty},\\
\Sigma_{\pi}(\Theta_1, \Theta_2; x_{\infty})\colon \ (\pi/2 <)\ \Theta_1 <\arg x <\Theta_2 \ ( < 3\pi/2), \qquad |x|> x_{\infty}.
\end{gather*}

}

\item[(6)] For the integration constants $c_0$, $c_x$ and $\sigma$, we frequently write
\begin{gather*}
\gamma^0_+ := c_0(\sigma +2\theta_0 -\theta_{\infty})/4, \qquad \gamma^0_- := c_0^{-1}(-\sigma +2\theta_0 +\theta_{\infty})/4, \qquad \\
\gamma^x_+ := c_x(-\sigma +2\theta_x -\theta_{\infty})/4, \qquad \gamma^x_- := c_x^{-1}(\sigma +2\theta_x +\theta_{\infty})/4,\\
\mathbf{c} := (c_0, c_x), \qquad c:=c_x/c_0, \qquad c':=c_0/c_x.
\end{gather*}

\item[(7)] For a sequence $\{ \phi^j \}$, $\tri \phi^j := \phi^j -\phi^{j-1}$ in Section \ref{sc5}.
\end{enumerate}

\section{Results}\label{sc2}

\subsection{Solutions of the Schlesinger-type equation}\label{ssc2.1}

For $\delta$, $\varepsilon$, $x_{\infty}$ satisfying $\delta<\pi/2$, $x_{\infty} >\varepsilon^{-1}$, and for each $\sigma\in B_* \subset \C$, define $\Sigma(\sigma,\varepsilon, x_{\infty}, \delta) \subset \mathcal{R}(\C\setminus \{0\})$ by
\begin{gather*}
\Sigma(\sigma,\varepsilon, x_{\infty}, \delta)\colon \ |\arg x-\pi/2 |<\pi/2-\delta, \quad \big|{\rm e}^x x^{\sigma-1} \big|< \varepsilon, \quad \big|{\rm e}^{-x} x^{-\sigma-1} \big|< \varepsilon, \quad |x|>x_{\infty}
\end{gather*}
and $D(B_*,\varepsilon, x_{\infty},\delta) \subset B_* \times \mathcal{R}(\C\setminus\{0\}) \subset \C \times \mathcal{R}(\C\setminus\{0\})$ by
\begin{gather*}
D(B_*,\varepsilon, x_{\infty},\delta) := \bigcup_{\sigma\in B_*} \{\sigma\} \times \Sigma(\sigma,\varepsilon, x_{\infty}, \delta).
\end{gather*}

\begin{thm}\label{thm2.1} Let $B_*\subset \C$ and $B_0, B_x \subset \C\setminus \{0\}$ be given bounded domains, and let $\delta$ be a~given positive number such that $\delta<\pi/2$. Then equation \eqref{1.3} admits a~three-parameter family of solutions
\begin{gather*}
\{ (A_0(\mathbf{c}, \sigma, x), A_x(\mathbf{c}, \sigma, x));\, (\mathbf{c},\sigma) :=(c_0,c_x,\sigma) \in B_0\times B_x \times B_* \}
\end{gather*}
with
\begin{gather*}
A_0(\mathbf{c}, \sigma, x)= f_0(\mathbf{c},\sigma, x)J + f_+(\mathbf{c},\sigma, x) \Delta_+ + f_-(\mathbf{c},\sigma, x)\Delta_-,\\
 A_x(\mathbf{c}, \sigma, x)= g_0(\mathbf{c},\sigma, x)J + g_+(\mathbf{c},\sigma, x)\Delta_+ + g_-(\mathbf{c},\sigma, x)\Delta_-
\end{gather*}
satisfying the conditions $(\mathrm{a})$ and $(\mathrm{b})$. The entries are holomorphic in $(\mathbf{c}, \sigma, x) \in B_0\times B_x \times D(B_*, \varepsilon, x_{\infty}, \delta)$, and are represented by the convergent series in powers of $\big({\rm e}^xx^{\sigma-1}, {\rm e}^{-x} x^{-\sigma-1}\big)$ as follows:{\allowdisplaybreaks
\begin{gather*}
f_0(\mathbf{c}, \sigma, x)= (\sigma-\theta_{\infty})/4 -\big((\sigma+\theta_{\infty}) \gamma^0_+\gamma^0_- +(\sigma-\theta_{\infty})\gamma^x_+\gamma^x_-\big)x^{-2}/2+\big[x^{-3}\big]\\
\qquad{}+\gamma^0_-\gamma^x_+\big(1+\big[x^{-1}\big]\big){\rm e}^x x^{\sigma-1} +\sum_{n=2}^{\infty} \big(\gamma^0_-\gamma^x_+\big)^n\big[x^{-n+1}\big]\big({\rm e}^x x^{\sigma-1}\big)^n\\
\qquad{}+\gamma^0_+\gamma^x_-\big(1+\big[x^{-1}\big]\big){\rm e}^{-x} x^{-\sigma-1} +\sum_{n=2}^{\infty} \big(\gamma^0_+\gamma^x_-\big)^n\big[x^{-n+1}\big]\big({\rm e}^{-x} x^{-\sigma-1}\big)^n,\\
 g_0(\mathbf{c}, \sigma, x) = -\theta_{\infty}/2 - f_0(\mathbf{c},\sigma, x),\\
x^{(\sigma +\theta_{\infty}) /2} f_+( \mathbf{c},\sigma, x) =\gamma^0_+\big(1+\big[x^{-1}\big]\big) -\gamma^x_+\big((\sigma-\theta_{\infty})/2 +\big[x^{-1}\big]\big){\rm e}^x x^{\sigma-1}\\
\qquad{} -\gamma^0_-\big(\gamma^x_+\big)^2 \big(1+\big[x^{-1}\big]\big)\big({\rm e}^x x^{\sigma-1}\big)^2
 +\sum_{n=3}^{\infty}\gamma^x_+\big(\gamma^0_-\gamma^x_+\big)^{n-1}\big[x^{-n+2}\big]\big({\rm e}^x x^{\sigma-1}\big)^n\\
\qquad{}
+2\big(\gamma^0_+\big)^2\gamma_-^x \big(1+\big[x^{-1}\big]\big){\rm e}^{-x}x^{-\sigma-2}
 +\sum_{n=2}^{\infty}\gamma^0_+\big(\gamma^0_+\gamma^x_-\big)^n \big[x^{-n}\big]\big({\rm e}^{-x} x^{-\sigma-1}\big)^n,\\
{\rm e}^{-x} x^{-(\sigma -\theta_{\infty}) /2} g_+( \mathbf{c},\sigma, x) =
\gamma^x_+\big(1+\big[x^{-1}\big]\big) +2\gamma_-^0\big(\gamma_+^x\big)^2\big(1+\big[x^{-1}\big]\big) {\rm e}^x x^{\sigma-2}\\
\qquad{} +\sum_{n=2}^{\infty}\gamma_+^x\big(\gamma^0_-\gamma^x_+\big)^{n} \big[x^{-n}\big]\big({\rm e}^x x^{\sigma-1}\big)^n
 - \gamma^0_+\big((\sigma+\theta_{\infty})/2+\big[x^{-1}\big]\big){\rm e}^{-x} x^{-\sigma-1}\\
\qquad{} -\big(\gamma^0_+\big)^2\gamma^x_- \big(1+\big[x^{-1}\big]\big)\big( {\rm e}^{-x} x^{-\sigma-1} \big)^2
 +\sum_{n=3}^{\infty} \gamma^0_+ \big(\gamma^0_+\gamma^x_-\big)^{n-1}\big[x^{-n+2}\big]\big({\rm e}^{-x} x^{-\sigma-1}\big)^n,\\
x^{-(\sigma +\theta_{\infty}) /2} f_-( \mathbf{c},\sigma, x) =\gamma^0_-\big(1+\big[x^{-1}\big]\big) +2\big(\gamma_-^0\big)^2\gamma_+^x\big(1+\big[x^{-1}\big]\big) {\rm e}^x x^{\sigma-2}\\
\qquad{} +\sum_{n=2}^{\infty} \gamma^0_- \big(\gamma^0_-\gamma^x_+\big)^n \big[x^{-n}\big]\big({\rm e}^x x^{\sigma-1}\big)^n
 -\gamma^x_-\big((\sigma-\theta_{\infty})/2 +\big[x^{-1}\big]\big) {\rm e}^{-x} x^{-\sigma-1}\\
\qquad{} -\gamma^0_+\big(\gamma^x_-\big)^2 \big(1+\big[x^{-1}\big]\big)\big({\rm e}^{-x} x^{-\sigma-1}\big)^2 +\sum_{n=3}^{\infty} \gamma^x_-
\big(\gamma^0_+\gamma^x_-\big)^{n-1} \big[x^{-n+2}\big]\big({\rm e}^{-x} x^{-\sigma-1}\big)^n, \\
{\rm e}^x x^{(\sigma -\theta_{\infty}) /2 } g_-( \mathbf{c},\sigma, x) =
 \gamma^x_-\big(1+\big[x^{-1}\big]\big) -\gamma^0_-\big((\sigma+\theta_{\infty})/2+\big[x^{-1}\big]\big) {\rm e}^{x} x^{\sigma-1}\\
\qquad{} -\big(\gamma^0_-\big)^2\gamma^x_+ \big(1+\big[x^{-1}\big]\big)\big({\rm e}^{x} x^{\sigma-1}\big)^2 +\sum_{n=3}^{\infty}\gamma^0_-\big(\gamma^0_-\gamma^x_+\big)^{n-1}
\big[x^{-n+2}\big]\big({\rm e}^x x^{\sigma-1}\big)^n\\
\qquad{} +2\gamma^0_+\big(\gamma_-^x\big)^2\big(1+\big[x^{-1}\big]\big) {\rm e}^{-x}x^{-\sigma-2} +\sum_{n=2}^{\infty} \gamma_-^x\big(\gamma^0_+\gamma^x_-\big)^n
\big[x^{-n}\big]\big({\rm e}^{-x} x^{-\sigma-1}\big)^n.
\end{gather*}
Here}
\begin{enumerate}\itemsep=0pt
\item[$(i)$] $\varepsilon=\varepsilon(B_0, B_x, B_*,\delta)$ $($respectively, $x_{\infty}=x_{\infty}(B_0,B_x, B_*, \delta) > \varepsilon^{-1})$ is a~sufficiently small $($respectively, large$)$ positive number depending on $(B_0, B_x, B_*,\delta)$;
\item[$(ii)$] $\gamma_{\pm}^0=\gamma_{\pm}^0(\mathbf{c}, \sigma)$, $\gamma_{\pm}^x=\gamma_{\pm}^x(\mathbf{c}, \sigma)$ denote
\begin{gather*}
\gamma^0_+={c_0} (\sigma +2\theta_0 -\theta_{\infty})/4, \qquad \gamma^0_-=c_0^{-1} (-\sigma+ 2\theta_0 + \theta_{\infty})/4,\\
\gamma^x_+={c_x} (-\sigma +2\theta_x -\theta_{\infty})/4, \qquad \gamma^x_-=c_x^{-1} (\sigma +2\theta_x +\theta_{\infty})/4;
\end{gather*}
\item[$(iii)$] the asymptotic series for $\big[x^{-1}\big]$, $\big[x^{-n}\big]$, $\ldots$ are valid uniformly in $(\mathbf{c}, \sigma)\in B_0\times B_x \times B_*$ as~$x$ tends to $\infty$ through the sector $|\arg x -\pi/2|<\pi/2-\delta$, $|x|>x_{\infty}$.
\end{enumerate}
\end{thm}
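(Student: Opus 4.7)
The plan is to recast the Schlesinger-type equation \eqref{1.3} under the constraints (a) and (b) as a fixed-point problem on a space of double power series in $\big({\rm e}^x x^{\sigma-1}, {\rm e}^{-x} x^{-\sigma-1}\big)$ whose coefficients are asymptotic series in $x^{-1}$, and to solve it by Picard iteration on $D(B_*,\varepsilon,x_\infty,\delta)$. Writing \eqref{1.3} in the basis $\{J,\Delta_+,\Delta_-\}$ and using $[J,\Delta_\pm]=\pm 2\Delta_\pm$, $[\Delta_+,\Delta_-]=J$, one obtains
\begin{gather*}
x f_0' = g_+f_- - g_-f_+,\qquad x f_\pm' = \pm 2(g_0 f_\pm - g_\pm f_0),\\
x g_0' = -(g_+ f_- - g_- f_+),\qquad x g_\pm' = \mp 2(g_0 f_\pm - g_\pm f_0) \pm x g_\pm;
\end{gather*}
condition (b) eliminates $g_0 = -\theta_\infty/2 - f_0$, and the determinant identities $f_+ f_- = \theta_0^2/4 - f_0^2$, $g_+ g_- = \theta_x^2/4 - g_0^2$ coming from (a) are first integrals of this flow. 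The leading behavior predicted by the theorem motivates the gauge change
\begin{gather*}
F_0 := f_0 - (\sigma-\theta_\infty)/4,\qquad F_+ := x^{(\sigma+\theta_\infty)/2} f_+,\qquad F_- := x^{-(\sigma+\theta_\infty)/2} f_-,\\
G_+ := {\rm e}^{-x} x^{-(\sigma-\theta_\infty)/2} g_+,\qquad G_- := {\rm e}^{x} x^{(\sigma-\theta_\infty)/2} g_-,
\end{gather*}
which flattens the linear part at $\infty$ so that the gauged tuple should tend to the ``bare'' seed $(0,\gamma_+^0,\gamma_-^0,\gamma_+^x,\gamma_-^x)$; one checks directly that this seed already satisfies the determinant identities.

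Next I would convert the gauged system into integral equations, choosing the base point of integration separately for each component so that the driving terms are small on $\Sigma$ --- for example, $F_0' = G_+ F_-{\rm e}^x x^{\sigma-1} - G_- F_+{\rm e}^{-x}x^{-\sigma-1}$, integrated from $+{\rm i}\infty$ along a ray of constant argument inside $\Sigma(\sigma,\varepsilon,x_\infty,\delta)$. The natural function space is the class $\mathfrak{A}$ constructed in Section~\ref{sc3}: formal double series
\begin{gather*}
\Phi(x)=\sum_{n,m\ge 0}\phi_{n,m}(x)\big({\rm e}^x x^{\sigma-1}\big)^n \big({\rm e}^{-x}x^{-\sigma-1}\big)^m,
\end{gather*}
with $\phi_{n,m}(x)$ admitting asymptotic expansions in $x^{-1}$ whose coefficients lie in $\Q_*$. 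The preparatory lemmas of Section~\ref{sc3} guarantee that multiplication, reciprocation of units of the form $1+\big[x^{-1}\big]$, and the relevant integrations from $+{\rm i}\infty$ all map $\mathfrak{A}$ into itself, so the nonlinear integral operator $T$ encoding the system is well defined on a small closed ball around the seed.

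Picard iteration $\Phi^{(k+1)}=T\Phi^{(k)}$ starting from the seed produces a Cauchy sequence because each application of $T$ contributes a factor bounded by $\varepsilon$ (from the small exponentials on $\Sigma$) or by $x_\infty^{-1}$, both of which can be made as small as we wish by choosing $\varepsilon$ small and $x_\infty$ large in terms of $(B_0,B_x,B_*,\delta)$; this gives uniform contraction. Reversing the gauge yields the claimed $(A_0,A_x)$; conditions (a) and (b) are preserved because they are dynamical invariants of \eqref{1.3} and hold at the seed, and the leading coefficients of $F_\pm,G_\pm$ identify the integration constants with $(c_0,c_x,\sigma)$ through the formulas for $\gamma_\pm^0,\gamma_\pm^x$. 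The main obstacle I expect is the combined analytic and combinatorial bookkeeping needed to show that the coefficient of $\big({\rm e}^x x^{\sigma-1}\big)^n\big({\rm e}^{-x}x^{-\sigma-1}\big)^m$ in the limit carries exactly the prefactor $(\gamma_-^0\gamma_+^x)^n(\gamma_+^0\gamma_-^x)^m$ times a function in $\big[x^{-k}\big]$ for precisely the exponent $k$ listed in the statement; enforcing this requires choosing the integration base points so carefully that no slower-decaying asymptotic term is injected at any step, which is presumably the content of Sections~\ref{sc4} and~\ref{sc5}.
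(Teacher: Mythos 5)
Your overall strategy coincides with that of Sections~\ref{sc4} and~\ref{sc5}: strip off the leading linear behaviour by a gauge, rewrite \eqref{1.3} as a system of integral equations, and solve by successive approximation in the class $\mathfrak{A}$. Your reduction to the system in the basis $\{J,\Delta_+,\Delta_-\}$, the verification that the seed satisfies the determinant identities, and the recovery of (a), (b) from the first integrals $\operatorname{tr}A_\bullet$, $\det A_\bullet$, $(A_0+A_x)_{11}$ evaluated in the limit are all correct and are exactly what is done after \eqref{4.2}. One structural difference: the paper first diagonalises the full linear part of \eqref{4.4} (Lemma~\ref{lem4.1}, introducing $u_\pm,v_\pm$), whereas your gauge removes only the diagonal constant coefficients and leaves the off-diagonal couplings between $f_\pm$ and $g_\pm$ in the perturbation. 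After your gauge these couplings become terms like $-\tfrac12(\sigma-\theta_\infty){\rm e}^xx^{\sigma-1}G_+$, which are $O(\varepsilon)$ and do reproduce the corresponding terms in the statement, so this is survivable; but note that they carry a factor $\varepsilon$ \emph{without} a factor $x^{-1}$, which feeds directly into the gap below.

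The genuine gap is that contraction by a factor $\varepsilon$ per iteration does not prove the limit lies in $\mathfrak{A}$, let alone that it has the stated form. An infinite sum of functions, each admitting an asymptotic expansion in $x^{-1}$, need not admit one; Proposition~\ref{prop3.2} requires the $k$-th increment to be $O\big(\tilde\varepsilon^{\,k-N}|x|^{-N+k_0}\big)$ for \emph{every} $N\le k$, i.e., the coefficient of each fixed monomial $\big({\rm e}^xx^{\sigma-1}\big)^n\big({\rm e}^{-x}x^{-\sigma-1}\big)^m$ must gain a factor $|x|^{-1}$ per iteration even though the norm of the increment only gains $\varepsilon$. Establishing this is the content of the lattice-set estimates of Proposition~\ref{prop5.3} (combined with the norm contraction of Proposition~\ref{prop5.2}), and, together with the $\gamma$-prefactor tracking of Proposition~\ref{prop5.4}, it is also precisely what pins down the exponents $\big[x^{-n+1}\big]$, $\big[x^{-n}\big]$, $\big[x^{-n+2}\big]$ and the factors $\big(\gamma_-^0\gamma_+^x\big)^n$, $\big(\gamma_+^0\gamma_-^x\big)^m$ asserted in the theorem. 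You flag this as deferred bookkeeping achieved by ``choosing the integration base points carefully,'' but no choice of base point produces it automatically; it is an inductive combinatorial argument and is the core of the proof, so as written your argument yields at best existence of a holomorphic solution, not the stated expansion. A smaller but real point: integrating ``along a ray of constant argument'' does not work uniformly over $\Sigma(\sigma,\varepsilon,x_\infty,\delta)$, since along such a ray one of $\big|{\rm e}^\xi\xi^{\sigma-1}\big|$, $\big|{\rm e}^{-\xi}\xi^{-\sigma-1}\big|$ grows exponentially unless $\arg x$ is essentially $\pi/2$; one needs the level curve $\big|{\rm e}^\xi\xi^\sigma\big|=\big|{\rm e}^xx^\sigma\big|$ of Lemma~\ref{lem3.3}, whose uniform bound $(1+2/\sin\delta)/n$ is what makes the contraction constant independent of $n$ and of the iteration step.
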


\begin{rem}\label{rem2.0} The restriction (a) implies the relations $f_0(\mathbf{c}, \sigma, x)^2+ f_+(\mathbf{c}, \sigma, x)f_-(\mathbf{c}, \sigma, x) \equiv \theta_0^2/4$ and $g_0(\mathbf{c}, \sigma, x)^2 + g_+(\mathbf{c}, \sigma, x)g_-(\mathbf{c}, \sigma, x) \equiv \theta_x^2/4$.
\end{rem}

\begin{rem}\label{rem2.1}More precisely, $\varepsilon=\varepsilon(B_0, B_x, B_*,\delta)$ may be chosen in such a way that
\begin{gather*}
\big(\big|\gamma^0_-\gamma^x_+\big| \!+\! \big|\gamma_+^0\gamma^x_-\big| \!+\! \big|\gamma_+^0\big| \!+\! \big|\gamma_-^0\big| \!+\! \big|\gamma_+^x\big| \!+\! \big|\gamma_-^x\big| \!+\! 1\big)
\big( \big|\gamma_+^0\big| \!+\! \big|\gamma_-^0\big| \!+\! \big|\gamma_+^x\big| \!+\! \big|\gamma_-^x\big| \!+\! 1\big)\varepsilon \le r_0(\delta)
\end{gather*}
for every $(c_0, c_x, \sigma) \in B_0\times B_x\times B_*$, where $r_0(\delta) <1$ is a sufficiently small positive number depending on~$\delta$ (see Sections~\ref{ssc5.2},~\ref{ssc5.4} and Proposition~\ref{prop5.2}).
\end{rem}

\begin{rem}\label{rem2.2} The sector-like domain $\Sigma(\sigma, \varepsilon,x_{\infty},\delta)$ is given by $|x|> x_{\infty}$ and
\begin{gather*}
-(1+ \re \sigma)\log |x| +\im \sigma\cdot \arg x + \log\big(\varepsilon^{-1}\big)\\
\qquad{} < \re x < (1-\re \sigma) \log |x| +\im \sigma\cdot \arg x -\log\big(\varepsilon^{-1}\big),
\end{gather*}
where $\im \sigma\cdot \arg x = O(1)$ since $|\arg x-\pi/2|<\pi/2 -\delta$ (cf.\ Fig.~\ref{fig1}).
\end{rem}

\begin{figure}[htb]
\small
\begin{center}
\unitlength=0.6mm
\begin{picture}(55,68)(-35,-20)
\put(-35,-5){\line(1,0){42}}
\put(9,-7){\makebox(10,7)[br]{\scriptsize $\re x$}}
\put(0,-10){\line(0,1){50}}
\put(-5,40){\makebox(10,7)[tr]{\scriptsize $\im x$}}
\thicklines
\qbezier(-14.1,10)(-23.2,20)(-28.3,40)
\qbezier(-5,10)(-8.5,20)(-10,40)
\qbezier[7](-5,10)(-8,12)(-14.1,10)
\put(-35,-25){\makebox(55,7){\small (a)\,\, $\re \sigma>1$}}
\end{picture}
\hskip1.0cm
\begin{picture}(55,68)(-20,-20)
\put(-20,-5){\line(1,0){45}}
\put(27,-7){\makebox(10,7)[br]{\scriptsize $\re x$}}
\put(0,-10){\line(0,1){50}}
\put(-5,40){\makebox(10,7)[tr]{\scriptsize $\im x$}}
\thicklines
\qbezier(10,10)(17,20)(20,40)
\qbezier(-7.1,10)(-11.6,20)(-14.1,40)
\qbezier[15](-7.1,10)(2,13)(10,10)
\put(-20,-25){\makebox(55,7){\small (b)\,\, $|\re \sigma|<1$}}
\end{picture}
\hskip1.0cm
\begin{picture}(55,68)(-5,-20)
\put(-5,-5){\line(1,0){42}}
\put(39,-7){\makebox(10,7)[br]{\scriptsize $\re x$}}
\put(0,-10){\line(0,1){50}}
\put(-5,40){\makebox(10,7)[tr]{\scriptsize $\im x$}}
\thicklines
\qbezier(14.1,10)(23.2,20)(28.3,40)
\qbezier(5,10)(8.5,20)(10,40)
\qbezier[7](5,10)(8,12)(14.1,10)
\put(-5,-25){\makebox(55,7){\small (c)\,\, $\re \sigma<-1$}}
\end{picture}
\end{center}
\caption{$\Sigma(\sigma, \varepsilon, x_{\infty}, \delta)$.}\label{fig1}
\end{figure}
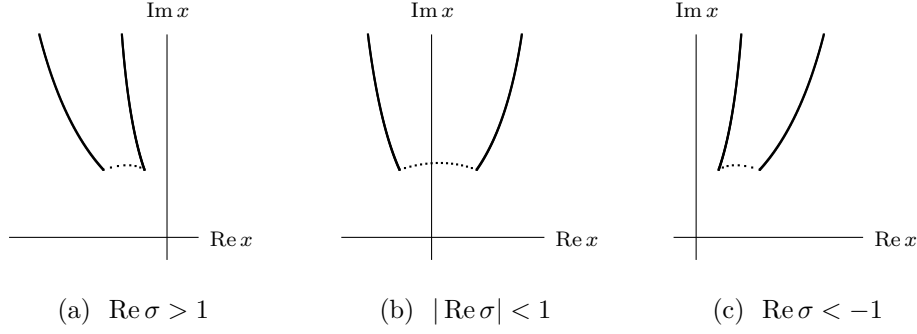

\begin{rem}\label{rem2.3} The asymptotic series for $\big[x^{-1}\big]$, $\big[x^{-n}\big]$, $\dots$ seem to be valid in an extended sector $|\arg x -\pi/2|<\pi -\delta$ (cf.~\cite{S1, T}).
\end{rem}

\begin{rem}\label{rem2.4} For $k\in \Z$, let $\Sigma_k(\sigma, \varepsilon,x_{\infty}, \delta)\subset \mathcal{R}(\C\setminus\{0\})$ be the sector-like domain defined by
\begin{gather*}
 |\arg x -(1/2 +k)\pi |<\pi/2 -\delta, \qquad \big|{\rm e}^x x^{\sigma-1} \big|<\varepsilon, \qquad \big|{\rm e}^{-x} x^{-\sigma-1} \big|<\varepsilon, \qquad |x|>x_{\infty}
\end{gather*}
(note that $\Sigma_0(\sigma,\varepsilon,x_{\infty},\delta) =\Sigma(\sigma,\varepsilon, x_{\infty},\delta)$). Then in the domain $D_k(B_*,\varepsilon,x_{\infty},\delta):= \bigcup_{\sigma\in B_*} \{\sigma\} \times
\Sigma_k(\sigma,\varepsilon,x_{\infty},\delta)$ equation~\eqref{1.3} admits a~family of solutions $\big\{ \big(A^{(k)}_0(\mathbf{c}, \sigma, x),A^{(k)}_x(\mathbf{c}, \sigma, x) \big) \big\}$ having an expression of the same form as in Theorem~\ref{thm2.1} with $\big[x^{-1}\big]$, $\big[x^{-n}\big]$, $\dots$ in the sector $|\arg x -(1/2+k)\pi|<\pi/2-\delta$.
\end{rem}

\begin{rem}\label{remA}By Theorem \ref{thm2.1} and Remark \ref{rem5.11} the tau-function is given by
\begin{gather*}
\frac{{\rm d}}{{\rm d}x} \log \tau_{\mathrm{V}}(x) = x^{-1} \operatorname{tr}(A_0A_x) -\operatorname{tr}(A_0J/2)-\theta_{\infty}/2\\
\hphantom{\frac{{\rm d}}{{\rm d}x} \log \tau_{\mathrm{V}}(x)}{} = (A_x)_{11} + x^{-1} \bigl( 2(A_0)_{11}(A_x)_{11} +(A_0)_{12}(A_x)_{21}+(A_0)_{21}(A_x)_{12} \bigr)\\
\hphantom{\frac{{\rm d}}{{\rm d}x} \log \tau_{\mathrm{V}}(x)}{} = - (\sigma+\theta_{\infty})/4 - \big(\sigma^2-\theta_{\infty}^2\big)x^{-1}/8\\
\hphantom{\frac{{\rm d}}{{\rm d}x} \log \tau_{\mathrm{V}}(x)=}{} - \big((\sigma+\theta_{\infty})\gamma_+^0\gamma_-^0 +(\sigma-\theta_{\infty})\gamma_+^x\gamma_-^x \big) x^{-2} /2+\big[x^{-3}\big]\\
\hphantom{\frac{{\rm d}}{{\rm d}x} \log \tau_{\mathrm{V}}(x)=}{} - \gamma^0_-\gamma^x_+\big(1+\big[x^{-1}\big]\big) {\rm e}^x x^{\sigma-2} + \gamma^0_+\gamma^x_-\big(1+\big[x^{-1}\big]\big) {\rm e}^{-x} x^{-\sigma-2}\\
\hphantom{\frac{{\rm d}}{{\rm d}x} \log \tau_{\mathrm{V}}(x)=}{} +\sum_{n=2}^{\infty} \big(\gamma_-^0\gamma_+^x\big)^{n}\big[x^{-n+1}\big]\big({\rm e}^xx^{\sigma-1}\big)^n
+\sum_{n=2}^{\infty} \big(\gamma_+^0\gamma_-^x\big)^{n}\big[x^{-n+1}\big]\big({\rm e}^{-x} x^{-\sigma-1}\big)^n.
\end{gather*}
It may be checked that first some terms agree with those of $\tau(t \to {\rm i}\infty)$ in \cite[equation~(1.12a)]{L}. It was conjectured by \cite{BLMST, N} that $\tau_{\mathrm{V}}(x)$ is represented by an infinite sum of irregular conformal blocks, whose full structure may be observed explicitly. Conformal field theory (with the Fredholm determinant) yields such expansions not via $({\rm d}/{\rm d}x)\log \tau_{\mathrm{V}}(x)$. On the other hand, from the fourth-order bilinear equation~\cite{Andreev-Kitaev, Jimbo, JM}
\begin{gather*}
 x^3\big(\tau \tau^{(4)} - 4\tau'\tau^{(3)} +3(\tau'')^2 \big)+ 4x^2\big(\tau \tau^{(3)} -\tau'\tau''\big)-\big(x^2-2\theta_{\infty}x +\theta_0^2+\theta_x^2)x (\tau\tau''-(\tau')^2\big)\\
\qquad{} + 2x \tau \tau'' +\big(\theta_{\infty} x-\theta_0^2 -\theta_x^2\big) \tau\tau'-\theta_x^2\theta_{\infty} \tau^2/2=0,
\end{gather*}
which does not apparently contain the logarithmic derivative, first some terms of $\tau_{\mathrm{V}}(x)$ may be obtained by an argument similar to that in \cite[Section~3]{GIL1} for $\tau_{\mathrm{VI}}(x)$. It seems difficult to derive the full expansion without finding a suitable structure of this equation.
\end{rem}

For special values of $\sigma$, $c_0$, $c_x$, we have a two-parameter or one-parameter family of solutions. If $\sigma=-2\theta_x -\theta_{\infty}$, namely, $\gamma_-^x =0$, we have

\begin{thm}\label{thm2.2} Suppose that $\theta_x \not=0$ and $\sigma=\sigma_0: =- 2\theta_x - \theta_{\infty} $. Let $\Sigma_*(\varepsilon,x_{\infty}, \delta) \subset \mathcal{R}(\C \setminus \{0\})$ be the domain defined by
\begin{gather*}
\Sigma_*(\varepsilon,x_{\infty}, \delta) \colon \ -(\pi/2-\delta) <\arg x -\pi/2 <\pi -\delta, \qquad \big|{\rm e}^x x^{\sigma_0-1} \big|<\varepsilon, \qquad |x |>x_{\infty},
\end{gather*}
where $\delta<\pi/2$ is a given positive number. Let $\tilde{B} \subset \C$ be a given bounded domain, and $B_0$ as in Theorem~{\rm \ref{thm2.1}}. Then equation \eqref{1.3} admits a two-parameter family of solutions
\begin{gather*}
\{ (A_0(\mathbf{c}, x), A_x(\mathbf{c}, x)); \, \mathbf{c}=(c_0,c_x) \in B_0\times \tilde{B} \big\}
\end{gather*}
with
\begin{gather*}
A_0(\mathbf{c}, x)= f_0(\mathbf{c}, x)J + f_+(\mathbf{c}, x)\Delta_+ + f_-(\mathbf{c}, x)\Delta_-,\\
 A_x(\mathbf{c}, x)= g_0(\mathbf{c}, x)J + g_+(\mathbf{c}, x)\Delta_+ + g_-(\mathbf{c}, x)\Delta_-
\end{gather*}
such that the entries are holomorphic in $(\mathbf{c}, x) \in B_0\times \tilde{B} \times \Sigma_*(\varepsilon,x_{\infty},\delta)$ and are represented by the convergent series in powers of $ {\rm e}^xx^{\sigma_0-1} $ as follows:
\begin{gather*}
f_0(\mathbf{c}, x)=- (\theta_x+\theta_{\infty})/2 +\theta_{x} {\gamma_+^0}_*{\gamma_-^0}_* x^{-2} +\big[x^{-3}\big]\\
\qquad{} +{\gamma_-^0}_*{\gamma_+^x}_*\big(1+\big[x^{-1}\big]\big){\rm e}^x x^{\sigma_0-1}
 +\sum_{n=2}^{\infty}\big({\gamma_-^0}_*{\gamma_+^x}_*\big)^n \big[x^{-n+1}\big]\big({\rm e}^x x^{\sigma_0-1}\big)^n ,\\
g_0(\mathbf{c}, x) = -\theta_{\infty}/2 - f_0(\mathbf{c}, x),\\
x^{- \theta_{x}} f_+( \mathbf{c}, x) ={\gamma_+^0}_*\big(1+\big[x^{-1}\big]\big) +{\gamma_+^x}_*\big(\theta_x +\theta_{\infty} +\big[x^{-1}\big]\big){\rm e}^x x^{\sigma_0-1}\\
\qquad{} -{\gamma_-^0}_*\big({\gamma_+^x}_*\big)^2 \big(1+\big[x^{-1}\big]\big)\big({\rm e}^x x^{\sigma_0-1}\big)^2
 +\sum_{n=3}^{\infty} {\gamma_+^x}_*\big({\gamma_-^0}_*{\gamma_+^x}_*\big)^{n-1} \big[x^{-n+2}\big]\big({\rm e}^x x^{\sigma_0-1}\big)^n, \\
 {\rm e}^{-x} x^{\theta_{x}+\theta_{\infty}} g_+( \mathbf{c}, x) ={\gamma_+^x}_*\big(1+\big[x^{-1}\big]\big) + 2{\gamma_-^0}_*\big({\gamma_+^x}_*\big)^2 \big(1+\big[x^{-1}\big]\big)
{\rm e}^x x^{\sigma_0-2}\\
\qquad{} +\sum_{n=2}^{\infty} {\gamma_+^x}_*\big({\gamma_-^0}_*{\gamma_+^x}_*\big)^n \big[x^{-n}\big]\big({\rm e}^x x^{\sigma_0-1}\big)^n + {\gamma_+^0}_*\big(\theta_{x} +\big[x^{-1}\big]\big) {\rm e}^{-x} x^{-\sigma_0-1},\\
 x^{\theta_{x}} f_-( \mathbf{c}, x) ={\gamma_-^0}_*\big(1+\big[x^{-1}\big]\big) + 2\big({\gamma_-^0}_*\big)^2 {\gamma_+^x}_* \big(1+\big[x^{-1}\big]\big)
{\rm e}^{x} x^{\sigma_0-2}\\
\qquad{} +\sum_{n=2}^{\infty}{\gamma_-^0}_*\big({\gamma_-^0}_*{\gamma_+^x}_*\big)^n \big[x^{-n}\big]\big({\rm e}^x x^{\sigma_0-1}\big)^n ,\\
 {\rm e}^x x^{-\theta_{x}-\theta_{\infty} } g_-( \mathbf{c},x) = {\gamma_-^0}_*(\theta_{x} +\big[x^{-1}\big]) {\rm e}^{x} x^{\sigma_0-1} -\big({\gamma_-^0}_*\big)^2{\gamma_+^x}_* \big(1+\big[x^{-1}\big]\big)\big({\rm e}^{x} x^{\sigma_0-1}\big)^2\\
\qquad{} +\sum_{n=3}^{\infty} {\gamma_-^0}_*\big( {\gamma_-^0}_*{\gamma_+^x}_*\big)^{n-1} \big[x^{-n+2}\big]\big({\rm e}^x x^{\sigma_0-1}\big)^n.
\end{gather*}
Here
\begin{enumerate}\itemsep=0pt
\item[$(i)$] $\varepsilon=\varepsilon(B_0, \tilde{B}, \delta)$ $($respectively, $x_{\infty}=x_{\infty}(B_0, \tilde{B}, \delta))$ is a sufficiently small $($respectively, large$)$ positive number depending on $(B_0, \tilde{B}, \delta);$
\item[$(ii)$] ${\gamma_{\pm}^0}_{*}:=\gamma^0_{\pm}(\mathbf{c},\sigma_0)$, ${\gamma_+^x}_* := \gamma^x_+(\mathbf{c},\sigma_0)$, that is,
\begin{gather*}
{\gamma_+^0}_*= {c_0} (\theta_0 -\theta_x -\theta_{\infty})/2, \qquad {\gamma_-^0}_*= c_0^{-1} (\theta_0 +\theta_x +\theta_{\infty})/2, \qquad {\gamma_+^x}_*= {c_x} \theta_x ;
\end{gather*}
\item[$(iii)$] the asymptotic series for $\big[x^{-1}\big]$, $\big[x^{-n}\big]$, $\ldots$ are valid uniformly in $\mathbf{c}\in B_0\times \tilde{B}$ as $x$ tends to~$\infty$ through the sector $-(\pi/2-\delta) <\arg x -\pi/2 <\pi-\delta$, $|x|>x_{\infty}$, and the coefficients of the series are in $\Q\big[\theta_0, \theta_x,\theta_{\infty}, c_0, c_0^{-1}, c_x \big] \subset \Q_*$.
\end{enumerate}

In addition to $\sigma=\sigma_0$, if $c_x=0$, then \eqref{1.3} admits a one-parameter family of solutions
\begin{gather*}
\{ (A_0({c_0}, x), A_x({c_0}, x)); \, c_0 \in B_0 \}
\end{gather*}
with
\begin{gather*}
A_0({c_0}, x)= f_0({c_0}, x)J + f_+({c_0}, x)\Delta_+ + f_-({c_0}, x)\Delta_-,\\
A_x({c_0}, x)= g_0({c_0}, x)J + g_+({c_0}, x)\Delta_+ + g_-({c_0}, x)\Delta_-,
\end{gather*}
whose entries are holomorphic in $(c_0, x) \in B_0 \times \Sigma_{**} (x_{\infty},\delta)$ with $\Sigma_{**}(x_{\infty},\delta)$: $|\arg x -\pi/2|<\pi -\delta$, $|x|> x_{\infty}$ for some $x_{\infty}=x_{\infty}(B_0,\delta)$, and are represented by the asymptotic series
\begin{gather*}
f_0(c_0, x)= -(\theta_x+\theta_{\infty})/2 + \theta_x \big(\theta_0^2-(\theta_x +\theta_{\infty})^2\big) x^{-2}/4 +\big[x^{-3}\big],\\
g_0(c_0,x)= -\theta_{\infty}/2 - f_0(c_0,x),\\
 x^{-\theta_x} f_+(c_0, x) = c_0(\theta_0 -\theta_x -\theta_{\infty})\big(1/2 + \big[x^{-1}\big]\big),\\
 x^{-\theta_x +1} g_+(c_0, x) = c_0 (\theta_0 -\theta_x -\theta_{\infty})\big(\theta_x/2 + \big[x^{-1}\big]\big),\\
 x^{\theta_x} f_-(c_0, x) = c_0^{-1}(\theta_0 +\theta_x +\theta_{\infty})\big(1/2 + \big[x^{-1}\big]\big),\\
 x^{\theta_x +1} g_-(c_0, x) = c_0^{-1} (\theta_0 +\theta_x +\theta_{\infty})\big(\theta_x/2 + \big[x^{-1}\big]\big),
\end{gather*}
uniformly in $c_0\in B_0$ as $x$ tends to $\infty$ through $\Sigma_{**}(x_{\infty},\delta)$, the coefficients of $\big[x^{-1}\big]$, $\dots$ being in $\Q\big[\theta_0,\theta_x,\theta_{\infty}, c_0, c_0^{-1}\big]$.
\end{thm}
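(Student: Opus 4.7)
The plan is to adapt the integral-equation and successive-approximation scheme of Sections~\ref{sc4}--\ref{sc5} (used to prove Theorem~\ref{thm2.1}) to the specialization $\sigma=\sigma_0=-2\theta_x-\theta_{\infty}$. Under this choice one computes $\gamma_-^x=c_x^{-1}(\sigma_0+2\theta_x+\theta_{\infty})/4\equiv 0$, so every monomial in the expansions of Theorem~\ref{thm2.1} whose coefficient carries a factor $\gamma_-^x$ drops out. In particular all $(\gamma_+^0\gamma_-^x)^n$ with $n\geq 1$ vanish, the series in the second exponential variable truncates, and what survives is an asymptotic series in $x^{-1}$, a full series in $\e^xx^{\sigma_0-1}$, and at most isolated terms linear in $\e^{-x}x^{-\sigma_0-1}$ carrying the coefficient ${\gamma_+^0}_*$. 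The disappearance of every $c_x^{-1}$-bearing factor is what reduces the coefficient ring to $\Q[\theta_0,\theta_x,\theta_{\infty},c_0,c_0^{-1},c_x]$ as claimed.

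The central task is to enlarge the sector from $\Sigma(\sigma_0,\varepsilon,x_{\infty},\delta)$ to $\Sigma_*(\varepsilon,x_{\infty},\delta)$, in which only $|\e^xx^{\sigma_0-1}|<\varepsilon$ is imposed. Since the second small parameter $\e^{-x}x^{-\sigma-1}$ no longer participates in any infinite series, I would redo the fixed-point argument of Section~\ref{sc5} treating $\e^xx^{\sigma_0-1}$ as the unique small quantity. The integration contours defining the Picard iterates must be chosen inside $\Sigma_*$ so that all exponential factors under the integrand are damped at the endpoint at infinity: where $\re x\le 0$ (the part of $\Sigma_*$ near $\arg x=\pi$) the factor $\e^{-x}$ is automatically dominated, while where $\re x>0$ (near $\arg x=0$) the bound $|\e^xx^{\sigma_0-1}|<\varepsilon$ provides the analogous control. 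Provided the contours are chosen consistently, the majorant estimates of Section~\ref{sc5} transfer essentially unchanged, yielding uniform convergence of the iterates throughout $B_0\times\tilde{B}\times\Sigma_*(\varepsilon,x_{\infty},\delta)$.

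For the second, one-parameter family one imposes additionally $c_x=0$. This makes ${\gamma_+^x}_*=c_x\theta_x=0$, so every factor $\gamma_+^x$ also vanishes and the series in $\e^xx^{\sigma_0-1}$ collapses. What remains is a purely asymptotic series in $x^{-1}$ with no exponentially small corrections, and the constraint $|\e^xx^{\sigma_0-1}|<\varepsilon$ becomes vacuous. The sector therefore enlarges to $\Sigma_{**}(x_{\infty},\delta)\colon |\arg x-\pi/2|<\pi-\delta$. The explicit leading-order expressions can be produced by solving the algebraic constraints of Remark~\ref{rem2.0} together with condition~(b) at leading order, then substituting into~\eqref{1.3} and matching powers of $x^{-1}$ recursively; the coefficients then live in $\Q[\theta_0,\theta_x,\theta_{\infty},c_0,c_0^{-1}]$ because $c_x$ has been removed.

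The principal obstacle is the sector enlargement in the two-parameter case: justifying uniform convergence of the successive approximation throughout $\Sigma_*$ rather than only in the narrower $\Sigma(\sigma_0,\varepsilon,x_{\infty},\delta)$ requires choosing the integration contours (and the branches of $x^{\pm(\sigma_0+\theta_{\infty})/2}$) compatibly across the whole extended sector. Once the contours are pinned down, the existing majorant arguments of Section~\ref{sc5} should extend with only cosmetic changes, consistent with the expectation recorded in Remark~\ref{rem2.3}.
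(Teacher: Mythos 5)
Your starting point agrees with the paper's: setting $\sigma=\sigma_0$ kills $\gamma_-^x$, the series in ${\rm e}^{-x}x^{-\sigma-1}$ truncates, and the solution of Theorem~\ref{thm2.1} is immediately available in the original sector $|\arg x-\pi/2|<\pi/2-\delta$ with only the constraint $\big|{\rm e}^xx^{\sigma_0-1}\big|<\varepsilon$. You also correctly isolate the sector enlargement as the crux. But your proposal stops exactly where the work begins: the assertion that, provided the contours are chosen consistently, the majorant estimates of Section~\ref{sc5} ``transfer essentially unchanged'' is not justified, and Section~\ref{ssc5.5} of the paper exists precisely to fill this gap. Two distinct problems arise in the left part of $\Sigma_*$ (say $\pi/2+\delta<\arg x<3\pi/2-\delta$). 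First, the norm on $\mathfrak{A}$ and the operator $\mathcal{I}$ are built on the path $\gamma(x)$ of Lemma~\ref{lem3.3}, which is only defined for $|\arg x-\pi/2|<\pi/2-\delta$; in the extended region one must pass to the family $\mathfrak{A}_+$ of Section~\ref{ssc3.2} with the ray $\gamma_{\pi}(x)$ and the estimates of Lemma~\ref{lem3.4} and Proposition~\ref{prop3.9}. Moreover the surviving terms carrying ${\rm e}^{-x}x^{-\sigma_0-1}$ (e.g.\ in $G_+$) require integration paths along which $\re\xi\to+\infty$, pointing in a different direction from those needed for the ${\rm e}^{x}x^{\sigma_0-1}$ terms, so no single contour choice serves the whole of $\Sigma_*$; this is why the paper constructs a second solution in the left sector and glues, rather than running one global fixed-point argument. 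Second, and independently of contours, the coefficients $\big[x^{-1}\big],\big[x^{-n}\big],\dots$ produced by the iteration are asymptotic series whose validity is initially confined to the narrow sector; extending them is not automatic (Remark~\ref{rem2.3} records this as an unproven expectation in general, so it cannot be cited as support).

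The paper's resolution, which your plan lacks, is the decomposition \eqref{5.14}: one splits $\big(\varphi^{\infty},\varphi_{\pm}^{\infty},\psi_{\pm}^{\infty}\big)$ into a free part $(p,p_{\pm},q_{\pm})$, shown to solve the reduced system with $\gamma_+^x=\gamma_-^x=0$ and hence --- by uniqueness of the solution tending to $0$ of that system with suitably rotated paths --- to admit asymptotic expansions in the full sector $|\arg x-\pi/2|<\pi-\delta$, together with a remainder $\big(\hat\varphi^{\infty},\hat\varphi_{\pm}^{\infty},\hat\psi_{\pm}^{\infty}\big)$ satisfying a new integral system that is solved afresh in $\mathfrak{A}_+\big(\Sigma_{\pi}\big(\pi/2+\delta,3\pi/2-\delta;x^1_{\infty}\big),\varepsilon\big)$; the two solutions are then identified on the overlap $\pi/2+\delta<\arg x<\pi-\delta$ because their asymptotic coefficients obey the same recursion. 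The uniqueness step and the overlap identification are genuine ideas, not cosmetic changes. Finally, your treatment of the one-parameter family by recursive matching of powers of $x^{-1}$ would still need an existence theorem realizing the formal series as an actual solution in $\Sigma_{**}$; in the paper this comes for free, since the $c_x=0$ solution is exactly the free part whose expansion has already been established in $|\arg x-\pi/2|<\pi-\delta$.
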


\begin{rem}\label{rem2.5}If we put $\sigma=\sigma'_0=2\theta_0+\theta_{\infty}$ under the supposition $\theta_0 \not=0$, we get a two-parameter family of solutions represented by a power series in ${\rm e}^{-x} x^{-\sigma'_0-1}$ in the domain $\Sigma'_*(\varepsilon, x_{\infty}, \delta)$: \smash{$-(\pi -\delta)$} $<\arg x- \pi/2 <\pi/2-\delta$, $\big|{\rm e}^{-x} x^{-\sigma'_0-1}\big|< \varepsilon$, $|x|>x_{\infty}$. If $\sigma=2\theta_x -\theta _{\infty}$, \smash{$\theta_x \not=0$} (respectively, $\sigma=-2\theta_0 + \theta_{\infty}$, \smash{$\theta_0 \not=0$}), then there exist solutions expanded into series in ${\rm e}^{-x} x^{-2\theta_x +\theta_{\infty} -1}$ (respectively, ${\rm e}^{x} x^{-2\theta_0 +\theta_{\infty} -1}$).
\end{rem}

\subsection{Monodromy data}\label{ssc2.2}
System \eqref{1.1} with (a) and (b) admits a fundamental matrix solution of the form
\begin{gather}\label{2.1}
Y(x, \lambda)=\big(I+O\big(\lambda^{-1}\big)\big) {\rm e}^{(\lambda/2)J}\lambda^{-(\theta_{\infty}/2)J}
\end{gather}
as $\lambda \to \infty$ through the sector $-\pi/2 < \arg \lambda < 3\pi/2$. Denote by $Y_1(x,\lambda)$ and $Y_2(x,\lambda)$ the matrix solutions having asymptotic representations of the same form as in~\eqref{2.1} in the sectors $-3\pi/2 < \arg\lambda <\pi/2$ and $\pi/2 < \arg\lambda <5\pi/2$, respectively. In accordance with \cite[Section~2]{Andreev-Kitaev}, \cite[Section~2.4]{S3} let $S_1=I+s_1\Delta_-$ and $S_2=I+ s_2 \Delta_+$ be the Stokes multipliers given by
\begin{gather*}
Y(x,\lambda)= Y_1(x,\lambda)S_1, \qquad Y_2(x,\lambda) = Y(x,\lambda)S_2,
\end{gather*}
and let $M_0, M_x, M_{\infty} \in {\rm SL}_2(\C)$ be the monodromy matrices defined by the analytic continuation of $Y(x,\lambda)$ along loops $l_0, l_x, l_{\infty} \in \pi_1(P^1(\C) \setminus \{0,x,\infty\})$ located as in Fig.~\ref{loops} for $x$ such that $-\pi <\arg x < \pi$. They surround, respectively, $\lambda=0$, $x$, $\infty$ in the positive sense and satisfy $l_0l_xl_{\infty}= \mathrm{id}$, which implies $M_{\infty} M_x M_0 = I$.

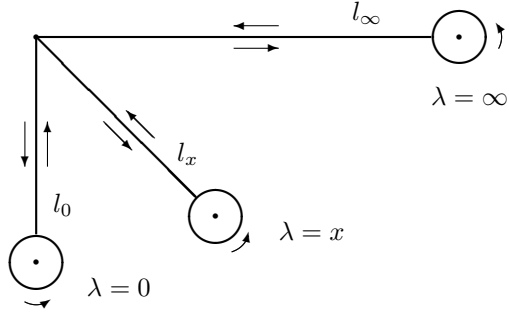
\begin{figure}[htb]
\small
\begin{center}
\unitlength=0.75mm
\begin{picture}(80,60)(-10,-8)
\thicklines
\put(0,40){\circle*{1}}
\put(0,40){\line(0,-1){35}}
\put(0,0){\circle{10}}
\put(0,0){\circle*{1}}
\put(0,40){\line(1,-1){28.3}}
\put(31.8, 8.2){\circle{10}}
\put(31.8, 8.2){\circle*{1}}
\put(0,40){\line(1,0){70}}
\put(75,40){\circle{10}}
\put(75,40){\circle*{1}}
\thinlines
\put(-2,25){\vector(0,-1){8}}
\put(2,17){\vector(0,1){8}}
\qbezier(-2,-7)(0,-8)(2,-7)
\put(2.6,-6.5){\vector(3,2){0}}
\put(12,25){\vector(1,-1){5}}
\put(21,22){\vector(-1,1){5}}
\qbezier(34.7, 1.9)(36.8, 2.6)(37.6, 4.7)
\put(37.7, 5.4){\vector(1,4){0}}
\put(35,38){\vector(1,0){8}}
\put(43,42){\vector(-1,0){8}}
\qbezier(82, 38)(83, 40)(82, 42)
\put(81.5, 42.5){\vector(-2,3){0}}
\put(70,28){\makebox{\small $\lambda=\infty$}}
\put(43,4){\makebox{\small $\lambda=x$}}
\put(9,-6){\makebox{\small $\lambda=0$}}
\put(56,43){\makebox{\small $l_{\infty}$}}
\put(25,18){\makebox{\small $l_{x}$}}
\put(3,9){\makebox{\small $l_{0}$}}
\end{picture}
\end{center}
\caption{$l_0$, $l_x$ and $l_{\infty}$.}\label{loops}
\end{figure}

System \eqref{1.1} has the isomonodromy property, that is, the matrices $M_0$, $M_x$, $S_1$, $S_2$ are invariant under the change of $x$ if and only if $(A_0, A_x)$ solves \eqref{1.3}. Then each solution of~\eqref{1.3} corresponds to some $(M_0,M_x) \in {\rm SL}_2(\C)^2$ not depending on $x$, and then, by
\begin{gather} \label{2.2}
M_{\infty}=M^{-1}_0 M_x^{-1}= S_2 {\rm e}^{\pi {\rm i}\theta_{\infty}J} S_1
\end{gather}
(cf.\ \cite[Section~2]{Andreev-Kitaev}), we have $(M_xM_0)_{21}= -{\rm e}^{-\pi {\rm i} \theta_{\infty}}s_1$, $(M_xM_0)_{12}= -{\rm e}^{-\pi {\rm i} \theta_{\infty}}s_2$, $ \operatorname{tr}(M_xM_0) = 2\cos \pi\theta_{\infty}+ {\rm e}^{-\pi {\rm i}\theta{\infty}} s_1s_2$. As will be seen in Remark~\ref{rem2.6} and Corollary~\ref{cor2.4a}, using the relations in the following theorems we may explicitly represent $(M_0, M_x, S_1, S_2)$ for each solution of~\eqref{1.3} in terms of $\theta_0$, $\theta_x$, $\theta_{\infty}$ and the integration constants $c_0$, $c_x$, $\sigma$.

\begin{thm}\label{thm2.3} Suppose that $\theta_0$, $\theta_x \not\in \Z$. Then, for each $(A_0(\mathbf{c}, \sigma, x), A_x(\mathbf{c}, \sigma, x))$ of Theorem~{\rm \ref{thm2.1}}, the corresponding matrices $M_0$, $M_x$, $ S_1$, $ S_2$ satisfy
\begin{gather}\label{2.3}
S_1 M_x M_0 M_x^{-1} S_1^{-1}= \big(C_0^1\big)^{-1} {\rm e}^{\pi {\rm i}\theta_{0}J} C_0^1, \qquad S_2^{-1}M_0S_2= \big(C_0^2\big)^{-1} {\rm e}^{\pi {\rm i}\theta_{0}J} C_0^2, \\
\label{2.4} M_x= C_x^{-1} {\rm e}^{\pi {\rm i}\theta_x J} C_x,
\end{gather}
where $C^1_0$, $C^2_0$ and $C_x$ are given by
\begin{gather*}
C^1_0=V_0 S_*^{-1} {\rm e}^{\pi {\rm i} (\sigma+\theta_{\infty})J/4} c_0^{-J/2}, \qquad C^2_0=V_0 S_{**}^{-1} {\rm e}^{-\pi {\rm i}(\sigma+\theta_{\infty})J/4} c_0^{-J/2},\qquad C_x=V_x c_x^{-J/2}
\end{gather*}
with
\begin{gather*}
 V_0=\begin{pmatrix}
\dfrac{{\rm e}^{\pi {\rm i}(\sigma-2\theta_0 -\theta_{\infty}) /4} \Gamma(-\theta_0)}{\Gamma(1-(\sigma+2\theta_0 -\theta_{\infty})/4)}
&
\dfrac{ \Gamma(-\theta_0)}{\Gamma(1+(\sigma-2\theta_0 -\theta_{\infty})/4)}
\vspace{1mm}\\
\dfrac{{\rm e}^{\pi {\rm i}(\sigma+2\theta_0 -\theta_{\infty}) /4} \Gamma(\theta_0)}{\Gamma(-(\sigma-2\theta_0 -\theta_{\infty})/4)}
&
- \dfrac{ \Gamma(\theta_0)}{\Gamma((\sigma+2\theta_0 -\theta_{\infty})/4)}
\end{pmatrix},\\
V_x= V_0 \big|_{(\sigma,\theta_0) \mapsto (-\sigma,\theta_x)}, \qquad \text{i.e., the result of the substitution $(\sigma,\theta_0)\mapsto (-\sigma,\theta_x)$ in $V_0$},\\
 S_*= I - \frac{2\pi {\rm i}}{\Gamma(-(\sigma-2\theta_0-\theta_{\infty})/4)\Gamma(1-(\sigma+2\theta_0-\theta_{\infty})/4) } \Delta_-,\\
 S_{**}= I + \frac{2\pi {\rm i} {\rm e}^{-\pi {\rm i}(\sigma-\theta_{\infty})/2}}{\Gamma((\sigma+2\theta_0-\theta_{\infty})/4)\Gamma(1+(\sigma-2\theta_0-\theta_{\infty})/4) } \Delta_+.
\end{gather*}
\end{thm}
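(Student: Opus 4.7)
The plan is to implement the matching strategy sketched in Section~\ref{sc1}. Fix $(\mathbf{c},\sigma) \in B_0 \times B_x \times B_*$ and insert the solution $(A_0(\mathbf{c},\sigma,x),A_x(\mathbf{c},\sigma,x))$ from Theorem~\ref{thm2.1} into~\eqref{1.1}, viewing the result as a perturbed linear system as $|x| \to \infty$ along $\Sigma(\sigma,\varepsilon,x_\infty,\delta)$. By isomonodromy, $M_0$, $M_x$, $S_1$, $S_2$ are independent of $x$, so any identities derived below may be simplified by passing to $x\to\infty$; the $x$-dependent pieces must cancel in the limit.

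I would then distinguish three overlapping regions in the $\lambda$-plane: an inner neighbourhood of $\lambda=0$, an inner neighbourhood of $\lambda=x$, and an outer region where $|\lambda|$ is sufficiently large for $Y(x,\lambda)$ to have the canonical expansion~\eqref{2.1}. In the inner region around $\lambda=0$, after a gauge transformation that diagonalises $A_0$ at leading order, absorbing the scaling $u$ and hence producing the factor $c_0^{-J/2}$, and a rescaling that sends the second pole to $\infty$, the leading-order system reduces to a confluent hypergeometric (Kummer-type) equation with Frobenius exponents $\pm\theta_0/2$ at $0$ and an irregular singularity at $\infty$. Its standard solutions admit explicit connection matrices to their canonical asymptotic solutions at $\infty$; matching against~\eqref{2.1} in the overlap region yields the connection matrix $V_0$, and the Stokes analysis at $\lambda=\infty$ produces the triangular factors $S_*$ and $S_{**}$, whose $\Delta_\pm$-entries are precisely the classical Gamma-quotients from the Kummer connection formula. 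An analogous analysis around $\lambda=x$ gives $V_x$, obtained from $V_0$ by $(\sigma,\theta_0)\mapsto(-\sigma,\theta_x)$, and the local monodromy $\e^{\pi\i\theta_xJ}$ then yields~\eqref{2.4} directly.

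The two identities in~\eqref{2.3} arise by continuing the local solution at $\lambda=0$ out to $\lambda=\infty$ along two paths chosen relative to the position of $\lambda=x$ and to the Stokes rays of the irregular singularity at $\infty$. The path through the lower half-plane meets the sector on which $Y_1 = YS_1^{-1}$ is natural and must first skirt $\lambda=x$, producing the left-hand side $S_1 M_x M_0 M_x^{-1} S_1^{-1}$; the path through the upper half-plane meets the sector of $Y_2 = YS_2$ and yields $S_2^{-1}M_0 S_2$. The diagonal twist $\e^{\pi\i(\sigma+\theta_\infty)J/4}$ encodes the difference between the local Kummer solution normalised by its Frobenius exponent at $0$ and the one normalised to match $Y$ on the corresponding sector at $\infty$. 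The hypotheses $\theta_0,\theta_x \notin \Z$ guarantee that the Frobenius solutions are non-logarithmic and that the Gamma-quotients in $V_0, V_x, S_*, S_{**}$ are well-defined.

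The principal obstacle is to justify the matching rigorously in the presence of the exponentially small perturbations $\e^{\pm x}x^{\pm\sigma-1}$ carried by $(A_0,A_x)$: uniform error estimates are required in the overlap annuli $|\lambda|\sim 1$ and $|\lambda - x|\sim 1$ showing that the corrections to the leading Kummer problem do not contaminate the connection coefficients. Once this is established, isomonodromy forces the $x\to\infty$ limit to kill every remaining $x^{-1}$-term and delivers the closed-form formulas for $C_0^1$, $C_0^2$, $C_x$. A secondary technical point, hinted at in the introduction, is that one must choose model functions that already capture the Frobenius structure at $\lambda=0$ and $\lambda=x$, so that the matching is governed by the standard confluent hypergeometric connection formulas rather than by resummation of the WKB asymptotic series produced by the naive diagonalisation.
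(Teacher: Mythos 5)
Your plan reproduces the paper's argument in Section~\ref{sc7} essentially step for step: the same perturbed linear system with isomonodromy used to let $x\to\infty$, the same Whittaker/Kummer local model \eqref{7.4}--\eqref{7.5} at $\lambda=0$ (and its analogue at $\lambda=x$) whose classical connection and Stokes data yield $V_0$, $V_x$, $S_*$, $S_{**}$, the gauge factor $c_0^{-J/2}$ and the branch twist $\e^{\pm\pi\i(\sigma+\theta_\infty)J/4}$ from the two continuation paths on either side of $\lambda=x$ producing the two relations in \eqref{2.3}. The only refinement the paper adds to overcome the obstacle you flag is that the matching is done in three nested, $x$-dependent overlap zones ($1<|\lambda|<\log|x|^{1/3}$, $\log|x|^{1/4}<|\lambda|<2|x|^{1/2}$, and $|\lambda|>|x|^{1/2}$; Lemmas~\ref{lem7.1}, \ref{lem7.2} and \ref{lem7.2a}) rather than in a single annulus $|\lambda|\sim 1$, since neither the outer WKB solution nor the unperturbed Whittaker solution is accurate across the whole intermediate range.
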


\begin{thm}\label{thm2.4}In the case where $\theta_0$ or $\theta_x$ is an integer, the formulas in Theorem~{\rm \ref{thm2.3}} are to be replaced as follows:
\begin{enumerate}\itemsep=0pt
\item[$(1)$] if $\theta_0\in \Z$, then
\begin{gather}\label{2.5}
S_1M_x M_0 M_x^{-1} S_1^{-1} = \big(\hat{C}^1_0\big)^{-1} {\rm e}^{2\pi {\rm i}\Delta_*} \hat{C}^1_0,\qquad S_2^{-1}M_0 S_2 = \big(\hat{C}^2_0\big)^{-1} {\rm e}^{2\pi {\rm i}\Delta_*} \hat{C}^2_0,
\end{gather}
where $\Delta_*$ denotes $\Delta_+$ if $\theta_0 \in \N\cup\{0\}$, and $\Delta_-$ if $-\theta_0 \in \N$, and $\hat{C}^1_0$ and $\hat{C}^2_0$ are given by
\begin{gather*}
\hat{C}^1_0 =\hat{V}_0 S_*^{-1} {\rm e}^{\pi {\rm i}(\sigma+\theta_{\infty})J/4} c_0^{-J/2}, \qquad \hat{C}^2_0 =\hat{V}_0 S_{**}^{-1}{\rm e}^{-\pi {\rm i}(\sigma +\theta_{\infty})J/4} c_0^{-J/2}
\end{gather*}
with $\hat{V}_0$ written in the form
\begin{gather*}
\hat{V}_0=
\begin{pmatrix}
\big(\hat{V}_0\big)_{11} & \big(\hat{V}_0\big)_{12}
\\
1 & 1
\end{pmatrix}\\
\hphantom{\hat{V}_0=}{}\times
 \operatorname{diag} \left[ \frac{{\rm e}^{\pi {\rm i}(\sigma+2\theta_0 -\theta_{\infty})/4} }
{\theta_0 ! \Gamma(1-(\sigma+2\theta_0 -\theta_{\infty})/4) } \,\,\,
 \frac{(-1)^{\theta_0}}{\theta_0 ! \Gamma(1+(\sigma-2\theta_0 -\theta_{\infty})/4) } \right],\\
\big(\hat{V}_0\big)_{11}=\psi(-(\sigma-2\theta_0 -\theta_{\infty})/4) -\psi(1) -\psi(1+\theta_0)-\pi {\rm i},\\
 \big(\hat{V}_0\big)_{12} =\psi(1+(\sigma+2\theta_0 -\theta_{\infty})/4) -\psi(1) -\psi(1+\theta_0)
\end{gather*}
if $\theta_0 \in \N\cup\{0\}$, and
\begin{gather*}
\hat{V}_0=
\begin{pmatrix}
1 & 1
\\
\big(\hat{V}_0\big)_{21} & \big(\hat{V}_0\big)_{22}
\end{pmatrix}\\
\hphantom{\hat{V}_0=}{}\times \operatorname{diag} \left[ \frac{-{\rm e}^{\pi {\rm i}(\sigma-2\theta_0 -\theta_{\infty})/4} }
{(-\theta_0)! \Gamma(-(\sigma-2\theta_0 -\theta_{\infty})/4) } \,\,\,
 \frac{(-1)^{\theta_0}} {(-\theta_0)! \Gamma((\sigma+2\theta_0 -\theta_{\infty})/4) } \right],\\
\big(\hat{V}_0\big)_{21} =\psi(-(\sigma+2\theta_0 -\theta_{\infty})/4) -\psi(1) -\psi(1-\theta_0)-\pi {\rm i},\\
 \big(\hat{V}_0\big)_{22} =\psi(1+(\sigma-2\theta_0 -\theta_{\infty})/4) -\psi(1) -\psi(1-\theta_0)
\end{gather*}
if $- \theta_0 \in \N$, $\psi(t)$ being the di-Gamma function $\psi(t) =\Gamma'(t)/\Gamma(t)$;

\item[$(2)$] if $\theta_x \in \Z$, then
\begin{gather*}
M_x= \hat{C}^{-1}_x {\rm e}^{2\pi {\rm i}\Delta_*} \hat{C}_x, \qquad \hat{C}_x =\hat{V}_x c_x^{-J/2},
\end{gather*}
where $\Delta_*$ is as in $(1)$, and $\hat{V}_x= \hat{V}_0\big|_{(\sigma,\theta_0) \mapsto (-\sigma,\theta_x)}$.
\end{enumerate}
\end{thm}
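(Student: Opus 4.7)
My plan is to derive Theorem~\ref{thm2.4} as a limiting case of Theorem~\ref{thm2.3} via the substitution $\theta_0 = n+\varepsilon$ (respectively $\theta_x = n+\varepsilon$) followed by the limit $\varepsilon \to 0$. Since the linear system \eqref{1.1} depends holomorphically on $\theta_0,\theta_x,\theta_\infty,c_0,c_x,\sigma$, the monodromy matrices $M_0,M_x,S_1,S_2$ are also holomorphic in these parameters; hence the left-hand sides of \eqref{2.3} and \eqref{2.4} extend continuously to $\theta_0\in\Z$ (respectively $\theta_x\in\Z$), and it suffices to compute the limits of the right-hand sides, after a suitable renormalization of the conjugating matrices. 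It is enough to treat case~(1) for $\theta_0 = n \in \N\cup\{0\}$: the subcase $-\theta_0\in\N$ is obtained by interchanging the roles of the first and second rows of $V_0$ (which explains why the normalized row $(1,1)$ of $\hat V_0$ appears in row two versus row one), and case~(2) reduces to case~(1) via the substitution $(\sigma,\theta_0)\mapsto(-\sigma,\theta_x)$ already built into Theorem~\ref{thm2.3}.

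The basic mechanism rests on the interplay between the degeneration
\begin{equation*}
{\rm e}^{\pi{\rm i}\theta_0 J} = (-1)^n\bigl(I + \pi{\rm i}\varepsilon J + O\bigl(\varepsilon^2\bigr)\bigr)
\end{equation*}
and the simple pole
\begin{equation*}
\Gamma(-n-\varepsilon) = \frac{(-1)^{n+1}}{n!\,\varepsilon}\bigl(1 + \varepsilon\,\psi(n+1) + O\bigl(\varepsilon^2\bigr)\bigr),
\end{equation*}
which makes the first row of $V_0$ singular of order $\varepsilon^{-1}$ at $\varepsilon = 0$. I factor $V_0 = \tilde V_\varepsilon D_\varepsilon$, where $D_\varepsilon$ is a diagonal matrix absorbing the $\varepsilon^{-1}$ pole together with the regular Gamma-factor prefactors that form the $\operatorname{diag}[\cdot,\cdot]$ normalization of $\hat V_0$, and $\tilde V_\varepsilon$ is holomorphic at $\varepsilon = 0$ with $\tilde V_0 = \hat V_0$. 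Substituting yields
\begin{equation*}
V_0^{-1}{\rm e}^{\pi{\rm i}\theta_0 J}V_0 = D_\varepsilon^{-1}\Bigl((-1)^n I + (-1)^n\pi{\rm i}\varepsilon\,\tilde V_\varepsilon^{-1}J\tilde V_\varepsilon + O\bigl(\varepsilon^2\bigr)\Bigr)D_\varepsilon.
\end{equation*}
The scalar leading term commutes with $D_\varepsilon$, while the $\varepsilon^{-1}$ entry of $D_\varepsilon$, acting by conjugation on the $\varepsilon$-order correction, yields a finite nilpotent matrix in the limit. A direct computation, using the Taylor expansion of the phase ${\rm e}^{\pi{\rm i}(\sigma-2\theta_0-\theta_\infty)/4}$ in $\varepsilon$ (which supplies the $-\pi{\rm i}$ shift in $(\hat V_0)_{11}$) and the ratio expansions of the form $\Gamma(-n-\varepsilon)/\Gamma(\alpha-\varepsilon)$ whose next-to-leading term produces di-Gamma differences $\psi(\alpha)-\psi(1+n)$, identifies the limit as $\hat V_0^{-1}\,{\rm e}^{2\pi{\rm i}\Delta_+}\hat V_0$, with the overall scalar $(-1)^n$ absorbed into the $(-1)^{\theta_0}$ entry of the diagonal normalization of $\hat V_0$. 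The matrices $\hat C_0^1$ and $\hat C_0^2$ are then defined by exactly the same multiplicative recipe as $C_0^1,C_0^2$ in Theorem~\ref{thm2.3}.

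The main obstacle will be the careful bookkeeping needed to verify that the $\varepsilon^{-1}$ cancellations inside $D_\varepsilon^{-1}\tilde V_\varepsilon^{-1}(\pi{\rm i}\varepsilon J)\tilde V_\varepsilon D_\varepsilon$ assemble precisely into the Jordan block $I + 2\pi{\rm i}\Delta_+ = {\rm e}^{2\pi{\rm i}\Delta_+}$, rather than a more general upper-triangular nilpotent perturbation, and that the collected di-Gamma contributions match exactly the stated entries $(\hat V_0)_{11}$ and $(\hat V_0)_{12}$. This is the standard mechanism by which a holomorphic family of diagonalizable matrices degenerates to a single Jordan block when two eigenvalues coalesce; once all phase factors and $\psi$-shifts are tracked, the statement for $\hat V_0$ (and hence for $\hat V_x$ via the $(\sigma,\theta_0)\mapsto(-\sigma,\theta_x)$ substitution) follows directly.
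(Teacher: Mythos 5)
Your strategy --- replace $\theta_0$ by $n+\varepsilon$, use Theorem~\ref{thm2.3}, and let $\varepsilon\to0$ --- is genuinely different from the paper's. The paper takes no limit: in Section~\ref{ssc7.3} the local solution \eqref{7.7} already carries the logarithm through the factor $\lambda^{\Delta_*}$, and $\hat{V}_0$ is read off \emph{directly} from the logarithmic expansions of $W_{\kappa,\nu}(z)$ at $2\nu\in\Z$ (the displayed formulas from Abramowitz--Stegun preceding Proposition~\ref{prop7.4}); the matching argument of Section~\ref{ssc7.4} is then verbatim the same as in the generic case. Your route would give a useful consistency check between the resonant and non-resonant formulas, but it must first establish that the solution of Theorem~\ref{thm2.1}, the canonical solutions at $\lambda=\infty$, and hence $M_0$, $M_x$, $S_1$, $S_2$ depend continuously on $\theta_0$ near the integer --- the paper treats $\theta_0,\theta_x,\theta_\infty$ as fixed throughout, so this is asserted rather than available.

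There are, moreover, concrete defects in the mechanism you describe. First, the factorization $V_0=\tilde{V}_\varepsilon D_\varepsilon$ with $D_\varepsilon$ diagonal on the right cannot yield $\tilde{V}_\varepsilon\to\hat{V}_0$: the pole of $\Gamma(-\theta_0)$ sits in the entire first \emph{row} of $V_0$, and the degeneration is that $\varepsilon\cdot(\hbox{row }1)$ tends to a multiple of row~$2$ (a reflection-formula computation), so a column rescaling can only produce a limit with a vanishing row, never the matrix $\hat{V}_0$ whose second row is nonzero. The workable bookkeeping is ${\rm e}^{\pi{\rm i}\theta_0J}=(-1)^n\big(\cos(\pi\varepsilon)I+{\rm i}\sin(\pi\varepsilon)J\big)$, whence $V_0^{-1}{\rm e}^{\pi{\rm i}\theta_0J}V_0=(-1)^n\big({\rm e}^{-\pi{\rm i}\varepsilon}I+2{\rm i}\sin(\pi\varepsilon)P_\varepsilon\big)$ with $P_\varepsilon=V_0^{-1}\operatorname{diag}[1\ 0]V_0$ a rank-one projection; the limit is $(-1)^n(I+2\pi{\rm i}N)$ with $N=\lim\varepsilon P_\varepsilon$ nilpotent, and the di-Gamma terms enter through $\lim_{\varepsilon\to0}\det V_0$, i.e.\ through the \emph{subleading} term of the singular row --- exactly the computation you defer as the ``main obstacle''. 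Second, the overall scalar $(-1)^n$ cannot be ``absorbed into the $(-1)^{\theta_0}$ entry of the diagonal normalization of $\hat{V}_0$'': conjugation is invariant under rescaling the conjugating matrix, and altering a diagonal entry of $\hat{V}_0$ only rescales the off-diagonal entries of $\hat{V}_0^{-1}\Delta_*\hat{V}_0$. The limit genuinely produces the scalar ${\rm e}^{\pi{\rm i}\theta_0}$ in front (as it must, since $\operatorname{tr}M_0=2\cos\pi\theta_0$), so you should carry \eqref{2.5} with ${\rm e}^{\pi{\rm i}\theta_0J}{\rm e}^{2\pi{\rm i}\Delta_*}$ in the middle rather than try to hide the sign. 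Third, for $\theta_0=0$ both rows of $V_0$ are singular, so the ``one singular row'' analysis does not apply and this subcase needs separate treatment. Finally, note that the conjugation identity determines $\hat{V}_0$ only up to left multiplication by the commutant of ${\rm e}^{2\pi{\rm i}\Delta_*}$; your limit can therefore only be matched against the combination $(\hat{V}_0)_{11}-(\hat{V}_0)_{12}$, not against the individual di-Gamma entries, which is sufficient for \eqref{2.5} but is not the stronger statement your last paragraph promises to verify.
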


\begin{rem}\label{rem2.6} Combining $M_xM_0= S_1^{-1}{\rm e}^{-\pi {\rm i} \theta_{\infty}J} S_2^{-1}$ (cf.~\eqref{2.2}) with the first relation in \eqref{2.3} we have
\begin{gather*}
S_2^{-1} M_x^{-1} S_1^{-1} =S_2^{-1} C_x^{-1} {\rm e}^{-\pi {\rm i}\theta_x J}C_xS_1^{-1}={\rm e}^{\pi {\rm i} \theta_{\infty}J} \big(C_0^1\big)^{-1} {\rm e}^{\pi {\rm i}\theta_0 J} C_0^1,
\end{gather*}
if $\theta_0, \theta_x \not\in \mathbb{Z}$. The (2,1)- and (1,2)-entries of this yield $s_1$ and $s_2$, respectively, which reveal the Stokes multipliers $S_1$ and $S_2$, and then $(M_0, M_x, S_1,S_2)$ may be written in terms of $\theta_0$, $\theta_x$, $\theta_{\infty}$, $\sigma$, $c_0$, $c_x$ as in the corollary below (note that $\operatorname{tr} M_0=2\cos \pi\theta_0$, $\operatorname{tr} M_x=2\cos \pi\theta_x$, $\det M_0=\det M_x=1$). In the case where $\theta_0$ or $\theta_x\in \mathbb{Z}$ as well, these matrices are obtained by the same argument. Such $(M_0, M_x)$ is a point on the manifold of the monodromy data (cf.\ \cite[Proposition~2.2, Remark~2.3]{Andreev-Kitaev}).
\end{rem}

\begin{cor}\label{cor2.4a}If $\theta_0$, $\theta_{x} \not\in \Z$, then{\allowdisplaybreaks
\begin{gather*}
 (M_0)_{11} = {\rm e}^{\pi {\rm i}(\sigma-\theta_{\infty})/2} \biggl( 1
 - \frac{2\pi {\rm i} c_0^{-1} }{\Gamma(1-(\sigma+2\theta_0 -\theta_{\infty})/4)\Gamma(-(\sigma-2\theta_0 -\theta_{\infty})/4)}\\
 \hphantom{(M_0)_{11} = {\rm e}^{\pi {\rm i}(\sigma-\theta_{\infty})/2} \biggl(}{} \times \frac{2\pi {\rm i} c_x}
{\Gamma(1-(\sigma+2\theta_x +\theta_{\infty})/4)\Gamma(-(\sigma-2\theta_x +\theta_{\infty})/4) } \biggr),\\
 (M_0)_{21} = \frac{2\pi {\rm i} {\rm e}^{-\pi {\rm i}\theta_{\infty} } c_0^{-1} }{\Gamma(1-(\sigma+2\theta_0 -\theta_{\infty})/4)\Gamma(-(\sigma-2\theta_0 -\theta_{\infty})/4)},\\
(M_x)_{11} = {\rm e}^{-\pi {\rm i}(\sigma+\theta_{\infty})/2},\\
 (M_x)_{12} = \frac{2\pi {\rm i} c_x }{\Gamma(1-(\sigma+2\theta_x +\theta_{\infty})/4)\Gamma(-(\sigma-2\theta_x +\theta_{\infty})/4)},\\
 s_1 = - \frac{2\pi {\rm i} {\rm e}^{\pi {\rm i}(\sigma+\theta_{\infty}) /2} c_0^{-1} }{\Gamma(1-(\sigma+2\theta_0 -\theta_{\infty})/4)\Gamma(-(\sigma-2\theta_0 -\theta_{\infty})/4)}\\
\hphantom{s_1 =}{} - \frac{2\pi {\rm i} c_x^{-1} }{\Gamma(1+(\sigma-2\theta_x +\theta_{\infty})/4)\Gamma((\sigma+2\theta_x +\theta_{\infty})/4)},\\
 s_2 = - \frac{2\pi {\rm i} {\rm e}^{\pi {\rm i}\theta_{\infty} } c_0 }{\Gamma(1+(\sigma-2\theta_0 -\theta_{\infty})/4)\Gamma((\sigma+2\theta_0 -\theta_{\infty})/4)}\\
\hphantom{s_2 =}{} - \frac{2\pi {\rm i} {\rm e}^{\pi {\rm i}(\sigma+\theta_{\infty})/2} c_x } {\Gamma(1-(\sigma+2\theta_x +\theta_{\infty})/4)\Gamma(-(\sigma-2\theta_x +\theta_{\infty})/4)}.
\end{gather*}}
\end{cor}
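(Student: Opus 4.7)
The corollary is obtained by unpacking the identities of Theorem~\ref{thm2.3} together with the derivation outlined in Remark~\ref{rem2.6}, and the calculation reduces throughout to algebraic manipulation of Gamma factors via the reflection formula $\Gamma(z)\Gamma(1-z) = \pi/\sin\pi z$. The key preliminary is the identity $\det V_0 = 1/\theta_0$ (and, by the substitution $(\sigma,\theta_0)\mapsto(-\sigma,\theta_x)$, $\det V_x = 1/\theta_x$): expanding $\det V_0 = V_{11}V_{22}-V_{12}V_{21}$, converting each denominator $\Gamma\cdot\Gamma$ factor into a sine via reflection, and invoking the trigonometric identity ${\rm e}^{{\rm i}a}\sin b - {\rm e}^{{\rm i}b}\sin a = \sin(b-a)$ with $(a,b) = (\pi(\sigma-2\theta_0-\theta_\infty)/4,\pi(\sigma+2\theta_0-\theta_\infty)/4)$ collapses the expression to the stated constant. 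Once this is in hand, the standard formula
\begin{gather*}
V^{-1}\operatorname{diag}(a,b)V = \frac{1}{\det V}\begin{pmatrix} aV_{11}V_{22} - bV_{12}V_{21} & (a-b)V_{12}V_{22}\\ -(a-b)V_{11}V_{21} & -aV_{12}V_{21} + bV_{11}V_{22}\end{pmatrix}
\end{gather*}
turns every conjugation by $V_0$ or $V_x$ into a compact two-Gamma expression.

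For $M_x$, I would apply this formula directly to $M_x = c_x^{J/2}V_x^{-1}{\rm e}^{\pi{\rm i}\theta_xJ}V_xc_x^{-J/2}$. A second application of the identity above, exploiting ${\rm e}^{\pm\pi{\rm i}\theta_x}{\rm e}^{{\rm i}a'} = {\rm e}^{{\rm i}b'}$ in the corresponding twisted variables, yields $(M_x)_{11} = {\rm e}^{-\pi{\rm i}(\sigma+\theta_\infty)/2}$; the $(1,2)$-entry picks up a factor $2{\rm i}\sin\pi\theta_x \cdot c_x$, which combines with $\Gamma(-\theta_x)\Gamma(\theta_x) = -\pi/(\theta_x\sin\pi\theta_x)$ to produce the stated entry. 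The entries of $M_0$ come from the second identity of~\eqref{2.3} rewritten as $M_0 = S_2(C_0^2)^{-1}{\rm e}^{\pi{\rm i}\theta_0J}C_0^2S_2^{-1}$; since $S_2^{\pm 1}$ and the inner $S_{**}^{\pm 1}$ are upper-triangular with unit diagonal, they fix the $(2,1)$-entry under conjugation, leaving only the diagonal rescaling by $c_0^{-J/2}{\rm e}^{-\pi{\rm i}(\sigma+\theta_\infty)J/4}$ to contribute the factor $c_0^{-1}{\rm e}^{-\pi{\rm i}(\sigma+\theta_\infty)/2}$ that converts $(V_0^{-1}{\rm e}^{\pi{\rm i}\theta_0J}V_0)_{21}$ into~$(M_0)_{21}$. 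The $(1,1)$-entry then follows from $(M_0)_{11} = ((C_0^2)^{-1}{\rm e}^{\pi{\rm i}\theta_0J}C_0^2)_{11} + s_2(M_0)_{21}$ once $s_2$ is determined.

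For the Stokes multipliers I follow Remark~\ref{rem2.6} and use $M_x^{-1} = S_2RS_1$ with $R = {\rm e}^{\pi{\rm i}\theta_\infty J}(C_0^1)^{-1}{\rm e}^{\pi{\rm i}\theta_0J}C_0^1$. Expanding the product using $S_1 = I+s_1\Delta_-$, $S_2 = I+s_2\Delta_+$ gives $(M_x^{-1})_{22} = R_{22}$, $(M_x^{-1})_{12} = R_{12}+s_2R_{22}$, $(M_x^{-1})_{21} = R_{21}+s_1R_{22}$, from which $s_1$ and $s_2$ are read off linearly. The entries of $R$ come from the same conjugation scheme as before, except that $S_*$ is lower-triangular, so its action on the $(2,1)$-entry contributes a quadratic polynomial in $s_* = -2\pi{\rm i}/\Gamma_1$ of the form $W_{21}+s_*(W_{11}-W_{22})-s_*^2W_{12}$, where $W = V_0^{-1}{\rm e}^{\pi{\rm i}\theta_0J}V_0$. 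After substituting the values of $W_{ij}$ and applying the sum-to-product identity $\cos(\pi(\sigma-\theta_\infty)/2) - \cos\pi\theta_0 = -2\sin(\pi(\sigma+2\theta_0-\theta_\infty)/4)\sin(\pi(\sigma-2\theta_0-\theta_\infty)/4)$, the quadratic collapses to a single Gamma term; dividing by $R_{22} = {\rm e}^{-\pi{\rm i}(\sigma+\theta_\infty)/2}$ and subtracting from $(M_x^{-1})_{21}$ (which is $-(M_x)_{21}$, computed by the same method as $M_x$) yields the two-term formula for $s_1$, and $s_2$ follows by the symmetric calculation on the $(1,2)$-entry.

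The principal obstacle is bookkeeping: each expression is a product of Gamma functions and exponentials of $\pi{\rm i}$-multiples of the parameters, and one must choose the correct reflection pairings and trigonometric identities at each step while tracking signs arising from $\sin(-z)=-\sin z$. The identity $\det V_0 = 1/\theta_0$ is the keystone; without it the expressions would not collapse to the compact forms displayed in the corollary.
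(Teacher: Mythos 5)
Your proposal is correct and follows exactly the route the paper itself indicates in Remark~\ref{rem2.6} (which is all the proof the paper supplies): extract $M_x$ from \eqref{2.4}, $M_0$ from the second relation in \eqref{2.3}, and $s_1$, $s_2$ from the $(2,1)$- and $(1,2)$-entries of $S_2^{-1}M_x^{-1}S_1^{-1}={\rm e}^{\pi{\rm i}\theta_\infty J}(C_0^1)^{-1}{\rm e}^{\pi{\rm i}\theta_0 J}C_0^1$. The computational details you supply — $\det V_0=1/\theta_0$ via ${\rm e}^{{\rm i}a}\sin b-{\rm e}^{{\rm i}b}\sin a=\sin(b-a)$, the preservation of the $(2,1)$-entry under upper-triangular unipotent conjugation, and the collapse of the quadratic in $s_*$ via $-4\sin\pi a\sin\pi b=2(\cos\pi(a+b)-\cos\pi\theta_0)$ — all check out and reproduce the stated entries.
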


\begin{rem}\label{2.6a}The results above combined with the monodromy data for solutions of (V) around $x=0$ \cite{Andreev-Kitaev, Jimbo, S3} yield the parametric connection formula between $x=0$ and $x={\rm i}\infty$ (cf.~Remark~\ref{rem2.9}), which corresponds to that for $\tau_{\mathrm{V}}(x)$ of \cite[Conjecture~C]{L}.
\end{rem}

Suppose that, for every $k\in \Z\setminus \{0\}$, $\big(A^{(k)}_0\big(\mathbf{c}^{(k)}, \sigma^{(k)}, x\big), A^{(k)}_x\big(\mathbf{c}^{(k)},\sigma^{(k)}, x\big)\big)$ with $\big(\mathbf{c}^{(k)},\sigma^{(k)} \big) =\big(c_0^{(k)}, c_x^{(k)}, \sigma^{(k)}\big)\in (\C \setminus \{0\})^2 \times \C$ is the analytic continuation of $(A_0(\mathbf{c},\sigma, x), A_x(\mathbf{c},\sigma, x))$ to the domain $\Sigma_{k} \big(\sigma^{(k)}, \varepsilon, x_{\infty}, \delta\big)$ (cf.\ Remark~\ref{rem2.4}). For every $j\in \Z$, let $l_0^{(j)}$ and $l_x^{(j)}$ be the loops in the $\lambda$-plane defined for $(2j-1)\pi < \arg x < (2j+1)\pi$ in the same way as in Fig.~\ref{loops}, and let $\big(M_0^{(k)}, M_x^{(k)}\big)$ with $k=2j$ or $2j-1$ correspond to $\big(A^{(k)}_0\big(\mathbf{c}^{(k)}, \sigma^{(k)}, x\big), A^{(k)}_x\big(\mathbf{c}^{(k)},\sigma^{(k)}, x\big)\big)$ for $x \in \Sigma_{k} \big(\sigma^{(k)}, \varepsilon, x_{\infty}, \delta\big)$, where $M_0^{(k)}$ and $M_x^{(k)}$ are the monodromy matrices given by the analytic continuation of $Y(x, \lambda)$ along $l_0^{(j)}$ and $l_x^{(j)}$, respectively (note that $l_0^{(0)}=l_0$, $l_x^{(0)}=l_x$, $M_0^{(0)}=M_0$, $M_x^{(0)}=M_x$). Then by definition, for every $j\in \Z$,
\begin{gather*}
 \big(M_0^{(2j)}, M_x^{(2j)}\big)= \big(M_0^{(2j)}, M_x^{(2j)}\big)\big(\mathbf{c}^{(2j)},\sigma^{(2j)}\big) =(M_0, M_x) |_{(\mathbf{c},\sigma)\mapsto (\mathbf{c}^{(2j)},\sigma^{(2j)}) },\\
 \big(M_0^{(2j-1)}, M_x^{(2j-1)}\big)= \big(M_0^{(2j-1)}, M_x^{(2j-1)}\big) \big(\mathbf{c}^{(2j-1)},\sigma^{(2j-1)}\big)\\
\hphantom{\big(M_0^{(2j-1)}, M_x^{(2j-1)}\big)}{} =\big(M_0^{(-1)}, M_x^{(-1)}\big)|_{(\mathbf{c}^{(-1)},\sigma^{(-1)})\mapsto(\mathbf{c}^{(2j-1)},\sigma^{(2j-1)}) }.
\end{gather*}

\begin{rem}\label{rem2.7}For $\big(A^{(-1)}_0\big(\mathbf{c}^{(-1)}, \sigma^{(-1)}, x\big),A^{(-1)}_x\big(\mathbf{c}^{(-1)},\sigma^{(-1)}, x\big)\big)$ in $\Sigma_{-1}\big(\sigma^{(-1)},\varepsilon, x_{\infty}, \delta\big)$
the corresponding matrices $M_0^{(-1)}$ and $M_x^{(-1)}$ are defined along the loops $l_0$ and $l_x$, respectively, as in Fig.~\ref{loops} for $-\pi <\arg x <\pi$. If $\theta_0$, $\theta_x\not \in \mathbb{Z}$,
\begin{gather*}
 M_0^{(-1)}=C^{-1}_0 {\rm e}^{\pi {\rm i}\theta_0 J} C_0,\\
S_1 M_x^{(-1)} S_1^{-1}= \big(C^1_x\big)^{-1} {\rm e}^{\pi {\rm i}\theta_x J} C^1_x, \qquad S_2 \big(M_0^{(-1)}\big)^{-1} M_x^{(-1)} M_0^{(-1)} S_2^{-1}= \big(C^2_x\big)^{-1}{\rm e}^{\pi {\rm i}\theta_x J} C^2_x,
\end{gather*}
where
\begin{gather*}
C_0=\tilde{V}_0 {\rm e}^{-\pi {\rm i}(\sigma+\theta_{\infty})J/4} \big(c_0^{(-1)}\big)^{-J/2}, \qquad C^1_x =\tilde{V}_x\tilde{S}^{-1}_* \big(c_x^{(-1)}\big)^{-J/2},\\
 C^2_x =\tilde{V}_x\tilde{S}^{-1}_{**} {\rm e}^{\pi {\rm i}(\sigma-\theta_{\infty})J/2}\big(c_x^{(-1)}\big)^{-J/2}
\end{gather*}
with
\begin{gather*}
 \tilde{V}_0 =V_0 |_{\sigma\mapsto \sigma^{(-1)}}, \qquad \tilde{V}_x =V_x |_{\sigma\mapsto \sigma^{(-1)}},\\
 \tilde{S}_* = S_{*}\big|_{(\sigma,\theta_0)\mapsto (-\sigma^{(-1)},\theta_x)},\qquad \tilde{S}_{**} = S_{**}\big|_{(\sigma,\theta_0)\mapsto (-\sigma^{(-1)}, \theta_x)}.
\end{gather*}
If $\theta_0$ or $\theta_x \in \Z$, then $\tilde{V}_0$ or $\tilde{V}_x$ is to be replaced by $\hat{V}_0|_{\sigma\mapsto \sigma^{(-1)}}$ or $\hat{V}_x|_{\sigma\mapsto \sigma^{(-1)}}$, respectively, as in Theorem~\ref{thm2.4} (cf.\ Section~\ref{ssc7.5}). The isomonodromy property implies $(M_0, M_x)(\mathbf{c},\sigma)= \big(M_0^{(-1)}, M_x^{(-1)}\big)\big(\mathbf{c}^{(-1)},\sigma^{(-1)}\big)$, which gives the relation between $(\mathbf{c}^{(-1)},\sigma^{(-1)})$ and $(\mathbf{c},\sigma)$.
\end{rem}

The following proposition gives the connection formulas between $\big(\mathbf{c}^{(k\pm 2)}, \sigma ^{(k\pm 2)}\big)$ and \linebreak $\big(\mathbf{c}^{(k)}, \sigma^{(k)}\big)$. This is obtained by deformation of the loops $l_0^{(j)}$, $l_x^{(j)}$ or by action of the braid~$\beta_1^2$ (see \cite[Section~1.2.3]{DM}, \cite[p.~331]{G-Critical}).

\begin{prop}For every $k\in \Z$
\begin{gather*}
 M_0^{(k+2)}= M_x^{(k)} M_0^{(k)} \big(M_x^{(k)}\big)^{-1}, \qquad M_x^{(k+2)}= M_x^{(k)} M_0^{(k)} M_x^{(k)} \big(M_0^{(k)}\big)^{-1}\big(M_x^{(k)}\big)^{-1},\\
M_0^{(k-2)}= \big(M_0^{(k)}\big)^{-1} \big(M_x^{(k)}\big)^{-1} M_0^{(k)} M_x^{(k)} M_0^{(k)},\qquad M_x^{(k-2)}= \big(M_0^{(k)}\big)^{-1} M_x^{(k)} M_0^{(k)}.
\end{gather*}
\end{prop}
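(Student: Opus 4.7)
The plan is to treat this proposition as a purely topological statement and exploit the braid-group action on loops in the $\lambda$-plane. The shift $k\mapsto k+2$ corresponds to analytically continuing $x$ once counterclockwise around the origin; during this continuation the puncture $\lambda=x$ traces a loop around $\lambda=0$ in the fiber, so the reference loops deform according to the Hurwitz action of the full twist $\beta_1^2$ in the pure braid group on two strands.

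Concretely, I would fix a base point in the $\lambda$-plane and the standard generators $l_0,l_x,l_\infty$ of $\pi_1\big(P^1(\C)\setminus\{0,x,\infty\}\big)$ drawn as in Figure~\ref{loops}, satisfying $l_0 l_x l_\infty = \mathrm{id}$. After $\arg x$ is increased by $2\pi$, the loops $l_0^{(j+1)},l_x^{(j+1)}$ drawn in the standard way around the displaced punctures are homotopic, in the original $\lambda$-plane, to
\begin{gather*}
l_0^{(j+1)} = l_x\, l_0\, l_x^{-1}, \qquad
l_x^{(j+1)} = l_x\, l_0\, l_x\, l_0^{-1}\, l_x^{-1}.
\end{gather*}
Under the representation convention $M(\gamma_1\gamma_2)=M(\gamma_1)M(\gamma_2)$ consistent with $M_\infty M_x M_0 = I$ in~\eqref{2.2}, these loop identities translate directly into the asserted formulas for $M_0^{(k+2)}$ and $M_x^{(k+2)}$. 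The $k-2$ formulas then arise from the inverse braid $\beta_1^{-2}$, or equivalently by algebraic inversion of the relations above. As a sanity check, $M_x^{(k+2)} M_0^{(k+2)} = M_x M_0 M_x M_0^{-1} M_x^{-1}\cdot M_x M_0 M_x^{-1} = M_x M_0$, reflecting the invariance of $M_\infty = S_2\,{\rm e}^{\pi {\rm i}\theta_\infty J} S_1$ under motion of $x$: the Stokes data at $\lambda=\infty$ do not see the braiding of the finite punctures.

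The main obstacle is pinning down the braid action precisely — identifying the correct words in $l_0,l_x$ with the right orientation and base-point conventions so that $\beta_1^2$ (and not $\beta_1^{-2}$ or a conjugate) acts. This is done by an explicit homotopy argument: as $\lambda=x$ wraps once around $\lambda=0$, one drags the reference arc from the base point to the moving puncture along with it and then deforms everything back in the complement of $\lambda=0$, whereupon the transported loop $l_x^{(j+1)}$ acquires a conjugation by $l_x$ together with an extra wrapping around $\lambda=0$ contributing the copy of $l_0$ inside the commutator. Alternatively one may invoke the standard description of the pure braid action referenced at the end of the proposition statement, and simply quote the resulting words.
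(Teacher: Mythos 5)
Your approach---deforming the loops $l_0^{(j)}$, $l_x^{(j)}$ under one full turn of $x$ around the origin, i.e., the action of the pure braid $\beta_1^2$ on $\pi_1\big(P^1(\C)\setminus\{0,x,\infty\}\big)$---is precisely the paper's proof, which consists of exactly this remark together with citations to Dubrovin--Mazzocco and Guzzetti. Your loop words yield the stated conjugation formulas (modulo the homomorphism versus anti-homomorphism convention forced by $M_\infty M_x M_0=I$, which you correctly single out as the one point requiring care), and the check $M_x^{(k+2)}M_0^{(k+2)}=M_x^{(k)}M_0^{(k)}$, reflecting the invariance of $M_\infty$, is a sound confirmation.
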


\begin{rem}\label{rem2.8} For the solution $(A_0(\mathbf{c}, x), A_x(\mathbf{c}, x))$ with $\sigma_0 =- 2\theta_x-\theta_{\infty}$ in Theorem~\ref{thm2.2}, the corresponding monodromy matrices are given by $(M_0,M_x)|_{\sigma=\sigma_0=-2\theta_x-\theta_{\infty}}$. For the solution $(A_0(c_0,x), A_x(c_0,x))$ we have $\big(M_0, {\rm e}^{\pi {\rm i}\theta_x J}\big)|_{({c_x},\sigma)= (0,\sigma_0)}$.
\end{rem}

\subsection{Fifth Painlev\'e transcendents, zeros and poles}\label{ssc2.3}

From Theorem \ref{thm2.1} and \eqref{1.2} we may derive a solution of (V) written in the form
\begin{gather}\label{2.6}
y= \frac{g_+(\mathbf{c}, \sigma, x)(f_0(\mathbf{c}, \sigma, x)+ \theta_0/2)}{f_+(\mathbf{c}, \sigma, x)(g_0(\mathbf{c}, \sigma, x)+ \theta_x/2)}
\end{gather}
parametrised by $(c,\sigma)=(c_x/c_0,\sigma)$ or $(c',\sigma)=(c_0/c_x, \sigma)$. This is meromorphic in $\Sigma(\sigma,\varepsilon, x_{\infty}, \delta)$ and is expanded into a convergent series in a subdomain of $\Sigma(\sigma,\varepsilon, x_{\infty}, \delta)$. Let $\delta$ be a given positive number such that $\delta <\pi/2$.

\begin{thm}\label{thm2.6} Let $B \subset \C\setminus\{0\}$ and $B_* \subset \C$ be given domains. Suppose that $\mathrm{dist}(\{ -2\theta_0+\theta_{\infty},2\theta_x -\theta_{\infty} \}, B_*) >0$. Then $(\mathrm{V})$ admits a two-parameter family of solutions $\{ y(c, \sigma, x);\, (c,\sigma)\in B\times B_*\}$ such that $y(c,\sigma,x)$ is holomorphic in $(c,\sigma,x)\in B\times D(B_*,\varepsilon', x'_{\infty},\delta)$ and expanded into the convergent series in $\big({\rm e}^x x^{\sigma-1}, {\rm e}^{-x} x^{-\sigma-1}\big)$
\begin{gather*}
y(c, \sigma, x)= c\big(1+ \big[x^{-1}\big]_*\big) {\rm e}^x x^{\sigma} \\
\hphantom{y(c, \sigma, x)= }{} \times \left( 1+ \sum_{n=1}^{\infty} \big(a_n+\big[x^{-1}\big]_*\big)\big({\rm e}^x x^{\sigma-1}\big)^n
+ \sum_{n=1}^{\infty} \big(b_n +\big[x^{-1}\big]_*\big)\big({\rm e}^{-x} x^{-\sigma-1}\big)^n \right)
\end{gather*}
with $a_n$, $b_n \in \Q_0:=\Q\big[\theta_0, \theta_x,\theta_{\infty}, c, c^{-1}, \sigma, (\sigma+2\theta_0 -\theta_{\infty})^{-1}, (-\sigma+ 2\theta_x -\theta_{\infty})^{-1}\big]$, in particular,
\begin{gather*}
a_1= c(-\sigma + \theta_0+\theta_x )/2, \qquad b_1= c^{-1}(\sigma +\theta_0+\theta_x )/2.
\end{gather*}
Here $\varepsilon'=\varepsilon'(B, B_*, \delta)$ $($respectively, $x'_{\infty}=x'_{\infty}(B, B_*, \delta))$ is a sufficiently small $($respectively, large$)$ positive number depending on $(B, B_*,\delta)$, and each $\big[x^{-1}\big]_*$ is represented by an asymptotic series with coefficients in $\Q_0$ valid in $|\arg x -\pi/2|<\pi/2 -\delta$, $|x|>x'_{\infty}$.
\end{thm}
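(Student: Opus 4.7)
The plan is to substitute the explicit convergent expansions of Theorem~\ref{thm2.1} into the quotient formula \eqref{2.6} and read off the claimed double series in $X:={\rm e}^x x^{\sigma-1}$ and $Y:={\rm e}^{-x} x^{-\sigma-1}$. Note first that \eqref{2.6} is invariant under the simultaneous rescaling $(c_0,c_x)\mapsto(\lambda c_0,\lambda c_x)$: the products $\gamma^0_\pm\gamma^x_\mp$ appearing in the series for $f_0,g_0$ are invariant, while $f_+$ and $g_+$ both carry one overall factor of $\lambda$, so the numerator and denominator of \eqref{2.6} each scale by $\lambda$. Consequently the right-hand side of \eqref{2.6} depends only on the ratio $c=c_x/c_0$ and on $\sigma$, and we may fix $c_0=1$, $c_x=c\in B$ throughout.

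After stripping the dominant factors via $g_+={\rm e}^x x^{(\sigma-\theta_\infty)/2}\cdot\tilde g_+$ and $f_+=x^{-(\sigma+\theta_\infty)/2}\cdot\tilde f_+$, Theorem~\ref{thm2.1} displays $\tilde f_+,\tilde g_+,\,f_0+\theta_0/2$ and $g_0+\theta_x/2$ as convergent double series in $(X,Y)$ whose coefficients admit asymptotic expansions in $x^{-1}$. Their $(X,Y)=(0,0)$ constant terms are, to leading order in $x^{-1}$,
\begin{gather*}
\tilde f_+\to\gamma^0_+,\qquad \tilde g_+\to\gamma^x_+,\qquad f_0+\theta_0/2\to (\sigma+2\theta_0-\theta_\infty)/4,\qquad g_0+\theta_x/2\to (-\sigma+2\theta_x-\theta_\infty)/4.
\end{gather*}
The hypothesis $\dist(\{-2\theta_0+\theta_\infty,\,2\theta_x-\theta_\infty\},B_*)>0$ together with $c\in\C\setminus\{0\}$ forces $\gamma^0_+=c_0(\sigma+2\theta_0-\theta_\infty)/4$ and $(-\sigma+2\theta_x-\theta_\infty)/4$ to be uniformly bounded below in modulus on $B\times B_*$. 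Hence, after shrinking $\varepsilon$ to some $\varepsilon'\le\varepsilon$ and enlarging $x_\infty$ to some $x'_\infty$, the denominator $f_+(g_0+\theta_x/2)$ of \eqref{2.6} is nonvanishing throughout $D(B_*,\varepsilon',x'_\infty,\delta)$.

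Its reciprocal is then itself a convergent power series in $(X,Y)$ obtained by geometric-series expansion around the nonzero constant-in-$(X,Y)$ leading value; multiplying by the numerator yields $y$ in the form stated. Using $\gamma^x_+/\gamma^0_+=c(-\sigma+2\theta_x-\theta_\infty)/(\sigma+2\theta_0-\theta_\infty)$, the leading contribution collapses to $c\,{\rm e}^x x^\sigma$, and the ring $\Q_0$ exactly records the new denominators $(\sigma+2\theta_0-\theta_\infty)^{-1}$ and $(-\sigma+2\theta_x-\theta_\infty)^{-1}$ introduced by the division. The explicit values $a_1=c(-\sigma+\theta_0+\theta_x)/2$ and $b_1=c^{-1}(\sigma+\theta_0+\theta_x)/2$ would be extracted by collecting the linear-in-$X$ and linear-in-$Y$ contributions at leading order in $x^{-1}$ in each of the four factors, simplifying through $f_0+g_0\equiv-\theta_\infty/2$ and Remark~\ref{rem2.0}.

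The principal technical obstacle is the uniform control required to invert the denominator as a convergent series in $(X,Y)$: both $\tilde f_+$ and $g_0+\theta_x/2$ are already infinite series in $(X,Y)$ whose coefficients are asymptotic expansions in $x^{-1}$, and to ensure that the resulting geometric-type inversion converges on the whole reduced domain one needs majorant estimates on these coefficients of the kind underlying Theorem~\ref{thm2.1} (cf.\ Remark~\ref{rem2.1} and Proposition~\ref{prop5.2}). Once such estimates are in place, choosing $\varepsilon'$ small confines the denominator to a disc about its nonzero leading value on which $1/z$ is analytic, and absolute convergence of the full double series for $y$ on $D(B_*,\varepsilon',x'_\infty,\delta)$ follows, together with the claimed asymptotic representation of its coefficients in the sector $|\arg x-\pi/2|<\pi/2-\delta$, $|x|>x'_\infty$.
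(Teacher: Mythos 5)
Your proposal follows essentially the same route as the paper's Section~\ref{ssc6.1}: substitute the expansions of Theorem~\ref{thm2.1} into the quotient~\eqref{2.6}, use the hypothesis on $B_*$ to keep the constant terms $\gamma_+^0$ and $\gamma_+^x/c_x$ of the two denominator factors bounded away from zero, and invert by geometric series inside the class of convergent series in $\big({\rm e}^xx^{\sigma-1},{\rm e}^{-x}x^{-\sigma-1}\big)$ with asymptotic coefficients; the paper carries out this inversion by factoring each reciprocal as $(1-\chi)^{-1}\big(1-x^{-1}(\cdots)(1-\chi)^{-1}\big)^{-1}$ and appealing to Examples~\ref{exa3.2},~\ref{exa3.3} and Proposition~\ref{prop3.2}, which is exactly the majorant control you flag as the technical obstacle. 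The one soft spot is your scaling argument for dependence on $c=c_x/c_0$ alone: you read homogeneity off the displayed leading coefficients of Theorem~\ref{thm2.1}, but the brackets $\big[x^{-k}\big]$ there only promise coefficients in $\Q_*=\Q\big[\ldots,c_0^{\pm1},c_x^{\pm1},\ldots\big]$, so a priori they could depend on $c_0$ and $c_x$ separately; the paper closes this by rescaling the unknowns in the integral system~\eqref{5.1} and checking that the rescaled system~\eqref{6.2} involves only the invariant combinations $\gamma_-^0\gamma_+^x$, $\gamma_+^0\gamma_-^x$, $\gamma_+^x/\gamma_+^0$, $\gamma_-^x/\gamma_-^0$, $\gamma_+^0\gamma_-^0$, $\gamma_+^x\gamma_-^x$ (Proposition~\ref{prop6.1}). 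Your underlying idea --- that $(c_0,c_x)\mapsto(\lambda c_0,\lambda c_x)$ is a constant diagonal conjugation symmetry --- is sound, but it must be run through the construction (or through Proposition~\ref{prop5.4}) rather than read off the statement of Theorem~\ref{thm2.1}.
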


\begin{rem}\label{rem2.9} For $y(c,\sigma, x)$ the corresponding monodromy matrices are obtained by putting $(c_0,c_x)=(1, c)$ in $M_0$, $M_x$ of Theorem~\ref{thm2.3}.
\end{rem}

\begin{rem}\label{rem2.91} The solution $y(c,\sigma,x)$ corresponds to $y_{\mathrm{V},0,*}(\mathbf{c},x)$ of \cite[Theorem~2.10 and Section~2.3]{S1} (see also~\cite{T}) and converges in the domain larger than the previously known one (note that (V)$|_{x\mapsto {\rm i}x}$ is treated in~\cite{S1}).
\end{rem}

\begin{thm}\label{thm2.7}Let $\tilde{B}\subset \C$ be a given domain.
\begin{enumerate}\itemsep=0pt
\item[$(1)$] Suppose that $\theta_x(\theta_0-\theta_x-\theta_{\infty})\not=0$, and set $\sigma_0=-2\theta_x-\theta_{\infty}$. Then $(\mathrm{V})$ admits a one-parameter family of solutions $\{y_+(c,x);\,c\in \tilde{B} \}$ such that $y_+(c,x)$ is holomorphic in $(c,x)\in \tilde{B} \times \Sigma_*(\varepsilon', x'_{\infty}, \delta)$ and is expanded into a convergent series in ${\rm e}^x x^{\sigma_0 -1}$ of the form
\begin{gather*}
y_+(c, x)= \frac 12 (\theta_0-\theta_x-\theta_{\infty}) x^{-1}\big(1+\big[x^{-1}\big]_*\big)\\
\hphantom{y_+(c, x)=}{} + c\big(1+ \big[x^{-1}\big]_*\big) {\rm e}^x x^{\sigma_0} \left( 1+ \sum_{n=1}^{\infty} \big(\tilde{a}_n +\big[x^{-1}\big]_*\big)\big({\rm e}^x x^{\sigma_0-1}\big)^n \right)
\end{gather*}
with $\tilde{a}_n \in \Q_1 :=\Q\big[\theta_0, \theta_x, \theta_{\infty}, c, (\theta_0-\theta_x-\theta_{\infty})^{-1}, \theta_x^{-1}\big]$. Here $\varepsilon'=\varepsilon'(\tilde{B},\delta)$ $($respectively, $x'_{\infty}=x'_{\infty}(\tilde{B},\delta))$ is a sufficiently small $($respectively, large$)$ positive number depending on $(\tilde{B},\delta)$, and each $\big[x^{-1}\big]_*$ is represented by an asymptotic series with coefficients in $\Q_1$ valid in $-(\pi/2-\delta)< \arg x-\pi/2 <\pi -\delta$, $|x|>x'_{\infty}$.

\item[$(2)$] Suppose that $\theta_0(\theta_0-\theta_x+\theta_{\infty})\not=0$, and set $\sigma'_0=2\theta_0+\theta_{\infty}$. Then $(\mathrm{V})$ admits a one-parameter family of solutions $\{y_-(c',x);\,c'\in \tilde{B} \}$ such that $y_-(c',x)$ is holomorphic in $(c',x)\in \tilde{B} \times \Sigma'_*(\varepsilon'', x''_{\infty}, \delta)$ and that the reciprocal is expanded into a convergent series in ${\rm e}^{-x} x^{-\sigma'_0 -1}$ of the form
\begin{gather*}
1/y_-(c', x)= \frac 12 (\theta_0-\theta_x+\theta_{\infty}) x^{-1}\big(1+\big[x^{-1}\big]_*\big)\\
\hphantom{1/y_-(c', x)=}{} + c'\big(1+ \big[x^{-1}\big]_*\big) {\rm e}^{-x} x^{-\sigma'_0} \left( 1+ \sum_{n=1}^{\infty} \big(\tilde{b}_n +\big[x^{-1}\big]_*\big)\big({\rm e}^{-x} x^{-\sigma'_0-1}\big)^n \right)
\end{gather*}
with $\tilde{b}_n \in \Q_2 :=\Q\big[\theta_0, \theta_x, \theta_{\infty}, c', (\theta_0-\theta_x+\theta_{\infty})^{-1}, \theta_0^{-1}\big]$. Here $\varepsilon''=\varepsilon''(\tilde{B},\delta)$ $($respectively, $x''_{\infty}=x''_{\infty}(\tilde{B},\delta))$ is a sufficiently small $($respectively, large$)$ positive number depending on $(\tilde{B},\delta)$, and each $\big[x^{-1}\big]_*$ is represented by an asymptotic series with coefficients in $\Q_2$ valid in $-(\pi-\delta)< \arg x-\pi/2 <\pi/2 -\delta$, $|x|>x''_{\infty}$.
\end{enumerate}
\end{thm}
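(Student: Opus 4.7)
The plan is to derive both one-parameter families directly from the matrix solutions of Theorem~\ref{thm2.2} and its ${\rm e}^{-x}$-counterpart (Remark~\ref{rem2.5}) by substitution into the quotient formula~\eqref{2.6}. For part~(1), I insert the two-parameter solution $(A_0(\mathbf{c},x),A_x(\mathbf{c},x))$ of Theorem~\ref{thm2.2} at $\sigma=\sigma_0=-2\theta_x-\theta_\infty$ into
\begin{gather*}
y=\frac{g_+(f_0+\theta_0/2)}{f_+(g_0+\theta_x/2)}.
\end{gather*}
The relevant leading behaviours, read off from Theorem~\ref{thm2.2}, are
\begin{gather*}
f_0+\theta_0/2=\tfrac12(\theta_0-\theta_x-\theta_\infty)+\big[x^{-2}\big]+O\big({\rm e}^xx^{\sigma_0-1}\big),\qquad g_0+\theta_x/2=\theta_x+\big[x^{-2}\big]+O\big({\rm e}^xx^{\sigma_0-1}\big),\\
f_+=x^{\theta_x}{\gamma^0_+}_*\big(1+\big[x^{-1}\big]\big)+O\big(x^{\theta_x}{\rm e}^xx^{\sigma_0-1}\big),
\end{gather*}
and the crucial point is that the summand ${\gamma^0_+}_*(\theta_x+[x^{-1}]){\rm e}^{-x}x^{-\sigma_0-1}$ appearing in the formula of Theorem~\ref{thm2.2} for ${\rm e}^{-x}x^{\theta_x+\theta_\infty}g_+$ contributes, after the prefactor ${\rm e}^xx^{-\theta_x-\theta_\infty}$ is absorbed, a \emph{purely algebraic} term ${\gamma^0_+}_*\theta_x x^{\theta_x-1}(1+[x^{-1}])$ to $g_+$, because $-\sigma_0-1-\theta_x-\theta_\infty=\theta_x-1$. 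Consequently
\begin{gather*}
g_+={\gamma^0_+}_*\theta_x\,x^{\theta_x-1}\big(1+\big[x^{-1}\big]\big)+{\gamma^x_+}_*\,{\rm e}^xx^{-\theta_x-\theta_\infty}\big(1+\big[x^{-1}\big]\big)+O\big(\big({\rm e}^xx^{\sigma_0-1}\big)^2\big).
\end{gather*}

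Dividing, the factor ${\gamma^0_+}_*=c_0(\theta_0-\theta_x-\theta_\infty)/2$ cancels between the algebraic piece of $g_+$ and $f_+$, and $\theta_x$ cancels with $g_0+\theta_x/2$, leaving the algebraic leading term $(\theta_0-\theta_x-\theta_\infty)/(2x)$ of $y$; the exponential leading term is ${\gamma^x_+}_*/{\gamma^0_+}_*\cdot(\theta_0-\theta_x-\theta_\infty)/(2\theta_x)\cdot{\rm e}^xx^{\sigma_0}=c\,{\rm e}^xx^{\sigma_0}$ with $c=c_x/c_0$. Since $\gamma^x_-=0$ at $\sigma=\sigma_0$, every non-algebraic piece of $f_0,f_\pm,g_0,g_\pm$ is already a convergent power series in the single monomial ${\rm e}^xx^{\sigma_0-1}$; after possibly shrinking $\varepsilon$ and enlarging $x_\infty$ so that $f_+(g_0+\theta_x/2)$ stays uniformly bounded away from zero on $\Sigma_*(\varepsilon',x'_\infty,\delta)$, one inverts it as a convergent geometric-type series in the same monomial and multiplies through to obtain the announced expansion of $y_+(c,x)$. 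The hypotheses $\theta_x\neq0$ and $\theta_0-\theta_x-\theta_\infty\neq0$ are exactly the invertibility conditions for those two leading coefficients; the gauge rescaling $(c_0,c_x)\mapsto(\alpha c_0,\alpha c_x)$ multiplies both $f_+$ and $g_+$ by $\alpha$ while fixing $f_0,g_0$ and therefore leaves $y$ invariant, explaining why $c_0$ drops out, only $c=c_x/c_0$ survives, and the coefficients $\tilde a_n$ land in $\Q_1$ once the inverse powers of $\theta_x$ and $\theta_0-\theta_x-\theta_\infty$ are permitted.

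For part~(2), the same argument is run at $\sigma=\sigma'_0=2\theta_0+\theta_\infty$ with the ${\rm e}^{-x}$-analogue of Theorem~\ref{thm2.2} promised by Remark~\ref{rem2.5} (produced by exactly the method of Sections~\ref{sc4}--\ref{sc5} with $\gamma^0_-=0$ in place of $\gamma^x_-=0$) and the reciprocal form $1/y=f_+(g_0+\theta_x/2)/[g_+(f_0+\theta_0/2)]$ of~\eqref{2.6}. At $\sigma=\sigma'_0$ one computes $f_0+\theta_0/2\sim\theta_0$ and $g_0+\theta_x/2\sim-(\theta_0-\theta_x+\theta_\infty)/2$, so the relevant non-vanishing conditions become precisely $\theta_0(\theta_0-\theta_x+\theta_\infty)\neq0$; the mirror-image prefactor/ghost cancellation now takes place inside the series for $f_+$ and supplies the algebraic correction $(\theta_0-\theta_x+\theta_\infty)/(2x)$ in $1/y_-(c',x)$. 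Convergence and holomorphy on $\Sigma'_*(\varepsilon'',x''_\infty,\delta)$, as well as membership of the $\tilde b_n$ in $\Q_2$, are obtained by the identical bookkeeping.

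Once Theorem~\ref{thm2.2} and the Remark~\ref{rem2.5} analogue are in hand, no new analytic input is required: the entire proof is an algebraic reorganisation of the quotient~\eqref{2.6} into a single convergent series in ${\rm e}^xx^{\sigma_0-1}$ or ${\rm e}^{-x}x^{-\sigma'_0-1}$. The one step that requires genuine care, and the step I would verify most carefully with pencil and paper, is the identification of the algebraic $x^{-1}$-correction in $y_\pm$: it does \emph{not} come from the algebraic leading parts of the matrix entries in the na\"ive way, but from the cancellation between the exponential prefactor attached to $g_+$ (respectively $f_+$) and the single anomalous summand of exponential type appearing in its expansion, so keeping track of exactly which coefficient ends up multiplying $x^{-1}$ is the main opportunity for a slip.
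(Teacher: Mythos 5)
Your proposal is correct and follows essentially the same route as the paper: Section~\ref{ssc6.1} likewise substitutes the matrix solution (Theorem~\ref{thm2.1} at $\sigma=\sigma_0$, i.e.\ Theorem~\ref{thm2.2}, which is what supplies holomorphy on the larger domain $\Sigma_*$) into the quotient~\eqref{2.6} and reorganises it via the geometric-series machinery of Section~\ref{sc3}. Your ``anomalous summand'' in $g_+$ is exactly the paper's observation that at $\sigma=\sigma_0$ the $\mathrm{e}^{-x}$-series of Theorem~\ref{thm2.6} truncates to its $n=1$ term $b_1=c^{-1}(\sigma_0+\theta_0+\theta_x)/2$, which produces the algebraic correction $\tfrac12(\theta_0-\theta_x-\theta_\infty)x^{-1}$, and your leading-order checks for part~(2) match the paper's factorisation of $x^{(\sigma_0'+\theta_\infty)/2}f_+$ and $\big(x^{(\sigma_0'+\theta_\infty)/2}g_+\big)^{-1}$.
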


\begin{rem}\label{rem2.10} The reciprocal $1/y_-(c',x)$ itself solves (V) with $(\theta_0, \theta_x, -\theta_{\infty})$.
\end{rem}

\begin{rem}\label{rem2.11}There exist the asymptotic solutions $y_+(0,x)=(1/2)(\theta_0-\theta_x-\theta_{\infty}) x^{-1}\big(1+\big[x^{-1}\big]_*\big)$ and $y_-(0,x)=2(\theta_0-\theta_x+\theta_{\infty})^{-1} x\big(1+\big[x^{-1}\big]_*\big)$ in the sector $|\arg x-\pi/2|<\pi-\delta$.
\end{rem}

\begin{rem}\label{rem2.12}For $\sigma= 2\theta_x-\theta_{\infty}$ (respectively, $\sigma=-2\theta_0 +\theta_{\infty} $) as well, under the condition $\big(\theta_0^2-(\theta_x-\theta_{\infty})^2\big)\theta_x\not=0$ (respectively, $\big(\theta_x^2-(\theta_0-\theta_{\infty})^2\big)\theta_0\not=0$), there exists a family of solutions $\{\tilde{y}_-(c',x);\, c'\in\tilde{B}\}$ (respectively, $\{\tilde{y}_+(c,x);\, c\in\tilde{B}\}$) such that
\begin{gather*}
 1/\tilde{y}_-(c', x)= - \frac 12 (\theta_0-\theta_x+\theta_{\infty}) x^{-1} \big(1+\big[x^{-1}\big]_*\big)\\
\hphantom{1/\tilde{y}_-(c', x)=}{} + c'\big(1+ \big[x^{-1}\big]_*\big) {\rm e}^{-x} x^{-2\theta_x+ \theta_{\infty}} \left( 1+ \sum_{n=1}^{\infty} \big(\tilde{\tilde{b}}_n +\big[x^{-1}\big]_*\big)
\big({\rm e}^{-x} x^{-2\theta_x +\theta_{\infty}-1}\big)^n \right)\\
 \Bigg(\text{respectively,} \quad \tilde{y}_+(c, x)= - \frac 12 (\theta_0-\theta_x-\theta_{\infty}) x^{-1}\big(1+\big[x^{-1}\big]_*\big)\\
 \qquad{} + c\big(1+ \big[x^{-1}\big]_*\big) {\rm e}^{x} x^{-2\theta_0 +\theta_{\infty}} \left( 1+ \sum_{n=1}^{\infty} \big(\tilde{ \tilde{a}}_n +\big[x^{-1}\big]_*\big)
\big({\rm e}^{x} x^{-2\theta_0+ \theta_{\infty}-1}\big)^n \right) \ \Bigg)
\end{gather*}
in $\Sigma'_*(\varepsilon'',x_{\infty}'', \delta)$ (respectively, $\Sigma_*(\varepsilon',x_{\infty}', \delta)$).
\end{rem}

\begin{rem}\label{rem2.13} Using \eqref{1.2} we may derive convergent series representations for $z(x)$ and $u(x)= x^{-\theta_{\infty}} u_{\mathrm{AK}}(x)$, which are parametrised by $(\sigma, c_x/c_0)$ and $(\sigma, c_x/c_0, c_0)$, respectively.
\end{rem}

From the quotient expression \eqref{2.6} we obtain a sequence of zeros or poles of $y(c,\sigma,x)$.

\begin{thm}\label{thm2.8}Let $R_0$ be a given positive number.
\begin{enumerate}\itemsep=0pt
\item[$(1)$] Suppose that $ \theta_x(\theta_0 \pm \theta_x -\theta_{\infty})\not=0$. Then there exists a small positive number $\varepsilon_0 =\varepsilon_0(R_0) =\varepsilon_0(R_0,\theta_0, \theta_x, \theta_{\infty})$ such that, for every $(c,\sigma)\in \C^2$ satisfying $0<|c|<R_0$, $\sigma\not= \pm 2\theta_0 +\theta_{\infty}, \pm 2\theta_x-\theta_{\infty}$, $|\sigma +2\theta_x+\theta_{\infty}| < R_0|c|$ $($respectively, $|\sigma + 2\theta_0 -\theta_{\infty}| < R_0|c|)$ and $|4c/(\sigma +2\theta_0 -\theta_{\infty})|<\varepsilon_0$ $($respectively, $|4c/(\sigma+2\theta_x +\theta_{\infty})|<\varepsilon_0)$, the solution $y(c,\sigma, x) $ has a sequence of zeros $\big\{x_m^{(0)} \big\}$, $m\ge m_0$, such that
\begin{gather*}
x^{(0)}_m = 2m \pi {\rm i}-(\sigma+1)\log(2m\pi {\rm i})-\log( \rho_0(\sigma)c) +O\big(m^{-1}\log m\big),
\end{gather*}
where $m_0$ is some large positive integer and $\rho_0(\sigma)= -4/(\sigma+2\theta_0 -\theta_{\infty})$ $($respectively, $-4/(\sigma + 2\theta_x +\theta_{\infty}))$.

\item[$(2)$] Suppose that $\theta_0(\pm\theta_0-\theta_x +\theta_{\infty}) \not=0$. Then there exists a small positive number $\varepsilon'_0 =\varepsilon'_0(R_0) =\varepsilon'_0(R_0,\theta_0, \theta_x, \theta_{\infty})$ such that, for every $(c,\sigma)\in \C^2$ satisfying $|c|>1/R_0$, $\sigma\not= \pm 2\theta_0 +\theta_{\infty}, \pm 2\theta_x-\theta_{\infty}$, $|c(\sigma - 2\theta_0 -\theta_{\infty})| < R_0$ $($respectively, $|c(\sigma - 2\theta_x +\theta_{\infty})| < R_0)$ and $\big|4c^{-1}/(\sigma-2\theta_x +\theta_{\infty})\big|<\varepsilon'_0$ $($respectively, $\big|4c^{-1}/(\sigma-2\theta_0 -\theta_{\infty})\big|<\varepsilon'_0\,)$, the solution $y(c,\sigma, x) $ has a sequence of poles $\big\{x_m^{(\infty)} \big\}$, $m\ge m_0$, such that
\begin{gather*}
x^{(\infty)}_m = 2m \pi {\rm i}-(\sigma-1)\log(2m\pi {\rm i})-\log( \rho_{\infty}(\sigma)c) +O\big(m^{-1}\log m\big),
\end{gather*}
where $\rho_{\infty}(\sigma)= -(\sigma -2\theta_x +\theta_{\infty})/4$ $($respectively, $-(\sigma-2\theta_0 -\theta_{\infty})/4)$.
\end{enumerate}
\end{thm}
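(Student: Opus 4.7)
By \eqref{2.6} specialised to $(c_0,c_x)=(1,c)$, the Painlev\'e transcendent $y(c,\sigma,x)$ has the quotient form $y = g_+(f_0+\theta_0/2)/(f_+(g_0+\theta_x/2))$, so its zeros (respectively poles) in $\Sigma(\sigma,\varepsilon,x_\infty,\delta)$ arise only from vanishing of factors in the numerator (respectively denominator). My first step is to identify which factor produces the stated zeros $\{x_m^{(0)}\}$ and poles $\{x_m^{(\infty)}\}$. In part~(1), under the hypothesis $|\sigma+2\theta_x+\theta_\infty|<R_0|c|$, substitution of the explicit series of Theorem~\ref{thm2.1} at the candidate locations shows that $f_+$, $g_0+\theta_x/2$ and $f_0+\theta_0/2$ each tend to a nonzero limit (namely $(\theta_0-\theta_x-\theta_\infty)/2$, $\theta_x$ and $(\theta_0-\theta_x-\theta_\infty)/2$ respectively) -- this is where the hypothesis $\theta_x(\theta_0\pm\theta_x-\theta_\infty)\ne 0$ enters -- so the zeros must come from $g_+$. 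Part~(2) is parallel: the poles come from $f_+$.

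Writing $u = {\rm e}^{-x}x^{-\sigma-1}$ and $\tilde u = {\rm e}^x x^{\sigma-1}$, the expansion of $g_+$ in Theorem~\ref{thm2.1} divided by ${\rm e}^{-x}x^{-(\sigma-\theta_\infty)/2}$ reads
\begin{gather*}
\gamma_+^x\big(1+\big[x^{-1}\big]\big) - \gamma_+^0\big((\sigma+\theta_\infty)/2 + \big[x^{-1}\big]\big)u + O\big(u^2+\tilde u\big) = 0.
\end{gather*}
A direct computation from the explicit formulas gives $\rho_0(\sigma)\gamma_+^0 = -1$ and $2\gamma_+^x/(\gamma_+^0(\sigma+\theta_\infty)) = \rho_0(\sigma)c\,\big(1+O(\sigma+2\theta_x+\theta_\infty)\big)$, so the leading-order equation reduces to ${\rm e}^{-x}x^{-\sigma-1} = \rho_0(\sigma)c\,(1+o(1))$. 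Taking logarithms gives the candidate zeros
\begin{gather*}
\hat x_m := 2m\pi {\rm i} - (\sigma+1)\log(2m\pi {\rm i}) - \log(\rho_0(\sigma)c),
\end{gather*}
and the smallness hypothesis $|\rho_0(\sigma)c|<\varepsilon_0$ ensures $\hat x_m\in\Sigma(\sigma,\varepsilon,x_\infty,\delta)$ for $m$ large, since at $\hat x_m$ one has $|u|\approx|\rho_0c|<\varepsilon_0<\varepsilon$ and $|\tilde u|\approx|\hat x_m|^{-2}|\rho_0c|^{-1}<\varepsilon$.

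To upgrade $\hat x_m$ to a genuine zero, I would apply Rouch\'e's theorem on a disc of radius $r_m = Km^{-1}\log m$ centred at $\hat x_m$ with $K$ a suitable constant. After normalising, the vanishing condition becomes $F_m(x) := 1 - u(x)/(\rho_0(\sigma)c) + R_m(x) = 0$, where $R_m$ is holomorphic with $|R_m(x)| = O(|x|^{-1})$ uniformly in the parameter set by Theorem~\ref{thm2.1}(iii) and Remark~\ref{rem2.1}. Writing $x = \hat x_m + \zeta$ one has $u(x)/(\rho_0(\sigma)c) = {\rm e}^{-\zeta}\big(1+O(|x|^{-1})\big)$, hence the main part $1 - u/(\rho_0c)$ has modulus at least $c'r_m$ on $|\zeta|=r_m$ for small $r_m$, dominating $R_m$ of size $O(m^{-1})$ once $K$ is chosen large; this yields a unique simple zero inside the disc and the stated $O(m^{-1}\log m)$ error. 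Part~(2) is handled symmetrically by solving $f_+=0$, whose dominant balance takes the form $\tilde u = 1/(\rho_\infty(\sigma)c) + o(1)$ and produces $x_m^{(\infty)} = 2m\pi{\rm i} - (\sigma-1)\log(2m\pi{\rm i}) - \log(\rho_\infty(\sigma) c) + O(m^{-1}\log m)$.

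The principal obstacle is the uniform bookkeeping required to control all subdominant contributions -- the asymptotic series $\big[x^{-1}\big], \big[x^{-2}\big], \ldots$ and the higher exponential powers $u^n$, $\tilde u^n$, $u^j\tilde u^k$ appearing in Theorem~\ref{thm2.1} -- and to verify that they are genuinely smaller than the retained leading balance, uniformly in $m\ge m_0$ and in the integration constants ranging over the bounded sets. The structural feature that every mixed-exponential term carries a factor of either $\gamma_-^0\gamma_+^x$ or $\gamma_+^0\gamma_-^x$, one of which is small by the hypotheses of the theorem, is exactly what makes this uniform control possible; once it is in place, the Rouch\'e step is routine.
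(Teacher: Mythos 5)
Your overall strategy---read the zeros and poles off the quotient \eqref{2.6}, identify the vanishing factor, solve the dominant balance for $u={\rm e}^{-x}x^{-\sigma-1}$, and confirm by Rouch\'e---is essentially the paper's (Section~\ref{ssc6.2}), but there is a genuine gap at the dominant-balance step. Under the hypotheses of part~(1) the coefficient $\gamma_-^x=c^{-1}(\sigma+2\theta_x+\theta_\infty)/4$ is only \emph{bounded} ($|\gamma_-^x|<R_0/4$), not negligible, so the quadratic term $-(\gamma_+^0)^2\gamma_-^x u^2$ in ${\rm e}^{-x}x^{-(\sigma-\theta_\infty)/2}g_+$ cannot be absorbed into an error term: at $u\approx\rho_0(\sigma)c$ it has size $O(|\sigma+2\theta_x+\theta_\infty|)$ relative to the constant term $\gamma_+^x$, a fixed quantity that does not decay in $m$. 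Accordingly the root of your linear truncation is $\rho_0(\sigma)c\,(1+O(\sigma+2\theta_x+\theta_\infty))$---as you yourself compute---and this does not ``reduce to'' $u=\rho_0(\sigma)c(1+o(1))$ in any sense compatible with the claimed remainder: it would shift the constant $-\log(\rho_0(\sigma)c)$ in $x_m^{(0)}$ by a fixed amount rather than $O(m^{-1}\log m)$. The paper's resolution is to keep the quadratic and factor it exactly, ${\rm e}^{-x}x^{-(\sigma-\theta_\infty)/2}g_+=-(\gamma_+^0)^2\gamma_-^x(u-\varrho_1)(u-\varrho_2)+O(x^{-1})$ with $\varrho_1=-c_x/\gamma_+^0=\rho_0(\sigma)c$ \emph{exactly}; the other root $\varrho_2$ is precisely the root of the denominator factor $g_0+\theta_x/2=-\gamma_+^0\gamma_-^x(u-\varrho_2)+O(x^{-1})$, so it cancels in $y$, and the genuine zeros sit at $u=\varrho_1+O(x^{-1})$. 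This also shows that your assertion $|R_m|=O(|x|^{-1})$ is false as stated: after normalisation $R_m$ contains $-(u/\varrho_2)(1-u/\varrho_1)$, which on the Rouch\'e circle is of size $O(\varepsilon_0)\,|1-u/\varrho_1|$, not $O(|x|^{-1})$; it is harmless only because it vanishes at the same point as the main term---and that is exactly the factorisation your argument never establishes.

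A second, smaller omission: you treat only the first alternative of each part. In the ``respectively'' case of~(1) the hypothesis $|\sigma+2\theta_0-\theta_\infty|<R_0|c|$ makes $\gamma_+^0$ small in the normalisation $(c_0,c_x)=(1,c)$, so your claim that $f_+$ and $f_0+\theta_0/2$ tend to nonzero limits fails; there the zeros come from $f_0+\theta_0/2=\gamma_+^0\gamma_-^x\big(u-\varrho_3\big)+O\big(x^{-1}\big)$ with $\varrho_3=-1/(c_0\gamma_-^x)$, and the paper switches to $(c_0,c_x)=(1/c,1)$ to keep all the $\gamma$'s under uniform control. The analogous remark applies to the ``respectively'' case of~(2), where the poles come from the factor $g_0+\theta_x/2$ rather than from $f_+$.
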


For one-parameter solutions we have
\begin{thm}\label{thm2.9} Let $y_+(c,x)$ and $y_-(c',x)$ be the solutions given above.
\begin{enumerate}\itemsep=0pt
\item[$(1)$] Suppose that $\theta_x(\theta_0-\theta_x -\theta_{\infty})\not=0$. If $c\not=0$ is sufficiently small, then $y_+(c,x) $ has a~sequence of zeros $\big\{x_m^{+(0)} \big\}$, $m\ge m_0$, such that
\begin{gather*}
x^{+(0)}_m = 2m \pi {\rm i}-(\sigma+1)\log(2m\pi {\rm i})-\log(-{2c}/(\theta_0-\theta_x -\theta_{\infty})) +O\big(m^{-1}\log m\big),
\end{gather*}
where $m_0$ is some large positive integer.
\item[$(2)$] Suppose that $\theta_0(\theta_x-\theta_0 -\theta_{\infty})\not=0$. If $c'\not=0$ is sufficiently small, then $y_-(c',x) $ has a~sequence of poles $\big\{x_m^{-(\infty)} \big\}$, $m\ge m_0$, such that
\begin{gather*}
x^{-(\infty)}_m = 2m \pi {\rm i}-(\sigma-1)\log(2m\pi {\rm i})+\log({2c'}/(\theta_x-\theta_0 -\theta_{\infty})) +O\big(m^{-1}\log m\big).
\end{gather*}
\end{enumerate}
\end{thm}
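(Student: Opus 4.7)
The plan is to handle both parts of Theorem~\ref{thm2.9} by the same strategy: starting from the convergent series supplied by Theorem~\ref{thm2.7}, I would isolate the dominant balance whose vanishing approximates a~zero (respectively pole), solve it asymptotically for a~sequence of candidate points, and finally upgrade the candidates to genuine zeros via Rouch\'e's theorem inside the sector $\Sigma_*(\varepsilon', x'_{\infty}, \delta)$ (respectively $\Sigma'_*(\varepsilon'', x''_{\infty}, \delta)$).

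For part~(1), I would write the series of Theorem~\ref{thm2.7}(1) as
\begin{gather*}
y_+(c,x) = \tfrac12 (\theta_0 - \theta_x - \theta_{\infty}) x^{-1}\bigl(1 + \bigl[x^{-1}\bigr]_*\bigr) + c\, \e^x x^{\sigma_0}\, \Phi(c,x),
\end{gather*}
with $\sigma_0 = -2\theta_x - \theta_{\infty}$ and $\Phi(c,x) = 1 + \bigl[x^{-1}\bigr]_* + O(\e^x x^{\sigma_0 - 1})$. The equation $y_+(c,x) = 0$ is then equivalent to
\begin{gather*}
\e^x x^{\sigma_0 + 1} = C_0\, \Psi(c,x), \qquad C_0 := -\frac{\theta_0 - \theta_x - \theta_{\infty}}{2c},
\end{gather*}
where $\Psi(c,x) = 1 + O(x^{-1}) + O(\e^x x^{\sigma_0 - 1})$, and taking logarithms yields $x + (\sigma_0+1)\log x = 2m\pi\i + \log C_0 + O(x^{-1}\log x)$. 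The natural candidate is
\begin{gather*}
x_m^{\mathrm{app}} := 2m\pi\i - (\sigma_0 + 1)\log(2m\pi\i) + \log C_0, \qquad m \ge m_0,
\end{gather*}
and the identity $\log C_0 = -\log\bigl(-2c/(\theta_0 - \theta_x - \theta_{\infty})\bigr)$ matches the stated asymptotics. A~direct substitution shows $\arg x_m^{\mathrm{app}} \to \pi/2$ and $\e^{x_m^{\mathrm{app}}} (x_m^{\mathrm{app}})^{\sigma_0 - 1} = C_0(2m\pi\i)^{-2}(1 + o(1)) \to 0$, so $x_m^{\mathrm{app}} \in \Sigma_*(\varepsilon', x'_{\infty}, \delta)$ as soon as $m$ is large; applying Rouch\'e's theorem to $y_+(c,x)$ compared with its dominant balance on a~circle of radius $\sim m^{-1}\log m$ around $x_m^{\mathrm{app}}$ produces a~unique zero $x_m^{+(0)}$ with the required expansion.

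Part~(2) runs in parallel on the reciprocal: the dominant balance of $1/y_-(c',x)$ in Theorem~\ref{thm2.7}(2) is
\begin{gather*}
\tfrac12 (\theta_0 - \theta_x + \theta_{\infty}) x^{-1} + c'\, \e^{-x} x^{-\sigma'_0} = 0,
\end{gather*}
which rewrites as $\e^x x^{\sigma'_0 - 1} = 2c'/(\theta_x - \theta_0 - \theta_{\infty}) + o(1)$ and yields, by the same log-then-Rouch\'e argument, the location $x_m^{-(\infty)}$ with the stated leading terms (note that $\theta_x - \theta_0 - \theta_{\infty} = -(\theta_0 - \theta_x + \theta_{\infty}) \neq 0$ by hypothesis).

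The main obstacle is the uniform control of the remainder $\Psi(c,x) - 1$ on the Rouch\'e contour: both the asymptotic correction $\bigl[x^{-1}\bigr]_*$ and the higher exponential tails $(\e^x x^{\sigma_0 - 1})^n$ with $n \ge 1$ must be shown to be strictly dominated by the leading $O(m^{-1}\log m)$ displacement on a~small circle $|x - x_m^{\mathrm{app}}| = K m^{-1}\log m$. On this circle the first contribution is $O(m^{-1})$ and the second is $O(m^{-2})$, so for~$K$ fixed sufficiently large relative to $|\sigma_0 + 1|$ the Rouch\'e estimate holds for all $m \ge m_0$; the identical check, with $\e^{-x} x^{-\sigma'_0 - 1}$ in place of $\e^x x^{\sigma_0 - 1}$, covers part~(2).
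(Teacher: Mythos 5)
Your argument is correct and reaches the stated asymptotics, but it follows a route different from the paper's. The paper (Section~\ref{ssc6.2}) never touches the scalar series of Theorem~\ref{thm2.7}: it locates the zeros of $y$ through the quotient expression \eqref{2.6}, i.e., as zeros of the single matrix entry $g_+$, writing ${\rm e}^{-x}x^{-(\sigma_0-\theta_{\infty})/2}g_+=\gamma_+^x-\big(\gamma_+^0/2\big)(\sigma_0+\theta_{\infty}){\rm e}^{-x}x^{-\sigma_0-1}+O\big(x^{-1}\big)$ and checking that the remaining factors $f_0+\theta_0/2$, $g_0+\theta_x/2$, $f_+$ stay bounded away from $0$ and $\infty$ nearby; the zero condition becomes ${\rm e}^{-x}x^{-\sigma_0-1}\approx -2c/(\theta_0-\theta_x-\theta_{\infty})$, which is exactly the reciprocal form of your balance ${\rm e}^x x^{\sigma_0+1}\approx C_0$, so the two reductions lead to the same transcendental equation and the same log-linearization. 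What the quotient route buys is uniformity with the proof of Theorem~\ref{thm2.8}, where the zeros sit at $\big|{\rm e}^{-x}x^{-\sigma-1}\big|\asymp 1$ and no two-term dominant balance is visible in the scalar series of Theorem~\ref{thm2.6}; it is also where the hypothesis that $c$ be small enters, since the root must lie in the disc $\big|{\rm e}^{-x}x^{-\sigma_0-1}\big|<\varepsilon$ on which the $O\big(x^{-1}\big)$ estimates of Section~\ref{ssc6.2} were established. What your route buys is that, for the one-parameter solutions, the entire proof needs only Theorem~\ref{thm2.7}: the candidate zeros satisfy ${\rm e}^{x}x^{\sigma_0-1}=O\big(m^{-2}\big)$, hence lie deep inside $\Sigma_*(\varepsilon',x'_{\infty},\delta)$ where the tails are dominated, and the smallness of $c$ (resp.~$c'$) is not actually used beyond fixing $m_0=m_0(c)$. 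Two small points worth writing out explicitly: the division by $\Phi$ is legitimate because $\Phi=1+o(1)$ on the Rouch\'e contours, and in part~(2) a zero of the holomorphic function $1/y_-$ furnished by Theorem~\ref{thm2.7}(2) is indeed a pole of $y_-$.
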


\begin{rem}\label{rem2.15}To $y(c,\sigma,x)$ of Theorem \ref{thm2.6} applying the B\"acklund transformation and the substitution~$\pi$: $ (\theta_0-\theta_x, \theta_0+\theta_x,\theta_{\infty})\mapsto (1-\theta_{\infty}, 1-\theta_0+\theta_x, \theta_0+\theta_x -1)$, we obtain another solution of (V) given by
\begin{gather*}
\hat{y}(c,\sigma, x)^{\pi} = \frac{Y(x, y(c,\sigma, x))^{\pi}}{ 1+ Y(x, y(c,\sigma, x))^{\pi}}
\end{gather*}
with
\begin{gather*}
Y(x,y) = x^{-1}(y-1) \big( (A_x)_{11} +\theta_x/2 -((A_x)_{11} -\theta_x/2)y^{-1}\big)\\
\hphantom{Y(x,y)}{} = -2(A_x)_{11} x^{-1} + ((A_x)_{11}+\theta_x/2)x^{-1}y + ((A_x)_{11}-\theta_x/2)x^{-1}y^{-1}
\end{gather*}
(cf.\ \cite{Gromak}, \cite[Lemma 6.1]{S3}). This is expressed as
\begin{gather*}
 \hat{y}(c,\sigma,x)^{\pi} = \big[x^{-1}\big] -\big((\sigma+2\theta_0-\theta_{\infty}-1)/4 +\big[x^{-1}\big]\big) c{\rm e}^x x^{\sigma-1}+ \sum_{n=2}^{\infty} [1]_*\big({\rm e}^x x^{\sigma-1}\big)^n\\
\hphantom{\hat{y}(c,\sigma,x)^{\pi} =}{} -\big((\sigma+2\theta_x+\theta_{\infty}-1)/4+\big[x^{-1}\big]\big) c^{-1}{\rm e}^{-x} x^{-\sigma-1}
+ \sum_{n=2}^{\infty} [1]_*\big({\rm e}^{-x} x^{-\sigma-1}\big)^n
\end{gather*}
for $\big|{\rm e}^xx^{\sigma-1}\big|<\varepsilon''$, $\big|{\rm e}^{-x}x^{-\sigma-1}\big| <\varepsilon''$, $\varepsilon''$ being sufficiently small, and admits a sequence of zeros $\{ \tilde{x}_m \}$ with $\tilde{x}_m= -\log c + 2m\pi {\rm i} -\sigma\log(2m\pi {\rm i}) +O\big(m^{-1}\log m\big)$ in the domain $|{\rm e}^xx^{\sigma}|$, $|{\rm e}^{-x}x^{-\sigma}| \ll 1$.
\end{rem}

\section{Families of series}\label{sc3}

\subsection[Family $\mathfrak{A}$]{Family $\boldsymbol{\mathfrak{A}}$}\label{ssc3.1}

Let $B_0$, $B_x$ and $B_*$ be as in Section \ref{sc2}, and $\Sigma_0(x_{\infty}, \delta)$ the sector $|\arg x-\pi/2|<\pi/2-\delta$, $|x|>x_{\infty}$. Denote by $ \hat{\mathfrak{A}} = \hat{\mathfrak{A}}(B_0, B_x, B_*,\Sigma_0(x_{\infty},\delta))$ the family of pairs $\big(\phi, \big\{p^+_n(x), p^-_n(x), p_0(x)\big\}_{n\in \N} \big)$, where $\phi$ is a~formal series of the form
\begin{gather*}
\phi=\phi(\mathbf{c}, \sigma, x) = \sum_{n=1}^{\infty} p^+_n(x)\big({\rm e}^x x^{\sigma-1}\big)^n + \sum_{n=1}^{\infty} p^-_n(x)\big({\rm e}^{-x} x^{-\sigma-1}\big)^n +p_0(x)x^{-1},
\end{gather*}
and $p^{+}_n(x)$, $p^-_n(x)$ and $p_0(x)$ are holomorphic in $(\mathbf{c}, \sigma, x)=(c_0,c_x, \sigma, x)\in B_0\times B_x \times B_*\times \Sigma_0(x_{\infty},\delta)$ and admit asymptotic representations
\begin{gather*}
p^+_n(x) \sim \sum_{m=0}^{\infty} p^+_{nm} x^{-m}, \qquad p^-_n(x) \sim \sum_{m=0}^{\infty} p^-_{nm} x^{-m}, \qquad p_0(x) \sim \sum_{m=0}^{\infty}p_{0m}x^{-m}
\end{gather*}
with coefficients $p^{\pm}_{nm}=p^{\pm}_{nm}(\mathbf{c},\sigma)$, $p_{0m}= p_{0m}(\mathbf{c},\sigma) \in \Q_*$ uniformly in $(\mathbf{c},\sigma) \in B_0 \times B_x \times B_*$ as $x\to\infty$ through the sector~$\Sigma_0(x_{\infty}, \delta)$. Note the example $p_1(x) {\rm e}^x x^{\sigma-1} + p_0(x) x^{-1} \equiv 0\cdot {\rm e}^x x^{\sigma-1} + 0\cdot x^{-1}$ with $p_1(x)= {\rm e}^{2{\rm i} x/\delta} \sim 0$, $p_0(x) = - {\rm e}^{(2{\rm i}/\delta +1) x} x^{\sigma} \sim 0$ in $\Sigma_0(x_{\infty}, \delta)$. To avoid such an ambiguity $\hat{\mathfrak{A}}$ is defined as the set of the pairs as above. For simplicity, however, keeping the strict definition above in mind, we regard and deal with $\hat{\mathfrak{A}}$ as the family of the formal series $\phi=\phi(\mathbf{c}, \sigma, x)$. To $\phi\in \hat{\mathfrak{A}}$ written as above, we assign the function
\begin{gather*}
\|\phi \| = \|\phi\|(x,\eta)= \|\phi\|_{\mathbf{c},\sigma}(x,\eta)\\
\hphantom{\|\phi \|}{} =\sum_{n=1}^{\infty} M(p^+_n, |x|) \big|\eta x^{-1}\big|^n + \sum_{n=1}^{\infty} M(p^-_n, |x|) \big|\eta^{-1} x^{-1}\big|^n + M(p_0, |x|) |x|^{-1},
\end{gather*}
where $M(p, |x|)$ is a function of $(\mathbf{c}, \sigma, |x|)$ given by
\begin{gather*}
M(p,|x|): = M_{\mathbf{c},\sigma}(p,|x|) = \sup\{ |p(\mathbf{c},\sigma,\xi)|;\, |\xi|\ge |x|,\, \xi \in \Sigma_0(x_{\infty},\delta) \}.
\end{gather*}
Suppose that $x_{\infty}>\varepsilon^{-1}$. Let $\mathfrak{A}= \mathfrak{A}(B_0, B_x, B_*, \Sigma_0(x_{\infty},\delta), \varepsilon)$ $(\subset \hat{\mathfrak{A}})$ be the family of $\phi\in \hat{\mathfrak{A}}$ such that $\|\phi\|_{\mathbf{c},\sigma}(x,\eta)$ converges uniformly in $(\mathbf{c},\sigma, x, \eta)\in B_0\times B_x\times B_* \times \Xi(\Sigma_0(x_{\infty}, \delta),\varepsilon)$, where
\begin{gather*}
\Xi(\Sigma_0(x_{\infty},\delta),\varepsilon)= \bigcup_{x\in \Sigma_0(x_{\infty},\delta)} \{x\} \times \big\{\eta; \, \big|\eta x^{-1}\big|<\varepsilon,\, \big|\eta^{-1}x^{-1}\big|<\varepsilon \big\}.
\end{gather*}
Let $D(B_*, \varepsilon, x_{\infty}, \delta)$ be as in Section \ref{sc2}. Then, as shown below, the sum and the product are canonically defined in~$\mathfrak{A}$.

\begin{prop} \label{prop3.1}\quad
\begin{enumerate}\itemsep=0pt
\item[$(1)$] Every $\phi \!\in\! \mathfrak{A}(B_0, B_x, B_*, \Sigma_0(x_{\infty},\delta), \varepsilon)$ is holomorphic in $(\mathbf{c},\sigma, x)\!\in\! B_0\times B_x \times D(B_*, \varepsilon, x_{\infty}, \delta)$, and satisfies $|\phi(\mathbf{c}, \sigma, x)| \le \|\phi\|_{\mathbf{c},\sigma}(x, {\rm e}^xx^{\sigma})$.
\item[$(2)$] Let $\phi$, $\psi \in \mathfrak{A}$. Then $\phi+\psi$, $\phi\psi \in \mathfrak{A}$, and $\|\phi + \psi\| \le \|\phi\| +\|\psi\|$, $\|\phi\psi \|\le \|\phi\| \|\psi\|$. If $a=a(\mathbf{c},\sigma) \in \Q_*$, then $a\phi \in \mathfrak {A}$ and $\|a\phi\| = |a| \|\phi \|$.
\end{enumerate}
\end{prop}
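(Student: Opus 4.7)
The plan is to handle the two parts separately. Part (1) follows from standard uniform convergence once the termwise majorization is set up; part (2) reduces to verifying that $\mathfrak{A}$ is closed under multiplication, with submultiplicativity of $\|\cdot\|$ obtained by a term-by-term comparison hinging on the single algebraic identity $e_+ e_- = x^{-2}$, where I write $e_+ := \e^x x^{\sigma-1}$ and $e_- := \e^{-x} x^{-\sigma-1}$.

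For (1), the defining conditions of $\Sigma(\sigma,\varepsilon,x_\infty,\delta)$ give $|e_+|, |e_-| < \varepsilon$ on $D(B_*,\varepsilon,x_\infty,\delta)$. Setting $\eta = \e^x x^\sigma$, one has $|\eta x^{-1}| = |e_+|$ and $|\eta^{-1} x^{-1}| = |e_-|$, so $(x,\eta) \in \Xi(\Sigma_0(x_\infty,\delta),\varepsilon)$. From the definition of $M$, $|p_n^\pm(\mathbf{c},\sigma,x)| \le M(p_n^\pm,|x|)$ and $|p_0(\mathbf{c},\sigma,x)| \le M(p_0,|x|)$, so summing termwise yields $|\phi(\mathbf{c},\sigma,x)| \le \|\phi\|_{\mathbf{c},\sigma}(x, \e^x x^\sigma)$, whose right-hand side converges uniformly by the defining property of $\mathfrak{A}$. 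Hence the series for $\phi$ converges absolutely and uniformly on compact subsets of $B_0\times B_x\times D(B_*,\varepsilon,x_\infty,\delta)$, so $\phi$ is holomorphic there.

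For (2), the claims for $\phi+\psi$ and for $a\phi$ with $a\in\Q_*$ are immediate from the coefficientwise definitions together with $M(p+q,|x|)\le M(p,|x|)+M(q,|x|)$ and $M(ap,|x|)=|a|M(p,|x|)$. For the product, I formally multiply the series for $\phi$ and $\psi$ and reorganize, using $e_+ e_- = x^{-2}$, into
\begin{gather*}
\phi\psi = \sum_{k=1}^\infty P_k^+(x) e_+^k + \sum_{k=1}^\infty P_k^-(x) e_-^k + P_0(x) x^{-1},
\end{gather*}
in which, for instance, $P_k^+$ collects $\sum_{n+m=k,\, n,m\ge 1} p_n^+ q_m^+$, the cross contributions $\sum_{m\ge 1}(p_{k+m}^+ q_m^- + p_m^- q_{k+m}^+) x^{-2m}$, and $(p_k^+ q_0 + p_0 q_k^+) x^{-1}$; $P_k^-$ is symmetric; and $P_0(x) x^{-1}$ collects the diagonal pieces $\sum_{n\ge 1}(p_n^+ q_n^- + p_n^- q_n^+) x^{-2n}$ together with $p_0 q_0 x^{-2}$. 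Each such coefficient is holomorphic and admits an asymptotic expansion in $x^{-1}$ with $\Q_*$-valued coefficients, since $\Q_*$ is a ring. Submultiplicativity then follows term by term from the identity $|\eta x^{-1}|^n |\eta^{-1} x^{-1}|^m = |\eta x^{-1}|^{n-m}|x|^{-2m}$ for $n\ge m$: the contribution $M(p_n^+ q_m^- x^{-2m},|x|)|\eta x^{-1}|^{n-m}$ to $\|\phi\psi\|$ is bounded by $M(p_n^+,|x|)M(q_m^-,|x|)|x|^{-2m}|\eta x^{-1}|^{n-m}$, which is exactly the term $M(p_n^+,|x|)M(q_m^-,|x|)|\eta x^{-1}|^n|\eta^{-1} x^{-1}|^m$ in $\|\phi\|\|\psi\|$; the remaining cases are symmetric.

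The main obstacle will be the combinatorial bookkeeping in the multiplication: verifying that the formal rearrangement into the canonical form is legitimate (no double counting, no missed contributions) and that every resulting coefficient $P_k^\pm$, $P_0$ lies in the appropriate class of holomorphic functions with asymptotic expansions whose coefficients are in $\Q_*$. Once these details are in place, uniform convergence on $B_0\times B_x\times B_*\times\Xi(\Sigma_0(x_\infty,\delta),\varepsilon)$ transfers from $\|\phi\|\|\psi\|$ to $\|\phi\psi\|$, and the proposition follows.
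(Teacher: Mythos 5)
Your approach is essentially the paper's: the same reorganization of the product via ${\rm e}^x x^{\sigma-1}\cdot {\rm e}^{-x}x^{-\sigma-1}=x^{-2}$ into the canonical form, and the same term-by-term comparison using the supremum definition of $M$ (with $|\xi|\ge|x|$) to obtain $\|\phi\psi\|\le\|\phi\|\,\|\psi\|$; part (1) is likewise the immediate termwise bound. The one point you defer is exactly where the paper does its only nontrivial work: each $P_k^{+}$ contains the \emph{infinite} cross-term sum $\sum_{m\ge1}\big(p_{k+m}^{+}q_m^{-}+p_m^{-}q_{k+m}^{+}\big)x^{-2m}$, and "$\Q_*$ is a ring" only handles the formal algebra, not the analytic fact that this sum is holomorphic and again has an asymptotic expansion. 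The paper closes this by using convergence of $\|\phi\|\,\|\psi\|$ to get $\big|p_{k+m}^{+}q_m^{-}\big|\ll\varepsilon^{-k-2m}$, hence $\big|x^{-2m}\big(p_{k+m}^{+}q_m^{-}+p_m^{-}q_{k+m}^{+}\big)\big|\ll\varepsilon^{-k}(\varepsilon x)^{-2m}$, so the tail past $m=N$ is $O\big((\varepsilon x)^{-2N}\big)$ uniformly; this single estimate supplies both the holomorphy and the asymptotic representation of $P_k^{\pm}$ and $P_0$, and is the detail you would need to add.
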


\begin{proof}Suppose that $\phi, \psi \in\mathfrak{A}$ are written as
\begin{gather*}
\phi = \sum_{n=1}^{\infty} p_n^+(x)\big({\rm e}^x x^{\sigma-1}\big)^n + \sum_{n=1}^{\infty} p_n^-(x)\big({\rm e}^{-x} x^{-\sigma-1}\big)^n + p_0(x)x^{-1},\\
\psi = \sum_{n=1}^{\infty} q_n^+(x)\big({\rm e}^x x^{\sigma-1}\big)^n + \sum_{n=1}^{\infty} q_n^-(x)\big({\rm e}^{-x} x^{-\sigma-1}\big)^n + q_0(x)x^{-1}.
\end{gather*}
It is natural to set
\begin{gather*}
\phi\psi = \sum_{n=1}^{\infty} \varpi_n^+(x)\big({\rm e}^x x^{\sigma-1}\big)^n + \sum_{n=1}^{\infty} \varpi_n^-(x)\big({\rm e}^{-x} x^{-\sigma-1}\big)^n + \varpi_0(x)x^{-1},
\end{gather*}
where each coefficient as a formal series is given by
\begin{gather*}
\varpi^{\pm}_n(x) = \sum_{\nu=1}^{n-1} p^{\pm}_{\nu}(x) q^{\pm}_{n-\nu}(x)+ x^{-1} \big(p_n^{\pm}(x)q_0(x) + p_0(x)q_n^{\pm}(x) \big)+ \sum_{\nu=n+1}^{\infty} x^{-2(\nu-n)} \varpi_{n,\nu}^{\pm}(x),\\
 \varpi_0(x) = x^{-2} p_0(x)q_0(x)+ \sum_{\nu=1}^{\infty} x^{-2\nu} \varpi_{\nu}^{0}(x)
\end{gather*}
with $\varpi_{n,\nu}^{\pm}(x)= p^{\pm}_{\nu}(x) q^{\mp}_{\nu-n}(x) +p^{\mp}_{\nu-n}(x) q^{\pm}_{\nu}(x)$, $\varpi_{\nu}^{0}(x)= p^{+}_{\nu}(x) q^{-}_{\nu}(x) +p^{-}_{\nu}(x) q^{+}_{\nu}(x)$. If $(\mathbf{c}, \sigma, x) \in B_0 \times B_x \times B_* \times \Sigma_0(x_{\infty}, \delta)$, then, by the definition of $\|\cdot\|$, for $\big|\eta x^{-1}\big|, \big|\eta^{-1} x^{-1}\big|<\varepsilon$
\begin{gather*}
|\varpi_{n,\nu}^+(x)| \le M(p^+_{\nu}, |x|) M(q^-_{\nu-n}, |x|) + M(p^-_{\nu-n},|x|) M(q^+_{\nu}, |x|)\\
\hphantom{|\varpi_{n,\nu}^+(x)|}{} \le 2\|\phi\|(x,\eta) \|\psi\|(x,\eta)\big|\eta x^{-1}\big|^{-\nu} \big|\eta^{-1} x^{-1}\big|^{-(\nu -n)} \ll \varepsilon^{-2\nu+ n},
\end{gather*}
and hence $\big|x^{-2(\nu-n)}\varpi_{n,\nu}^+(x) \big|\ll \varepsilon^{-n} (\varepsilon x) ^{-2(\nu-n)}$, the implied constant not depending on $(\varepsilon, x)$. This implies that $\varpi^+_n(x) $ is holomorphic in $(\mathbf{c},\sigma, x) \in B_0 \times B_x \times B_* \times \Sigma_0(x_{\infty},\delta)$. Furthermore, for a given integer $N\ge 1$, we have $\Sigma_{\nu\ge n+N} |x^{-2(\nu-n)}\varpi_{n,\nu}^+(x) |\ll \varepsilon^{-n} (\varepsilon x)^{-2N}$ in the domain $B_0\times B_x \times B_* \times \Sigma_0 (2x_{\infty},\delta)$, which implies that $\varpi_n^+(x)$ is represented by an asymptotic series in $\Sigma_0(x_{\infty},\delta)$ uniformly in~$(\mathbf{c}, \sigma) \in B_0\times B_x \times B_*$. Thus we have shown that $\phi\psi \in \hat{\mathfrak{A}}$. To evaluate $\|\phi\psi\|$ we note that, for $\nu > n \ge 1$,
\begin{gather*}
\big\| p_{\nu}^+ (x)\big({\rm e}^x x^{\sigma-1}\big)^{\nu} \cdot q_{\nu-n}^-(x) \big({\rm e}^{-x}x^{-\sigma-1}\big)
^{\nu-n} \big\| =\big\| \big({\rm e}^x x^{\sigma-1}\big)^{n} p_{\nu}^+(x) q_{\nu-n}^-(x)x^{-2(\nu-n)} \big\|\\
\qquad{} = \big|\eta x^{-1}\big|^{n} \sup \big\{ \big|p^+_{\nu}(\xi) q_{\nu-n}^-(\xi) \xi^{-2(\nu-n)}\big|; \,
|\xi| \ge |x|,\, \xi \in \Sigma_0(x_{\infty}, \delta) \big\}\\
\qquad{} \le \big|\eta x^{-1}\big|^{\nu} \big|\eta^{-1} x\big|^{\nu-n} |x|^{-2(\nu-n)}
\sup \big\{ |p^+_{\nu}(\xi) |; \, \ldots \big\} \sup \big\{ |q^-_{\nu-n}(\xi) |; \, \ldots \big\}\\
\qquad{} =\big|\eta x^{-1}\big|^{\nu} M(p^+_{\nu}, |x|) \big|\eta^{-1} x^{-1}\big|^{\nu-n} M\big(q^-_{\nu-n}, |x|\big)\\
\qquad{} =\big\| p_{\nu}^+(x)\big({\rm e}^x x^{\sigma-1}\big)^{\nu}\big\|\big\| q_{\nu-n}^-(x)\big({\rm e}^{-x}x^{-\sigma-1}\big)^{\nu-n} \big\|,
\end{gather*}
and that, for $\nu \ge 1$,
\begin{gather*}
\big\| p_{\nu}^+(x)\big({\rm e}^x x^{\sigma-1}\big)^{\nu} \cdot q_0(x)x^{-1} \big\|
= \big|\eta x^{-1}\big|^{\nu} \sup \big\{ \big|p^+_{\nu}(\xi) q_0(\xi) \xi^{-1}\big|; \, |\xi| \ge |x|,\, \xi \in \Sigma_0(x_{\infty}, \delta) \big\}\\
\qquad{} \le \big|\eta x^{-1}\big|^{\nu} |x|^{-1} \sup \{ |p^+_{\nu}(\xi) |; \, \ldots \} \sup \{ |q_0(\xi) |; \, \ldots \}\\
\qquad {}=\big\| p_{\nu}^+(x)\big({\rm e}^x x^{\sigma-1}\big)^{\nu}\big\| \big\| q_0(x) x^{-1} \big\|.
\end{gather*}
Using these inequalities we have $\|\phi\psi\|\le \|\phi\| \|\psi\|$.
\end{proof}

\begin{exa}\label{exa3.1} In the sector $|\arg x- \pi/2|<\pi-\delta$, we may take a path $\gamma_*(x)$ $(\ni \xi)$ starting from $x$ in such a way that $\big|{\rm e}^{-\xi}{\rm e}^x\big|$ or $\big|{\rm e}^{\xi} {\rm e}^{-x}\big|$ is monotone decreasing and decays exponentially along $\gamma_*(x)$. Then for $n\in \N$
\begin{gather*}
 -\int_{\gamma_*(x)} \big({\rm e}^{\xi}\xi^{\sigma-1}\big)^n {\rm d}\xi =\big({\rm e}^x x^{\sigma-1}\big)^n P_n^{(1)}(x), \\
 -\int_{\gamma_*(x)} \big({\rm e}^{-\xi}\xi^{-\sigma-1}\big)^n {\rm d}\xi = \big({\rm e}^{-x} x^{-\sigma-1}\big)^n P_n^{(2)}(x),
\end{gather*}
where
$P^{(\iota)}_n(x) \sim \sum\limits_{m=0}^{\infty} p^{(\iota)}_{nm}(\sigma)x^{-m}$ with $p^{(\iota)}_{nm}(\sigma) \in \Q[\sigma]$, $\iota=1,2$, as $x\to \infty$ through the sector $|\arg x -\pi/2 |<\pi-\delta$. Furthermore if $g(x)\sim \sum\limits_{m=0}^{\infty}g_mx^{-m}$ as $x\to\infty$ through this sector, we have
\begin{gather*}
-\int_{\gamma_*(x)} \big({\rm e}^{\xi}\xi^{\sigma-1}\big)^n g(\xi) {\rm d}\xi = \big({\rm e}^x x^{\sigma-1}\big)^n G_n(x), \qquad G_n(x)\sim \sum_{m=0}^{\infty} G_{nm}(\sigma)x^{-m}.
\end{gather*}
Clearly these integrals belong to $\mathfrak{A}$.
\end{exa}

\begin{prop}\label{prop3.2} Let $\{\phi_k\}_{k\ge 1} \subset \mathfrak{A}(B_0,B_x, B_*, \Sigma_0(x_{\infty}, \delta), \varepsilon)$ with
\begin{gather*}
\phi_k =\sum_{n=1}^{\infty} p^{(k)+}_n(x) \big({\rm e}^x x^{\sigma-1}\big)^n + \sum_{n=1}^{\infty} p^{(k)-}_n(x) \big({\rm e}^{-x} x^{-\sigma-1}\big)^n + p_0^{(k)} (x) x^{-1}
\end{gather*}
be such that $\|\phi_k\| \ll \tilde{\varepsilon}^{k-N}|x|^{-N+k_0}$ for every pair of integers $(k, N)$ with $1\le k_0 \le N \le k$ if $\big|\eta x^{-1}\big|< \varepsilon$, $\big|\eta^{-1} x^{-1}\big|<\varepsilon$, $|x|>x_{\infty}$, where $k_0$ is a given positive integer, $\tilde{\varepsilon}$ is some number satisfying $0<\tilde{\varepsilon} <1$, and the implied constant does not depend on $k$. Then, $\phi^{\infty} =\sum\limits_{k=1}^{\infty} \phi_k$ also belongs to $ \mathfrak{A}(B_0,B_x,B_*, \Sigma_0(x_{\infty},\delta), \varepsilon)$ and $\phi^{\infty}$ is given by
\begin{gather*}
\phi^{\infty} =\sum_{n=1}^{\infty} p^{\infty+}_n(x) \big({\rm e}^x x^{\sigma-1}\big)^n +
\sum_{n=1}^{\infty} p^{\infty-}_n(x) \big({\rm e}^{-x} x^{-\sigma-1}\big)^n + p_0^{\infty}(x) x^{-1}
\end{gather*}
with $p^{\infty \pm}_n(x) =\sum\limits_{k=1}^{\infty} p_n^{(k)\pm}(x) =[1]$, $p^{\infty }_0(x) =\sum\limits_{k=1}^{\infty} p_0^{(k)}(x) =[1]$.
\end{prop}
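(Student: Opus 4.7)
The plan is to first extract uniform convergence of $\phi^{\infty}$ from the $\|\cdot\|$-norm bound, and then sharpen this into the asymptotic expansions claimed for the limit coefficient functions. Applying the hypothesis with $N=k_0$ yields $\|\phi_k\|\ll\tilde{\varepsilon}^{k-k_0}$ uniformly on $B_0\times B_x\times B_*\times\Xi(\Sigma_0(x_{\infty},\delta),\varepsilon)$, so $\sum_k\|\phi_k\|$ converges geometrically. The triangle inequality from Proposition~\ref{prop3.1} then makes $\sum_k\phi_k$ absolutely convergent in $\|\cdot\|$, and unpacking the definition of the norm shows that each of the series $\sum_k p^{(k)+}_n(x)$, $\sum_k p^{(k)-}_n(x)$, and $\sum_k p_0^{(k)}(x)$ converges uniformly on $B_0\times B_x\times B_*\times\Sigma_0(x_{\infty},\delta)$ to holomorphic limits $p^{\infty\pm}_n(x)$, $p_0^{\infty}(x)$, with $\|\phi^{\infty}\|\le\sum_k\|\phi_k\|<\infty$.

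Next I would extract bounds on individual coefficient functions from the norm inequality $M(p^{(k)+}_n,|x|)|\eta x^{-1}|^n\le\|\phi_k\|(x,\eta)$. Taking $|\eta|$ arbitrarily close to $\varepsilon|x|$ gives $M(p^{(k)+}_n,|x|)\ll\varepsilon^{-n}\|\phi_k\|$, and then applying the hypothesis with $N=k$ yields $M(p^{(k)+}_n,|x|)\ll\varepsilon^{-n}|x|^{-k+k_0}$; parallel estimates hold for $p^{(k)-}_n$ and for $p_0^{(k)}$ (with one additional factor of $|x|$ in the latter case). Since each $p^{(k)+}_n$ admits an asymptotic expansion $\sum_m p^{(k)+}_{n,m}x^{-m}$, the decay $|p^{(k)+}_n(x)|=O(|x|^{-(k-k_0)})$ forces $p^{(k)+}_{n,m}=0$ for every $0\le m<k-k_0$, and analogously for the other two families.

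This vanishing collapses each formal sum $c^+_{n,m}:=\sum_k p^{(k)+}_{n,m}$ into a finite sum over $k\le k_0+m$ (shifted to $k\le k_0+m+1$ in the $p_0$ case), placing each such coefficient in $\Q_*$ as a finite sum of $\Q_*$-elements. To verify $p^{\infty+}_n(x)\sim\sum_m c^+_{n,m}x^{-m}$, I split $p^{\infty+}_n=\sum_{k<k_0+M}p^{(k)+}_n+R_M$: the finite head inherits an asymptotic expansion to order $M$ whose coefficients are exactly $c^+_{n,0},\dots,c^+_{n,M-1}$, while the tail obeys $|R_M(x)|\le\sum_{k\ge k_0+M}M(p^{(k)+}_n,|x|)\ll\varepsilon^{-n}|x|^{-M}$ on applying the hypothesis with $N=k_0+M$. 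The same reasoning handles $p^{\infty-}_n$ and $p_0^{\infty}$, so all three limit functions lie in $[1]$.

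The main obstacle I anticipate is the $\Q_*$-structure of the limit coefficients: a priori $c^+_{n,m}$ is an infinite sum of polynomial expressions in $(\mathbf{c},\sigma)$ and need not itself be polynomial. What resolves this is the interplay between the two extreme choices $N=k_0$ and $N=k$ in the hypothesis: the former supplies the geometric summability in $k$, and the latter produces the polynomial decay in $|x|$ that annihilates the first $k-k_0$ coefficients of the asymptotic expansion of $p^{(k)+}_n$, thereby collapsing $c^+_{n,m}$ into a finite $\Q_*$-sum.
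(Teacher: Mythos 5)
Your proposal is correct and follows essentially the same route as the paper: the case $N=k_0$ gives uniform absolute convergence and holomorphy of the limit coefficients, and the asymptotic representation of each $p_n^{\infty\pm}$, $p_0^{\infty}$ is obtained by splitting off a finite head and bounding the tail via $M\big(p_n^{(k)+},|x|\big)\le\|\phi_k\|(x,\eta)\big|\eta x^{-1}\big|^{-n}\ll\varepsilon^{-n}\tilde{\varepsilon}^{k-N}|x|^{-N+k_0}$ with $N$ chosen according to the desired order. Your additional observation that the choice $N=k$ forces $p^{(k)\pm}_{nm}=0$ for $m<k-k_0$, so that each limit coefficient is a finite $\Q_*$-sum, is exactly the content of Remark~\ref{rem3.1} and is a welcome explicit justification of the $\Q_*$-membership that the paper leaves implicit.
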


\begin{proof} By the condition with $N=k_0$, $\sum\limits^{\infty}_{k=k_0} \phi_k$ as a double series converges uniformly and absolutely, and hence $p^{\infty \pm}_n(x) = \sum\limits^{\infty}_{k=1} p_n^{(k)\pm}(x)$, $p^{\infty}_0(x)=\sum\limits^{\infty}_{k=1} p_0^{(k)}(x)$ are holomorphic in $\Sigma_0(x_{\infty}, \delta)$. It is sufficient to show that $\phi^{\infty} \in \hat{\mathfrak{A}} (B_0,B_x, B_*, \Sigma_0(x_{\infty},\delta), \varepsilon)$, that is, $p^{\infty \pm}_n(x)$ and $p_0^{\infty}(x)$ may be represented by asymptotic series. For a given positive number $N\ge k_0$, if $k\ge N$ and if $\big|\eta x^{-1}\big|<\varepsilon$, $\big|\eta^{-1} x^{-1}\big|<\varepsilon$, $|x|>x_{\infty} > \varepsilon^{-1}$, then
\begin{gather*}
M\big(p_n^{(k)+}, |x|\big) \le \|\phi_k\|(x,\eta) \big|\eta x^{-1}\big|^{-n} \ll \big|\eta x^{-1}\big|^{-n} \tilde{\varepsilon}^{k-N} |x|^{-N+k_0},
\end{gather*}
the implied constant possibly depending on $N$ only. For $x\in \Sigma_0(x_{\infty},\delta)$, letting $\eta \to \varepsilon x$, we have $\big|p_n^{(k)+}(x)\big| \ll \varepsilon^{-n} \tilde{\varepsilon}^{k-N} |x|^{-N+k_0}$, which means $\Big| \sum\limits_{k=N}^{\infty} p_n^{(k)+}(x) \big| \ll \varepsilon^{-n} (1-\tilde{\varepsilon})^{-1} |x|^{-N+k_0}$ in $\Sigma_0(x_{\infty}, \delta)$. Substitution of this and $p^{(k)+} _n(x)$ with $k\le N-1$ into $p^{\infty+}_n(x)$ yields the asymptotic representation of $p^{\infty+}_n(x)$. Thus we obtain the proposition.
\end{proof}

\begin{rem}\label{rem3.1} Under the supposition of Proposition \ref{prop3.2}, for $p^{(k)\pm}_n (x) \sim \sum\limits_{m=0}^{\infty} p^{(k)\pm}_{nm}x^{-m}$ and $p^{(k)}_0 (x) \sim \sum\limits_{m=0}^{\infty} p^{(k)}_{0m}x^{-m}$, the asymptotic representations of $p^{\infty\pm}_n(x)$ and $p^{\infty}_0(x)$ are written in the form $p^{\infty\pm}_n(x) \sim \sum\limits_{m=0}^{\infty} p^{\infty\pm}_{nm}x^{-m}$ and $p^{\infty}_0(x) \sim \sum\limits_{m=0}^{\infty} p^{\infty}_{0m}x^{-m}$ with coefficients $p^{\infty\pm}_{nm}=\sum\limits_{k=1}^{m+k_0} p^{(k)\pm}_{nm}$ and $p^{\infty}_{0m} =\sum\limits^{m+k_0}_{k=1} p^{(k)}_{0m}$, respectively.
\end{rem}

The following sums of the form $\sum\limits_{k=1}^{\infty}\phi^k$ belong to $\mathfrak{A}(B_0, B_x, B_*, \Sigma_0(x_{\infty},\delta),\varepsilon)$, although $\big\|\phi^k\big\|$ does not necessarily satisfy the condition of Proposition~\ref{prop3.2}.

\begin{exa}\label{exa3.2} For $\phi=p^+(x) {\rm e}^xx^{\sigma-1} +p^-(x) {\rm e}^{-x} x^{-\sigma-1} \in \mathfrak{A}(B_0,B_x,B_*,\Sigma_0(x_{\infty},\delta), \varepsilon)$, if $\varepsilon'<\varepsilon$ is sufficiently small, then $\sum\limits_{k=1}^{\infty} \phi^k \in \mathfrak{A}(B_0,B_x,B_*,\Sigma_0(x_{\infty},\delta), \varepsilon')$.

{\bf Verification}. Suppose that $p^{\pm}(x) \sim p_0^{\pm}+ \sum\limits_{m=1} ^{\infty}p_m^{\pm} x^{-m}$ and that $\big|p^{\pm}(x)-p_0^{\pm}\big|<r_0$ for $(\mathbf{c},\sigma, x) \in B_0\times B_x \times B_* \times \Sigma_0(x_{\infty}, \delta)$. We may choose $\varepsilon'$ in such a way that $\sum\limits_{k=1}^{\infty}\big( (|p^+_0|+|\xi_+|)|\eta_+| + (|p^-_0|+|\xi_-|)|\eta_-|\big)^k$ is convergent for $(\mathbf{c},\sigma, \xi_+,\xi_-, \eta_+,\eta_-)$ satisfying $(\mathbf{c},\sigma)\in B_0\times B_x\times B_*$, $|\xi_{\pm}|<2r_0$, $|\eta_{\pm}|<\varepsilon'$. Then $\sum\limits_{k=1}^{\infty}\big( (p^+_0+\xi_+)\eta x^{-1}+ (p^-_0+\xi_-)\eta^{-1}x^{-1}\big)^k$ converges absolutely for $(\mathbf{c},\sigma, \xi_+, \xi_-)$ as above and for $(\eta, x)$ satisfying $\big|\eta x^{-1}\big|, \big|\eta^{-1} x^{-1}\big|<\varepsilon'$, $|x|>1/\varepsilon'$, and is written in the form
\begin{gather*}
\sum_{n=1}^{\infty} \big(\pi^+_n\big(\xi_+,\xi_-, x^{-1}\big)\big(\eta x^{-1}\big)^n + \pi^-_n\big(\xi_+,\xi_-, x^{-1}\big)\big(\eta^{-1} x^{-1}\big)^n \big) +\pi_0\big(\xi_+,\xi_-, x^{-1}\big) x^{-1},
\end{gather*}
where $\pi^{\pm}_n\big(\xi_+,\xi_-, x^{-1}\big) =\sum\limits_{m=0}^{\infty}\pi_{nm}^{\pm} (\xi_+,\xi_-) x^{-m}$, $\pi_0\big(\xi_+,\xi_-, x^{-1}\big) =\sum\limits_{m=0}^{\infty}\pi_{0m}(\xi_+,\xi_-) x^{-m}$ with $\pi^{\pm}_{nm}(\xi_+,\xi_-) =\sum\limits_{k_1,k_2}\pi_{nmk_1k_2}^{\pm}
\xi_+^{k_1} \xi_-^{k_2}$, $\pi_{0m}(\xi_+,\xi_-) =\sum\limits_{k_1,k_2}\pi_{0mk_1k_2} \xi_+^{k_1} \xi_-^{k_2}$ converge for $|\xi_{\pm}|<2r_0$, $|x|>1/\varepsilon'$. Inserting $\eta={\rm e}^x x^{\sigma}$, $\xi_{\pm}=p^{\pm}(x)-p_0^{\pm} \sim \sum\limits^{\infty}_{m=1} p_m^{\pm}x^{-m}$ into this we have the conclusion.
\end{exa}

\begin{exa}\label{exa3.3} For $\phi=p_1(x){\rm e}^x x^{\sigma-1} + p_2(x) \big({\rm e}^{x} x^{\sigma-1}\big)^2 \in \mathfrak{A}(B_0,B_x,B_*,\Sigma_0(x_{\infty},\delta), \varepsilon)$, we have $\sum\limits_{k=1}^{\infty}\phi^k \in
\mathfrak{A}(B_0,B_x,B_*,\Sigma_0(x_{\infty},\delta), \varepsilon)$, because, for each $n\ge 1$, the coefficient of $\big({\rm e}^x x^{\sigma-1}\big)^n$ is a~finite sum of asymptotic series in $x^{-1}$.
\end{exa}

The following lemma is used in evaluating the primitive function of $x^{-1} \phi$.

\begin{lem}\label{lem3.3} For every $(\sigma ,x )\in B_* \times \Sigma_0(\tilde{x} _{\infty}, \delta)$,
\begin{gather*}
\big|{\rm e}^x x^{\sigma-1}\big|^{-n} \int_{\gamma(x)} \big|{\rm e}^{\xi}\xi^{\sigma-1}\big|^n\frac{|{\rm d}\xi|}
{|\xi|}, \
\big|{\rm e}^{-x} x^{-\sigma-1}\big|^{-n} \int_{\gamma(x)} \big|{\rm e}^{-\xi}\xi^{-\sigma-1}\big|^n
\frac{|{\rm d}\xi|}{|\xi|} \le \frac 1n\left( 1+\frac{2} {\sin\delta} \right),
\end{gather*}
$n=1,2,3,\ldots$. Here $\tilde{x}_{\infty}=\tilde{x}_{\infty}(B_*,\delta)$ is a sufficiently large number depending on $(B_*,\delta)$, and~$\gamma(x)$ is a path with the properties:{\samepage
\begin{enumerate}\itemsep=0pt
\item[$(i)$] $\gamma(x)$ starts from $x$ and tends to $\infty {\rm e}^{\pi {\rm i}/2}$, and $\xi \in \gamma(x)$ is given by $\xi={\rm i} \tau \exp(-{\rm i}\theta(\tau))$ with $|x|\le \tau <\infty$, where $\theta(\tau)$ satisfies $\theta(|x|)=\pi/2- \arg x$, $\theta(\tau) \to 0$ as $\tau\to \infty$;
\item[$(ii)$] for every $\xi \in \gamma(x)$, $|{\rm e}^x x^{\sigma}|= \big|{\rm e}^{\xi} \xi^{\sigma}\big|$;
\item[$(iii)$] for every $\xi \in \gamma(x)$, $|\xi|\ge |x|$, and $|\pi/2 -\arg \xi|< \pi/2-\delta$.
\end{enumerate}}
\end{lem}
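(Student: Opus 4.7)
The plan exploits property (ii) to collapse both integrals onto a single purely geometric estimate. Since $\big|{\rm e}^\xi \xi^\sigma\big|$ is constant along $\gamma(x)$ and equals $\big|{\rm e}^x x^\sigma\big|$, one has $\big|{\rm e}^\xi \xi^{\sigma-1}\big|^n = \big|{\rm e}^x x^\sigma\big|^n \tau^{-n}$ with $\tau=|\xi|$, and the analogous identity with $\sigma\mapsto-\sigma$ since $\big|{\rm e}^{-\xi}\xi^{-\sigma}\big|$ is just the reciprocal. Thus both inequalities reduce to establishing
\[
|x|^n \int_{\gamma(x)} \tau^{-n-1}\, |{\rm d}\xi| \;\le\; \frac{1}{n}\bigg(1 + \frac{2}{\sin\delta}\bigg).
\]

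With the parametrization $\xi = \i\tau \exp(-\i\theta(\tau))$ from (i), a direct computation gives $|{\rm d}\xi|^2 = {\rm d}\tau^2 + \tau^2 {\rm d}\theta^2$, hence $|{\rm d}\xi| \le {\rm d}\tau + \tau|{\rm d}\theta|$. The radial piece contributes $|x|^n \int_{|x|}^\infty \tau^{-n-1}\, {\rm d}\tau = 1/n$ at once. For the angular piece, I would differentiate the level identity $\tau\sin\theta + \re \sigma \cdot \log\tau - \im\sigma \cdot (\pi/2-\theta) = \mathrm{const}$ (which is exactly (ii) written out in polar form) to obtain
\[
\tau\theta'(\tau) = -\frac{\tau\sin\theta + \re \sigma}{\tau\cos\theta + \im\sigma}.
\]
By (iii) one has $\cos\theta \ge \sin\delta$; by choosing $\tilde{x}_\infty=\tilde{x}_\infty(B_*,\delta)$ large enough that $|\re\sigma|/\tau$ and $|\im\sigma|/(\tau\sin\delta)$ are both sufficiently small for $\tau \ge \tilde{x}_\infty$, uniformly in $\sigma\in B_*$, the estimate $|\tau\theta'(\tau)| \le 2/\sin\delta$ is secured. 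Writing $|{\rm d}\theta| = |\theta'(\tau)|\,{\rm d}\tau$ (monotonicity of $\theta$ along $\gamma(x)$ is also enforced by the sign structure of the above expression once $\tilde{x}_\infty$ is large), the angular contribution is bounded by $|x|^n\int_{|x|}^\infty \tau^{-n-1}\cdot\tau|\theta'(\tau)|\, {\rm d}\tau \le (2/\sin\delta)\cdot(1/n)$, and adding the two contributions yields the stated bound.

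The main obstacle is extracting the clean constant $2/\sin\delta$ rather than the weaker $\cot\delta + O(\tilde{x}_\infty^{-1})$ one first reads off from $\tau\theta'(\tau) = -\tan\theta + O(|\sigma|/\tau)$: this is precisely what forces $\tilde{x}_\infty$ to depend on the bounded parameter set $B_*$, as reflected in the statement. Once $|\tau\theta'(\tau)| \le 2/\sin\delta$ is established uniformly in $\sigma\in B_*$, the rest of the argument is routine, and the $\big|{\rm e}^{-\xi}\xi^{-\sigma-1}\big|$-integral is handled verbatim by the same reduction.
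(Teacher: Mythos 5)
Your reduction of both integrals to the single geometric bound $|x|^n\int_{\gamma(x)}\tau^{-n-1}|{\rm d}\xi|\le\frac1n\big(1+\frac{2}{\sin\delta}\big)$ via property (ii), the arc-length estimate $|{\rm d}\xi|\le{\rm d}\tau+\tau|{\rm d}\theta|$, and the expression $\tau\theta'(\tau)=-(\tau\sin\theta+\re\sigma)/(\tau\cos\theta+\im\sigma)$ obtained by differentiating the level identity are exactly the paper's argument, and that part is correct (including the observation that the $-\sigma$ integral reduces to the identical bound because $\big|{\rm e}^{-\xi}\xi^{-\sigma}\big|$ is the reciprocal of the conserved quantity).

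The genuine gap is in how you secure $|\tau\theta'(\tau)|\le 2/\sin\delta$: you invoke property (iii) in the form $\cos\theta(\tau)\ge\sin\delta$, but (iii) — together with the global existence of $\theta(\tau)$ for all $\tau\ge|x|$ and $\theta(\tau)\to0$ — is itself part of what the lemma asserts about $\gamma(x)$ and must be proved, not assumed. A priori the implicit function theorem only gives $\theta(\tau)$ near $\tau=|x|$, and nothing in your argument prevents $|\theta|$ from reaching $\pi/2-\delta$; note that the conditional bound $|\theta'|\le 2/(\tau\sin\delta)$ does not self-improve into containment, since $\int^\infty{\rm d}\tau/\tau$ diverges. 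The paper closes this by computing $(\im\xi(\tau))'$ and $(\re\xi(\tau))'$ and showing $(\im\xi(\tau))'/|(\re\xi(\tau))'|>10\tan\delta$ for $\tau\ge|x|>\tilde x_\infty$, so the curve rises steeply enough that it can never cross the boundary rays of the sector; this simultaneously yields prolongation, (iii), and $\theta(\tau)\to0$. Your parenthetical about the ``sign structure'' of $\tau\theta'$ does point at a workable alternative — for $\tau\ge\tilde x_\infty$ the numerator $\tau\sin\theta+\re\sigma$ has the sign of $\theta$ once $|\theta|$ exceeds a threshold of order $|\sigma|/\tau$, so $|\theta(\tau)|$ is driven toward $0$ and stays below $\max\{|\theta(|x|)|,O(|\sigma|/\tilde x_\infty)\}<\pi/2-\delta$ — but this must be promoted from an aside to an actual argument, since every subsequent estimate is conditional on it.
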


\begin{proof} We substitute $\xi= {\rm i} \tau \exp(-{\rm i}\theta(\tau))=\tau \exp({\rm i}(\pi/2-\theta(\tau)))$ into $|{\rm e}^x x^{\sigma}|= \big|{\rm e}^{\xi}\xi^{\sigma}\big|$ to obtain
\begin{gather}\label{3.1}
\re x +\re \sigma \cdot \log|x| - \im \sigma \cdot \arg x =\tau\sin\theta(\tau) + \re\sigma \cdot \log\tau -\im\sigma \cdot (\pi/2- \theta(\tau)).
\end{gather}
Choose $\tilde{x}_{\infty}=\tilde{x}_{\infty}(B_*,\delta)$ in such a way that, for every $\sigma\in B_*$,
\begin{gather}\label{3.2}
\tilde{x}_{\infty} > 100 (|\sigma|+1) (\tan\delta + 1/\sin\delta) > 200(|\sigma| +1).
\end{gather}
Then, for any $x \in \Sigma_0( \tilde{x}_{\infty}, \delta)$, the function $\theta(\tau)$ satisfying \eqref{3.1} and $\theta(|x|)=\pi/2-\arg x$ is uniquely determined near $\tau=|x|$ by the implicit function theorem. Note that
\begin{gather}\label{3.3}
 \theta'(\tau) = - \frac{\sin\theta(\tau) + \re\sigma \cdot \tau^{-1} }{\tau\cos\theta(\tau) + \im \sigma },\\
\label{3.4}
(\im\xi(\tau))' =\cos\theta(\tau) -\tau\theta'(\tau) \sin\theta(\tau)= \frac{ \tau + \im\sigma \cdot\cos\theta(\tau) + \re\sigma\cdot \sin\theta(\tau) }{\tau\cos\theta(\tau) + \im \sigma },\\
\label{3.5}
 (\re\xi(\tau))' =\sin\theta(\tau) +\tau\theta'(\tau) \cos\theta(\tau)= \frac{ \im\sigma \cdot\sin\theta(\tau) - \re\sigma\cdot \cos\theta(\tau) }{\tau\cos\theta(\tau) + \im \sigma }.
\end{gather}
As long as $\tau \ge |x| > \tilde{x}_{\infty}$, $|\theta(\tau)|< \pi/2 -\delta$, from \eqref{3.2}, \eqref{3.4} and \eqref{3.5} it follows that
\begin{gather*}
(\im\xi(\tau))' \ge \frac{ \tau (1-1/100) }{\tau(1+1/100)\sin\delta } \ge \frac 1{2\sin\delta} >0,\\
| (\re\xi(\tau))'| \le \frac{ 2|\sigma| } {\tau(1-1/100) \sin\delta } < \frac{3(|\sigma| +1)}{\tau\sin\delta},
\end{gather*}
and hence $(\im\xi(\tau))'/|(\re\xi(\tau))'| > \tau(|\sigma|+1)^{-1}/6 > 10 \tan\delta$. This fact implies that $\theta(\tau)$ may be prolonged for $\tau \ge |x| >\tilde{x}_{\infty}$ and that (ii) and (iii) are fulfilled. Then, by~\eqref{3.3} with~\eqref{3.2},
\begin{gather*}
|\theta'(\tau)| \le \frac{1+1/3 } {\tau(1-1/3)\cos\theta(\tau) } \le \frac{2\tau^{-1}}{\sin\delta},
\end{gather*}
and hence $|{\rm d}\xi| =|{\rm d}\xi/{\rm d}\tau| {\rm d}\tau \le \big|{\rm i} {\rm e}^{-{\rm i}\theta(\tau)} + \theta'(\tau) \tau {\rm e}^{-{\rm i}\theta(\tau)} \big|{\rm d}\tau \le (1+|\tau \theta'(\tau)|){\rm d}\tau \le (1+2/\sin\delta) {\rm d}\tau$. Using this and (ii) we obtain
\begin{gather*}
\big|{\rm e}^x x^{\sigma-1}\big|^{-n} \int_{\gamma(x)} \big|{\rm e}^{\xi}\xi^{\sigma-1}\big|^n\frac{|{\rm d}\xi|}
{|\xi|}= |x|^{n} \int_{|x|}^{\infty} \tau^{-n-1} \left(1+\frac{2}{\sin\delta}\right) {\rm d}\tau \le \frac 1n\left( 1+\frac{2} {\sin\delta} \right).
\end{gather*}
This completes the proof.
\end{proof}

\begin{rem}\label{rem3.2}If $|\arg x-3\pi/2| < \pi/2-\delta$, $|x|> \tilde{x}_{\infty}'(B_*,\delta)$, then along the path $\gamma_{3\pi/2}(x)$ defined by $\xi= {\rm i}\tau \exp(-{\rm i}\theta_{3\pi/2} (\tau) )$ with $\theta_{3\pi/2}(\tau) =-\pi+\theta(\tau)$ (cf.~\eqref{3.1}) the same estimates for the integrals are obtained. If $|\arg x +\pi/2| < \pi/2 -\delta$, then $\gamma_{-\pi/2}(x)$ given by $\xi= {\rm i} \tau \exp(-{\rm i} \theta_{-\pi/2}(\tau) )$ with $\theta_{-\pi/2}(\tau) = \pi +\theta(\tau)$ has the same property. These paths are obtained by making the substitutions $(x,\xi, \sigma)\mapsto \big(x{\rm e}^{-\pi {\rm i}}, \xi {\rm e}^{-\pi {\rm i}}, -\sigma\big)$ and $(x,\xi, \sigma)\mapsto \big(x{\rm e}^{\pi {\rm i}}, \xi {\rm e}^{\pi {\rm i}}, -\sigma\big)$, respectively, in the definition of $\gamma(x)$.
\end{rem}

Furthermore we have
\begin{lem}\label{lem3.4} For every $\sigma\in B_*$
\begin{gather*}
\big|{\rm e}^x x^{\sigma}\big|^{-n} \int_{\gamma_{\pi}(x)} \big|{\rm e}^{\xi}\xi^{\sigma} \big|^n{|{\rm d}\xi|} \le \frac 2{n|\cos(\arg x)|} , \qquad n=1,2,3,\ldots,
\end{gather*}
in the sector $|\arg x-\pi|< \pi/2 -\delta$, $|x|> \tilde{x}_{\infty}''(B_*,\delta)$, where $\tilde{x}_{\infty}''(B_*,\delta)$ is sufficiently large and~$\gamma_{\pi}(x)$ is a ray starting from~$x$ and tending to $\infty {\rm e}^{{\rm i} \arg x}$.
\end{lem}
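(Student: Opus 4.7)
The plan is to parametrise the ray $\gamma_\pi(x)$ as $\xi = t\,{\rm e}^{{\rm i}\arg x}$ with $t\ge|x|$, reducing everything to a single real integral, and then absorb the polynomial factor $t^{n\operatorname{Re}\sigma}$ into a mild tail of the exponential by choosing $\tilde x''_\infty(B_*,\delta)$ sufficiently large. Since $|\arg x-\pi|<\pi/2-\delta$, we have $\cos(\arg x)<0$ with $|\cos(\arg x)|\ge\sin\delta$, which makes the exponential genuinely decaying along $\gamma_\pi(x)$.

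First I would compute, along this ray, $|{\rm e}^\xi|={\rm e}^{t\cos(\arg x)}$ and $|\xi^\sigma|=t^{\operatorname{Re}\sigma}{\rm e}^{-\operatorname{Im}\sigma\cdot\arg x}$, and likewise for $|{\rm e}^x x^\sigma|$. The ${\rm e}^{-n\operatorname{Im}\sigma\cdot\arg x}$ factors cancel in the quotient, $|{\rm d}\xi|={\rm d}t$, so the integral to estimate reduces to
\begin{gather*}
\int_{|x|}^{\infty} {\rm e}^{-n(t-|x|)|\cos(\arg x)|} \left(\frac{t}{|x|}\right)^{n\operatorname{Re}\sigma}{\rm d}t.
\end{gather*}
Without the power factor this is exactly $1/(n|\cos(\arg x)|)$, so the task is to absorb $(t/|x|)^{n\operatorname{Re}\sigma}$ at the cost of a factor~$2$.

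Next, using $\log(1+s)\le s$ for $s\ge0$, I would bound $(t/|x|)^{n\operatorname{Re}\sigma}\le \exp(n\operatorname{Re}\sigma\cdot (t-|x|)/|x|)$ when $\operatorname{Re}\sigma\ge0$ (and simply by $1$ when $\operatorname{Re}\sigma<0$, since $t\ge|x|$). Choosing
\begin{gather*}
\tilde x''_\infty(B_*,\delta) \ge \frac{2\sup_{\sigma\in B_*}|\sigma|}{\sin\delta}
\end{gather*}
ensures $\operatorname{Re}\sigma/|x|\le |\cos(\arg x)|/2$ for every $\sigma\in B_*$ and every $x$ in the given sector. Then the integrand is majorised by ${\rm e}^{-n(t-|x|)|\cos(\arg x)|/2}$, whose integral over $[|x|,\infty)$ equals $2/(n|\cos(\arg x)|)$, giving the desired bound.

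The only subtle point is the interplay between the growing factor $(t/|x|)^{n\operatorname{Re}\sigma}$ and the exponential decay when $n$ is large; however, because the same $n$ multiplies both, the inequality $\log(1+s)\le s$ is enough, provided $|x|$ dominates $|\sigma|/|\cos(\arg x)|$, and this is exactly what the threshold $\tilde x''_\infty(B_*,\delta)$ is for. I expect no genuine obstacle beyond this calibration.
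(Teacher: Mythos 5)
Your proof is correct and is essentially the paper's argument: the paper writes the integrand as $\mathrm{e}^{-r(t)}$ with $-r(t)=n(t\cos(\arg x)+\operatorname{Re}\sigma\cdot\log(1+t/|x|))$ and bounds ${\rm d}t/{\rm d}r\le 2/(n|\cos(\arg x)|)$ by making $|x|$ large enough that the $\operatorname{Re}\sigma/(|x|+t)$ term eats at most half of $|\cos(\arg x)|$, which is the same "sacrifice half the exponential decay" calibration you perform via $\log(1+s)\le s$. Your version is a direct pointwise majorisation rather than a change of variables, but the key estimate and the choice of $\tilde x''_\infty(B_*,\delta)$ are the same.
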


\begin{proof} The path $\gamma_{\pi}(x)$ is given by $\xi=\xi(t)= x+t{\rm e}^{{\rm i}\arg x}$, $0\le t <\infty$. Then
\begin{gather*}
 \big|{\rm e}^{\xi} \xi^{\sigma}\big|^n \big| {\rm e}^x x^{\sigma} \big|^{-n} = \exp( -r(t)), \qquad
- r(t) = n( t \cos(\arg x) +\re\sigma \cdot \log (1+ t/|x|) ).
\end{gather*}
Since $-{\rm d}r/{\rm d}t =n(\cos(\arg x) +\re\sigma/(|x|+t) )$, we have ${\rm d}t/{\rm d}r \le 2/(n|\cos(\arg x)|)$, if $\tilde{x}''_{\infty}(B_*,\delta)$ is sufficiently large. Hence
\begin{gather*}
\big|{\rm e}^x x^{\sigma}\big|^{-n} \int_{\gamma_{\pi}(x)} \big|{\rm e}^{\xi}\xi^{\sigma} \big|^n{|{\rm d}\xi|}
=\int^{\infty}_0 {\rm e}^{-r(t)} \frac{{\rm d}t}{{\rm d}r} {\rm d}r \le \frac 2{n|\cos(\arg x)|} \int^{\infty}_0 {\rm e}^{-r}{\rm d}r
\le \frac 2{n|\cos(\arg x)|},
\end{gather*}
which completes the proof.
\end{proof}

\begin{rem}\label{rem3.3} Similarly, for $\sigma\in B_*$, in the sector $|\arg x|< \pi/2-\delta$, $|x| >\tilde{x}''_{\infty}(B_*,\delta)$, we have
\begin{gather*}
\big|{\rm e}^{-x} x^{-\sigma}\big|^{-n} \int_{\gamma_{0}(x)} \big|{\rm e}^{-\xi}\xi^{-\sigma} \big|^n{|{\rm d}\xi|} \le \frac 2{n\cos(\arg x)},
\end{gather*}
where $\gamma_0(x)$ is a ray starting from $x$ and tending to $\infty {\rm e}^{{\rm i}\arg x}$.
\end{rem}

\begin{lem}\label{lem3.5} Let $B_*$, $\tilde{x}_{\infty}$ and $\gamma(x)$ be as in Lemma~{\rm \ref{lem3.3}} and let $x_{\infty}> \tilde{x}_{\infty}$. Suppose that $p(x)$ is holomorphic in $(\mathbf{c},\sigma, x) \in B_0\times B_x \times B_*\times\Sigma_0(x_{\infty},\delta)$ and admits the asymptotic representation $p(x) \sim \sum\limits_{m=0}^{\infty} p_m x^{-m}$, $p_m \in \Q_*$ uniformly in $(\mathbf{c},\sigma)\in B_0\times B_x \times B_*$ as $x\to\infty$ through $\Sigma_0(x_{\infty},\delta)$. Then, for any $n\in \N$,
\begin{gather*}
I^{+n}_{\gamma(x)}(p(x)) := - \big({\rm e}^x x^{\sigma-1}\big)^{-n} \int_{\gamma(x)}\big({\rm e}^{\xi}\xi^{\sigma-1}\big)^n p(\xi) {\rm d}\xi,\\
I^{-n}_{\gamma(x)}(p(x)) := - \big({\rm e}^{-x} x^{-\sigma-1}\big)^{-n} \int_{\gamma(x)}\big({\rm e}^{-\xi}\xi^{-\sigma-1}\big)^n p(\xi) {\rm d}\xi
\end{gather*}
are holomorphic in $(\mathbf{c},\sigma, x)$ and admit the asymptotic representations
\begin{gather*}
 I^{+n}_{\gamma(x)}(p(x)) \sim \sum_{m=0}^{\infty} P^+_{nm} x^{-m}, \qquad I^{-n}_{\gamma(x)}(p(x)) \sim \sum_{m=0}^{\infty} P^-_{nm} x^{-m}
\end{gather*}
with $P^{\pm}_{nm} \in \Q_*$ uniformly in $(\mathbf{c},\sigma)\in B_0\times B_x \times B_*$ as $x\to\infty$ through $\Sigma_0(x_{\infty},\delta)$. Furthermore, if $p(x)=O\big(x^{-1}\big)$,
\begin{gather*}
 I^{0}_{\gamma(x)}(p(x)) := - x \int_{\gamma(x)}{\xi}^{-1} p(\xi) {\rm d}\xi \sim \sum_{m=0}^{\infty} P^0_{nm} x^{-m}
\end{gather*}
with $P^0_{nm} \in \Q_*$.
\end{lem}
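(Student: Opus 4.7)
The plan is to reduce each integral to the case of monomials $\xi^{-m}$, handle those by iterated integration by parts, and then control the tail using the path estimates already built into Lemma \ref{lem3.3}. For any integer $M\ge 1$, split $p(\xi)=\sum_{m=0}^{M-1}p_m\xi^{-m}+R_M(\xi)$, where the uniform asymptotic representation of $p$ furnishes $|R_M(\xi)|\le C_M|\xi|^{-M}$ for some constant $C_M$ bounded on $B_0\times B_x\times B_*\times\Sigma_0(x_\infty,\delta)$; treat the polynomial part and the remainder $R_M$ separately.

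For the monomials inside $I^{+n}_{\gamma(x)}$, integration by parts on
\begin{gather*}
J_{n,m}(x) := \int_{\gamma(x)}e^{n\xi}\,\xi^{n(\sigma-1)-m}\,d\xi
\end{gather*}
with antiderivative $e^{n\xi}/n$ gives, since by property (ii) of Lemma \ref{lem3.3} one has $|e^{n\xi}\xi^{n(\sigma-1)-m}|=|e^{nx}x^{n\sigma}|\,|\xi|^{-n-m}$ which vanishes at $\xi=\infty e^{\mathrm{i}\pi/2}$, the recursion
\begin{gather*}
J_{n,m}(x)=-\frac{1}{n}\,e^{nx}x^{n(\sigma-1)-m}-\frac{n(\sigma-1)-m}{n}\,J_{n,m+1}(x).
\end{gather*}
Iterating $K$ times expresses $I^{+n}_{\gamma(x)}(\xi^{-m})=-\,(e^xx^{\sigma-1})^{-n}J_{n,m}(x)$ as a finite sum $\sum_{k=0}^{K-1}\alpha_{n,m,k}(\sigma)\,x^{-m-k}$ with $\alpha_{n,m,k}(\sigma)\in\Q[\sigma]\subset\Q_{*}$, plus a tail proportional to $I^{+n}_{\gamma(x)}(\xi^{-m-K})$. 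For the remainder, the derivation of Lemma \ref{lem3.3} (with the factor $|\xi|^{-M}$ inserted and the identity $|e^\xi\xi^{\sigma-1}|^n=|e^xx^\sigma|^n|\xi|^{-n}$ on $\gamma(x)$) gives
\begin{gather*}
|I^{+n}_{\gamma(x)}(R_M)|\le |e^xx^{\sigma-1}|^{-n}\int_{\gamma(x)}|e^\xi\xi^{\sigma-1}|^n|R_M(\xi)|\,|d\xi|\le\frac{C_M(1+2/\sin\delta)}{n+M-1}\,|x|^{-M+1}.
\end{gather*}
Combining the two and choosing $M$ and $K$ large enough, one obtains the asymptotic representation $I^{+n}_{\gamma(x)}(p(x))\sim\sum_{m\ge0}P^{+}_{nm}x^{-m}$ at any prescribed order $N$, with coefficients $P^{+}_{nm}\in\Q_{*}$ (bilinear combinations of the $p_\ell\in\Q_*$ and the rational-in-$\sigma$ factors produced by the IBP), uniformly in $(\mathbf c,\sigma)\in B_0\times B_x\times B_*$. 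Holomorphy of $I^{+n}_{\gamma(x)}(p(x))$ in $(\mathbf c,\sigma,x)$ is a direct consequence of the holomorphy of the integrand together with the uniform absolute convergence supplied by the same bound.

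The case $I^{-n}_{\gamma(x)}$ is entirely analogous, with antiderivative $-e^{-n\xi}/n$ applied to the base power $\xi^{-n(\sigma+1)-m}$; the decay of $|e^{-n\xi}\xi^{-n(\sigma+1)-m}|=|e^xx^\sigma|^{-n}|\xi|^{-n-m}$ along $\gamma(x)$ is again forced by property (ii). For $I^{0}_{\gamma(x)}$ no IBP is needed: since $p(x)=O(x^{-1})$, a direct computation gives $\int_{\gamma(x)}\xi^{-m-1}d\xi=x^{-m}/m$ for $m\ge 1$, so $I^{0}_{\gamma(x)}(\xi^{-m})=-x^{1-m}/m$, and the remainder is bounded by $|I^{0}_{\gamma(x)}(R_M)|\le C_M|x|\int_{\gamma(x)}|\xi|^{-1-M}|d\xi|=O(|x|^{1-M})$ via $|d\xi|\le(1+2/\sin\delta)\,d\tau$. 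The main obstacle is purely bookkeeping: one must track the rational-in-$\sigma$ factors produced at each IBP step to certify that the resulting coefficients $P^{\pm}_{nm},P^{0}_{nm}$ lie in $\Q_{*}$, and verify that all tail bounds are uniform on the parameter domain $B_0\times B_x\times B_*$ — after which holomorphy and the asymptotic representations are immediate from the preceding estimates.
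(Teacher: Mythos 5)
Your proposal is correct and follows essentially the same route as the paper: integration by parts on the monomials $\xi^{-m}$ yields the recursion $I_{n,m}=-x^{-m}/n-\tfrac{n(\sigma-1)-m}{n}I_{n,m+1}$ (boundary term killed by property (ii) of Lemma~\ref{lem3.3}), and the remainder is controlled by the path estimates of Lemma~\ref{lem3.3}(iii). Only be careful with the phrase ``uniform absolute convergence'': as Remark~\ref{rem3.4} notes, $I^{\pm 1}_{\gamma(x)}(p)$ itself need not converge absolutely when $p_0\neq 0$, and holomorphy should be read off from your decomposition, in which the explicit boundary terms and the absolutely convergent integrals appearing after one integration by parts (or in the remainder with $M\ge 2$) are each holomorphic.
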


\begin{rem}\label{rem3.4}The integrals $I^{\pm 1}_{\gamma(x)}(p(x))$ are not necessarily absolutely convergent.
\end{rem}

\begin{rem}\label{rem3.5} If $p(x) \sim \sum\limits_{m=0}^{\infty} p_m x^{-m}$ in the sector $|\arg x -3\pi/2 |< \pi/2-\delta$, then
\begin{gather*}
 I^{\pm n}_{\gamma_{3\pi/2}(x)}(p(x)) := - \big({\rm e}^{\pm x} x^{\pm\sigma-1}\big)^{-n} \int_{\gamma_{3\pi/2}(x)}({\rm e}^{\pm\xi} \xi^{\pm \sigma-1})^n p(\xi) {\rm d}\xi
\end{gather*}
with $\gamma_{3\pi/2}(x)$ in Remark \ref{rem3.2} admit asymptotic representations of the same form as above. Furthermore, for $p(x)$ in the sector $|\arg x + \pi/2|<\pi/2-\delta$, we may define $I^{\pm n}_{\gamma_{-\pi/2}(x)}(p(x))$ with $\gamma_{-\pi/2}(x)$ as in Remark~\ref{rem3.2} having the same property.
\end{rem}

\begin{proof}[Proof of Lemma \ref{lem3.5}] For every $(k, n)\in (\N\cup \{ 0\}) \times \N$, integrating by parts and using (ii) of Lemma~\ref{lem3.3}, we have
\begin{gather*}
I_{n,k}(x) := \big({\rm e}^x x^{\sigma-1}\big)^{-n} \int_{\gamma(x)} \big({\rm e}^{\xi} \xi^{\sigma-1}\big)^n \xi^{-k} {\rm d}\xi
= \left[ {\rm e}^{-nx}x^{-n(\sigma-1)} \frac{{\rm e}^{n\xi}}n \xi^{n(\sigma-1)-k} \right] _{\gamma(x)}\\
\hphantom{I_{n,k}(x):=}{} -\frac{n(\sigma-1)-k}{n} \big({\rm e}^x x^{\sigma-1}\big)^{-n} \int_{\gamma(x)}
\big({\rm e}^{\xi} \xi^{\sigma-1}\big)^n \xi^{-k-1} {\rm d}\xi\\
\hphantom{I_{n,k}(x)}{} = -\frac{x^{-k}}n -\frac{n(\sigma-1)-k}n I_{n, k+1}(x)
\end{gather*}
in $\Sigma_0(x_{\infty},\delta)$, because $\big(\big|{\rm e}^{\xi}\xi^{\sigma}\big|/ |{\rm e}^x x^{\sigma}|\big)^n |x|^n |\xi|^{-n-k} \to 0$ as $\xi\to \infty$ along $\gamma(x)$. Furthermore, $I_{n,k+1}(x)$ converges absolutely. If $k \ge 1$, then, by (iii) of Lemma~\ref{lem3.3},
\begin{gather*}
|I_{n,k}(x)|\le \int_{\gamma(x)} \big|{\rm e}^x x^{\sigma-1}\big|^{-n} |{\rm e}^{\xi}\xi^{\sigma-1}|^n |x|^{-(k-1)} \frac{|{\rm d}\xi|}{|\xi|} \ll |x|^{-(k-1)}.
\end{gather*}
Similarly, if $g(\xi) \ll |\xi|^{-k}$,
\begin{gather*}
\left| \int_{\gamma(x)} \big({\rm e}^x x^{\sigma-1}\big)^{-n} \big({\rm e}^{\xi}\xi^{\sigma-1}\big)^n g(\xi) {\rm d}\xi \right| \ll |x|^{-(k-1)}.
\end{gather*}
Combining these facts suitably, we may show that $I^{+n}_{\gamma(x)} (p(x))$ is holomorphic in $(\mathbf{c},\sigma, x)$ and get the asymptotic expansion of $I^{+n}_{\gamma(x)} (p(x))$ as in the lemma.
\end{proof}

Now we are ready to define the primitive function of $\phi \in \hat{\mathfrak{A}}$ or $\mathfrak{A}$. Let $x_{\infty} \ge \tilde{x}_{\infty}$ with $\tilde{x}_{\infty}$ as in Lemma~\ref{lem3.3}. Suppose that
\begin{gather*}
\phi = \sum_{n=1}^{\infty} p_n^+(x) \big({\rm e}^x x^{\sigma-1}\big)^n + \sum_{n=1}^{\infty} p_n^-(x) \big({\rm e}^{-x} x^{-\sigma-1}\big)^n + p_0(x) x^{-1}\\
\hphantom{\phi =}{} \in \hat{\mathfrak{A}} = \hat{\mathfrak{A}}( B_0, B_x, B_*, \Sigma_0(x_{\infty},\delta), \varepsilon)
\end{gather*}
and that $p_0(x)=O\big(x^{-1}\big)$. Let $\gamma(x)$ be as in Lemma~\ref{lem3.3}, and $I^{\pm n}_{\gamma(x)} (\,\cdot \,)$, $I^{0}_{\gamma(x)} (\,\cdot \,)$ as in Lemma~\ref{lem3.5}. Then we define
\begin{gather*}
\mathcal{I}[\phi] := \sum_{n=1}^{\infty} P^+_n(x) \big({\rm e}^x x^{\sigma-1}\big)^n + \sum_{n=1}^{\infty} P^-_n(x) \big({\rm e}^{-x} x^{-\sigma-1}\big)^n +P_0(x) x^{-1}
\end{gather*}
with
\begin{gather}\label{3.6}
P^+_n(x) = I^{+n}_{\gamma(x)} (p^{+}_n(x) ), \qquad P^-_n(x) = I^{-n}_{\gamma(x)} (p^{-}_n(x) ), \qquad P_0(x) = I^{0}_{\gamma(x)} (p_0(x) ).
\end{gather}
By Lemma \ref{lem3.5}, $P^{\pm}_n(x)$ and $P_0(x)$ are represented by asymptotic series of the form
\begin{gather}\label{3.7}
P^{\pm}_n(x) \sim \sum_{m=0}^{\infty} P^{\pm}_{nm} x^{-m}, \qquad P_0(x) \sim \sum_{m=0}^{\infty} P_{0m} x^{-m}
\end{gather}
with $P^{\pm}_{nm},P_{0m} \in \Q_*$ uniformly in $(\mathbf{c},\sigma) \in B_0\times B_x \times B_*$ as $x\to\infty$ through $\Sigma_0(x_{\infty},\delta)$, and hence $\mathcal{I}[\phi] \in \hat{\mathfrak{A}}$. The series $\mathcal{I}[\phi]$ is a~formal primitive function of~$\phi$.

\begin{prop}\label{prop3.6} Suppose that $\phi \in \hat{\mathfrak{A}}$ and $p_0(x)=O\big(x^{-1}\big)$. Then $({\rm d}/{\rm d}x) \mathcal{I}[\phi] =\phi$ as a formal series.
\end{prop}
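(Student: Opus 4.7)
The plan is to differentiate $\mathcal{I}[\phi]$ summand by summand and to reduce each term to an instance of the fundamental theorem of calculus applied to the defining integrals~\eqref{3.6}. Rewriting those definitions as
\begin{gather*}
P_n^+(x)\big({\rm e}^x x^{\sigma-1}\big)^n = -\int_{\gamma(x)} \big({\rm e}^{\xi}\xi^{\sigma-1}\big)^n p_n^+(\xi)\,{\rm d}\xi,\\
P_n^-(x)\big({\rm e}^{-x} x^{-\sigma-1}\big)^n = -\int_{\gamma(x)} \big({\rm e}^{-\xi}\xi^{-\sigma-1}\big)^n p_n^-(\xi)\,{\rm d}\xi,\\
P_0(x) x^{-1} = -\int_{\gamma(x)} \xi^{-1} p_0(\xi)\,{\rm d}\xi,
\end{gather*}
each right-hand side will be shown to have $x$-derivative equal to the integrand evaluated at $\xi = x$, which is exactly the matching summand of $\phi$. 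Since elements of $\hat{\mathfrak{A}}$ are by definition formal series and $({\rm d}/{\rm d}x)$ is taken coefficientwise in the Laurent variables ${\rm e}^x x^{\sigma-1}$, ${\rm e}^{-x} x^{-\sigma-1}$, summing these three families of equalities yields $({\rm d}/{\rm d}x)\mathcal{I}[\phi] = \phi$.

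The main technical point is the validity of the FTC for an integral over the contour $\gamma(x)$, which varies with $x$ both in its lower endpoint and in its shape (the angle $\theta(\tau)$ of Lemma~\ref{lem3.3} depends on $x$). The plan is to fix a nearby base point $x_0$ and to decompose
\begin{gather*}
\int_{\gamma(x)} = \int_{x \to x_0} + \int_{\gamma(x_0)},
\end{gather*}
the first integral being taken along a short straight segment from $x$ to $x_0$. The three integrands are holomorphic in $\xi \in \mathcal{R}(\C \setminus \{0\})$, and Lemma~\ref{lem3.3}(ii), (iii) together with Remarks~\ref{rem3.2} and~\ref{rem3.3} (in the alternative path situations) guarantee convergence of both $\int_{\gamma(x)}$ and $\int_{\gamma(x_0)}$; Cauchy's theorem therefore legitimizes the decomposition. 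Since $\gamma(x_0)$ is $x$-independent, differentiating in $x$ produces only the boundary term from the moving lower endpoint, namely minus the integrand at $\xi = x$, so the minus sign in~\eqref{3.6} gives precisely the desired identity.

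Once this contour-deformation step is in place, each of the three summand types reduces to a one-line application of the FTC, and no analysis of the asymptotic coefficients of $P_n^{\pm}(x)$ or $P_0(x)$ is required at the formal-series level — one simply reads off coefficients of $({\rm e}^x x^{\sigma-1})^n$, $({\rm e}^{-x} x^{-\sigma-1})^n$, and $x^{-1}$ one at a time. The only substantive hurdle is the holomorphic deformation of $\gamma(x)$; the rest is mechanical.
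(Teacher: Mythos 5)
Your route—splitting $\gamma(x)$ into a short segment from $x$ to a fixed base point $x_0$ followed by the $x$-independent contour $\gamma(x_0)$, and reading off the derivative as the boundary term—is genuinely different from the paper's, which instead writes $p_n^+(x)=\big(p_n^+(x)-p_{n0}^+\big)+p_{n0}^+$, applies the fundamental theorem of calculus only to the absolutely convergent piece $I^{+n}_{\gamma(x)}\big(p_n^+(x)-p_{n0}^+\big)$, and handles the constant piece through the explicit identity $I^{+n}_{\gamma(x)}(1)=1/n-(\sigma-1)I^{+n}_{\gamma(x)}\big(x^{-1}\big)$ from the proof of Lemma~\ref{lem3.5}. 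The reason the paper takes this detour is precisely the point your sketch passes over: by Remark~\ref{rem3.4} the integrals $I^{\pm 1}_{\gamma(x)}(p(x))$ are in general only conditionally convergent, because along $\gamma(x)$ one has $\big|{\rm e}^{\xi}\xi^{\sigma}\big|\equiv\big|{\rm e}^{x}x^{\sigma}\big|$, so for $n=1$ and bounded $p_1^{\pm}$ the integrand decays only like $|\xi|^{-1}$. In particular Lemma~\ref{lem3.3}(ii),(iii) do \emph{not} ``guarantee convergence'' of $\int_{\gamma(x)}$ and $\int_{\gamma(x_0)}$ for the $n=1$ terms: those estimates control only integrals carrying an extra factor $|{\rm d}\xi|/|\xi|$. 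Convergence of the $n=1$ integrals comes from the integration by parts in Lemma~\ref{lem3.5}, which you would need to invoke instead.

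Second, ``Cauchy's theorem legitimizes the decomposition'' is not yet an argument: the two contours are unbounded, so you must close them by an arc at radius $T$ and show its contribution vanishes as $T\to\infty$. This is not automatic, because $\gamma(x)$ and $\gamma(x_0)$ do not approach each other: from \eqref{3.1} one finds $\theta(T)-\theta_0(T)=O(1/T)$, so the connecting arc has length of order $|\re x-\re x_0|$, bounded but not shrinking. One must therefore use the $O\big(T^{-n}\big)$ bound on the integrand along that arc (via the monotonicity of $\big|{\rm e}^{\xi}\xi^{\sigma}\big|$ there) to kill the arc term, and for $n=1$ this is exactly the borderline computation $O\big(T^{-1}\big)\cdot O(1)\to 0$. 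With these two repairs—convergence of the improper integrals supplied by Lemma~\ref{lem3.5} rather than Lemma~\ref{lem3.3}, and the vanishing-arc estimate—your argument does close, and it has the merit of treating all $n$ and all three summand types uniformly; but as written the $n=\pm1$ case, which is the only reason the paper's proof is not a one-liner, is unsupported.
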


\begin{proof} For $n\ge 1$, let $p^+_{n0}=\lim\limits_{x\to\infty} p^+_n(x)$. Then
\begin{gather*}
I^{+n}_{\gamma(x)} \big(p^+_n(x)\big) =I^{+n}_{\gamma(x)} \big(p^+_n(x) -p^+_{n0}\big) + p^+_{n0} I^{+n}_{\gamma(x)} (1).
\end{gather*}
Since $\big({\rm e}^x x^{\sigma-1}\big)^{-n} \big({\rm e}^{\xi}\xi^{\sigma-1}\big)^n \big(p^{+}_{n} (\xi) -p^+_{n0}\big) \ll |x||\xi|^{-2}$ as $\xi \to\infty$ along $\gamma(x)$, $I^{+n}_{\gamma (x)}\big(p^+_n(x) -p^+_{n0}\big) $ converges absolutely, and hence
\begin{gather*}
({\rm d}/{\rm d}x)\big( \big({\rm e}^x x^{\sigma-1}\big)^n I^{+n}_{\gamma(x)}\big(p^+_n(x) -p^+_{n0}\big) \big)= \big({\rm e}^x x^{\sigma-1}\big)^n \big(p^+_n(x) -p^+_{n0}\big).
\end{gather*}
As shown in the proof of Lemma \ref{lem3.5}, $I^{+n}_{\gamma(x)} (1) = 1/n -(\sigma-1) I^{+n}_{\gamma(x)}\big(x^{-1}\big)$, the last integral being absolutely convergent. This implies
\begin{gather*}
({\rm d}/{\rm d}x)\big( \big({\rm e}^x x^{\sigma-1}\big)^n I^{+n}_{\gamma(x)}(1) p^+_{n0} \big)= \big({\rm e}^x x^{\sigma-1}\big)^n p^+_{n0}.
\end{gather*}
Thus we obtain the conclusion.
\end{proof}

Furthermore we have
\begin{prop}\label{prop3.7} If $\phi \in \mathfrak{A} =\mathfrak{A}(B_0,B_x, B_*, \Sigma_0(x_{\infty},\delta), \varepsilon)$, then $\mathcal{I}\big[x^{-1}\phi\big] \in \mathfrak{A}$ and $\big\| \mathcal{I} \big[x^{-1} \phi\big] \big\| \le (1+2/\sin\delta) \|\phi\|$.
\end{prop}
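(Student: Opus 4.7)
The plan is to verify the two claims --- membership in $\mathfrak{A}$ and the norm inequality --- by breaking $\phi$ into its three pieces (the positive exponential tower, the negative exponential tower, and the isolated $x^{-1}$ term) and estimating each coefficient of $\mathcal{I}[x^{-1}\phi]$ via the arc-length estimates of Lemma~\ref{lem3.3}. First I would write
\begin{gather*}
x^{-1}\phi = \sum_{n=1}^{\infty} \big(x^{-1}p_n^+(x)\big)\big({\rm e}^x x^{\sigma-1}\big)^n + \sum_{n=1}^{\infty} \big(x^{-1}p_n^-(x)\big)\big({\rm e}^{-x} x^{-\sigma-1}\big)^n + \big(x^{-1}p_0(x)\big) x^{-1},
\end{gather*}
and observe that the new isolated coefficient $x^{-1}p_0(x) =O(x^{-1})$ automatically, since $p_0(x)$ is an asymptotic series with coefficients in $\Q_*$. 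Hence the definition \eqref{3.6} of $\mathcal{I}$ applies, and Lemma~\ref{lem3.5} immediately places $\mathcal{I}[x^{-1}\phi]$ in $\hat{\mathfrak{A}}$.

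Next I would turn to the key bound. For $\xi_0\in \Sigma_0(x_{\infty},\delta)$ with $|\xi_0|\ge |x|$, Lemma~\ref{lem3.3} applied to the path $\gamma(\xi_0)$ gives
\begin{gather*}
\big|I^{+n}_{\gamma(\xi_0)}\big(\xi_0^{-1}p_n^+(\xi_0)\big)\big| \le \sup_{\xi \in \gamma(\xi_0)}|p_n^+(\xi)| \cdot \frac{1}{n}\left(1+\frac{2}{\sin\delta}\right) \le \frac{1}{n}\left(1+\frac{2}{\sin\delta}\right) M(p_n^+,|x|),
\end{gather*}
because $|\xi|\ge|\xi_0|\ge|x|$ along $\gamma(\xi_0)$ and $\gamma(\xi_0)\subset\Sigma_0(x_{\infty},\delta)$ by (iii) of Lemma~\ref{lem3.3}. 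Taking the supremum over $\xi_0$ yields $M(I^{+n}_{\gamma(\cdot)}(\cdot^{-1}p_n^+(\cdot)),|x|) \le (1+2/\sin\delta)M(p_n^+,|x|)/n$; the same inequality with $+$ replaced by $-$ follows verbatim. For the isolated part I would estimate directly, using the arc-length bound $|{\rm d}\xi|\le (1+2/\sin\delta)\,{\rm d}\tau$ from the proof of Lemma~\ref{lem3.3}:
\begin{gather*}
\big|I^0_{\gamma(\xi_0)}\big(\xi_0^{-1}p_0(\xi_0)\big)\big| = \left| \xi_0 \int_{\gamma(\xi_0)} \xi^{-2} p_0(\xi)\, {\rm d}\xi \right| \le |\xi_0| M(p_0,|x|) \int_{|\xi_0|}^{\infty} \tau^{-2}\left(1+\frac{2}{\sin\delta}\right){\rm d}\tau,
\end{gather*}
which collapses to $(1+2/\sin\delta)M(p_0,|x|)$, and hence $M(I^0_{\gamma(\cdot)}(\cdot^{-1}p_0(\cdot)),|x|)\le (1+2/\sin\delta)M(p_0,|x|)$.

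Finally, assembling these three estimates according to the definition of $\|\cdot\|$, the $n$-th positive (resp.\ negative) term of $\|\mathcal{I}[x^{-1}\phi]\|$ is at most $(1/n)(1+2/\sin\delta)$ times the corresponding term of $\|\phi\|$, while the isolated term satisfies the same inequality with factor $(1+2/\sin\delta)$. Summing termwise and using $1/n\le 1$ delivers
\begin{gather*}
\|\mathcal{I}[x^{-1}\phi]\|(x,\eta) \le \left(1+\frac{2}{\sin\delta}\right)\|\phi\|(x,\eta)
\end{gather*}
on $\Xi(\Sigma_0(x_{\infty},\delta),\varepsilon)$. Since by hypothesis $\|\phi\|$ converges uniformly in $(\mathbf{c},\sigma,x,\eta)\in B_0\times B_x\times B_*\times \Xi(\Sigma_0(x_{\infty},\delta),\varepsilon)$, so does $\|\mathcal{I}[x^{-1}\phi]\|$, which places $\mathcal{I}[x^{-1}\phi]$ in $\mathfrak{A}$ and completes the proof.

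There is essentially no deep obstacle here; the only subtle point --- and what I would take the most care with --- is the isolated term, since the integral $\int_{\gamma(x)}\xi^{-2}p_0(\xi)\,{\rm d}\xi$ is not covered by the Lemma~\ref{lem3.3} bound in its stated form and must be estimated by hand using the parametrisation $\xi={\rm i}\tau {\rm e}^{-{\rm i}\theta(\tau)}$ together with the arc-length estimate $|{\rm d}\xi|\le (1+2/\sin\delta)\,{\rm d}\tau$ extracted from that proof.
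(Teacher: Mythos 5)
Your proposal is correct and follows essentially the same route as the paper: decompose $\mathcal{I}\big[x^{-1}\phi\big]$ coefficientwise, bound the coefficients $I^{\pm n}_{\gamma(\tilde x)}\big(\tilde x^{-1}p^{\pm}_n(\tilde x)\big)$ via Lemma~\ref{lem3.3} to get $M\big(\hat P^{\pm}_n,|x|\big)\le(1+2/\sin\delta)M\big(p^{\pm}_n,|x|\big)$, and treat the isolated term analogously. The paper dismisses the isolated term with a ``similarly''; your explicit computation of it from the arc-length estimate $|{\rm d}\xi|\le(1+2/\sin\delta)\,{\rm d}\tau$ is exactly the intended argument.
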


\begin{proof} By definition
\begin{gather*}
\mathcal{I}\big[x^{-1} \phi\big] =\sum_{n=1}^{\infty} \hat{P}^+_n(x) \big({\rm e}^x x^{\sigma-1}\big)^n +\sum_{n=1}^{\infty} \hat{P}^-_n(x) \big({\rm e}^{-x} x^{-\sigma-1}\big)^n + \hat{P}_0(x)x^{-1} \in \hat{\mathfrak{A}},
\end{gather*}
where
\begin{gather*}
\hat{P}^{\pm}_n(x) = I^{\pm n}_{\gamma(x)}\big(x^{-1} p_n^{\pm}(x)\big)\\
\hphantom{\hat{P}^{\pm}_n(x)}{} = -\big({\rm e}^{\pm x} x^{\pm \sigma-1} \big)^{-n} \int_{\gamma(x)}
\big({\rm e}^{\pm \xi} \xi^{\pm \sigma-1} \big)^{n} \xi^{-1} p_{n}^{\pm}(\xi) {\rm d}\xi \sim \hat{P}^{\pm}_{n1} x^{-1} +\cdots,\\
\hat{P}_0(x) = I^0_{\gamma(x)} \big(x^{-1} p_0(x)\big) = - x \int_{\gamma(x)}\xi^{-2} p_{0}(\xi) {\rm d}\xi \sim \hat{P}_{00} +\cdots.
\end{gather*}
By Lemma~\ref{lem3.3}, for any $x$, $\tilde{x} \in \Sigma_0(x_{\infty}, \delta)$ such that $|\tilde{x}|\ge |x|$,
\begin{gather*}
\big|\hat{P}^{\pm}_n(\tilde{x})\big| \le \int_{\gamma(\tilde{x})} \big|{\rm e}^{\pm \tilde{x}} \tilde{x}^{\pm \sigma-1} \big|^{-n}
\big|{\rm e}^{\pm \xi} \xi^{\pm \sigma-1} \big|^{n} |\xi|^{-1}M\big( p_{n}^{\pm}, |\tilde{x}|\big) |{\rm d}\xi |\\
\hphantom{\big|\hat{P}^{\pm}_n(\tilde{x})\big|}{} \le (1+2/\sin\delta) M\big(p^{\pm}_n, |\tilde{x}|\big) \le (1+2/\sin\delta) M\big(p^{\pm}_n, |x|\big),
\end{gather*}
which implies $M\big(\hat{P}^{\pm}_n, |x|\big) \le (1+2/\sin\delta) M(p^{\pm}_n, |x|)$. Similarly, we have $M\big(\hat{P}_0, |x|\big) \le (1+2/\sin\delta) M(p_0, |x|)$. From these inequalities the proposition immediately follows.
\end{proof}

\begin{prop}\label{prop3.8} If $\phi \in \mathfrak{A} =\mathfrak{A}(B_0, B_x, B_*, \Sigma_0(x_{\infty}, \delta),\varepsilon)$, then we have ${\rm e}^{-x} x^{-\sigma} \mathcal{I}\big[{\rm e}^x x^{\sigma-1}\phi\big]$,
${\rm e}^{x} x^{\sigma} \mathcal{I}\big[{\rm e}^{-x} x^{-\sigma-1}\phi\big] \in \mathfrak{A}$, and $\big\|{\rm e}^{-x} x^{-\sigma} \mathcal{I}\big[{\rm e}^x x^{\sigma-1}\phi\big] \big\|$, $\big\|{\rm e}^{x} x^{\sigma} \mathcal{I}\big[{\rm e}^{-x} x^{-\sigma-1}\phi\big] \big\| \le (1+2/\sin\delta) \|\phi \|$.
\end{prop}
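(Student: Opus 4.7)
The plan is to mimic the proof of Proposition~\ref{prop3.7}. First I would write ${\rm e}^x x^{\sigma-1}\phi$ in canonical $\hat{\mathfrak{A}}$-form: its coefficient of $({\rm e}^x x^{\sigma-1})^n$ is $p_{n-1}^+(x)$ for $n\ge 2$ and $p_0(x)x^{-1}$ for $n=1$; its coefficient of $({\rm e}^{-x}x^{-\sigma-1})^m$ is $p_{m+1}^-(x)x^{-2}$ for $m\ge 1$; and the pure $x^{-1}$-part becomes $p_1^-(x)x^{-1}\cdot x^{-1}$, so the new ``$p_0$'' equals $p_1^-(x)x^{-1}$, which is $O(x^{-1})$ as required by the definition of $\mathcal{I}$. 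Applying formula~\eqref{3.6} termwise and multiplying through by ${\rm e}^{-x}x^{-\sigma}$, each basic summand of $\phi$ contributes as follows: $p_n^+({\rm e}^x x^{\sigma-1})^n$ produces $x^{-1}I^{+(n+1)}_{\gamma(x)}(p_n^+)\cdot({\rm e}^x x^{\sigma-1})^n$ for $n\ge 1$; $p_1^-{\rm e}^{-x}x^{-\sigma-1}$ produces $I^{0}_{\gamma(x)}(p_1^-(x)x^{-1})\,{\rm e}^{-x}x^{-\sigma-1}$; $p_n^-({\rm e}^{-x}x^{-\sigma-1})^n$ produces $x\cdot I^{-(n-1)}_{\gamma(x)}(p_n^-(x)x^{-2})\cdot({\rm e}^{-x}x^{-\sigma-1})^n$ for $n\ge 2$; and $p_0(x)x^{-1}$ produces $I^{+1}_{\gamma(x)}(p_0(x)x^{-1})\cdot x^{-1}$. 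By Lemma~\ref{lem3.5}, each of these coefficients is holomorphic on $B_0\times B_x\times B_*\times\Sigma_0(x_\infty,\delta)$ and admits an asymptotic series in $x^{-1}$ with coefficients in $\Q_*$, so the image lies in $\hat{\mathfrak{A}}$.

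For the norm bound I would reuse the two estimates from the proof of Lemma~\ref{lem3.3}: $|{\rm e}^\xi\xi^\sigma|=|{\rm e}^x x^\sigma|$ along $\gamma(x)$ and $|{\rm d}\xi|\le(1+2/\sin\delta)\,{\rm d}\tau$ with $\tau=|\xi|$. Together with $|{\rm e}^\xi\xi^{\sigma-1}|^{n+1}=|{\rm e}^x x^\sigma|^{n+1}\tau^{-(n+1)}$ on $\gamma(x)$ and the elementary integral $\int_{|x|}^\infty\tau^{-(n+1)}\,{\rm d}\tau=|x|^{-n}/n$, these majorise each of the four basic coefficients above pointwise by $(1+2/\sin\delta)/n$ times the corresponding $M(p_n^\pm,|x|)$ or $M(p_0,|x|)$. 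Re-attaching the weights $|\eta x^{-1}|^n$ or $|\eta^{-1}x^{-1}|^n$ and summing (using $1/n\le 1$) yields $\|{\rm e}^{-x}x^{-\sigma}\mathcal{I}[{\rm e}^x x^{\sigma-1}\phi]\|\le(1+2/\sin\delta)\|\phi\|$, whence membership in $\mathfrak{A}$ follows from the hypothesis that $\|\phi\|$ converges uniformly on $B_0\times B_x\times B_*\times\Xi(\Sigma_0(x_\infty,\delta),\varepsilon)$. The companion statement for ${\rm e}^x x^\sigma\mathcal{I}[{\rm e}^{-x}x^{-\sigma-1}\phi]$ follows by swapping the roles of $(+)$ and $(-)$ components throughout.

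The main obstacle is the bookkeeping that matches each input slot of $\phi$ to the correct output slot (with the appropriate shifts by $x^{\pm 1}$ and re-indexing of $n$); once this is settled, the estimates reduce to the $\tau^{-k}$-integrals already handled in the proof of Lemma~\ref{lem3.3}.
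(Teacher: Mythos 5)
Your proposal is correct and follows essentially the same route as the paper's own proof: the identical re-indexing of the coefficients of ${\rm e}^x x^{\sigma-1}\phi$ into canonical $\hat{\mathfrak{A}}$-form, termwise application of $\mathcal{I}$ via Lemma~\ref{lem3.5}, and the norm estimate obtained from properties (ii)--(iii) of the path $\gamma(x)$ in Lemma~\ref{lem3.3} together with the monotonicity of $M(\,\cdot\,,|x|)$. The slot-by-slot bookkeeping you describe coincides with the paper's $\tilde{p}^{\pm}_n$, $\tilde{P}^{\pm}_n$, $P^{*\pm}_n$.
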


\begin{proof}For $ \phi=\sum\limits_{n=1}^{\infty} {p}^+_n(x) \big({\rm e}^x x^{\sigma-1}\big)^n + \sum\limits_{n=1}^{\infty} {p}^-_n(x) \big({\rm e}^{-x} x^{-\sigma-1}\big)^n + {p}_0(x)x^{-1}\in {\mathfrak{A}}$, we have
\begin{gather*}
{\rm e}^x x^{\sigma-1} \phi=\sum_{n=1}^{\infty} \tilde{p}^+_n(x) \big({\rm e}^x x^{\sigma-1}\big)^n + \sum_{n=1}^{\infty} \tilde{p}^-_n(x) \big({\rm e}^{-x} x^{-\sigma-1}\big)^n + \tilde{p}_0(x)x^{-1}
\end{gather*}
with
\begin{gather*}
\tilde{p}^+_n(x) =p^+_{n-1}(x) \qquad \text{for $n\ge 2$}, \qquad \tilde{p}^+_1(x) = x^{-1} p_{0}(x),\\
 \tilde{p}_0(x) = x^{-1} p^-_{1}(x), \qquad \tilde{p}^-_n(x) =x^{-2} p^-_{n+1}(x) \qquad \text{for $n\ge 1$}.
\end{gather*}
By the definition of the primitive function
\begin{gather*}
\mathcal{I}\big[{\rm e}^x x^{\sigma-1} \phi\big] =\sum_{n=1}^{\infty} \tilde{P}^+_n(x)\big({\rm e}^x x^{\sigma-1}\big)^n +
\sum_{n=1}^{\infty} \tilde{P}^-_n(x) \big({\rm e}^{-x} x^{-\sigma-1}\big)^n + \tilde{P}_0(x)x^{-1} \in \hat{\mathfrak{A}}
\end{gather*}
with $\tilde{P}^{\pm}_n(x) = I^{\pm n}_{\gamma(x)} (\tilde{p}^{\pm}_n(x))$, $\tilde{P}_0(x) = I^{0}_{\gamma(x)} (\tilde{p}_0(x))$, and then
\begin{gather*}
{\rm e}^{-x}x^{-\sigma} \mathcal{I}\big[{\rm e}^x x^{\sigma-1} \phi\big] =\sum_{n=1}^{\infty} {P}^{*+}_n(x) \big({\rm e}^x x^{\sigma-1}\big)^n +\sum_{n=1}^{\infty} {P}^{*-}_n(x) \big({\rm e}^{-x} x^{-\sigma-1}\big)^n + {P}^*_0(x)x^{-1},
\end{gather*}
where
\begin{gather*}
{P}^{*+}_n(x) =x^{-1}\tilde{P}^+_{n+1}(x) \qquad \text{for $n\ge 1$}, \qquad {P}^*_0(x) = x^{-1} \tilde{P}^+_{1}(x),\\
{P}^{*-}_1(x) = \tilde{P}_{0}(x), \qquad {P}^{*-}_n(x) = x \tilde{P}^-_{n-1}(x) =O\big(x^{-1}\big) \qquad \text{for $n\ge 2$.}
\end{gather*}
This implies ${\rm e}^{-x} x^{-\sigma} \mathcal{I}\big[{\rm e}^x x^{\sigma-1}\phi \big] \in \hat{\mathfrak{A}}$. Since, for $n\ge 1$,
\begin{gather*}
{P}^{*+}_n(x) = x^{-1}\tilde{P}^+_{n+1}(x)= -x^{-1} \int_{\gamma(x)} \big({\rm e}^{x} x^{\sigma-1}\big)^{-n-1}\big({\rm e}^{\xi} \xi^{\sigma-1}\big)^{n+1} \tilde{p}_{n+1}^{+}(\xi) {\rm d}\xi\\
\hphantom{{P}^{*+}_n(x)}{} = - \int_{\gamma(x)} \big({\rm e}^{x} x^{\sigma}\big)^{-n-1} x^n \big({\rm e}^{\xi} \xi^{\sigma}\big)^{n+1} \xi^{-n} {p}_{n}^{+}(\xi)\frac{ {\rm d}\xi}{\xi},
\end{gather*}
we have
$|{P}^{*+}_n(\tilde{x})| \le (1+2/\sin\delta) M(p^{+}_n, |\tilde{x}|) \le (1+2/\sin\delta) M(p^{+}_n, |x|)$ for any $x, \tilde{x} \in \Sigma_0(x_{\infty},\delta)$ such that $|\tilde{x}|\ge |x|$, which implies
$ M({P}^{*+}_n, |x|) \le (1+2/\sin\delta) M(p^{+}_n, |x|)$. Similarly, we can verify $M({P}^{*}_0, |x|) \le (1+2/\sin\delta) M(p_0, |x|)$, $M({P}^{*-}_n, |x|) \le (1+2/\sin\delta) M(p^{-}_n, |x|)$. From these estimates the proposition immediately follows.
\end{proof}

\subsection[Families $\mathfrak{A}_+$, $\mathfrak{A}_-$]{Families $\boldsymbol{\mathfrak{A}_+}$, $\boldsymbol{\mathfrak{A}_-}$}\label{ssc3.2}

In addition to $B_0$, let $\tilde{B} \subset \C$ be a given domain, and let $\sigma_0= -2\theta_x-\theta_{\infty}$. For given numbers~$\Theta_1$ and~$\Theta_2$ such that $\pi/2 <\Theta_1 < \Theta_2 <3\pi/2$, denote by $\Sigma_{\pi}(\Theta_1, \Theta_2; x_{\infty})$ the sector defined by $\Theta_1 < \arg x <\Theta_2$, $|x|>x_{\infty}$.

Let $\hat{\mathfrak{A}}_+ =\hat{\mathfrak{A}}_+\big(B_0,\tilde{B}, \Sigma_{\pi}(\Theta_1, \Theta_2;x_{\infty})\big)$ be the family of formal series of the form
\begin{gather*}
\phi=\phi(\mathbf{c}, x) = \sum_{n=1}^{\infty} p_n^+(x) \big({\rm e}^x x^{\sigma_0}\big)^n + p_0(x) x^{-1},
\end{gather*}
strictly the family of pairs $\big(\phi, \{p^+_n(x), p_0(x)\}_{n\in\N} \big)$, where $p_n^+(x)$ and $p_0(x)$ are holomorphic in $(\mathbf{c}, x) \in B_0 \times\tilde{B} \times \Sigma_{\pi}(\Theta_1, \Theta_2;x_{\infty})$ and admit asymptotic representations
\begin{gather*}
p^+_n(x) \sim \sum_{m=0}^{\infty} p^+_{nm} x^{-m}, \qquad p_0(x) \sim \sum_{m=0}^{\infty} p_{0m} x^{-m}
\end{gather*}
with $p_{0m},p^+_{nm} \in \Q\big[\theta_0,\theta_x,\theta_{\infty}, c_0, c_0^{-1}, c_x\big] \subset \Q_*$ uniformly in $\mathbf{c} \in B_0 \times \tilde{B}$ as $x\to \infty$ through $\Sigma_{\pi}(\Theta_1,\Theta_2;x_{\infty})$. Furthermore, for $\phi$ above set
\begin{gather*}
\| \phi \|_+ =\|\phi \|_+(x,\eta)={\|\phi \|_+}_{ \mathbf{c}}(x,\eta) =\sum_{n=1}^{\infty} M_+(p^+_n, |x|) |\eta |^n +M_+(p_0, |x|) |x|^{-1}
\end{gather*}
with
\begin{gather*}
M_+(p, |x|) ={M_+}_{\mathbf{c}}(p,|x|) =\sup \big\{ |p(\mathbf{c}, \xi)|; \,|\xi| \ge |x|, \, \xi \in \Sigma_{\pi}(\Theta_1, \Theta_2;x_{\infty}) \big\}.
\end{gather*}
Let $ \mathfrak{A}_+ =\mathfrak{A}_+\big(B_0,\tilde{B}, \Sigma_{\pi}(\Theta_1,\Theta_2; x_{\infty}),\varepsilon\big)$ $(\subset \hat{\mathfrak{A}}_+)$ be the family of series $ \phi \in \hat{\mathfrak{A}}_+ $ such that ${\|\phi \|_+}_{\mathbf{c}}(x,\eta)$ converges uniformly in $(\mathbf{c}, x, \eta) \in B_0\times \tilde{B} \times \Xi_+(\Sigma_{\pi}(\Theta_1, \Theta_2;x_{\infty}),\varepsilon) $, where
\begin{gather*}
\Xi_+(\Sigma_{\pi}(\Theta_1,\Theta_2;x_{\infty}), \varepsilon) = \bigcup_{x\in \Sigma_{\pi}(\Theta_1,\Theta_2;x_{\infty})} \{x\} \times \{\eta; \, |\eta |<\varepsilon \} .
\end{gather*}
Then every $\phi \in \mathfrak{A}_+(B_0,\tilde{B}, \Sigma_{\pi}(\Theta_1, \Theta_2;x_{\infty}),\varepsilon)$ is holomorphic in $(\mathbf{c}, x)$ in the domain $ B_0 \times \tilde{B} \times \big(\Sigma_{\pi}(\Theta_1,\Theta_2;x_{\infty}) \cap \big\{x; \, |{\rm e}^x x^{\sigma_0}| <\varepsilon \big\}\big)$, and satisfies $|\phi(\mathbf{c}, x) | \le {\|\phi \|_+}_{\mathbf{c}} \big(x, {\rm e}^x x^{\sigma_0}\big)$;
and we may similarly verify properties corresponding to those in Propositions~\ref{prop3.1} and~\ref{prop3.2}. The primitive function of $\phi \in \hat{\mathfrak{A}}_+$ or $\mathfrak{A}_+$ is also defined by replacing $\big(\gamma(x), {\rm e}^x x^{\sigma-1}\big)$ by $\big(\gamma_{\pi}(x), {\rm e}^x x^{\sigma_0}\big)$ (cf.~Lemma~\ref{lem3.4}). Then we obtain the same conclusions as in Propositions~\ref{prop3.6} and~\ref{prop3.7}. Note that the constant $1+2/\sin\delta$ in Proposition~\ref{prop3.7} may be replaced by $2/\min\{|\cos\Theta_1|,|\cos \Theta_2|\}$. Instead of Proposition~\ref{prop3.8} we have

\begin{prop}\label{prop3.9}If $\phi \in \mathfrak{A}_+ =\mathfrak{A}_+\big(B_0,\tilde{B}, \Sigma_{\pi} (\Theta_1,\Theta_2;x_{\infty}),\varepsilon\big)$, then $\big({\rm e}^{x} x^{\sigma_0}\big)^{-n} \mathcal{I} \big[\big({\rm e}^x x^{\sigma_0}\big)^n \phi\big] \in \mathfrak{A}_+$ and $\big\| \big({\rm e}^{x} x^{\sigma_0}\big)^{-n} \mathcal{I} \big[\big({\rm e}^x x^{\sigma_0}\big)^n \phi\big] \big\| \le 2 \|\phi\|/\min\{|\cos\Theta_1|,|\cos\Theta_2|\}$ for every $n\ge 1$.
\end{prop}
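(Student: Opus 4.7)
\textbf{Proof proposal for Proposition \ref{prop3.9}.} The plan is to imitate Proposition~\ref{prop3.8}, but using the path $\gamma_{\pi}(x)$ in place of $\gamma(x)$ and the estimate of Lemma~\ref{lem3.4} in place of that of Lemma~\ref{lem3.3}. Write $E:={\rm e}^{x}x^{\sigma_0}$ for brevity, and start from a general
\begin{gather*}
\phi= \sum_{m=1}^{\infty} p_m^+(x)E^m + p_0(x)x^{-1} \in \mathfrak{A}_+.
\end{gather*}

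First I would rewrite $E^n\phi$ in $\mathfrak{A}_+$-form, namely
\begin{gather*}
 E^n\phi= \sum_{k=n+1}^{\infty} p^+_{k-n}(x) E^k + \bigl(x^{-1}p_0(x)\bigr) E^n,
\end{gather*}
so that its $\mathfrak{A}_+$-coefficients are $\tilde{p}^+_k(x)=p^+_{k-n}(x)$ for $k>n$, $\tilde{p}^+_n(x)=x^{-1}p_0(x)$, $\tilde{p}^+_k(x)=0$ for $1\le k<n$, and $\tilde{p}_0(x)\equiv 0$. Applying the primitive operator (defined via $\gamma_{\pi}(x)$ and $E$) coefficient-by-coefficient, $\mathcal{I}[E^n\phi]=\sum_{k\ge n}\tilde{P}^+_k(x)E^k$ with $\tilde{P}^+_k(x)=-E^{-k}\int_{\gamma_{\pi}(x)}({\rm e}^{\xi}\xi^{\sigma_0})^k\tilde{p}^+_k(\xi)\,{\rm d}\xi$. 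Multiplying by $E^{-n}$ and setting $j=k-n$ gives
\begin{gather*}
E^{-n}\mathcal{I}[E^n\phi]= \sum_{j\ge 1}\tilde{P}^+_{n+j}(x)E^j + \tilde{P}^+_n(x),
\end{gather*}
which has the $\mathfrak{A}_+$-coefficients $P^{*+}_j(x)=\tilde{P}^+_{n+j}(x)$ for $j\ge 1$ and $P^*_0(x)=x\tilde{P}^+_n(x)$.

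Next I would establish that $E^{-n}\mathcal{I}[E^n\phi]$ actually belongs to $\hat{\mathfrak{A}}_+$ (hence in particular $P^*_0(x)=O(1)$) by invoking the $\mathfrak{A}_+$-analogue of Lemma~\ref{lem3.5}: the integrals $\tilde{P}^+_{n+j}(x)$ and $x\tilde{P}^+_n(x)$ are holomorphic and possess asymptotic expansions in $x^{-1}$ with coefficients in $\Q[\theta_0,\theta_x,\theta_{\infty},c_0,c_0^{-1},c_x]$, uniformly on $B_0\times\tilde{B}\times\Sigma_{\pi}(\Theta_1,\Theta_2;x_{\infty})$. This is the routine part.

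The quantitative bound is then obtained from Lemma~\ref{lem3.4}. For any $\tilde{x}\in\Sigma_{\pi}(\Theta_1,\Theta_2;x_{\infty})$ with $|\tilde{x}|\ge|x|$, Lemma~\ref{lem3.4} gives $|E(\tilde{x})|^{-(n+j)}\int_{\gamma_{\pi}(\tilde{x})}|{\rm e}^{\xi}\xi^{\sigma_0}|^{n+j}|{\rm d}\xi|\le 2/((n+j)|\cos(\arg\tilde{x})|)$, so pulling out $M_+(p^+_j,|\tilde{x}|)\le M_+(p^+_j,|x|)$ (and similarly $M_+(p_0,|\tilde{x}|)\le M_+(p_0,|x|)$) yields
\begin{gather*}
M_+(P^{*+}_j,|x|)\le \frac{2}{(n+j)\kappa}M_+(p^+_j,|x|),\qquad M_+(P^*_0,|x|)\le \frac{2}{n\kappa}M_+(p_0,|x|),
\end{gather*}
where $\kappa:=\min\{|\cos\Theta_1|,|\cos\Theta_2|\}$. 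Since $1/(n+j),1/n\le 1$, summing these over $j\ge 1$ weighted by $|\eta|^j$ and adding the $|x|^{-1}$-term gives $\|E^{-n}\mathcal{I}[E^n\phi]\|_+(x,\eta)\le (2/\kappa)\|\phi\|_+(x,\eta)$, which is exactly the claimed bound and also shows the uniform convergence of $\|\cdot\|_+$, hence the desired membership in $\mathfrak{A}_+$.

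The only potentially delicate point is the observation that the prefactor $1/((n+j)|\cos(\arg\tilde{x})|)$ from Lemma~\ref{lem3.4} is uniformly bounded on $\Sigma_{\pi}(\Theta_1,\Theta_2;x_{\infty})$ by $1/\kappa$ and, crucially, the dependence on $n$ is absorbed harmlessly (it only improves the constant). This is the essential difference from Proposition~\ref{prop3.8}: here we have a single exponential direction, so we do not need the sector $\Sigma_0(x_{\infty},\delta)$ nor the compensation between $E^n$ and $E^{-n}$ that produced the factor $1+2/\sin\delta$ there; the entire estimate is driven by Lemma~\ref{lem3.4} alone.
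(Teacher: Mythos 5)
Your proof is correct and follows exactly the route the paper intends: the paper gives no written proof of Proposition~\ref{prop3.9}, only the remark in Section~\ref{ssc3.2} that the primitive is defined via $\gamma_{\pi}(x)$ and that the constant $1+2/\sin\delta$ of Proposition~\ref{prop3.7} is to be replaced by $2/\min\{|\cos\Theta_1|,|\cos\Theta_2|\}$ via Lemma~\ref{lem3.4}. Your index shift $k=n+j$, the reassignment of the $({\rm e}^xx^{\sigma_0})^n$-term to the $p_0$-slot through $P^*_0(x)=x\tilde P^+_n(x)$, and the monotonicity $M_+(p,|\tilde x|)\le M_+(p,|x|)$ are precisely the steps needed to turn that remark into a proof, and the bound $2/((n+j)\kappa)\le 2/\kappa$ is applied correctly.
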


\begin{rem}\label{rem3.6} If $\phi\in \mathfrak{A} (B_0, B_x, B_*, \Sigma_0(x_{\infty},\delta),\varepsilon) \cap \hat{\mathfrak{A}}_+ \big(B_0,\tilde{B}, \Sigma_{\pi}(\pi/2+\delta, \pi-\delta;x_{\infty}),\varepsilon\big)$, then $\phi \in \mathfrak{A}_+ \big(B_0,\tilde{B}, \Sigma_{\pi}(\pi/2+\delta, \pi-\delta; x_{\infty}),\varepsilon\big)$, and $\|\phi\|_+ \le \|\phi\|$.
\end{rem}

In the sector $\Sigma_0(\Theta_1',\Theta_2',x_{\infty})$: $-\pi/2 <\Theta'_1 < \arg x <\Theta'_2 <\pi/2$, we may similarly define the family $\hat{\mathfrak{A}}_- =\hat{\mathfrak{A}}_- (\tilde{B}, B_x, \Sigma_0(\Theta'_1,\Theta'_2;x_{\infty}) )$ and ${\mathfrak{A}}_- ={\mathfrak{A}}_- (\tilde{B}, B_x, \Sigma_0(\Theta'_1,\Theta'_2;x_{\infty}), \varepsilon )$ with $\sigma_0' =2\theta_0 +\theta_{\infty}$, which have similar properties.

\section{Equation (\ref{1.3}) and a system of integral equations}\label{sc4}

We would like to construct a general solution of \eqref{1.3} under the restrictions~(a) and~(b). A~generic form of a pair of matrices $\Lambda_0$, $\Lambda_x$ satisfying $(\Lambda_0 +\Lambda_x)_{11} = -(\Lambda_0 +\Lambda_x)_{22} = -\theta_{\infty}/2$ and having the eigenvalues $\pm \theta_0/2$, $\pm \theta_x/2$, respectively, may be given by
\begin{gather}
 \Lambda_0 = \frac 14 (\sigma -\theta_{\infty}) J + \gamma^0_+ \Delta_++ \gamma^0_- \Delta_-,\qquad
 \Lambda_x = - \frac 14 (\sigma + \theta_{\infty}) J + \gamma^{x}_+ \Delta_++ \gamma^{x}_- \Delta_-\label{4.1}
\end{gather}
with
\begin{gather*}
\gamma^0_{\pm} = \pm \frac {c_0^{\pm 1}}4 (\sigma \pm 2\theta_0 -\theta_{\infty}),\qquad \gamma^x_{\pm} = \mp \frac {c_x^{\pm 1}}4(\sigma \mp 2\theta_x +\theta_{\infty})
\end{gather*}
(cf.~Theorem~\ref{thm2.1}), where $\sigma$, $c_0$, $c_x$ are arbitrary. For a solution $(A_0(x), A_x(x))$ of~\eqref{1.3}, let us set
\begin{gather}
A_0(x) = x^{-(1/4)(\sigma +\theta_{\infty})J} (\Lambda_0 +\Phi_0(x)) x^{(1/4)(\sigma +\theta_{\infty}) J},\nonumber\\
A_x(x) = {\rm e}^{(x/2)J}x^{(1/4)(\sigma-\theta_{\infty}) J} (\Lambda_x +\Phi_x(x)) x^{-(1/4)(\sigma-\theta_{\infty}) J} {\rm e}^{-(x/2)J}\label{4.2}
\end{gather}
(cf.~\eqref{4.6} and \eqref{4.7}). {\it If $\Phi_0(x), \Phi_x(x)\to 0$ along some curve tending to $\infty$, then $(A_0(x), A_x(x))$ satisfies $(\mathrm{a})$ and $(\mathrm{b})$, and \eqref{1.1} has the isomonodromy property}. In checking (a) and (b) we use $({\rm d}/{\rm d}x)\operatorname{tr} A_0(x)= ({\rm d}/{\rm d}x)\operatorname{tr} A_x(x) \equiv 0$, $({\rm d}/{\rm d}x) \det A_0(x) =({\rm d}/{\rm d}x) \det A_x(x)\equiv 0$. Indeed, if $\det A_0(x)\not\equiv 0$, then $({\rm d}/{\rm d}x)A_0(x) = x^{-1} \big(A_x(x) -A_0(x) A_x(x) A_0(x)^{-1}\big) A_0(x)$, and hence $({\rm d}/{\rm d}x)\det A_0(x) = x^{-1} \operatorname{tr}\big(A_x(x) -A_0(x) A_x(x) A_0(x)^{-1}\big) \det A_0(x)\equiv 0$.

In what follows we change \eqref{1.3} into a suitable nonlinear system, and construct a solution of it as mentioned above. Let
\begin{gather*}
A_0= f_0 J + f_+\Delta_+ + f_-\Delta_-, \qquad A_x= g_0 J + g_+\Delta_+ + g_-\Delta_-
\end{gather*}
with $g_0= -f_0 -\theta_{\infty}/2$. Then, \eqref{1.3} is equivalent to a system of nonlinear equations
\begin{alignat}{3}
& x f'_0 = f_-g_+ - f_+g_-, \qquad && g_0=-f_0 -\theta_{\infty}/2,&\nonumber\\
& x f_+' = 2(f_+g_0 - f_0g_+), \qquad && x g'_+ = 2(f_0g_+ - f_+g_0) + x g_+,& \nonumber\\
& x f_-' = 2(f_0g_- - f_-g_0), \qquad && x g'_- = 2(f_-g_0 - f_0g_-) - x g_-.& \label{4.3}
\end{alignat}
We set
\begin{gather*}
f_0= (\sigma-\theta_{\infty})/4 + \varphi, \qquad g_0= -(\sigma +\theta_{\infty})/4 -\varphi
\end{gather*}
to write \eqref{4.3} in the form
\begin{gather}
x\varphi' = f_- g_+ - f_+ g_-,\nonumber\\
xf'_+ = -(1/2)((\sigma+\theta_{\infty}) f_+ + (\sigma-\theta_{\infty}) g_+) - 2( \varphi f_+ +\varphi g_+),\nonumber\\
xf'_- = (1/2) ((\sigma+\theta_{\infty}) f_- + (\sigma -\theta_{\infty})g_-) + 2( \varphi f_- +\varphi g_-),\nonumber\\
xg'_+ = x g_+ + (1/2) ((\sigma-\theta_{\infty}) g_+ +(\sigma+\theta_{\infty})f_+) + 2(\varphi g_++\varphi f_+),\nonumber\\
xg'_- =-x g_- - (1/2)((\sigma-\theta_{\infty}) g_- +(\sigma +\theta_{\infty})f_-) - 2(\varphi g_- +\varphi f_-).\label{4.4}
\end{gather}
The following fact may be verified by an argument analogous to that of \cite[Section~10]{Si} (see also \cite[Chapter~4]{W}).

\begin{lem}\label{lem4.1} By the change of variables $\mathbf{y} = (I+ p(x) \Delta_+) \mathbf{z}$ the linear system
\begin{gather*}
x\frac {{\rm d}\mathbf{y}}{{\rm d}x} = \left( \begin{pmatrix} 0 & 0 \\ 0 & 1 \end{pmatrix}
x + \frac 12 \begin{pmatrix} -(\sigma +\theta_{\infty} ) & - (\sigma-
\theta_{\infty}) \\
\sigma +\theta_{\infty} & \sigma-\theta_{\infty} \\
\end{pmatrix} \right) \mathbf{y}
\end{gather*}
is taken to
\begin{gather*}
x\frac {{\rm d}\mathbf{z}}{{\rm d}x} = \left( \begin{pmatrix} 0 & 0 \\ 0 & 1 \end{pmatrix}
x + \frac 12 \begin{pmatrix} -(\sigma +\theta_{\infty} )(1+p(x)) & 0 \\
\sigma +\theta_{\infty} & \sigma-\theta_{\infty} + (\sigma
+ \theta_{\infty})p(x) \end{pmatrix}\right) \mathbf{z},
\end{gather*}
and by $\mathbf{z} =(I+ q(x) \Delta_-) \mathbf{w}$ the last system is reduced to
\begin{gather*}
x\frac {{\rm d}\mathbf{w}}{{\rm d}x} = \left( \begin{pmatrix} 0 & 0 \\ 0 & 1 \end{pmatrix}
x + \frac 12 \begin{pmatrix} -(\sigma +\theta_{\infty} )(1+p(x)) & 0 \\
0 & \sigma-\theta_{\infty} + (\sigma + \theta_{\infty})p(x) \end{pmatrix}\right) \mathbf{w}.
\end{gather*}
Here $p(x)$ and $q(x)$ satisfy
\begin{gather*}
xp'(x) + xp(x) + (1/2)(1+p(x))( \sigma-\theta_{\infty} +(\sigma+\theta_{\infty}) p(x) ) =0,\\
xq'(x) - xq(x) - (1/2)( (\sigma +\theta_{\infty})(1+ (1+2p(x))q(x)) +(\sigma -\theta_{\infty}) q(x) )=0,
\end{gather*}
and admit the asymptotic representations
\begin{gather*}
p(x) = -(1/2)(\sigma-\theta_{\infty})\big( x^{-1} + (1-\sigma)x^{-2} +\big[x^{-3}\big]\big)\qquad \text{for $|\arg x -\pi| <3\pi/2-\delta$},\\
q(x) = -(1/2)(\sigma +\theta_{\infty})\big(x^{-1} -(1+\sigma) x^{-2}+\big[x^{-3}\big]\big) \qquad \text{for $|\arg x - \pi /2| < \pi - \delta$},
\end{gather*}
whose coefficients are in $\Q[\theta_{\infty},\sigma]$.
\end{lem}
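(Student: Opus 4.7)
The lemma has two components: an algebraic reduction asserting that each gauge transformation produces the stated reduced system provided the scalar gauge function solves a specific Riccati equation, and an analytic existence statement producing holomorphic $p(x)$, $q(x)$ with prescribed asymptotic expansions in the stated sectors. I would treat them in that order.

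\textbf{Algebraic reduction.} Using $(I+r\Delta_\pm)^{-1} = I - r\Delta_\pm$, inserting $\mathbf{y}=(I+p\Delta_+)\mathbf{z}$ into the first system conjugates its coefficient matrix $M(x)$ into $(I-p\Delta_+)M(x)(I+p\Delta_+)-xp'(x)\Delta_+$. I compute this $2\times 2$ matrix directly: the $(1,2)$-entry is $-(1/2)(\sigma-\theta_\infty) - xp - \sigma p - (1/2)(\sigma+\theta_\infty)p^2 - xp'$, and requiring it to vanish is exactly the first Riccati equation in the statement; the diagonal and $(2,1)$-entries then automatically match those of the second system. The analogous computation with $\Delta_-$ in place of $\Delta_+$ reduces the second system to diagonal form and produces the Riccati equation for $q(x)$.

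\textbf{Asymptotic existence.} Rewrite the $p$-equation as $(\d/\d x)(\e^x p) = -\e^x F(x,p)/x$ with $F(x,p) := (1/2)(\sigma-\theta_\infty) + \sigma p + (1/2)(\sigma+\theta_\infty)p^2$. Choosing a path $\gamma(x)$ from $x$ to $\infty$ along which $\re\xi$ is monotone increasing (a bent analogue of the ray of Lemma~\ref{lem3.4}), the equation is equivalent to
\begin{gather*}
p(x) = -\int_{\gamma(x)} \e^{x-\xi}\,\xi^{-1} F(\xi,p(\xi))\,\d\xi.
\end{gather*}
The kernel estimate of Lemma~\ref{lem3.4}, applied to $|\e^{x-\xi}/\xi|$, combined with $F(\xi,p)=O(|\xi|^{-1})$ on the ball $\{p:|p|\le C|x|^{-1}\}$, makes the right-hand side a contraction there for $|x|$ large, so successive approximation yields a holomorphic fixed point. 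Inserting the $N$-th partial sum of the formal series (whose coefficients are determined by matching powers of $x^{-1}$ in the Riccati equation: $p_1=-(\sigma-\theta_\infty)/2$, $p_2=-(1-\sigma)(\sigma-\theta_\infty)/2$, and generally $p_k\in\Q[\theta_\infty,\sigma]$ recursively) on the right and bounding the remainder validates the announced asymptotic expansion to all orders. The construction of $q(x)$ is parallel: one rewrites its Riccati equation as $(\d/\d x)(\e^{-x}q)=(\e^{-x}/x)G(x,q)$ with $G$ linear in $q$ with coefficients involving the already-constructed $p$, and solves the corresponding integral equation with kernel $\e^{x-\xi}/\xi$ by the same scheme. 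The sector for $q$ is the smaller $|\arg x-\pi/2|<\pi-\delta$, inherited from the domain on which $p$ is available.

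\textbf{Main obstacle.} The main technical hurdle is realising the full sector $|\arg x - \pi| < 3\pi/2 - \delta$ of opening nearly $3\pi$ for $p(x)$: no single straight ray of monotone $\re\xi$ accommodates the entire range of $\arg x$, so I would build $\gamma(x)$ as a concatenation of a short bending arc with a terminal ray whose direction depends on $\arg x$, keeping the bend short compared with $|x|$ so that the Lemma~\ref{lem3.4}-style kernel bound survives. Uniqueness of Riccati solutions possessing a given asymptotic expansion then glues the fixed points obtained in overlapping subsectors into a single holomorphic $p(x)$ on the full sector.
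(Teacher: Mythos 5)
Your algebraic reduction is correct, and your overall scheme --- Riccati equation for the off-diagonal gauge function, conversion to an integral equation along paths where the exponential kernel decays, contraction, validation of the expansion by inserting partial sums, and gluing over subsectors --- is exactly the standard block-diagonalisation argument of Sibuya and Wasow that the paper itself cites in lieu of a proof. However, your integral equation for $p$ has the orientation reversed, and the fixed point it produces solves the wrong differential equation. From $({\rm d}/{\rm d}x)({\rm e}^xp)=-{\rm e}^xF(x,p)/x$ the relevant solution is obtained by integrating from $\infty$ in a direction where ${\rm e}^{\xi}\to0$, i.e.
\begin{gather*}
p(x)=-\int_{\gamma(x)}{\rm e}^{\xi-x}\,\xi^{-1}F(\xi,p(\xi))\,{\rm d}\xi,
\end{gather*}
with $\gamma(x)$ running from $x$ to $\infty$ with $\re\xi$ monotone \emph{decreasing} to $-\infty$, so that $\big|{\rm e}^{\xi-x}\big|$ decays (this is the kernel of Lemma~\ref{lem3.4} itself, whereas your version matches Remark~\ref{rem3.3}). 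Your equation, with kernel ${\rm e}^{x-\xi}$ and $\re\xi$ increasing, satisfies $p'(x)-p(x)=x^{-1}F(x,p(x))$, i.e., $xp'-xp-F=0$ rather than $xp'+xp+F=0$; it is the correct Ansatz for $q$, not for $p$. The distinction is not cosmetic: it is precisely because the $p$-paths must tend to $\re\xi=-\infty$ that the sector for $p$ is centred on $\arg x=\pi$ (this is the content of Remark~\ref{rem4.2}), and with the orientation corrected, your ``main obstacle'' paragraph --- rays of varying direction $\theta$ with $|\theta-\pi|<\pi/2$, glued over overlapping subsectors --- covers $|\arg x-\pi|<3\pi/2-\delta$ as intended; with your orientation the natural sector would be centred on $\arg x=0$ and the stated sector could not be reached.

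Three smaller points. First, $F(\xi,p)=\frac12(\sigma-\theta_\infty)+\sigma p+\frac12(\sigma+\theta_\infty)p^2$ is only $O(1)$ on the ball $|p|\le C|\xi|^{-1}$, not $O(|\xi|^{-1})$ as you assert (the constant term does not vanish); this is harmless because the kernel estimate $\int_{\gamma(x)}\big|{\rm e}^{\xi-x}\big|\,|\xi|^{-1}|{\rm d}\xi|\ll|x|^{-1}$ already supplies the factor $|x|^{-1}$ needed for invariance of the ball, but the argument should be phrased that way. Second, the sector $|\arg x-\pi/2|<\pi-\delta$ for $q$ is not inherited solely from the domain of $p$: its lower edge $\arg x>-\pi/2+\delta$ does come from the $p$-domain, but its upper edge $\arg x<3\pi/2-\delta$ comes from the requirement that the $q$-paths reach $\re\xi=+\infty$, since the $p$-domain extends up to $\arg x<5\pi/2-\delta$. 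Third, your gluing device needs care near the edges of the wide sector: two solutions with the same expansion differ by $C{\rm e}^{-x+O(\log x)}$, so uniqueness holds only on subsectors containing a direction with $\re x\to-\infty$; on the extreme subsectors (where $\re x>0$ throughout) one must instead show that the single solution fixed in the central region continues to admit the expansion, rather than matching independently constructed fixed points.
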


\begin{rem}\label{rem4.1}\quad\samepage \begin{enumerate}\itemsep=0pt
\item[(1)] In Lemma \ref{lem4.1}, $p(x)$ may be replaced by $\tilde{p}(x)$ having the same asymptotic representation in the sector $|\arg x +\pi|< 3\pi/2 -\delta$. The diagonalisation is possible by $(\tilde{p}(x),q(x))$ for $|\arg x+ \pi/2| <\pi-\delta$ as well as $(p(x), q(x))$ for $|\arg x- \pi/2|<\pi-\delta$.
\item[(2)] By the substitution $(\sigma-\theta_{\infty}, \sigma +\theta_{\infty} , x) \mapsto \big({-}(\sigma-\theta_{\infty}), -(\sigma +\theta_{\infty}) , {\rm e}^{\pi {\rm i}}x\big) $ or $ \mapsto
\big({-}(\sigma-\theta_{\infty}), -(\sigma +\theta_{\infty}) , {\rm e}^{-\pi {\rm i}}x\big) $, we obtain
\begin{gather*}
p^*(x) = -(1/2)(\sigma-\theta_{\infty})\big( x^{-1} - (1+\sigma)x^{-2} +\big[x^{-3}\big]\big),\\
q^*(x) = -(1/2)(\sigma +\theta_{\infty})\big(x^{-1} +(1-\sigma) x^{-2}+\big[x^{-3}\big]\big)
\end{gather*}
such that
\begin{gather*}
x\frac {{\rm d}\mathbf{y}}{{\rm d}x} = \left( \begin{pmatrix} 0 & 0 \\ 0 & -1 \end{pmatrix}
x + \frac 12 \begin{pmatrix} \sigma +\theta_{\infty} & \sigma-\theta_{\infty} \\
-(\sigma +\theta_{\infty} ) & -(\sigma-\theta_{\infty})
\end{pmatrix} \right) \mathbf{y}
\end{gather*}
is changed into
\begin{gather*}
x\frac {{\rm d}\mathbf{w}}{{\rm d}x} = \left( \begin{pmatrix} 0 & 0 \\ 0 & -1 \end{pmatrix}
x + \frac 12 \begin{pmatrix} (\sigma +\theta_{\infty})(1+p^*(x))
 & 0 \\
0 & -(\sigma-\theta_{\infty})-(\sigma+\theta_{\infty})p^*(x)
\end{pmatrix} \right) \mathbf{w}.
\end{gather*}
\end{enumerate}
\end{rem}

\begin{rem}\label{rem4.2}The proof of Lemma~\ref{lem4.1} depends on the following fact: from every point in the sector $|\arg x-\pi| < 3\pi/2 -\delta$ (respectively, $|\arg x|< 3\pi/2-\delta$) one may draw a line such that
it is contained in the sector and that $\re x \to -\infty$ (respectively, $\to \infty$) along it.
\end{rem}

By the facts above the linear parts of \eqref{4.4} may be diagonalised, that is, there exists a~transformation of the form
\begin{gather*}
f_+ =(1+pq)u_++pv_+= \big(1+\big[x^{-2}\big]\big) u_+ - x^{-1} \big((\sigma-\theta_{\infty})/2 +\big[x^{-1}\big]\big) v_+,\\
g_+ = qu_+ +v_+ = x^{-1}\big({-}(\sigma +\theta_{\infty})/2 +\big[x^{-1}\big]\big)u_+ + v_+,\\
f_- = (1+p^*q^*)u_- + p^* v_-= \big(1+\big[x^{-2}\big]\big) u_- - x^{-1} \big((\sigma-\theta_{\infty})/2 +\big[x^{-1}\big]\big) v_-,\\
g_- = q^* u_- +v_-= x^{-1}\big({-}(\sigma +\theta_{\infty})/2 +\big[x^{-1}\big]\big)u_- + v_-
\end{gather*}
that takes \eqref{4.4} to
\begin{gather}
 x\varphi' =\big(1+\big[x^{-2}\big]\big) u_-v_+ - \big(1+\big[x^{-2}\big]\big) u_+v_-\nonumber\\
\hphantom{x\varphi' =}{} + \big(\sigma+\theta_{\infty} +\big[x^{-1}\big]\big)x^{-2}u_+u_- + \big(\sigma-\theta_{\infty} +\big[x^{-1}\big]\big) x^{-2}v_+v_-,\nonumber\\
 xu'_+= \big({-}(\sigma +\theta_{\infty})/2 +\big[x^{-1}\big]\big)u_+ -2\varphi\big( \big(1+\big[x^{-1}\big]\big)u_+ +\big(1+\big[x^{-1}\big]\big)v_+\big),\nonumber\\
 xv'_+=xv_+ + \big((\sigma-\theta_{\infty})/2 +\big[x^{-1}\big]\big)v_+ +2\varphi\big( \big(1+\big[x^{-1}\big]\big)u_+ +\big(1+\big[x^{-1}\big]\big)v_+\big),\nonumber\\
 xu'_-= \big((\sigma +\theta_{\infty})/2 +\big[x^{-1}\big]\big)u_- +2\varphi\big( \big(1+\big[x^{-1}\big]\big)u_- +\big(1+\big[x^{-1}\big]\big)v_-\big),\nonumber\\
 xv'_-=-xv_- - \big((\sigma-\theta_{\infty})/2 +\big[x^{-1}\big]\big)v_- -2\varphi\big( \big(1+\big[x^{-1}\big]\big)u_- +\big(1+\big[x^{-1}\big]\big)v_-\big),\label{4.4.1}
\end{gather}
where $\big[x^{-1}\big], \big[x^{-2}\big], \ldots $ are valid in the sector $|\arg x- \pi/2|<\pi -\delta$. Rewriting, e.g., the last equation in the form
\begin{gather*}
\big({\rm e}^x x^{(\sigma-\theta_{\infty})/2} \big(1+\big[x^{-1}\big]\big)v_-\big)'= -2{\rm e}^xx^{(\sigma-\theta_{\infty})/2 -1} \varphi\big( \big(1+\big[x^{-1}\big]\big)u_- +\big(1+\big[x^{-1}\big]\big)v_-\big),
\end{gather*}
and setting
\begin{gather*}
 x^{(\sigma +\theta_{\infty})/2} \big(1+\big[x^{-1}\big]\big)u_+ = \gamma^0_+ +\varphi_+,\qquad {\rm e}^{-x} x^{-(\sigma-\theta_{\infty})/2}\big(1+\big[x^{-1}\big]\big)v_+ = \gamma^x_+ +\psi_+,\\
 x^{-(\sigma+\theta_{\infty})/2} \big(1+\big[x^{-1}\big]\big)u_- = \gamma^0_- +\varphi_-,\qquad {\rm e}^{x} x^{(\sigma-\theta_{\infty})/2 } \big(1+\big[x^{-1}\big]\big)v_- = \gamma^x_- +\psi_-
\end{gather*}
with $\gamma^0_{\pm}$ and $\gamma^x_{\pm}$ as in \eqref{4.1}, we arrive at a system of integral equations of the form
\begin{gather}
\varphi = \mathcal{I} \bigl[{\rm e}^x x^{\sigma-1}(1)_x \big(\gamma_-^0 +\varphi_-\big)\big(\gamma^x_+ +\psi_+\big) - {\rm e}^{-x} x^{-\sigma-1} (1)_x \big(\gamma^0_+ +\varphi_+\big)\big(\gamma^x_- +\psi_-\big)\bigr]\nonumber\\
\hphantom{\varphi =}{} + \mathcal{I} \bigl[ \big(\sigma +\theta_{\infty}+\big[x^{-1}\big]\big) x^{-3}\big(\gamma_+^0 +\varphi_+\big)\big(\gamma^0_- +\varphi_-\big)\nonumber\\
\hphantom{\varphi =}{} + \big(\sigma -\theta_{\infty}+\big[x^{-1}\big]\big) x^{-3} \big(\gamma^x_+ +\psi_+\big)\big(\gamma^x_-+ \psi_-\big)\bigr],\nonumber\\
 \varphi_+ = -2 \mathcal{I} \bigl[\varphi \bigl( x^{-1}(1)_x \big(\gamma_+^0 +\varphi_+\big)+ {\rm e}^{x} x^{\sigma-1} (1)_x \big(\gamma^x_+ +\psi_+\big) \bigr) \bigr],\nonumber\\
 \psi_+ = 2 \mathcal{I} \bigl[\varphi \bigl( x^{-1}(1)_x \big(\gamma_+^x +\psi_+\big)+ {\rm e}^{-x} x^{-\sigma-1} (1)_x \big(\gamma^0_+ +\varphi_+\big) \bigr) \bigr],\nonumber\\
\varphi_- = 2 \mathcal{I} \bigl[\varphi \bigl( x^{-1}(1)_x \big(\gamma_-^0 +\varphi_-\big)+ {\rm e}^{-x} x^{-\sigma-1} (1)_x \big(\gamma^x_- +\psi_-\big)\bigr) \bigr],\nonumber\\
\psi_- = -2 \mathcal{I} \bigl[\varphi \bigl( x^{-1}(1)_x \big(\gamma_-^x +\psi_-\big)+ {\rm e}^{x} x^{\sigma-1} (1)_x \big(\gamma^0_- +\varphi_-\big) \bigr) \bigr].\label{4.5}
\end{gather}
Here $(1)_x$ denotes $1+\big[x^{-1}\big]$, and the functions $\varphi$, $\varphi_{\pm}$, $\psi_{\pm}$ and the products $\varphi_-\psi_+$, $\varphi_+ \psi_-$, $\ldots$ are supposed to be at least in $\hat{\mathfrak{A}}$. If we succeed in constructing $\varphi, \varphi_{\pm}, \psi_{\pm} \in \mathfrak{A}$, then, by Propositions~\ref{prop3.6} through~\ref{prop3.8}, $(A_0(x), A_x(x))$ with
\begin{gather}
f_0= (\sigma-\theta_{\infty})/4 + \varphi, \qquad g_0= -(\sigma+\theta_{\infty})/4 -\varphi,\nonumber\\
f_+ = x^{-(\sigma+\theta_{\infty})/2} \bigl((1)_x \big(\gamma^0_+ +\varphi_+\big)- \big((\sigma-\theta_{\infty})/2+\big[x^{-1}\big]\big) {\rm e}^x x^{\sigma -1}\big(\gamma^x_+ +\psi_+\big) \bigr),\nonumber\\
g_+ = {\rm e}^x x^{(\sigma-\theta_{\infty})/2}\bigl((1)_x \big(\gamma^x_+ +\psi_+\big)- \big((\sigma +\theta_{\infty})/2 +\big[x^{-1}\big]\big) {\rm e}^{-x} x^{-\sigma -1}\big(\gamma^0_+ +\varphi_+\big)\bigr),\nonumber\\
f_- = x^{(\sigma+\theta_{\infty})/2} \bigl((1)_x \big(\gamma^0_- +\varphi_-\big)- \big((\sigma-\theta_{\infty})/2+\big[x^{-1}\big]\big) {\rm e}^{-x} x^{-\sigma -1}\big(\gamma^x_- +\psi_-\big) \bigr),\nonumber\\
g_- = {\rm e}^{-x} x^{-(\sigma-\theta_{\infty})/2} \bigl((1)_x \big(\gamma^x_- +\psi_-\big)- \big((\sigma+\theta_{\infty})/2 +\big[x^{-1}\big]\big) {\rm e}^{x} x^{\sigma -1}\big(\gamma^0_- +\varphi_-\big) \bigr)\label{4.6}
\end{gather}
is a solution of \eqref{1.3} written as \eqref{4.2}. Then, $\Phi_0(x)$ and $\Phi_x(x)$ in \eqref{4.2} are given by
\begin{gather}
\Phi_0(x)_{11}= -\Phi_0(x)_{22} =\varphi,\nonumber\\
\Phi_0(x)_{12}= \gamma^0_+\big[x^{-1}\big] +(1)_x \varphi_+ -\big((\sigma-\theta_{\infty})/2 +\big[x^{-1}\big]\big) {\rm e}^x x^{\sigma-1}\big(\gamma^x_+ +\psi_+\big),\nonumber\\
\Phi_0(x)_{21}= \gamma^0_-\big[x^{-1}\big] +(1)_x \varphi_- -\big((\sigma-\theta_{\infty})/2 +\big[x^{-1}\big]\big) {\rm e}^{-x}x^{-\sigma-1}\big(\gamma^x_- +\psi_-\big),\nonumber\\
\Phi_x(x)_{11}= -\Phi_x(x)_{22} =-\varphi,\nonumber\\
\Phi_x(x)_{12}= \gamma^x_+\big[x^{-1}\big] +(1)_x \psi_+ -\big((\sigma +\theta_{\infty})/2 +\big[x^{-1}\big]\big) {\rm e}^{-x} x^{-\sigma-1}\big(\gamma^0_+ +\varphi_+\big),\nonumber\\
\Phi_x(x)_{21}= \gamma^x_-\big[x^{-1}\big] +(1)_x \psi_- -\big((\sigma +\theta_{\infty})/2 +\big[x^{-1}\big]\big) {\rm e}^{ x} x^{\sigma-1}\big(\gamma^0_- +\varphi_-\big).\label{4.7}
\end{gather}
Moreover, if $\varphi$, $\varphi_{\pm}$, $\psi_{\pm}$ are such that $\Phi_0(x), \Phi_x (x) \to 0$ as $x\to \infty$ along some curve, then $(A_0(x), A_x(x)) $ satisfies (a) and (b), and hence it is a desired solution of~\eqref{1.3}.
\begin{rem}\label{rem4.3}By $p(x)$ and ${p}^*(x)$ in Lemma \ref{lem4.1} and Remark \ref{rem4.1}, the linear parts of \eqref{4.4.1} are written in the more detailed form
\begin{gather*}
 xu'_+= (-(\sigma +\theta_{\infty})/2 +\kappa(x))u_+ + \cdots, \\
 xv'_+=xv_+ + ((\sigma-\theta_{\infty})/2 -\kappa(x) )v_+ + \cdots,\\
 xu'_-= ((\sigma +\theta_{\infty})/2 -\kappa(x) )u_- + \cdots,\\
 xv'_-=-xv_- + (-(\sigma-\theta_{\infty})/2 + \kappa(x) )v_- +\cdots
\end{gather*}
with $\kappa(x)= \big(\sigma^2 -\theta_{\infty}^2\big) x^{-1}/4 + \big[x^{-2}\big]$, in each appearance $\big[x^{-2}\big]$ not necessarily denoting the same function. Then the expressions of $f_{\pm}$, $g_{\pm}$ in \eqref{4.6} become
\begin{gather*}
f_+ = x^{-(\sigma+\theta_{\infty})/2} \bigl((1-\kappa(x)) \big(\gamma^0_+ +\varphi_+\big)- \cdots \bigr),\\
g_+ = {\rm e}^x x^{(\sigma-\theta_{\infty})/2}\bigl((1+\kappa(x)) \big(\gamma^x_+ +\psi_+\big)- \cdots \bigr),\\
f_- = x^{(\sigma+\theta_{\infty})/2} \bigl((1+\kappa(x)) \big(\gamma^0_- +\varphi_-\big)- \cdots \bigr),\\
g_- = {\rm e}^{-x} x^{-(\sigma-\theta_{\infty})/2} \bigl((1-\kappa(x)) \big(\gamma^x_- +\psi_-\big)- \cdots \bigr),
\end{gather*}
and the first equation in \eqref{4.5} is
\begin{gather*}
\varphi = \mathcal{I} \bigl[{\rm e}^x x^{\sigma-1}(1+2\kappa(x)) \big(\gamma_-^0 +\varphi_-\big)\big(\gamma^x_+ +\psi_+\big)\\
\hphantom{\varphi =}{} - {\rm e}^{-x} x^{-\sigma-1} (1-2\kappa(x)) \big(\gamma^0_+ +\varphi_+\big)\big(\gamma^x_- +\psi_-\big)\bigr] +\cdots.
\end{gather*}
\end{rem}

\section{Proofs of Theorems \ref{thm2.1} and \ref{thm2.2}}\label{sc5}

\subsection{System of integral equations}\label{ssc5.1}
Instead of system \eqref{4.5} we deal with
\begin{gather}
\varphi = F_0(x,\varphi, \varphi_+, \psi_+, \varphi_-, \psi_-) := {\rm e}^{x}x^{\sigma-1}(1)_x \big(\gamma^0_- +\varphi_-\big)\big(\gamma^x_+ + \psi_+\big)\nonumber\\
\hphantom{\varphi =}{} + {\rm e}^{-x}x^{-\sigma-1}(1)_x \big(\gamma^0_+ +\varphi_+\big)\big(\gamma^x_-+ \psi_-\big)\nonumber\\
\hphantom{\varphi =}{} -4 \mathcal{I}\bigl[{\rm e}^{x}x^{\sigma-2}(1)_x \varphi\big(\gamma^0_- +\varphi_-\big)\big(\gamma^x_+ + \psi_+\big)- {\rm e}^{-x}x^{-\sigma-2}(1)_x \varphi\big(\gamma^0_+ +\varphi_+\big)\big(\gamma^x_- +\psi_-\big) \bigr]\nonumber\\
\hphantom{\varphi =}{} + \mathcal{I} \bigl[ x^{-3}\big(\sigma+ \theta_{\infty}+\big[x^{-1}\big]+[1]\varphi\big)\big(\gamma^0_+ +\varphi_+\big)\big(\gamma^0_- +\varphi_-\big)\nonumber\\
\hphantom{\varphi =}{} + x^{-3}\big(\sigma- \theta_{\infty}+\big[x^{-1}\big]+[1]\varphi\big)\big(\gamma^x_+ +\psi_+\big)\big(\gamma^x_- +\psi_-\big) \bigr],\nonumber\\
 \varphi_+ = F_+(x, \varphi, \varphi_+, \psi_+):= -2 \mathcal{I} \bigl[\varphi \bigl( x^{-1}(1)_x \big(\gamma_+^0 +\varphi_+\big)+ {\rm e}^{x} x^{\sigma-1} (1)_x \big(\gamma^x_+ +\psi_+\big)\bigr) \bigr],\nonumber\\
 \psi_+ = G_+(x, \varphi, \varphi_+, \psi_+):= 2 \mathcal{I} \bigl[\varphi \bigl( x^{-1}(1)_x \big(\gamma_+^x +\psi_+\big)+ {\rm e}^{-x} x^{-\sigma-1} (1)_x \big(\gamma^0_+ +\varphi_+\big)\bigr) \bigr],\nonumber\\
 \varphi_- = F_-(x, \varphi, \varphi_-, \psi_-):= 2 \mathcal{I} \bigl[\varphi \bigl( x^{-1}(1)_x \big(\gamma_-^0 +\varphi_-\big)+ {\rm e}^{-x} x^{-\sigma-1} (1)_x \big(\gamma^x_- +\psi_-\big)\bigr) \bigr],\nonumber\\
 \psi_- = G_-(x, \varphi, \varphi_-, \psi_-):= -2 \mathcal{I} \bigl[\varphi\bigl( x^{-1}(1)_x \big(\gamma_-^x +\psi_-\big)+ {\rm e}^{x} x^{\sigma-1} (1)_x \big(\gamma^0_- +\varphi_-\big) \bigr) \bigr],\label{5.1}
\end{gather}
which is equivalent to \eqref{4.5}. Indeed, by Proposition \ref{prop3.6} and integration by parts, we may write the first equation of \eqref{4.5} in the form
\begin{gather*}
\varphi= I_0 + \mathcal{I} \bigl[\big(\sigma+\theta_{\infty} +\big[x^{-1}\big]\big) x^{-3}\big(\gamma^0_+ +\varphi_+\big)\big(\gamma^0_- +\varphi_-\big) + \cdots \bigr]
\end{gather*}
with
\begin{gather*}
I_0 = \mathcal{I} \bigl[{\rm e}^x x^{\sigma-1}(1)_x \big(\gamma_-^0 +\varphi_-\big) \big(\gamma^x_+ +\psi_+\big) - {\rm e}^{-x} x^{-\sigma-1} (1)_x \big(\gamma^0_+ +\varphi_+\big)\big(\gamma^x_- +\psi_-\big)\bigr]\\
\hphantom{I_0}{} = {\rm e}^x x^{\sigma-1}(1)_x \big(\gamma_-^0 +\varphi_-\big)\big(\gamma^x_+ +\psi_+\big) + {\rm e}^{-x} x^{-\sigma-1} (1)_x \big(\gamma^0_+ +\varphi_+\big)\big(\gamma^x_- +\psi_-\big)\\
\hphantom{I_0=}{} - \mathcal{I} \bigl[{\rm e}^x x^{\sigma-1}(1)_x \big(\psi'_+\big(\gamma_-^0 +\varphi_-\big)+\varphi'_-\big(\gamma^x_+ +\psi_+\big) \big)\\
\hphantom{I_0=}{} + {\rm e}^{-x} x^{-\sigma-1} (1)_x \big(\psi'_-\big(\gamma^0_+ +\varphi_+\big) +\varphi'_+\big(\gamma^x_- +\psi_-\big) \big) \bigr].
\end{gather*}
The substitution of
\begin{gather*}
\varphi'_+ = -2 \varphi \bigl( x^{-1}(1)_x \big(\gamma_+^0 +\varphi_+\big)+ {\rm e}^{x} x^{\sigma-1} (1)_x \big(\gamma^x_+ +\psi_+\big) \bigr),
\end{gather*}
$\varphi'_- = \cdots$, $\psi'_{\pm} =\cdots$ into the last integral yields the first equation of \eqref{5.1}.

\subsection{Sequences}\label{ssc5.2}
To construct a solution of \eqref{5.1} we define $\big\{ \big(\varphi^j, \varphi_+^j, \psi_+^j, \varphi_-^j, \psi_-^j\big) \big\}_{j \ge 0}$ by
\begin{gather}
\varphi^0=\varphi^0_{\pm} =\psi^0_{\pm} \equiv 0,\qquad
\varphi^{j+1} = F_0\big(x, \varphi^j, \varphi_+^j, \psi_+^j, \varphi_-^j, \psi_-^j\big),\nonumber\\
\varphi_+^{j+1} = F_+\big(x, \varphi^{j+1}, \varphi_+^j, \psi_+^j \big),\qquad \psi_+^{j+1} = G_+\big(x, \varphi^{j+1}, \varphi_+^j, \psi_+^j \big),\nonumber\\
\varphi_-^{j+1} = F_-\big(x, \varphi^{j+1}, \varphi_-^j, \psi_-^j \big),\qquad \psi_-^{j+1} = G_-\big(x, \varphi^{j+1}, \varphi_-^j, \psi_-^j \big)\label{5.2}
\end{gather}
for $j \ge 0$. It is shown by induction on $j$ that $\varphi^j$, $\varphi_{\pm}^j$, $\psi_{\pm}^j$ are finite sums of $\big({\rm e}^x x^{\sigma -1}\big)^n[1]$, $\big({\rm e}^{-x} x^{-\sigma-1}\big) ^n [1]$ and $\big[x^{-1}\big]$, and hence $\varphi^j, \varphi_{\pm}^j, \psi_{\pm}^j \in \mathfrak{A}$ for $j\ge 0$.

\begin{rem}\label{rem5.1}As long as $|\arg x -\pi/2|<\pi-\delta$, the path of integration for $\mathcal {I}[\, \cdot \,]$ may also be taken to be a line on which $\big|{\rm e}^{\xi} \xi^{\sigma-1}\big|$ or $\big|{\rm e}^{-\xi}\xi^{-\sigma-1}\big|$ decays exponentially, and hence the asymptotic expansions $\big[x^{-1}\big]$, $\ldots$ in the expressions of $\varphi^j$, $\varphi^j_ {\pm}$, $\psi^j_{\pm}$ are valid in the sector $|\arg x-\pi/2|<\pi -\delta$ (cf.\ Example~\ref{exa3.1}).
\end{rem}

In this section, to simplify the description, for a sequence $\{\phi^j \}$ we write $\tri \phi^j:=\phi^j-\phi^{j-1}$.

\subsubsection[$\varphi^j$, $\varphi_{\pm}^j$, $\psi_{\pm}^j$ for $j=1,2$]{$\boldsymbol{\varphi^j}$, $\boldsymbol{\varphi_{\pm}^j}$, $\boldsymbol{\psi_{\pm}^j}$ for $\boldsymbol{j=1,2}$}\label{sssc5.2.1}

By definition $\varphi^1= F_0(x, 0,0, 0,0, 0)$, $\varphi_{\pm}^1 =F_{\pm}\big(x, \varphi^1,0, 0\big)$, $\psi_{\pm}^1 =G_{\pm}\big(x, \varphi^1, 0, 0\big)$, that is
\begin{gather}
\varphi^1=\gamma^0_-\gamma^x_+(1)_x {\rm e}^x x^{\sigma-1}+\gamma^0_+\gamma^x_-(1)_x {\rm e}^{-x} x^{-\sigma-1} + \big(\gamma^0_+\gamma^0_-[1]+ \gamma^x_+\gamma^x_-[1]\big)x^{-2},\nonumber\\
\varphi_+^1 =-2\gamma^0_+ X^1_0 -\gamma^x_+ X^1_+,\qquad \psi_+^1 =2\gamma^x_+ X^1_0 +\gamma^0_+ X^1_-,\nonumber\\
\varphi_-^1 =2\gamma^0_- X^1_0 +\gamma^x_- X^1_-,\qquad \psi_-^1 =-2\gamma^x_- X^1_0 -\gamma^0_- X^1_+\label{5.3}
\end{gather}
with
\begin{gather*}
X^1_0=\gamma^0_-\gamma^x_+(1)_x {\rm e}^x x^{\sigma-2}-\gamma^0_+\gamma^x_-(1)_x {\rm e}^{-x} x^{-\sigma-2} + \big( \gamma^0_+\gamma^0_-[1]+ \gamma^x_+\gamma^x_-[1]\big)x^{-2},\\
X^1_+=\gamma^0_-\gamma^x_+(1)_x {\rm e}^{2x} x^{2\sigma-2}-2\gamma^0_+\gamma^x_-(1)_x x^{-1} + \big(\gamma^0_+\gamma^0_-[1]+ \gamma^x_+\gamma^x_-[1]\big){\rm e}^x x^{\sigma -3},\\
X^1_-=-2 \gamma^0_-\gamma^x_+(1)_x x^{-1}-\gamma^0_+\gamma^x_-(1)_x {\rm e}^{-2x}x^{-2\sigma-2} + \big(\gamma^0_+\gamma^0_-[1]+ \gamma^x_+\gamma^x_-[1]\big){\rm e}^{-x} x^{-\sigma -3},
\end{gather*}
which belong to $\mathfrak{A}=\mathfrak{A}(B_0, B_x, B_*, \Sigma_0(x_{\infty},\delta), \varepsilon)$ (cf.\ Section~\ref{ssc3.1}). We may choose $x^1_{\infty}=x^1_{\infty}(B_0,B_x, B_*,\delta)> x_{\infty}$ depending on $\big(\gamma^0_{\pm},\gamma^x_{\pm}\big)$ or on $(\mathbf{c},\sigma)\in B_0\times B_x\times B_*$ in such a way that the following estimates with absolute implied constants are valid for $(\mathbf{c},\sigma, x)\in B_0\times B_x \times B_* \times \Sigma_0\big(x^1_{\infty}, \delta\big)$:
\begin{gather*}
\big\| \varphi^1 \big\| \ll \big|\gamma^0_-\gamma^x_+ \big| \big|\eta x^{-1}\big| + \big|\gamma^0_+\gamma^x_- \big| \big|\eta^{-1} x^{-1}\big| + |x|^{-1},\\
\big\| X^1_0 \big\| \ll |x|^{-1}\big( \big|\gamma^0_-\gamma^x_+ \big| \big|\eta x^{-1}\big| + \big|\gamma^0_+\gamma^x_- \big| \big|\eta^{-1} x^{-1}\big| + 1\big),\\
\big\| X^1_+ \big\| \ll \big|\gamma^0_-\gamma^x_+ \big| \big|\eta x^{-1}\big|^2 + \big|\gamma^0_+\gamma^x_- \big| |x|^{-1} +\big|\eta x^{-1}\big| |x|^{-1},\\
\big\| X^1_- \big\| \ll \big|\gamma^0_-\gamma^x_+ \big| |x|^{-1} + \big|\gamma^0_+\gamma^x_- \big| \big|\eta^{-1} x^{-1}\big|^2 +\big|\eta^{-1} x^{-1}\big| |x|^{-1}.
\end{gather*}
Then under the condition that
\begin{gather}\label{5.4}
(\gamma_0^* +\gamma_1^* +1)(\gamma^*_1 +1) \varepsilon \le r_0 <1
\end{gather}
with
\begin{gather*}
\gamma^*_0:= \big|\gamma_-^0\gamma_+^x\big| +\big|\gamma_+^0\gamma_-^x\big|, \qquad\gamma^*_1:= \big|\gamma_+^0\big| +\big|\gamma_-^0\big| +\big|\gamma_+^x\big| +\big|\gamma_-^x\big|
\end{gather*}
for every $(\mathbf{c}, \sigma) \in B_0\times B_x \times B_*$ (cf.\ Remark~\ref{rem2.1}), we have, for $\big|\eta x^{-1}\big|, \big|\eta^{-1} x^{-1}\big| <\varepsilon$, $x\in\Sigma_0\big(x^1_{\infty},\delta\big)$,
\begin{gather}\label{5.5}
\big \|\varphi^1 \big\| \ll \big(\gamma^*_0 +1\big)\varepsilon, \qquad \big\|\varphi^1_{\pm}\big\|, \big\|\psi^1_{\pm}\big\| \ll \big(\gamma^*_0+1\big)\big(\gamma^*_1+1\big) \varepsilon,
\end{gather}
where $r_0$ will be chosen later. By \eqref{5.3} we have ${\rm e}^{-x}x^{-\sigma} \varphi^1_+ \!\in\! \mathfrak{A}$, and, under~\eqref{5.4}, \smash{$\big\| {\rm e}^{-x}x^{-\sigma} \varphi^1_+ \big\| \!\ll\! 1$} for $\big|\eta x^{-1}\big|, \big|\eta^{-1} x^{-1}\big|<\varepsilon$, $x\in \Sigma_0\big(x^1_{\infty},\delta\big)$. Similarly, ${\rm e}^x x^{\sigma}\varphi_-^1, \big({\rm e}^x x^{\sigma}\big)^{\pm 1}\psi^1_{\pm} \in \mathfrak{A}$, and $\big\| {\rm e}^x x^{\sigma}\varphi_-^1\big\|$, $\big\|\big({\rm e}^x x^{\sigma}\big)^{\pm 1}\psi^1_{\pm}\big\| \ll 1$. By~\eqref{5.2},
\begin{gather*}
\tri \varphi^2 =F_0\big(x,\varphi^1, \varphi^1_+,\psi^1_+,\varphi^1_-,\psi^1_-\big)-F_0(x,0,0,0,0,0)=X_0 + I_1 +I_2
\end{gather*}
with
\begin{gather*}
X_0 ={\rm e}^x x^{\sigma-1} (1)_x \big(\gamma^x_+\varphi^1_- +\gamma^0_-\psi_+^1 +\varphi^1_-\psi_+^1\big)+{\rm e}^{-x} x^{-\sigma-1} (1)_x \big(\gamma^x_-\varphi^1_+ +\gamma^0_+\psi_-^1 +\varphi^1_+\psi_-^1\big),\\
I_1 = -4 \mathcal{I}\bigl[ {\rm e}^x x^{\sigma-2} (1)_x\varphi^1 \big(\gamma_-^0+\varphi_-^1\big)\big(\gamma_+^x +\psi_+^1\big) - {\rm e}^{-x} x^{-\sigma-2} (1)_x\varphi^1 \big(\gamma_+^0+\varphi_+^1\big)\big(\gamma_-^x +\psi_-^1\big)\bigr],\\
I_2 = \mathcal{I}\bigl[ \big[x^{-3}\big]\big(\gamma_-^0 \varphi_+^1 +\gamma_+^0\varphi_-^1 +\varphi_+^1\varphi_-^1\big) +\big[x^{-3}\big] \varphi^1\big(\gamma_+^0 +\varphi_+^1\big)\big(\gamma_-^0 +\varphi_-^1\big)\\
\hphantom{I_2 =}{} + \big[x^{-3}\big]\big(\gamma_-^x \psi_+^1 +\gamma_+^x\psi_-^1 +\psi_+^1\psi_-^1\big) +\big[x^{-3}\big] \varphi^1\big(\gamma_+^x +\psi_+^1\big)\big(\gamma_-^x +\psi_-^1\big) \bigr].
\end{gather*}
It is easy to see $ \tri \varphi^2 \in \mathfrak{A}$. By Proposition~\ref{prop3.7}
\begin{gather*}
\| X_0\| \ll \big(\big\| {\rm e}^x x^{\sigma-1}\big\| +\big\|{\rm e}^{-x}x^{-\sigma-1}\big\|\big) \Upsilon_0,\\
\| I_1\| \ll \big(\big\| {\rm e}^x x^{\sigma-1}\big\| +\big\|{\rm e}^{-x}x^{-\sigma-1}\big\|\big) \big\|\varphi^1\big\|\big(\gamma^*_0 + \Upsilon_0\big),\qquad \| I_2 \| \ll |x|^{-2} \big(\big\| \varphi^1 \big\| +1\big) \big(\gamma^*_0 + \Upsilon_0)
\end{gather*}
with $\Upsilon_0 =\big(\gamma^*_1+ \big\| \varphi_+^1\big\| +\big\|\psi_+^1 \big\| \big)\big( \big\| \varphi_+^1\big\| +\big\|\psi_+^1 \big\|+ \big\| \varphi_-^1\big\| +\big\|\psi_-^1 \big\| \big)$, the implied constants depending on~$\delta$ only. Substituting~\eqref{5.5}, we have, under \eqref{5.4}, $\big\|\tri \varphi^2 \big\| \ll \big(\gamma^*_1 +1\big)\varepsilon$ for $\big|\eta x^{-1} \big|, \big|\eta^{-1} x^{-1} \big|<\varepsilon$. It is easy to verify $\big({\rm e}^x x^{\sigma}\big)^{\pm 1} \tri \varphi^2 \in\mathfrak{A}$, and we have
\begin{gather*}
\big\| {\rm e}^x x^{\sigma} \tri \varphi^2 \big\| \ll \big\| {\rm e}^x x^{\sigma-1} \big\|\big(\gamma_1^* +\big\| \psi^1_+ \big\|\big) \big(\big\| {\rm e}^x x^{\sigma} \varphi_-^1 \big\| +\big\| {\rm e}^x x^{\sigma} \psi_+^1 \big\|\big)\\
\hphantom{\big\| {\rm e}^x x^{\sigma} \tri \varphi^2 \big\| \ll}{} + |x|^{-1}\big(\gamma_1^* +\big\| \varphi^1_+ \big\|\big) \big(\big\| \varphi_+^1 \big\| +\big\| \psi_-^1 \big\|\big) + \| I_3 \| + \| I_4 \| + \big\| {\rm e}^x x^{\sigma} I_2 \big\|
\end{gather*}
with
\begin{gather*}\begin{split}&
I_3 = {\rm e}^x x^{\sigma} \mathcal{I}\bigl[ {\rm e}^x x^{\sigma-2} (1)_x\varphi^1 \big(\gamma_-^0+\varphi_-^1\big)\big(\gamma_+^x +\psi_+^1\big) \bigr],\\
& I_4 = {\rm e}^x x^{\sigma} \mathcal{I} \bigl[{\rm e}^{-x} x^{-\sigma-2} (1)_x\varphi^1 \big(\gamma_+^0+\varphi_+^1\big)\big(\gamma_-^x +\psi_-^1\big)\bigr]\end{split}
\end{gather*}
and $I_2$ given above. By Proposition~\ref{prop3.8}
\begin{gather*}
 \bigl\| I_3- \gamma_-^0 \gamma_+^x {\rm e}^x x^{\sigma} \mathcal{I} \big[{\rm e}^x x^{\sigma-2}(1)_x \varphi^1\big] \bigr\|\\
\qquad{} = \bigl \| {\rm e}^xx^{\sigma} \mathcal{I} \bigl[ {\rm e}^{-x}x^{-\sigma-1} (1)_x {\rm e}^xx^{\sigma-1} \varphi^1 \big(\gamma_-^0 {\rm e}^x x^{\sigma} \psi_+^1 +\gamma_+^x {\rm e}^xx^{\sigma}\varphi_-^1 +\psi_+^1 {\rm e}^x x^{\sigma} \varphi_-^1\big) \bigr] \bigr\|\\
\qquad{} \ll \big\| {\rm e}^x x^{\sigma-1} \big\| \big\|\varphi^1 \big\| \big(\gamma_1^* + \big\|\psi_+^1 \big\|\big)\big(\big\| {\rm e}^x x^{\sigma} \psi_+^1 \big\| + \big\| {\rm e}^x x^{\sigma}\varphi_-^1 \big\|\big),\\
\| I_4\| \ll |x|^{-1} \big\|\varphi^1\big\| \big(\gamma^*_0 + \big(\gamma^*_1 +\big\|\varphi_+^1 \big\|\big)\big(\big\|\varphi_+^1 \big\| +\big\|\psi_-^1 \big\|\big) \big),\\
\big\|{\rm e}^x x^{\sigma} I_2 \big\| \ll |x|^{-1} \big\| {\rm e}^x x^{\sigma-1} \big\| \big(\|\varphi^1\|+1\big)\big(\gamma^*_0+ \Upsilon_0\big).
\end{gather*}
By \eqref{5.3}
\begin{gather*}
\big\| {\rm e}^x x^{\sigma} \mathcal{I} \big[ {\rm e}^x x^{\sigma-2}(1)_x \varphi^1 \big] \big\|\ll \gamma^*_0 \big(\big\| {\rm e}^x x^{\sigma-1} \big\|^3 + |x|^{-1}\big\| {\rm e}^x x^{\sigma-1}\big\|\big) +\big(\gamma^*_1\big)^2|x|^{-2} \big\| {\rm e}^x x^{\sigma -1}\big\|^2 .
\end{gather*}
Summing up these estimates we get $\big\| {\rm e}^x x^{\sigma} \tri \varphi^2 \big\| \ll \big(\gamma^*_0 +\gamma^*_1 +1\big)\varepsilon$ under~\eqref{5.4}. Similarly for $\big\| {\rm e}^{-x} x^{-\sigma} \tri \varphi^2 \big\|$ we have the same inequality. We may verify that ${\rm e}^{-x}x^{-\sigma} \tri \varphi_+^2 \in \mathfrak{A}$ as well, and by analogous arguments we have
\begin{gather*}
\big\| \tri \varphi_+^2 \big\| \ll \big|\gamma_+^0\big| \big\|\tri \varphi^2\big\| + \big\|\varphi^2 \big\| \big\|\varphi_+^1 \big\| + |\gamma_+^x| \big\| {\rm e}^x x^{\sigma} \tri \varphi^2 \big\| + \big\|\varphi^2 \big\| \big\|{\rm e}^x x^{\sigma}\psi_+^1 \big\|, \\
\big\| {\rm e}^{-x}x^{-\sigma} \tri \varphi_+^2 \big\| \ll \big|\gamma_+^0\big| \big\| {\rm e}^{-x}x^{-\sigma} \tri \varphi^2 \big\| + \big\|\varphi^2 \big\| \big\| {\rm e}^{-x} x^{-\sigma} \varphi_+^1 \big\| + \big|\gamma_+^x\big| \big\|\tri \varphi^2\big\| + \big\|\varphi^2 \big\| \big\|\psi_+^1 \big\|,
\end{gather*}
where $\big\|\varphi^2 \big\| \le \big\|\varphi^1\big\| + \big\|\tri\varphi^2\big\|$. Substitution of \eqref{5.5} and the estimates for $\big\|\tri \varphi^2 \big\|$, $\ldots$ obtained above yields $\big\|\tri \varphi_+^2 \big\|,\big\|{\rm e}^{-x} x^{-\sigma} \tri \varphi_+^2 \big\| \ll \big(\gamma_0^*+\gamma_1^*+1\big)\big(\gamma_1^* +1\big)\varepsilon $. Furthermore ${\rm e}^xx^{\sigma}\tri\varphi_-^2$,
$\big({\rm e}^xx^{\sigma}\big)^{\pm 1}\tri \psi_{\pm}^2 \in \mathfrak{A}$, and for $\big\|\tri \varphi_-^2 \big\|$, $\big\|{\rm e}^{x}x^{\sigma}\tri \varphi_-^2\big\|$, $\big\|\tri\psi_{\pm}^2\big\|$, $\big\|\big({\rm e}^xx^{\sigma}\big)^{\pm}\tri\psi_{\pm}^2\big\|$, we have the same estimates. As will be shown later $\big({\rm e}^x x^{\sigma}\big)^{\pm 1}\tri \varphi^{j+1},\big({\rm e}^x x^{\sigma}\big)^{\mp 1} \tri \varphi_{\pm}^{j+1},\big({\rm e}^x x^{\sigma}\big)^{\pm 1} \tri \psi_{\pm}^{j+1} \in \mathfrak{A}$ for $j \ge 2$ as well. Let us set
\begin{gather*}
\Psi_{j} : = \big\|\tri \varphi^{j+1}\big\| +\big\|{\rm e}^x x^{\sigma} \tri \varphi^{j+1}\big\| + \big\|{\rm e}^{-x} x^{-\sigma} \tri \varphi^{j+1} \big\|\\
\hphantom{\Psi_{j} : =}{} +\big\|\tri \varphi_+^{j+1} \big\| + \big\|{\rm e}^{-x} x^{-\sigma} \tri \varphi_+^{j+1} \big\| + \big\|\tri \psi_+^{j+1} \big\| + \big\|{\rm e}^{x} x^{\sigma} \tri \psi_+^{j+1} \big\|
\\
\hphantom{\Psi_{j} : =}{} + \big\| \tri \varphi_-^{j+1} \big\| + \big\|{\rm e}^{x} x^{\sigma} \tri \varphi_-^{j+1} \big\| + \big\| \tri \psi_-^{j+1} \big\| + \big\|{\rm e}^{-x} x^{-\sigma} \tri \psi_-^{j+1} \big\|.
\end{gather*}
For $j=1$, as shown above, we have

\begin{lem}\label{lem5.1} If $x^1_{\infty}=x^1_{\infty}(B_0,B_x, B_*,\delta)$ is sufficiently large, then $\big({\rm e}^x x^{\sigma}\big)^{\pm 1} \tri \varphi^2$, $\big({\rm e}^x x^{\sigma}\big)^{\mp 1} \tri \varphi_{\pm}^2$, $\big({\rm e}^x x^{\sigma}\big)^{\pm 1} \tri \psi_{\pm}^2$ also belong to~$\mathfrak{A}$, and, under the condition~\eqref{5.4} for every $(\mathbf{c},\sigma) \in B_0\times B_x \times B_*$, we have
\begin{gather*}
\big\|\varphi^1\big\| \le K_0\big(\gamma^*_0 +1\big)\varepsilon, \qquad \big\|\varphi_{\pm}^1\big\|,\big\|\psi_{\pm}^1\big\| \le K_0\big(\gamma^*_0 +1\big)\big(\gamma^*_1 +1\big)\varepsilon,\\
 \Psi_1 \le K_0\big(\gamma^*_0 +\gamma^*_1 +1\big)\big(\gamma^*_1+1\big)\varepsilon
\end{gather*}
for $\big|\eta x^{-1}\big|,\big|\eta^{-1} x^{-1}\big|<\varepsilon$, $x\in \Sigma_0\big(x_{\infty}^1,\delta\big)$, where $K_0 \ge 1$ is some positive number depending on $\delta$ only.
\end{lem}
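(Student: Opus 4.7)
The plan is to make rigorous the chain of estimates already sketched in the text preceding the statement, organising the work in three stages.

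First, for the explicit expressions \eqref{5.3} of $\varphi^1,\varphi_{\pm}^1,\psi_{\pm}^1$, I would read off membership in $\mathfrak{A}(B_0,B_x,B_*,\Sigma_0(x_{\infty}^1,\delta),\varepsilon)$ directly from Proposition \ref{prop3.1}: each is a finite sum of products $\gamma \cdot {\rm e}^{\pm x}x^{\pm\sigma-1}(1)_x$ or $\gamma\cdot x^{-2}[1]$, and the norm $M(\cdot,|x|)$ of each factor is computed by inspection. Choosing $x_{\infty}^1$ large enough so that the asymptotic series $(1)_x = 1+[x^{-1}]$ satisfy $|(1)_x - 1|\le 1$ uniformly in $(\mathbf{c},\sigma)\in B_0\times B_x\times B_*$ (possible by the definition of $\hat{\mathfrak{A}}$), the bounds $\|\varphi^1\|\ll (\gamma_0^* + 1)\varepsilon$ and $\|\varphi_{\pm}^1\|,\|\psi_{\pm}^1\|\ll (\gamma_0^*+1)(\gamma_1^*+1)\varepsilon$ follow once \eqref{5.4} is invoked. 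By the same inspection of \eqref{5.3}, the weighted versions ${\rm e}^{-x}x^{-\sigma}\varphi_+^1$, ${\rm e}^x x^\sigma\varphi_-^1$, $({\rm e}^x x^\sigma)^{\pm 1}\psi_{\pm}^1$ all belong to $\mathfrak{A}$ with norms bounded by $O(1)$ times the relevant $\gamma$-products.

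Second, I would decompose $\triangle\varphi^2 = F_0(x,\varphi^1,\varphi^1_+,\psi^1_+,\varphi^1_-,\psi^1_-) - F_0(x,0,\ldots,0) = X_0 + I_1 + I_2$ as in the preceding derivation. The algebraic piece $X_0$ is bounded by Proposition \ref{prop3.1}(2) ($\|\phi\psi\|\le \|\phi\|\|\psi\|$). The integrals $I_1$ and $I_2$ are of the form $\mathcal{I}[{\rm e}^{\pm x}x^{\pm\sigma-2}\cdot\text{product}]$ and $\mathcal{I}[x^{-3}\cdot\text{product}]$; after factoring out $x^{-1}$ they fall under Proposition \ref{prop3.7}, giving the constant $(1+2/\sin\delta)$. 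Inserting the Step-1 bounds, one reaches $\|\triangle\varphi^2\|\ll (\gamma_1^*+1)\varepsilon$ under \eqref{5.4}. The iteration \eqref{5.2} for $\triangle\varphi_{\pm}^2$, $\triangle\psi_{\pm}^2$ is handled similarly, using $\varphi^2 = \varphi^1 + \triangle\varphi^2$ and the Step-1 bounds on $\varphi_{\pm}^1,\psi_{\pm}^1$ and their weighted versions.

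The technical heart of the proof, and the step I expect to be the main obstacle, is the estimation of the weighted quantities $({\rm e}^xx^\sigma)^{\pm 1}\triangle\varphi^2$, $({\rm e}^xx^\sigma)^{\mp 1}\triangle\varphi_{\pm}^2$, $({\rm e}^xx^\sigma)^{\pm 1}\triangle\psi_{\pm}^2$. Here Proposition \ref{prop3.7} no longer applies directly, since multiplying an integral $\mathcal{I}[{\rm e}^xx^{\sigma-2}(\cdots)]$ by ${\rm e}^x x^\sigma$ would otherwise blow up. The resolution, which the excerpt carries out in part for $\|I_3\|,\|I_4\|$, is to rewrite the integrand so that one extracts a factor ${\rm e}^x x^{\sigma-1}$, arriving at expressions of the shape ${\rm e}^xx^\sigma\mathcal{I}[{\rm e}^xx^{\sigma-1}\phi]$ with $\phi\in\mathfrak{A}$, which is then controlled by Proposition \ref{prop3.8}. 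The bookkeeping requires distinguishing, for each product of factors inside the integral, whether the exponential weight already present allows the external ${\rm e}^{\pm x}x^{\pm\sigma}$ to be absorbed; each of the $\ll 20$ resulting pieces is estimated by either Proposition \ref{prop3.7} or Proposition \ref{prop3.8}, every time producing a factor $\le (1+2/\sin\delta)$. Summing all eleven weighted and unweighted contributions, using the Step-1 bounds and \eqref{5.4}, yields $\Psi_1 \le K_0(\gamma_0^* + \gamma_1^* + 1)(\gamma_1^* + 1)\varepsilon$ with $K_0\ge 1$ depending only on $\delta$, completing the proof.
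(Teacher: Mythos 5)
Your proposal follows essentially the same route as the paper: explicit inspection of the first iterates \eqref{5.3} to get the bounds \eqref{5.5}, the decomposition $\tri\varphi^2=X_0+I_1+I_2$ with Proposition~\ref{prop3.7} for the unweighted integrals, and the crucial rewriting of the weighted integrals (the paper's $I_3$, $I_4$) so that Proposition~\ref{prop3.8} absorbs the external factor $({\rm e}^xx^{\sigma})^{\pm1}$. The argument is correct and matches the paper's proof in both structure and the source of the constant $K_0(\delta)$.
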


\subsubsection[$\Psi_j$ for $j\ge 2$]{$\boldsymbol{\Psi_j}$ for $\boldsymbol{j\ge 2}$}\label{sssc5.2.2}

In addition to \eqref{5.4} suppose that
\begin{gather}\label{5.6}
\|\varphi^{\nu}\| \le 3K_0\big(\gamma^*_0 +\gamma^*_1 +1\big)\big(\gamma^*_1 +1\big)\varepsilon \le 1, \qquad \big\|\varphi_{\pm}^{\nu} \big\|, \big\|\psi_{\pm}^{\nu} \big\| \le 1,\\
\label{5.7}
 \big({\rm e}^x x^{\sigma}\big)^{\pm 1}\tri\varphi^{\nu},\big({\rm e}^x x^{\sigma}\big)^{\mp 1}\tri\varphi_{\pm}^{\nu},
\big({\rm e}^x x^{\sigma}\big)^{\pm 1}\tri\psi_{\pm}^{\nu} \in \mathfrak{A},\\
\label{5.7a}
\Psi_{\nu-1} \le (1/2) \Psi_{\nu-2} \qquad \text{if $\nu \ge 3$}
\end{gather}
for $2\le \nu \le j$. Lemma \ref{lem5.1} implies that~\eqref{5.6}, \eqref{5.7} and~\eqref{5.7a} are valid for $j=2$ if $3K_0r_0 \le 1$, since $\big\| \varphi^2\big\| \le \big\|\varphi^1\big\|+ \Psi_1$, $\big\| \varphi_{\pm}^2\big\| \le \big\|\varphi_{\pm}^1\big\|+ \Psi_1$ and $\big\| \psi_{\pm}^2\big\| \le \big\|\psi_{\pm}^1\big\|+ \Psi_1$.

From \eqref{5.2} it follows that, for $j\ge 2$,
\begin{gather*}
\big\|\tri \varphi^{j+1} \big\| \le \big\| {\rm e}^x x^{\sigma-1} (1)_x \tri \omega_1^{j} +{\rm e}^{-x} x^{-\sigma-1} (1)_x \tri \omega_2^{j} \big\| \\
\hphantom{\big\|\tri \varphi^{j+1} \big\| \le}{} +\big\| 4\mathcal{I} \big[ {\rm e}^x x^{\sigma-2} (1)_x\tri\big( \varphi^{j}\omega_1^{j}\big)
- {\rm e}^{-x}x^{-\sigma-2}(1)_x \tri\big(\varphi^{j}\omega_2^{j}\big) \big] \big\|\\
\hphantom{\big\|\tri \varphi^{j+1} \big\| \le}{}
 + \bigl\| \mathcal{I} \bigl[ x^{-3}\bigl([1]\tri\chi_1^{j} + [1]\tri\chi_2^{j} + [1]\tri\big(\varphi^{j}\chi_1^{j}\big)
 + [1]\tri\big(\varphi^{j}\chi_2^{j}\big) \bigr) \bigr] \bigr\|
\end{gather*}
with $ \omega_1^j=\big(\gamma_-^0 +\varphi_-^j\big)\big(\gamma_+^x + \psi_+^j\big)$, $\omega_2^j=\big(\gamma_+^0 +\varphi_+^j\big)\big(\gamma_-^x + \psi_-^j\big)$, $ \chi_1^j=\big(\gamma_+^0 +\varphi_+^j\big)\big(\gamma_-^0 + \varphi_-^j\big)$, $\chi_2^j=\big(\gamma_+^x +\psi_+^j\big) \big(\gamma_-^x + \psi_-^j\big)$. Then, by~\eqref{5.4} and~\eqref{5.6}, the first two parts on the right-hand side are, respectively,
\begin{gather*}
 {} \le \big\|{\rm e}^x x^{\sigma-1}\big\| \big(\big|\gamma^0_-\big|\big\| \tri\psi_+^j \big\| + \big|\gamma^x_+\big| \big\|\tri \varphi_-^j \big\| + \big\| \tri \big(\varphi_-^j\psi_+^j\big) \big\| \big)\\
\hphantom{\le}{} + \big\|{\rm e}^{-x} x^{-\sigma-1}\big\| \big(\big|\gamma^0_+\big|\big\|\tri \psi_-^j \big\| + \big|\gamma^x_-\big|
\big\|\tri \varphi_+^j \big\| + \big\| \tri\big( \varphi_+^j\psi_-^j \big) \big\| \big)\\
{} \le \big(1+\gamma^*_1\big)\varepsilon \big(\big\| \tri \varphi_+^j \big\|+\big\| \tri \psi_+^j \big\| +\big\| \tri \varphi_-^j \big\|+
\big\|\tri \psi_-^j \big\| \big) \le \big(1+\gamma^*_1\big)\varepsilon \Psi_{j-1},
\end{gather*}
and
\begin{gather*}
{}\le L_0 \big\|{\rm e}^x x^{\sigma-1}\big\| \bigl(\big|\gamma^0_-\gamma^x_+\big|\big\|\tri \varphi^j \big\| +\big|\gamma_0^-\big|\big\|\tri\big( \varphi^j \psi_+^j\big) \big\| + \big|\gamma^x_+\big|\big\|\tri\big(\varphi^j \varphi_-^j\big) \big\|
 + \big\|\tri\big(\varphi^j \varphi_-^j\psi_+^j\big) \big\|\bigr)\\
\hphantom{\le}{} +L_0 \big\|{\rm e}^{-x} x^{-\sigma-1}\big\|\bigl(\big|\gamma^0_+\gamma^x_-\big|\big\|\tri \varphi^j\big\| +\big|\gamma_+^0\big| \big\|\tri\big( \varphi^j \psi_-^j\big)\big\| +\big| \gamma^x_-\big|\big\|\tri\big( \varphi^j \varphi_+^j\big)\big\| \\
\hphantom{\le}{} + \big\| \tri\big(\varphi^j \varphi_+^j\psi_-^j\big) \big\| \bigr) \\
{} \le 2 L_0\big(\gamma^*_0+\gamma^*_1 +1\big) \varepsilon \big(\big\|\tri \varphi^j \big\| +\big\|\tri \varphi_+^j \big\|+ \big\|\tri \psi_+^j \big\| +\big\| \tri \varphi_-^j \big\|+\big\|\tri \psi_-^j \big\| \big)\\
{}\le 2 L_0\big(\gamma^*_0+\gamma^*_1 +1\big) \varepsilon \Psi_{j-1}
\end{gather*}
for $\big|\eta x^{-1}\big|, \big|\eta^{-1}x^{-1} \big| <\varepsilon$, $x\in \Sigma_0\big(x^1_{\infty}, \delta\big)$, since
\begin{gather*}
\big\|\tri\big( \varphi^j \varphi_-^j \psi_+^j\big) \big\| \le \big\|\varphi_-^j\big\| \big\|\psi_+^j\big\| \big\|\tri \varphi^j \big\|
+ \big\|\varphi^{j-1}\big\| \big\|\psi_+^j\big\| \big\|\tri \varphi_-^j \big\| + \big\|\varphi^{j-1}\big\| \big\| \varphi_-^{j-1}\big\| \big\|\tri \psi_+^j \big\|.
\end{gather*}
Here $L_0 \ge K_0$ is some number depending on $\delta$ only, which may be retaken larger, if necessary, in each appearance below. Similarly the remaining part is $\le 2 L_0\big(\gamma^*_0 +\gamma^*_1 +1\big)\varepsilon^2 \Psi_{j-1}$, and hence
\begin{gather}\label{5.8}
\big\|\tri \varphi^{j+1} \big\| \le 5L_0\big(\gamma^*_0+\gamma^*_1 +1\big)\varepsilon \Psi_{j-1}.
\end{gather}
Observe that ${\rm e}^x x^{\sigma} \mathcal{I}\big[{\rm e}^x x^{\sigma-2} (1)_x \tri \big(\varphi^j \omega_1^j\big) \big]={\rm e}^x x^{\sigma} \mathcal{I}\big[{\rm e}^{-x} x^{-\sigma-1} (1)_x {\rm e}^x x^{\sigma-1} (\cdots)\big]$, where
\begin{gather*}
(\cdots) ={\rm e}^xx^{\sigma} \tri\big(\varphi^j \omega_1^j\big) =\big(\gamma_-^0\gamma_+^x +\gamma_-^0\psi_+^j + \gamma_+^x\varphi_-^j
+ \psi_+^j \varphi_-^j\big)\cdot {\rm e}^x x^{\sigma}\tri \varphi^j\\
\hphantom{(\cdots) =}{} +\big(\gamma_-^0 + \varphi_-^j\big) \varphi^{j-1}\cdot {\rm e}^xx^{\sigma} \tri \psi_+^j+\big(\gamma_+^x + \psi_+^{j-1}\big) \varphi^{j-1}\cdot {\rm e}^x x^{\sigma} \tri \varphi_-^j.
\end{gather*}
By Proposition \ref{prop3.8}, this and analogous facts combined with~\eqref{5.7} imply that ${\rm e}^x x^{\sigma}\tri\varphi^{j+1} \in \mathfrak{A}$. Then, dividing $\big\| {\rm e}^x x^{\sigma} \tri \varphi^{j+1} \big\| $ into three parts corresponding to those of $\big\| \tri \varphi^{j+1} \big\|$ above, we derive
\begin{gather}
\notag
\big\| {\rm e}^x x^{\sigma}\tri \varphi^{j+1} \big\| \le 4L_0 \big(\gamma^*_0 +\gamma^*_1 +1\big) \varepsilon \big(\big\|\tri \varphi^j \big\|+ \big\|{\rm e}^x x^{\sigma}\tri \varphi^j \big\|\\
\notag
\hphantom{\big\| {\rm e}^x x^{\sigma}\tri \varphi^{j+1} \big\| \le}{} + \big\| \tri \varphi_+^j \big\|+ \big\|{\rm e}^x x^{\sigma} \tri \psi_+^j \big\|+ \big\|{\rm e}^x x^{\sigma} \tri \varphi_-^j \big\|+ \big\|\tri \psi_-^j \big\|\big)\\
\label{5.9}
\hphantom{\big\| {\rm e}^x x^{\sigma}\tri \varphi^{j+1} \big\|}{} \le 4 L_0\big(\gamma^*_0 +\gamma^*_1 +1\big)\varepsilon \Psi_{j-1}.
\end{gather}
Similarly we may show that ${\rm e}^{-x}x^{-\sigma}\tri \varphi^{j+1} \in \mathfrak{A}$ and
\begin{gather}\label{5.10}
 \big\| {\rm e}^{-x} x^{-\sigma}\tri \varphi^{j+1} \big\| \le 4 L_0\big(\gamma^*_0 +\gamma^*_1 +1\big)\varepsilon \Psi_{j-1}.
\end{gather}

Since ${\rm e}^{-x}x^{-\sigma} \mathcal{I} \big[ x^{-1} (1)_x (\varphi^{j+1} \big(\gamma^0_+ +\varphi_+^j\big) -\varphi^j \big(\gamma^0_+ +\varphi_+^{j-1}\big) \big) \big] ={\rm e}^{-x}x^{-\sigma} \mathcal{I} \big[ {\rm e}^x x^{\sigma-1} (1)_x (\cdots) \big]$ with $(\cdots)= \big(\gamma^0_+ +\varphi_+^j\big) {\rm e}^{-x}x^{-\sigma}\tri \varphi^{j+1}+\varphi^j\cdot {\rm e}^{-x}x^{-\sigma}\tri \varphi_+^{j}$, we have $ {\rm e}^{-x}x^{-\sigma} \tri \varphi_+^{j+1} \in \mathfrak{A}$ as well. Furthermore by~\eqref{5.6}
\begin{gather*}
 \big\|\tri \varphi_+^{j+1} \big\| \le L_0 \bigl( \gamma^*_1 \big\| \tri \varphi^{j+1} \big\| + \big\| \tri\big( \varphi^{j+1} \varphi_+^{j} \big) \big\| +\gamma^*_1 \big\| {\rm e}^x x^{\sigma} \tri \varphi^{j+1} \big\| +\big\| {\rm e}^x x^{\sigma}\tri\big(\varphi^{j+1}\psi_+^j \big)\big\| \bigr) \\
 \hphantom{\big\|\tri \varphi_+^{j+1} \big\|}{} \le L_0 \big(\gamma^*_1+1\big) \bigl( \big\| \tri\varphi^{j+1} \big\|
+ \big\| {\rm e}^x x^{\sigma} \tri \varphi^{j+1} \big\| \\
\hphantom{\big\|\tri \varphi_+^{j+1} \big\|}{} +3K_0\big(\gamma^*_0 +\gamma^*_1 +1\big)\varepsilon \big( \big\| \tri \varphi_+^{j} \big\|
 +\big\| {\rm e}^x x^{\sigma}\tri \psi_+^j \big\| \big) \bigr)
\end{gather*}
since
\begin{gather*}
\big\|\tri\big( \varphi^{j+1} \varphi_+^{j} \big) \big\|\le \big\|\varphi_+^j \big\| \big\| \tri \varphi^{j+1} \big\|+\big\|\varphi^j \big\| \big\| \tri \varphi_+^{j} \big\|,\\
\big\| {\rm e}^xx^{\sigma} \tri\big(\varphi^{j+1}\psi_+^j\big) \big\| \le \big\|\psi_+^j \big\| \big\|{\rm e}^xx^{\sigma} \tri\varphi^{j+1}\big\| +\big\|\varphi^j\big\| \big\|{\rm e}^xx^{\sigma} \tri\psi^j_+\big\|,
\end{gather*}
and similarly
\begin{gather*}
\big\| {\rm e}^{-x} x^{-\sigma} \tri \varphi_+^{j+1} \big\| \le L_0\big(\gamma^*_1+1\big) \bigl( \big\| \tri \varphi^{j+1} \big\| + \big\| {\rm e}^{-x} x^{-\sigma} \tri \varphi^{j+1} \big\|\\
\hphantom{\big\| {\rm e}^{-x} x^{-\sigma} \tri \varphi_+^{j+1} \big\| \le}{} +3K_0\big(\gamma^*_0 +\gamma^*_1 +1\big) \varepsilon
 \big( \big\|{\rm e}^{-x} x^{-\sigma}\tri \varphi_+^{j} \big\| +\big\|\tri \psi_+^j \big\| \big) \bigr).
\end{gather*}
We combine these estimates with \eqref{5.8}, \eqref{5.9} and \eqref{5.10} to obtain
\begin{gather*}
\big\|\tri \varphi_+^{j+1} \big\| + \big\| {\rm e}^{-x} x^{-\sigma} \tri \varphi_+^{j+1} \big\| \le 3L_0^2\big(\gamma^*_0 +\gamma^*_1 +1\big)\big(\gamma^*_1 +1\big)\varepsilon \Psi_{j-1}\\
\qquad\quad{} + L_0\big(\gamma^*_1+1\big) \big( 2 \big\| \tri\varphi^{j+1} \big\| + \big\| {\rm e}^{x} x^{\sigma} \tri \varphi^{j+1} \big\|
+ \big\| {\rm e}^{-x} x^{-\sigma} \tri \varphi^{j+1} \big\| \big)\\
\qquad{} \le 21L_0^2\big(\gamma^*_0 +\gamma^*_1 +1\big)\big(\gamma^*_1 +1\big)\varepsilon \Psi_{j-1}.
\end{gather*}
The other differences $\tri \varphi_-^{j+1}$, ${\rm e}^xx^{\sigma}\tri \varphi_-^{j+1} $, $\tri \psi_{\pm}^{j+1}$, $\big({\rm e}^x x^{\sigma}\big)^{\pm 1} \tri \psi_{\pm}^{j+1} $ are treated in a similar manner. Thus we have shown that~\eqref{5.7} is valid for $ \nu \le j+1$, and that $\Psi_{j} \le 100 L_0^2 \big(\gamma^*_0+\gamma^*_1 +1\big)\big(\gamma^*_1 +1\big)\varepsilon \Psi_{j-1}$. Choose~$r_0$ in~\eqref{5.4} in such a way that $3K_0 r_0 \le 100 L^2_0 r_0 \le 1/2$. Then $\Psi_j \le (1/2) \Psi_{j-1}$, and hence~\eqref{5.7a} is valid for $\nu \le j+1$. By Lemma~\ref{lem5.1}
\begin{gather*}
\big\|\varphi^{j+1} \big\| \le \big\|\varphi^1\big\| + \sum_{\nu=1}^j\big\|\tri \varphi^{\nu+1}\big\| \le \big\|\varphi^1\big\| +\sum_{\nu=1}^j \Psi_{\nu} \le \big\|\varphi^1\big\| + 2\Psi_1\\
\hphantom{\big\|\varphi^{j+1} \big\|}{} \le K_0\big(\gamma^*_0+1\big)\varepsilon + 2K_0 \big(\gamma^*_0 +\gamma^*_1 +1\big)\big(\gamma^*_1
+1\big)\varepsilon \le 3K_0\big(\gamma^*_0 +\gamma^*_1+1\big)\big(\gamma^*_1 +1\big)\varepsilon,\\
\big\|\varphi_+^{j+1} \big\| \le \big\|\varphi^1_+ \big\| + 2\Psi_1 \le 3K_0\big(\gamma^*_0 +\gamma^*_1+1\big)\big(\gamma^*_1 +1\big)\varepsilon, \quad \ldots,
\end{gather*}
that is, \eqref{5.6} is also valid for $\nu\le j+1$. Thus we have shown that~\eqref{5.6}, \eqref{5.7} and \eqref{5.7a} are valid for every $\nu$ if $r_0$ is as above.
\begin{prop}\label{prop5.2}For $j \ge 2$ we have $\big({\rm e}^x x^{\sigma}\big)^{\pm 1} \tri\varphi^{j},\big({\rm e}^x x^{\sigma}\big)^{\mp 1} \tri \varphi_{\pm}^{j},\big({\rm e}^x x^{\sigma}\big)^{\pm 1} \tri \psi_{\pm}^{j} \in\mathfrak{A}$, and $\Psi_{j} \le (1/2)\Psi_{j-1} $ for $\big|\eta x^{-1}\big|, \big|\eta^{-1} x^{-1}\big|< \varepsilon$, $x\in \Sigma_0\big(x^1_{\infty}, \delta\big)$ under~\eqref{5.4} with $r_0=r_0(\delta)$ such that $100 L_0^2r_0 \le 1/2$.
\end{prop}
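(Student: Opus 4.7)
The plan is to run an induction on $j$, showing simultaneously that the hypotheses \eqref{5.6}, \eqref{5.7} and \eqref{5.7a} carried through Section~\ref{sssc5.2.2} persist for $\nu = j+1$. The base case $\nu = 2$ is furnished by Lemma~\ref{lem5.1}, since $\bigl\|\varphi^2\bigr\| \le \bigl\|\varphi^1\bigr\| + \Psi_1$ and $\bigl\|\varphi^1\bigr\| + \Psi_1 \le 3 K_0(\gamma_0^* + \gamma_1^* +1)(\gamma_1^* + 1)\varepsilon$, which is $\le 1$ once $r_0$ is small, and the corresponding weighted differences $({\rm e}^x x^\sigma)^{\pm 1}\tri\varphi^2$, etc., were checked to lie in~$\mathfrak{A}$ there.

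For the inductive step, I will exploit the recursion \eqref{5.2} to express each difference $\tri\varphi^{j+1}$, $\tri\varphi_\pm^{j+1}$, $\tri\psi_\pm^{j+1}$ as an $\mathcal{I}[\cdot]$-integral or a simple product involving differences at level $j$ of the auxiliary quantities
\[
\omega_1^j = \bigl(\gamma_-^0+\varphi_-^j\bigr)\bigl(\gamma_+^x+\psi_+^j\bigr), \qquad \omega_2^j = \bigl(\gamma_+^0+\varphi_+^j\bigr)\bigl(\gamma_-^x+\psi_-^j\bigr),
\]
and analogously $\chi_1^j$, $\chi_2^j$. The algebraic identity
\[
\tri(abc) = bc\,\tri a + a^{j-1} c\,\tri b + a^{j-1} b^{j-1}\,\tri c
\]
(at the three indices) reduces every such difference to a linear combination of $\tri\varphi^j$, $\tri\varphi_\pm^j$, $\tri\psi_\pm^j$ with coefficients that, thanks to~\eqref{5.6}, are bounded by $O(\gamma_0^* + \gamma_1^* + 1)$. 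Membership of the weighted differences in $\mathfrak{A}$ is obtained exactly as in the displayed computation
\[
{\rm e}^x x^\sigma \mathcal{I}\bigl[{\rm e}^x x^{\sigma-2}(1)_x\,\tri(\varphi^j\omega_1^j)\bigr] = {\rm e}^x x^\sigma \mathcal{I}\bigl[{\rm e}^{-x} x^{-\sigma-1}(1)_x\,{\rm e}^x x^{\sigma-1}(\cdots)\bigr],
\]
where $(\cdots)\in \mathfrak{A}$ by the induction hypothesis \eqref{5.7} at step $j$; one then invokes Propositions~\ref{prop3.7} and~\ref{prop3.8} to keep the result in $\mathfrak{A}$ and to produce the required norm bounds.

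Summing the four groups of estimates (the boundary term, the $\mathcal{I}[{\rm e}^{\pm x}\cdots]$ terms, and the $\mathcal{I}[x^{-3}\cdots]$ term for $\tri\varphi^{j+1}$, and the analogues for $\tri\varphi_\pm^{j+1}$, $\tri\psi_\pm^{j+1}$), I expect the master inequality
\[
\Psi_j \le 100\, L_0^2\,\bigl(\gamma_0^* + \gamma_1^* + 1\bigr)\bigl(\gamma_1^* + 1\bigr)\varepsilon\,\Psi_{j-1}
\]
already displayed in the text, with $L_0 = L_0(\delta) \ge K_0$. Choosing $r_0 = r_0(\delta)$ so that $100\, L_0^2\, r_0 \le 1/2$ and imposing \eqref{5.4} gives the contraction $\Psi_j \le \tfrac{1}{2}\Psi_{j-1}$, which is \eqref{5.7a} at $\nu = j+1$. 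Finally, \eqref{5.6} at $\nu = j+1$ follows by telescoping: $\bigl\|\varphi^{j+1}\bigr\| \le \bigl\|\varphi^1\bigr\| + \sum_{\nu=1}^{j}\Psi_\nu \le \bigl\|\varphi^1\bigr\| + 2\Psi_1 \le 3 K_0(\gamma_0^* + \gamma_1^* + 1)(\gamma_1^* + 1)\varepsilon \le 1$, and similarly for $\varphi_\pm^{j+1}$ and $\psi_\pm^{j+1}$.

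The main technical obstacle is not the scalar bookkeeping but the verification, at each step, that the weighted differences genuinely lie in $\mathfrak{A}$ rather than merely in $\hat{\mathfrak{A}}$; this requires expressing each troublesome factor like ${\rm e}^x x^\sigma$ as ${\rm e}^{-x} x^{-\sigma-1}\cdot {\rm e}^{x}x^{\sigma-1}\cdot x$ so that Proposition~\ref{prop3.8} applies and the integrand is handled by an element already in $\mathfrak{A}$. Once this bookkeeping is set up uniformly for all six unknowns, the contraction estimate $\Psi_j \le \tfrac{1}{2}\Psi_{j-1}$ drops out of the linearity of the estimates in the choice of $r_0$.
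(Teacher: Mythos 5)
Your proposal is correct and follows essentially the same route as the paper's own argument in Section~\ref{sssc5.2.2}: induction carrying \eqref{5.6}, \eqref{5.7}, \eqref{5.7a}, the telescoping product-difference identity applied to $\omega_i^j$, $\chi_i^j$, membership in $\mathfrak{A}$ via the factorisation ${\rm e}^x x^{\sigma}={\rm e}^{-x}x^{-\sigma-1}\cdot{\rm e}^{x}x^{\sigma-1}\cdot x$ together with Propositions~\ref{prop3.7} and~\ref{prop3.8}, the master bound $\Psi_j\le 100L_0^2(\gamma_0^*+\gamma_1^*+1)(\gamma_1^*+1)\varepsilon\Psi_{j-1}$, and the choice $100L_0^2 r_0\le 1/2$ (with $3K_0r_0\le 1$ for the base case). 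No gaps beyond the bookkeeping you already identify.
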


\subsection{Asymptotic coefficients}\label{ssc5.3}

Let $\phi \in \hat{\mathfrak{A}}$ be given by
\begin{gather*}
\phi=\sum_{n=1}^{\infty} p_n^+(x)\big({\rm e}^x x^{\sigma-1}\big)^n+\sum_{n=1}^{\infty} p_n^-(x)\big({\rm e}^{-x} x^{-\sigma-1}\big)^n +p_0(x) x^{-1}.
\end{gather*}
For every $p^+_n(x) \not\sim 0$, let $d_+(n) \in \N\cup\{0\}$ be such that $p^+_n(x)=x^{-d_+(n)}\big(a^+_n+ O\big(x^{-1}\big) \big)$ with $a^+_n \not= 0$, and assign the lattice point $(n, -d_+(n)) \in \Z^2$ to $p_n^+(x)$. For the other asymptotic coefficients $p_0(x), p^-_n(x)\not\sim 0$, the degrees $d(0)$ and $d_-(n)$ are similarly defined, and the lattice points $(0, -d(0))$ and $(-n, -d_-(n))$ are assigned to $p_0(x)\not\sim 0$ and to $p_n^-(x)\not\sim 0$, respectively. Then denote by $\varpi(\phi)$ the set of such lattice points for all asymptotic coefficients $\not\sim 0$ of $\phi$. For $d, m_-,m_+ \in \Z$ satisfying $m_- \le m_+$, $d \ge 0$, set
\begin{gather*}
 [m_-, m_+; -d] := \big\{ (x_1, x_2) \in \Z^2; \, x_2\le -d,\, x_2\le x_1 - m_- - d, \, x_2 \le -x_1 +m_+ -d \big\}.
\end{gather*}
Then, for $\varphi^j$, $\varphi_{\pm}^j$, $\psi_{\pm}^j$ given by~\eqref{5.2}, we have

\begin{prop}\label{prop5.3}For every $j \ge 2$, the lattice sets $\varpi\big(\varphi^{j-1}\big)$, $\varpi\big(\varphi_{\pm}^{j-1}\big)$, $\varpi\big(\psi_{\pm}^{j-1}\big)$ consist of finite numbers of lattice points, and have the properties:
\begin{gather*}
\varpi\big( \varphi^{j} \big) \subset [-1, 1; 0],\qquad \varpi\big( \tri \varphi^{j} \big) \subset [-j, j; -j+1];\\
\varpi\big( \varphi_+^{j} \big),\varpi\big( \psi_-^{j}\big ) \subset [0, 2; 0], \qquad \varpi\big( \tri \varphi_+^{j} \big),\varpi\big( \tri\psi_-^{j} \big) \subset [-j, -1; -j] \cup [0, j+1; -j+1];\\
\varpi\big( \varphi_-^{j} \big),\varpi\big(\psi_+^{j} \big) \subset [-2, 0; 0], \qquad \varpi\big( \tri \varphi_-^{j} \big),\varpi\big( \tri \psi_+^{j} \big) \subset [-j-1, 0; -j+1]\cup [1,j;-j].
\end{gather*}
\end{prop}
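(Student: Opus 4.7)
The proof will proceed by induction on $j$. The base case $j=2$ is verified by direct inspection of the explicit expressions for $\varphi^1,\varphi_\pm^1,\psi_\pm^1$ from Section~\ref{sssc5.2.1} combined with a short computation of $\tri\varphi^2,\tri\varphi_\pm^2,\tri\psi_\pm^2$ extracted from the recursion~\eqref{5.2}; finiteness of the associated lattice sets is immediate, since each of the five series contains only finitely many nonzero coefficients.

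For the inductive step I would first establish three transformation rules for $\varpi(\cdot)$ corresponding to the three operations appearing in~\eqref{5.1}. First, a multiplication rule: using the product formula obtained in the proof of Proposition~\ref{prop3.1} and tracking the leading order of $x^{-1}$ in each of the summands $p_\nu^+ q_{n-\nu}^+$, $x^{-1}(p_n^+ q_0 + p_0 q_n^+)$, and $x^{-2(\nu-n)}\varpi_{n,\nu}^{\pm}$, one obtains a ``Minkowski-sum-with-shifts'' bound for $\varpi(\phi\psi)$ in terms of $\varpi(\phi)$ and $\varpi(\psi)$, with the extra $x^{-1}$ and $x^{-2(\nu-n)}$ factors dropping the second coordinate accordingly. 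Second, multiplication by $\mathrm{e}^{\pm x}x^{\pm\sigma-1}$ shifts $\varpi$ by $(\pm 1,0)$ in the first coordinate (with a small $(0,-1)$ adjustment at the boundary where $p_0$ is transferred to $p^\pm_{\pm 1}$), and multiplication by $x^{-k}$ shifts by $(0,-k)$. Third, Lemma~\ref{lem3.5} shows that the primitive operator $\mathcal{I}[\cdot]$ preserves the leading asymptotic exponent of each coefficient term by term, and hence preserves $\varpi$ set-theoretically.

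With these rules in hand, each summand on the right-hand side of~\eqref{5.1} can be tracked individually. For $\varpi(\varphi^{j+1})\subset[-1,1;0]$, the prefactors $\mathrm{e}^{\pm x}x^{\pm\sigma-1}$ applied to $\bigl(\gamma_-^0+\varphi_-^j\bigr)\bigl(\gamma_+^x+\psi_+^j\bigr)\in[-2,0;0]+[-2,0;0]$ (and its mirror) deliver contributions in $[-1,1;0]$, while the cubic integrated terms with $x^{-3}$ factors land well inside. For $\tri\varphi^{j+1}$, one expands $\tri(AB)=A\cdot\tri B+B^{(j-1)}\cdot\tri A$ and iterates, so that each $\tri$ drops the second coordinate by one relative to the non-difference bound, producing $[-j-1,j+1;-j]$. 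The two-piece structure $[-j,-1;-j]\cup[0,j+1;-j+1]$ for $\tri\varphi_+^{j+1}$ (and the mirror for $\tri\psi_+^{j+1}$) reflects the fact that the two integrated summands in $F_+$ contribute to essentially disjoint first-coordinate bands: the $x^{-1}(1)_x\varphi^{j+1}\bigl(\gamma_+^0+\varphi_+^j\bigr)$ channel contributes lattice points whose ``flat top'' sits at $x_2=-j$ in the range $x_1\in[-j,-1]$, while the $\mathrm{e}^x x^{\sigma-1}(1)_x\varphi^{j+1}\bigl(\gamma_+^x+\psi_+^j\bigr)$ channel shifts this by $(+1,0)$, and the resulting flat top sits at $x_2=-j+1$ in the range $x_1\in[0,j+1]$.

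The main obstacle will be the bookkeeping for the two-piece regions: a naive single-region Minkowski-sum bound would give a convex set strictly larger than the required union, so one must exploit the exact separation between the ``$x^{-1}$-channel'' and the ``$\mathrm{e}^{\pm x}x^{\pm\sigma-1}$-channel'' in $F_\pm,G_\pm$, handling the two summands independently before uniting them. Finiteness of each $\varpi(\cdot)$ is carried through the induction in parallel and is routine, since each operation in~\eqref{5.2} enlarges the first-coordinate support by at most a bounded amount per iteration.
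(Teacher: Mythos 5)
Your strategy coincides with the paper's: identify each monomial with a lattice point, note that the primitive operator $\mathcal{I}$ preserves the lattice sets, derive multiplication rules for the lattice sets, and induct on $j$ starting from the explicit formulas for $\varphi^1,\varphi^1_{\pm},\psi^1_{\pm}$, expanding differences via $\tri(AB)=A\,\tri B+B^{(j-1)}\tri A$ and treating the two channels of $F_{\pm}$, $G_{\pm}$ separately. However, your second transformation rule --- the one that does all the work --- is stated incorrectly. Multiplication by ${\rm e}^x x^{\sigma-1}$ does not shift $\varpi$ by $(+1,0)$ with only a boundary adjustment at the $p_0$-slot: acting on a term $p_n^-(x)\big({\rm e}^{-x}x^{-\sigma-1}\big)^n$ with $n\ge 1$ (first coordinate $-n<0$) it produces $\big({\rm e}^{-x}x^{-\sigma-1}\big)^{n-1}x^{-2}p_n^-(x)$, so every lattice point with negative first coordinate is shifted by $(+1,-2)$, not $(+1,0)$; only the points with $x_1\ge 1$ shift by $(+1,0)$, and the $x_1=0$ slot shifts by $(+1,-1)$. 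This three-case behaviour is exactly what the paper's formulas \eqref{5.11}--\eqref{5.12} encode by writing ${\rm e}^x x^{\sigma-1}[-j,j;-j+1]$ as a union of three polygons rather than a translate, and it is the reason the regions $[m_-,m_+;-d]$ have the slope-$\pm1$ shape at all: applied literally, your $(+1,0)$ rule would fail to produce the drop in the second coordinate on the left flank of $\varpi\big(\tri\varphi_+^{j+1}\big)$ and would give sets strictly larger than $[-j-1,-1;-j-1]\cup[0,j+2;-j]$. Once this rule is corrected (and its mirror for ${\rm e}^{-x}x^{-\sigma-1}$), the remainder of your outline matches the paper's computation and closes the induction.
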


The polygons packing the lattice sets on the right-hand sides are described in Fig.~\ref{convex}.

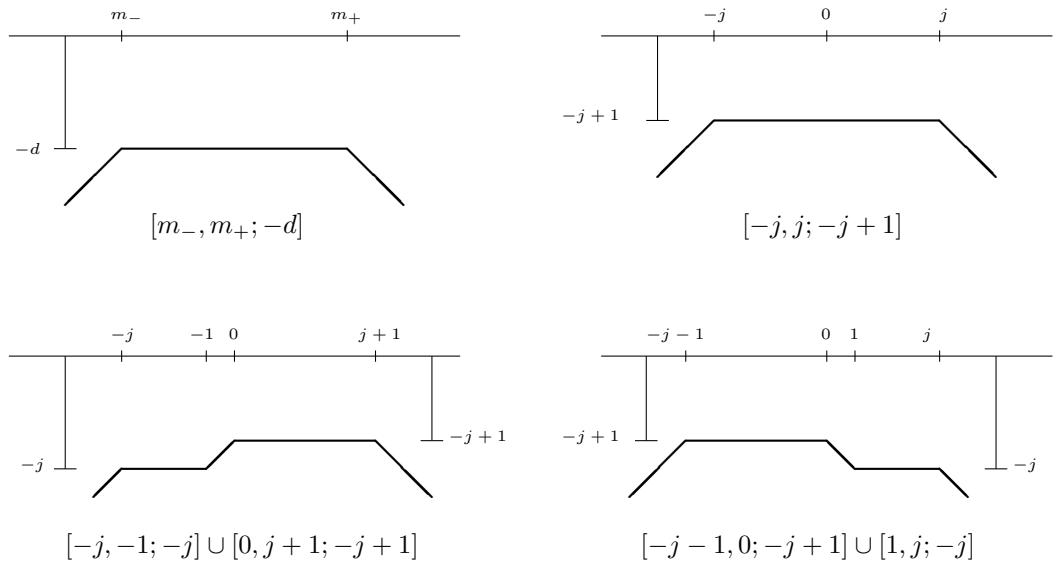
\begin{figure}[htb]\small
\begin{center}
\unitlength=0.75mm
\begin{picture}(80,55)(-10,-20)
\put(-5,30){\line(1,0){80}}
\put(15,30){\line(0,1){1}}
\put(15,30){\line(0,-1){1}}
\put(55,30){\line(0,1){1}}
\put(55,30){\line(0,-1){1}}
\put(13,33){\makebox{\tiny$m_-$}}
\put(52,33){\makebox{\tiny$m_+$}}
\put(3,10){\line(1,0){4}}
\put(5,30){\line(0,-1){20}}
\put(-4,9){\makebox{\tiny$-d$}}
\thicklines
\put(15,10){\line(-1,-1){10}}
\put(15,10){\line(1,0){40}}
\put(55,10){\line(1,-1){10}}
\put(20,-5){\makebox{$[m_-,m_+;-d]$}}
\end{picture}
\qquad\qquad\quad
\begin{picture}(80,55)(-10,-20)
\put(-5,30){\line(1,0){80}}
\put(15,30){\line(0,1){1}}
\put(15,30){\line(0,-1){1}}
\put(55,30){\line(0,1){1}}
\put(55,30){\line(0,-1){1}}
\put(35,30){\line(0,1){1}}
\put(35,30){\line(0,-1){1}}
\put(13,33){\makebox{\tiny$-j$}}
\put(55,33){\makebox{\tiny$j$}}
\put(34,33){\makebox{\tiny$0$}}
\put(3,15){\line(1,0){4}}
\put(5,30){\line(0,-1){15}}
\put(-12,15){\makebox{\tiny$-j+1$}}
\thicklines
\put(15,15){\line(-1,-1){10}}
\put(15,15){\line(1,0){40}}
\put(55,15){\line(1,-1){10}}
\put(20,-5){\makebox{$[-j,j;-j+1]$}}
\end{picture}
\vskip0.1cm\noindent
\begin{picture}(80,40)(-10,-5)
\put(-5,30){\line(1,0){80}}
\put(15,30){\line(0,1){1}}
\put(15,30){\line(0,-1){1}}
\put(60,30){\line(0,1){1}}
\put(60,30){\line(0,-1){1}}
\put(35,30){\line(0,1){1}}
\put(35,30){\line(0,-1){1}}
\put(30,30){\line(0,1){1}}
\put(30,30){\line(0,-1){1}}
\put(13,33){\makebox{\tiny$-j$}}
\put(57,33){\makebox{\tiny$j+1$}}
\put(34,33){\makebox{\tiny$0$}}
\put(27,33){\makebox{\tiny$-1$}}
\put(3,10){\line(1,0){4}}
\put(5,30){\line(0,-1){20}}
\put(-3,10){\makebox{\tiny$-j$}}
\put(68,15){\line(1,0){4}}
\put(70,30){\line(0,-1){15}}
\put(73,15){\makebox{\tiny$-j+1$}}
\thicklines
\put(15,10){\line(-1,-1){5}}
\put(15,10){\line(1,0){15}}
\put(30,10){\line(1,1){5}}
\put(35,15){\line(1,0){25}}
\put(60,15){\line(1,-1){10}}
\put(5,-5){\makebox{$[-j,-1;-j]\cup[0,j+1;-j+1]$}}
\end{picture}
\qquad\qquad\quad
\begin{picture}(80,40)(-10,-5)
\put(-5,30){\line(1,0){80}}
\put(10,30){\line(0,1){1}}
\put(10,30){\line(0,-1){1}}
\put(55,30){\line(0,1){1}}
\put(55,30){\line(0,-1){1}}
\put(35,30){\line(0,1){1}}
\put(35,30){\line(0,-1){1}}
\put(40,30){\line(0,1){1}}
\put(40,30){\line(0,-1){1}}
\put(3,33){\makebox{\tiny$-j-1$}}
\put(52,33){\makebox{\tiny$j$}}
\put(34,33){\makebox{\tiny$0$}}
\put(39,33){\makebox{\tiny$1$}}
\put(1,15){\line(1,0){4}}
\put(3,30){\line(0,-1){15}}
\put(-12,15){\makebox{\tiny$-j+1$}}
\put(63,10){\line(1,0){4}}
\put(65,30){\line(0,-1){20}}
\put(68,10){\makebox{\tiny$-j$}}
\thicklines
\put(10,15){\line(-1,-1){10}}
\put(10,15){\line(1,0){25}}
\put(35,15){\line(1,-1){5}}
\put(40,10){\line(1,0){15}}
\put(55,10){\line(1,-1){5}}
\put(2,-5){\makebox{$[-j-1,0;-j+1]\cup[1,j;-j]$}}
\end{picture}
\end{center}
\caption{Polygons packing the lattice sets.}\label{convex}
\end{figure}

\begin{proof}Let us identify $\big({\rm e}^x x^{\sigma-1}\big)^n x^{-d_+(n)}$, $x^{-1} x^{-d(0)}$ and $\big({\rm e}^{-x} x^{-\sigma-1}\big)^n x^{-d_-(n)}$ with $(n, -d_+(n))$, $(0,-d(0))$ and $(n, -d_-(n))$, respectively. Then we may canonically define the product of them and write, for example, $\big({\rm e}^x x^{\sigma-1}\big)^n x^{-d_+(n)} (-n', -d_-(n')) =(n, -d_+(n)) (-n', -d_-(n'))$ $=(n-n', -d_+(n)-d_-(n') -2n')$ if $n>n'$, $ \big({\rm e}^x x^{\sigma-1}\big)^n x^{-d_+(n)} (-n, -d_-(n)) =(0, -d_+(n)-d_-(n) -2n+1)$. The following formulas are easily obtained:
\begin{gather}\label{5.11}
 {\rm e}^x x^{\sigma-1} [-j,j; -j+1]=[-j+1,-1;-j-1] \cup [0,1; -j] \cup[2, j+1; -j+1],\\
\label{5.12}
 {\rm e}^x x^{\sigma-1} ( [-j-1,0; -j+1] \cup [1,j; -j] )=[-j,-1;-j-1] \cup [0,j+1; -j],
\end{gather}
in particular,
\begin{gather}\label{5.13}
 {\rm e}^x x^{\sigma-1} [-1,1; 0]\subset [1,2;0],\qquad {\rm e}^x x^{\sigma-1} [-2,0; 0]\subset [0,2;-1].
\end{gather}
Indeed, for example, \eqref{5.11} is verified by using
\begin{gather*}
[-j, j;-j+1] = [-j,-2;-j+1] \bigl|_{x_1 \le -2} \cup [-1, 0; -j+1]\bigl|_{-1\le x_1 \le 0}\cup [1,j; -j+1] \bigl|_{ x_1 \ge 1}.
\end{gather*}

We show the relations by induction on $j$. By virtue of the symmetric property of \eqref{5.2}, it is sufficient to focus on $\varphi^j$ and~$\varphi_+^j$. By~\eqref{5.3}, $\varpi\big(\varphi^1\big) \subset [-1,1; 0]$, $\varpi\big(\varphi^1_+\big), \varpi\big(\psi^1_-\big) \subset [0,2;0]$, $\varpi\big(\varphi^1_-\big), \varpi\big(\psi^1_+\big) \subset [-2,0;0]$. Note that $\mathcal{I}\big[\big({\rm e}^x x^{\sigma-1}\big)^m[x^{-l}\big] \big] =\big({\rm e}^x x^{\sigma-1}\big)^m [x^{-l}]$, $\mathcal{I}\big[x^{-1} \big[x^{-l-1}\big] \big] = \big[x^{-l-1}\big]$, $\mathcal{I}\big[\big({\rm e}^{-x} x^{-\sigma-1}\big)^m \big[x^{-l}\big] \big] =\big({\rm e}^{-x} x^{-\sigma-1}\big)^m \big[x^{-l}\big]$, where $m\in \N$, $l \in \N \cup \{0\}$. By $\tri \varphi^2=F_0\big(x, \varphi^1, \varphi^1_+, \psi^1_+, \varphi_-^1,\psi_-^1\big) -F_0(x, 0, 0, 0, 0, 0)$ and~\eqref{5.13} we have $\varpi\big(\tri\varphi^2\big) \subset [-2, 2;-1]$, and hence $\varpi\big(\varphi^2\big) \!\subset\! [-1, 1;0] \cup [-2,2; -1]= [-1,1;0]$. In obtaining this, we have used $\varpi\big({\rm e}^x x^{\sigma-2} \varphi_-^1 \varphi^1\big)\!$ $\subset x^{-1} {\rm e}^x x^{\sigma-1} [-2,0;0] [-1,1;0] \subset [0,2; -2] [-1,1;0]\subset [-1, 3; -2]\subset [0,2;-1]$. From
\begin{gather*}
\tri \varphi_+^2 = \mathcal{I}\big[ x^{-1} [1]\big( \big([1]+ \varphi_+^1\big)\tri \varphi^2 +\varphi^1 \varphi_+^1\big) + {\rm e}^x x^{\sigma-1} [1] \big( \big([1]+ \psi_+^1\big) \tri\varphi^2 + \varphi^1 \psi_+^1\big) \big],
\end{gather*}
we derive $\varpi\big(\tri \varphi^2_+\big) \subset [-2,-1; -2] \cup [0, 3; -1]$ and $\varpi\big(\varphi_+^2\big) \subset [0,2; 0]$ by using
\begin{gather*}
\varpi\big({\rm e}^x x^{\sigma-1} \tri \varphi^2\big) \subset {\rm e}^xx^{\sigma-1}[-2,2;-1] \subset {\rm e}^xx^{\sigma-1}([-2,-2;-1] \cup [-1,0;-1]\cup [1,2;-1])\\
\hphantom{\varpi\big({\rm e}^x x^{\sigma-1} \tri \varphi^2\big)}{} \subset [-1,-1; -3]\cup [0,1;-2]\cup [2,3;-1] \subset [0,1;-2]\cup[2,3;-1]\subset [0,3;-1],\\
\varpi\big({\rm e}^x x^{\sigma-1}\psi^1_+ \tri \varphi^2\big) \subset \psi^1_+\varpi\big({\rm e}^xx^{\sigma-1}\tri\varphi^2\big) \subset [-2,0;0] ([0,1;-2] \cup [2,3;-1])\\
\hphantom{\varpi\big({\rm e}^x x^{\sigma-1}\psi^1_+ \tri \varphi^2\big)}{} \subset [-2,1;-2]\cup [0,3;-1] \subset [-2,-1;-2] \cup [0,3;-1],\\
 \varpi\big({\rm e}^x x ^{\sigma-1} \varphi^1\psi_+^1\big) \subset {\rm e}^x x^{\sigma-1} [-2,0;0] [-1,1; 0]\subset [0,2; -1] ( [-1, -1; 0] \cup [0, 1; 0])\\
\hphantom{\varpi\big({\rm e}^x x ^{\sigma-1} \varphi^1\psi_+^1\big)}{} \subset [-1, 1; -2] \cup [0,3; -1]
\end{gather*}
and so on. Hence the assertion is valid for $j=2$.

Suppose that the assertion is valid for every integer $\le j$. From \eqref{5.2} it follows that
\begin{gather*}
\tri \varphi^{j+1} = {\rm e}^x x^{\sigma-1} [1] \big( \big([1]+\varphi_-^j\big) \tri \psi_+^j +\big([1] +\psi_+^{j-1}\big)\tri \varphi_-^j \big) + {\rm e}^{-x} x^{-\sigma-1} (\cdots)\\
\hphantom{\tri \varphi^{j+1} =}{}+ \mathcal{I} \bigl[ {\rm e}^x x^{\sigma-2} [1] \bigl( \big([1]+\varphi_-^j\big)\big([1]+\psi_+^j\big)\tri \varphi^j + \big([1]+\varphi_-^j\big) \varphi^{j-1}\tri\psi_+^j\\
\hphantom{\tri \varphi^{j+1} =}{} +\big([1] +\psi_+^{j-1}\big) \varphi^{j-1} \tri \varphi_-^j \bigr) + {\rm e}^{-x} x^{-\sigma-2}(\cdots) \bigr]\\
\hphantom{\tri \varphi^{j+1} =}{} + \mathcal{I} \bigl[ x^{-3} \bigl( [1] \big([1]+\varphi_+^j\big)\big([1]+\varphi_-^j\big)\tri \varphi^j + \big([1]+[1]\varphi^{j-1}\big) \big([1]+ \varphi_-^{j}\big)\tri\varphi_+^j\\
\hphantom{\tri \varphi^{j+1} =}{} + \big([1]+[1]\varphi^{j-1}\big) \big([1]+ \varphi_+^{j-1}\big)\tri\varphi_-^j \bigr)+ x^{-3}(\cdots) \bigr].
\end{gather*}
By \eqref{5.11} and \eqref{5.12} we have
\begin{gather*}
\varpi\big({\rm e}^xx^{\sigma-2} \tri\varphi^j\big) \subset [-j+1,-1;-j-2] \cup [0,1; -j-1] \cup [2,j+1;-j]\\
\hphantom{\varpi\big({\rm e}^xx^{\sigma-2} \tri\varphi^j\big)}{} \subset [-j-1,j+1;-1],\\
\varpi\big({\rm e}^xx^{\sigma-2} \varphi^{j-1}\tri\varphi_-^j\big) \subset [-1,1;0]([-j,-1;-j-2]\cup [0,j+1;-j-1])\\
\hphantom{\varpi\big({\rm e}^xx^{\sigma-2} \varphi^{j-1}\tri\varphi_-^j\big)}{} \subset [-j-1,0;-j-2]\cup [-1,j+2;-j-1] \subset [-j-1,j+1;-j],\\
\varpi\big({\rm e}^xx^{\sigma-2} \varphi^{j-1}\psi^j_+\tri\varphi_-^j\big) \subset [-2,0;0] ([-j-1,0;-j-2]\cup [-1,j+2;-j-1])\\
\hphantom{\varpi\big({\rm e}^xx^{\sigma-2} \varphi^{j-1}\psi^j_+\tri\varphi_-^j\big)}{} \subset [-j-1,j+1;-j]
\end{gather*}
and so on. From these it follows that $\varpi\big(\tri \varphi^{j+1}\big) \subset [-j-1, j+1; -j]$ and consequently $\varpi\big(\varphi^{j+1}\big) \subset [-1,1;0]$. Furthermore,
\begin{gather*}
\tri \varphi_+^{j+1} =\mathcal{I} \bigl[ x^{-1} [1] \big( \big([1] +\varphi_+^j\big) \tri\varphi^{j+1} +\varphi^j \tri \varphi_+^j\big)\\
\hphantom{\tri \varphi_+^{j+1} =}{} + {\rm e}^x x^{\sigma-1} [1] \big(\big([1] +\psi_+^j\big) \tri \varphi^{j+1}+ \varphi^j \tri \psi_+^j\big)\bigr].
\end{gather*}
Observing that, by \eqref{5.12},
\begin{gather*}
\varpi\big({\rm e}^x x ^{\sigma-1} \varphi^j\tri \psi_+^j\big) \subset [-1,1; 0] ( [-j, -1; -j-1] \cup [0, j+1; -j])\\
 \qquad{} \subset [-j-1, 0; -j-1] \cup [-1, 1; 0] [0,j+1; -j] \subset [-j-1, j; -j-1] \cup [0, j+2; -j],
\end{gather*}
where $[-1,1;0] [0,j+1;-j]=([-1,-1;0] \cup [0, 1;0]) [0,j+1; -j] \subset [-1, j; -j-1] \cup [0,j+2;-j]$, and so on, we have $\varpi\big(\tri \varphi_+^{j+1}\big) \subset [-j-1, -1; -j-1] \cup [0, j+2; -j]$ and $\varpi\big(\varphi_+^{j+1}\big) \subset [0,2;0]$. Thus we obtain the proposition.
\end{proof}

\begin{prop}\label{prop5.4}The summands of $\varphi^j$, $\varphi_{\pm}^j$ and $\psi_{\pm}^j$ satisfy the following: if $n \ge 1$,
\begin{enumerate}\itemsep=0pt
\item[$(i)$] $p^+_n(x) =\big(\gamma^0_- \gamma_+^x\big)^n [1]$, $p^-_n(x) =\big(\gamma^0_+ \gamma_-^x\big)^n [1]$ for $\varphi^j$;
\item[$(ii)$] $p^+_n(x) =\gamma_+^x\big(\gamma^0_- \gamma_+^x\big)^{n-1} [1]$, $p^-_n(x) =\gamma_+^0\big(\gamma^0_+ \gamma_-^x\big)^n [1]$ for $\varphi_+^j$;
\item[$(iii)$] $p^+_n(x) =\gamma_+^x\big(\gamma^0_- \gamma_+^x\big)^{n} [1]$, $p^-_n(x) =\gamma_+^0\big(\gamma^0_+ \gamma_-^x\big)^{n-1} [1]$ for $\psi_+^j$;
\item[$(iv)$] $p^+_n(x) =\gamma_-^0\big(\gamma^0_- \gamma_+^x\big)^{n} [1]$, $p^-_n(x) =\gamma_-^x\big(\gamma^0_+ \gamma_-^x\big)^{n-1} [1]$ for $\varphi_-^j$;
\item[$(v)$] $p^+_n(x) =\gamma_-^0\big(\gamma^0_- \gamma_+^x\big)^{n-1} [1]$, $p^-_n(x) =\gamma_-^x\big(\gamma^0_+ \gamma_-^x\big)^{n} [1]$ for $\psi_-^j$.
\end{enumerate}
Furthermore $p_0(x)=\gamma_{\pm}^0 [1]$ for $\varphi_{\pm}$, and $p_0(x)=\gamma_{\pm}^x [1]$ for $\psi_{\pm}$.
\end{prop}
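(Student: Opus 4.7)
\medskip

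\noindent\textbf{Proof proposal.} The plan is to argue by induction on $j$, tracking the divisibility of each asymptotic summand by monomials in $\gamma^0_\pm,\gamma^x_\pm$. Write $\eta_+:=\gamma^0_-\gamma^x_+$ and $\eta_-:=\gamma^0_+\gamma^x_-$. Attach to every $\phi\in\hat{\mathfrak{A}}$ a ``weight pattern'' consisting, for each asymptotic coefficient $p^\pm_n(x)$ and $p_0(x)$, of a monomial $M$ in $\gamma^0_\pm,\gamma^x_\pm$ by which that coefficient is divisible (meaning $p=M\cdot[1]$). The proposition asserts exactly that the following weights hold:
\begin{gather*}
\varphi^j\colon\ p^+_n\mapsto\eta_+^n,\quad p^-_n\mapsto\eta_-^n;\qquad \varphi^j_+\colon\ p^+_n\mapsto\gamma^x_+\eta_+^{n-1},\ p^-_n\mapsto\gamma^0_+\eta_-^{n},\ p_0\mapsto\gamma^0_+;\\
\psi^j_+\colon\ p^+_n\mapsto\gamma^x_+\eta_+^{n},\ p^-_n\mapsto\gamma^0_+\eta_-^{n-1},\ p_0\mapsto\gamma^x_+,
\end{gather*}
and the two sets for $\varphi^j_-,\psi^j_-$ obtained by the symmetry $(+\leftrightarrow -)$.

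The base case $j=1$ is a direct reading of \eqref{5.3} together with the explicit forms of $X^1_0,X^1_+,X^1_-$: e.g.\ $\varphi^1$ supplies $p^+_1=\eta_+(1)_x$, $p^-_1=\eta_-(1)_x$ and an unconstrained $p_0$; $\varphi^1_+=-2\gamma^0_+X^1_0-\gamma^x_+X^1_+$ supplies $p^+_1$ divisible by $\gamma^x_+\eta_+$ (from $-\gamma^x_+\cdot\eta_+(1)_x{\rm e}^{2x}x^{2\sigma-2}$) and by $\gamma^x_+$ (from $-\gamma^x_+\cdot(-2\gamma^0_+\gamma^x_-)(1)_xx^{-1}$), $p^-_1$ divisible by $\gamma^0_+\eta_-$, and $p_0$ divisible by $\gamma^0_+$; and analogously for $\psi^1_\pm,\varphi^1_-$.

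For the inductive step $j\mapsto j+1$ I compute $\varphi^{j+1}=F_0(x,\varphi^j,\varphi^j_+,\psi^j_+,\varphi^j_-,\psi^j_-)$ first, then feed the result into the four remaining recursions of \eqref{5.2}. In each case I expand every product by the formula in the proof of Proposition~\ref{prop3.1}, namely
\begin{gather*}
\varpi^+_n=\sum_{\nu=1}^{n-1}p^+_\nu q^+_{n-\nu}+x^{-1}(p^+_nq_0+p_0q^+_n)+\sum_{\nu=n+1}^\infty x^{-2(\nu-n)}(p^+_\nu q^-_{\nu-n}+p^-_{\nu-n}q^+_\nu),
\end{gather*}
substitute the inductive weights, and check that each summand is divisible by the monomial predicted for $\varphi^{j+1},\ldots$ The shifts induced by multiplication by ${\rm e}^{\pm x}x^{\pm\sigma-1}$ move $p^\pm_n\mapsto p^\pm_{n\pm 1}$, which is precisely what is needed to align the weight $\eta_\pm^n\mapsto\eta_\pm^{n+1}$ after one is combined with a factor $\gamma^0_\mp$, $\gamma^x_\pm$ from $(\gamma^0_\mp+\varphi^j_\mp)(\gamma^x_\pm+\psi^j_\pm)$. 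Since $\mathcal{I}$ acts coefficient-wise on the asymptotic parts (Lemma~\ref{lem3.5}), it preserves divisibility, so the contributions from all four $\mathcal{I}[\,\cdot\,]$ blocks in $F_0,F_\pm,G_\pm$ are handled uniformly.

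The only delicate bookkeeping is the ``cross'' terms $p^+_\nu q^-_{\nu-n}\,x^{-2(\nu-n)}$ and their mirrors, which mix the $\eta_+$ and $\eta_-$ towers. The main obstacle is to verify that these always carry \emph{at least} the predicted $\gamma$-exponent: concretely, in each recursion one sees that cross terms produce monomials of the form (predicted weight)$\cdot\gamma^{0,x}_+\gamma^{0,x}_-\cdot\eta_\pm^{\text{extra}}$, whose extra $\gamma^{0,x}_+\gamma^{0,x}_-$ factors are mere scalars in $\Q_*$ absorbed into $[1]$. Once this observation is made, each of the five assertions in the proposition is obtained by a finite enumeration of term types in the corresponding recursion of \eqref{5.2}, and the induction closes.
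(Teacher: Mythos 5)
Your proof is correct and follows essentially the same route as the paper: an induction on $j$ through the recursion \eqref{5.2}, tracking divisibility of each asymptotic coefficient by the appropriate monomial in $\gamma^0_{\pm}$, $\gamma^x_{\pm}$, with products expanded as in Proposition~\ref{prop3.1}, $\mathcal{I}$ acting coefficient-wise, and the surplus factors $\gamma^0_+\gamma^0_-$, $\gamma^x_+\gamma^x_-$ (and powers of $\gamma^0_-\gamma^x_+\gamma^0_+\gamma^x_-$ from the cross terms) absorbed into $[1]$. The paper only streamlines the same bookkeeping by rewriting the recursion in terms of the rescaled quantities ${\rm e}^{-x}x^{-\sigma}\varphi^j_+$, ${\rm e}^{x}x^{\sigma}\psi^j_+$, ${\rm e}^{x}x^{\sigma}\varphi^j_-$, ${\rm e}^{-x}x^{-\sigma}\psi^j_-$, which makes the induction hypothesis uniform in $n\ge 0$; your direct tracking of the slightly non-uniform weights accomplishes the same thing.
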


\begin{proof}Note that the relations for $\varphi^{j+1}$, $\varphi_+^{j+1}$ and $\psi_-^{j+1}$ in~\eqref{5.2} are rewritten in the form
\begin{gather*}
\varphi^{j+1} = F_0\big(x, \varphi^j, {\rm e}^xx^{\sigma}\varphi_{+*}^j, \psi_+^j, \varphi_-^j,{\rm e}^x x^{\sigma}\psi_{-*}^j\big),\\
\varphi_{+*}^{j+1} = {\rm e}^{-x}x^{-\sigma} F_+\big(x, \varphi^{j+1}, {\rm e}^xx^{\sigma}\varphi_{+*}^j, \psi_+^j\big),\\
\psi_{-*}^{j+1} = {\rm e}^{-x}x^{-\sigma} G_-\big(x, \varphi^{j+1}, \varphi_-^j,{\rm e}^xx^{\sigma}\psi_{-*}^j\big),
\end{gather*}
where $\varphi_{+*}^j= {\rm e}^{-x}x^{-\sigma} \varphi_+^j$, $\psi_{-*}^j= {\rm e}^{-x}x^{-\sigma} \psi_-^j$. Combining these with the relations for~$\varphi_-^{j+1}$ and~$\psi_+^{j+1}$ in~\eqref{5.2}, by induction on~$j$ we may verify the facts: if $n\ge 0$,
\begin{enumerate}\itemsep=0pt
\item[(a)] $p_n^+(x)=\big(\gamma_-^0 \gamma_+^x\big)^n [1]$ for $\varphi^j $;
\item[(b)] $p_n^+(x)=\gamma_+^x \big(\gamma_-^0 \gamma_+^x\big)^n [1]$ for $\varphi_{+*}^j $, $\psi_+^j$;
\item[(c)] $p_n^+(x)=\gamma_-^0 \big(\gamma_-^0 \gamma_+^x\big)^n [1]$ for $\varphi_{-}^j $, $\psi_{-*}^j$.
\end{enumerate}
Similarly, if $n\ge 0$,
\begin{enumerate}\itemsep=0pt
\item[(a$'$)] $p_n^-(x)=\big(\gamma_+^0 \gamma_-^x\big)^n [1]$ for $\varphi^j $;
\item[(b$'$)] $p_n^-(x)=\gamma_+^0\big(\gamma_+^0 \gamma_-^x\big)^n [1]$ for $\varphi_{+}^j$, $\psi_{+*}^j$;
\item[(c$'$)] $p_n^-(x)=\gamma_-^x\big(\gamma_+^0 \gamma_-^x\big)^n [1]$ for $\varphi_{-*}^j$, $\psi_{-}^j$,
\end{enumerate}
where $\psi_{+*}^j= {\rm e}^x x^{\sigma}\psi_+^j$, $\varphi_{-*}^j= {\rm e}^x x^{\sigma}\varphi_-^j$. Then the proposition immediately follows.
\end{proof}

\subsection{Completion of the proof of Theorem \ref{thm2.1}}\label{ssc5.4}

By Proposition \ref{prop5.3}, for $j\ge 2$, $ \big\| \tri \varphi^{j} \big\| , \big\| \tri \varphi_{\pm}^{j} \big\|, \big\| \tri \psi_{\pm}^{j} \big\| \ll |x|^{-j+1}$, and hence $ \big\|\big({\rm e}^x x^{\sigma}\big)^{\pm 1} \tri \varphi^{j}\big\|$, $ \big\|\big({\rm e}^x x^{\sigma}\big)^{\mp 1} \tri \varphi_{\pm}^{j} \big\| , \big\|\big({\rm e}^x x^{\sigma}\big)^{\pm 1} \tri \psi_{\pm}^{j} \big\| \ll |x|^{-j+2}$ if $\big|\eta x^{-1}\big|, \big|\eta^{-1} x^{-1}\big|<\varepsilon$, $x\in \Sigma_0\big(x^1_{\infty}, \delta\big)$, the implied constants possibly depending on~$j$. Let~$N$ be a given positive integer. Combining this fact with Proposition~\ref{prop5.2} we derive that, for every $j\ge N+1$,
\begin{gather*}
\big\| \tri \varphi^{j} \big\| , \big\| \tri \varphi_{\pm}^{j} \big\| , \big\| \tri \psi_{\pm}^{j} \big\| \le \Psi_{j-1} \le 2^{-j+1+N}\Psi_N \ll 2^{-j+N}|x|^{-N+1},
\end{gather*}
the implied constant not depending on $j$ but possibly on $N$. By Proposition~\ref{prop3.2} we conclude that
\begin{gather*}
\varphi^{\infty} =\sum_{j=2}^{\infty} \tri \varphi^j +\varphi^1, \qquad \varphi_{\pm}^{\infty} =\sum_{j=2}^{\infty} \tri \varphi_{\pm}^j +\varphi_{\pm}^1, \qquad \psi_{\pm}^{\infty} =\sum_{j=2}^{\infty} \tri \psi_{\pm}^j +\psi_{\pm}^1
\end{gather*}
belong to $\mathfrak{A}=\mathfrak{A}\big(B_0, B_x, B_*, \Sigma_0\big(x^1_{\infty},\delta\big), \varepsilon\big)$ if $\varepsilon$ fulfils \eqref{5.4} with $r_0$ chosen as in Proposition~\ref{prop5.2}. Thus we have constructed a solution $\big(\varphi, \varphi_{\pm},\psi_{\pm}\big)= \big(\varphi^{\infty}, \varphi^{\infty}_{\pm},\psi^{\infty}_{\pm}\big)$ of~\eqref{5.1} and of~\eqref{4.5}. By~\eqref{5.3}, Propositions~\ref{prop3.2} and~\ref{prop5.4},
$\varphi^{\infty}$, $\varphi_{\pm}^{\infty}$, $\psi_{\pm}^{\infty}$ are written in the form
\begin{gather*}
\varphi^{\infty} = -\frac 12 \big((\sigma+\theta_{\infty})\gamma^0_+\gamma^0_-+ (\sigma-\theta_{\infty}) \gamma^x_+\gamma^x_- +\big[x^{-1}\big]\big) x^{-2}\\
\hphantom{\varphi^{\infty} =}{} + \gamma_-^0\gamma_+^x (1)_x {\rm e}^x x^{\sigma-1}+ \sum_{n=2}^{\infty}\big( \gamma_-^0\gamma_+^x\big)^n \big[x^{-n+1}\big]\big( {\rm e}^x x^{\sigma-1}\big)^n\\
\hphantom{\varphi^{\infty} =}{} + \gamma_+^0\gamma_-^x (1)_x {\rm e}^{-x} x^{-\sigma-1} + \sum_{n=2}^{\infty}\big(\gamma_+^0\gamma_-^x\big)^n \big[x^{-n+1}\big]\big( {\rm e}^{-x} x^{-\sigma-1}\big)^n,\\
\varphi_+^{\infty} = \gamma_+^0\big[x^{-1}\big] - \gamma_+^x\big(2\gamma_+^0\gamma_-^0 +\big[x^{-1}\big]\big) {\rm e}^x x^{\sigma-2}-\gamma_-^0\big(\gamma_+^x\big)^2(1)_x \big({\rm e}^x x^{\sigma-1}\big)^2\\
\hphantom{\varphi_+^{\infty} =}{} + \sum_{n=3}^{\infty} \gamma_+^x \big( \gamma_-^0\gamma_+^x\big)^{n-1} \big[x^{-n+2}\big]\big( {\rm e}^x x^{\sigma-1}\big)^n+ 2\big(\gamma_+^0\big)^2\gamma_-^x (1)_x {\rm e}^{-x} x^{-\sigma-2}\\
\hphantom{\varphi_+^{\infty} =}{} + \sum_{n=2}^{\infty} \gamma_+^0\big( \gamma_+^0\gamma_-^x\big)^n \big[x^{-n}\big]\big( {\rm e}^{-x} x^{-\sigma-1}\big)^n,\\
\psi_+^{\infty} = \gamma_+^x \big[x^{-1}\big] +2 \gamma_-^0\big(\gamma_+^x\big)^2 (1)_x {\rm e}^x x^{\sigma-2} + \sum_{n=2}^{\infty} \gamma_+^x\big( \gamma_-^0\gamma_+^x\big)^n \big[x^{-n}\big]\big( {\rm e}^x x^{\sigma-1}\big)^n\\
\hphantom{\psi_+^{\infty} =}{} - \gamma_+^0\big(2\gamma_+^x \gamma_-^x + \big[x^{-1}\big]\big) {\rm e}^{-x} x^{-\sigma-2}- \big(\gamma_+^0\big)^2\gamma_-^x (1)_x \big({\rm e}^{-x}x^{-\sigma-1}\big)^2\\
\hphantom{\psi_+^{\infty} =}{} + \sum_{n=3}^{\infty}\gamma_+^0\big( \gamma_+^0\gamma_-^x\big)^{n-1} \big[x^{-n+2}\big]\big( {\rm e}^{-x} x^{-\sigma-1}\big)^n,\\
\varphi_-^{\infty} = \gamma_-^0 \big[x^{-1}\big]+2 \big(\gamma_-^0\big)^2\gamma_+^x (1)_x {\rm e}^x x^{\sigma-2}+ \sum_{n=2}^{\infty}\gamma_-^0\big( \gamma_-^0\gamma_+^x\big)^n \big[x^{-n}\big]\big( {\rm e}^x x^{\sigma-1}\big)^n\\
\hphantom{\varphi_-^{\infty} =}{} - \gamma_-^x\big(2\gamma_+^0 \gamma_-^0 + \big[x^{-1}\big]\big) {\rm e}^{-x} x^{-\sigma-2} - \gamma_+^0\big(\gamma_-^x\big)^2 (1)_x \big({\rm e}^{-x} x^{-\sigma-1}\big)^2\\
\hphantom{\varphi_-^{\infty} =}{} + \sum_{n=3}^{\infty}\gamma_-^x\big( \gamma_+^0\gamma_-^x\big)^{n-1} \big[x^{-n+2}\big]\big( {\rm e}^{-x} x^{-\sigma-1}\big)^n,\\
\psi_-^{\infty} = \gamma_-^x\big[x^{-1}\big] - \gamma_-^0\big(2\gamma_+^x\gamma_-^x +\big[x^{-1}\big]\big) {\rm e}^x x^{\sigma-2}-\big(\gamma_-^0\big)^2 \gamma_+^x (1)_x \big({\rm e}^x x^{\sigma-1}\big)^2\\
\hphantom{\psi_-^{\infty} =}{} + \sum_{n=3}^{\infty}\gamma_-^0\big( \gamma_-^0\gamma_+^x\big)^{n-1} \big[x^{-n+2}\big]\big( {\rm e}^x x^{\sigma-1}\big)^n+ 2\gamma_+^0\big(\gamma_-^x\big)^2 (1)_x {\rm e}^{-x} x^{-\sigma-2}\\
\hphantom{\psi_-^{\infty} =}{} + \sum_{n=2}^{\infty}\gamma_-^x\big( \gamma_+^0\gamma_-^x\big)^n \big[x^{-n}\big]\big( {\rm e}^{-x} x^{-\sigma-1}\big)^n.
\end{gather*}
Then, by \eqref{4.7}, $\Phi_0(x), \Phi_x(x) \to 0$ as $x\to \infty$ along a curve on which $\big|{\rm e}^x x^{\sigma}\big|=1$. Substitution of these into~\eqref{4.6} leads us to the desired solution of Theorem~\ref{thm2.1}.

\begin{rem}\label{rem5.11}By Remark \ref{rem4.3}, in the first equation of \eqref{5.1},
\begin{gather*}
 F_0(x,\varphi, \varphi_+, \psi_+, \varphi_-, \psi_-):= {\rm e}^{x}x^{\sigma-1}\big(1-(\sigma-1)x^{-1} +2\kappa(x)\big) \big(\gamma^0_- +\varphi_-\big)\big(\gamma^x_+ + \psi_+\big)\\
\qquad{} + {\rm e}^{-x}x^{-\sigma-1}(1-(\sigma+1)x^{-1} -2\kappa(x)) \big(\gamma^0_+ +\varphi_+\big)\big(\gamma^x_-+ \psi_-\big) - \cdots.
\end{gather*}
Using this fact, \eqref{4.6} and Proposition \ref{prop5.3}, and computing $\varphi^2$, $\varphi_{\pm}^2$, $\psi_{\pm}^2$, we may write some terms of the expressions for $f_0$, $f_{\pm}$, $g_{\pm}$ in Theorem \ref{thm2.1}
in more detail:
\begin{gather*}
 f_0 = \cdots + \gamma^0_-\gamma^x_+ \bigl(1-(\sigma-1 + 2\big(\gamma_+^0\gamma_-^0 + \gamma_+^x\gamma_-^x\big) -\big(\sigma^2-\theta_{\infty}^2\big)/2 )x^{-1}+\big[x^{-2}\big] \bigr) {\rm e}^x x^{\sigma-1}\\
\hphantom{f_0 =}{} + \gamma^0_+\gamma^x_- \bigl(1-\big(\sigma+1 - 2\big(\gamma_+^0\gamma_-^0 + \gamma_+^x\gamma_-^x\big) +\big(\sigma^2-\theta_{\infty}^2\big)/2 \big)x^{-1}+\big[x^{-2}\big] \bigr) {\rm e}^{-x} x^{-\sigma-1} + \cdots\!,\\
 x^{(\sigma +\theta_{\infty})/2} f_+ = \gamma^0_+ \bigl(1+ \big(2\gamma^x_+\gamma^x_- - \big(\sigma^2-\theta_{\infty}^2\big)/4\big) x^{-1} +\big[x^{-2}\big] \bigr) +\cdots,\\
 {\rm e}^{-x} x^{-(\sigma -\theta_{\infty})/2} g_+ = \gamma^x_+ \bigl(1- \big(2\gamma^0_+\gamma^0_- - \big(\sigma^2-\theta_{\infty}^2\big)/4\big) x^{-1} +\big[x^{-2}\big] \bigr) +\cdots,\\
 x^{-(\sigma +\theta_{\infty})/2} f_- = \gamma^0_- \bigl(1- \big(2\gamma^x_+\gamma^x_- - \big(\sigma^2-\theta_{\infty}^2\big)/4\big) x^{-1} +\big[x^{-2}\big] \bigr) +\cdots,\\
 {\rm e}^{x} x^{(\sigma -\theta_{\infty})/2} g_- = \gamma^x_- \bigl(1+ \big(2\gamma^0_+ \gamma^0_- - \big(\sigma^2-\theta_{\infty}^2\big)/4\big) x^{-1} +\big[x^{-2}\big] \bigr) + \cdots.
\end{gather*}
These facts are used in computing the tau-function.
\end{rem}

\subsection{Proof of Theorem \ref{thm2.2}}\label{ssc5.5}
In the proof of Theorem \ref{thm2.1} described above, we put $\sigma=\sigma_0 =-2\theta_x-\theta_{\infty}$, namely $\gamma_-^x=0$, to obtain the solution of Theorem~\ref{thm2.2} in the sector $|\arg x -\pi/2|<\pi/2 -\delta$,
$\big|{\rm e}^x x^{\sigma_0 -1} \big|<\varepsilon$, since the restriction $\big|{\rm e}^{-x} x^{-\sigma_0-1}\big|< \varepsilon$ is removed. It is sufficient to show that this expression may be extended to the sector $|\arg x-\pi|<\pi/2-\delta$. In the sector $|\arg x -\pi/2| <\pi/2 -\delta$, write $\big(\varphi^{\infty}, \varphi^{\infty}_{\pm}, \psi^{\infty}_{\pm}\big)$ with $\sigma=\sigma_0$ in the form
\begin{alignat}{3}
&\varphi^{\infty}= p(x) + {\rm e}^x x^{\sigma_0 } \hat{\varphi}^{\infty}, \qquad && \varphi_{\pm}^{\infty}= p_{\pm}(x)+ {\rm e}^x x^{\sigma_0 }\hat{\varphi}_{\pm}^{\infty},&\nonumber\\
& \psi_+^{\infty}= q_+(x) {\rm e}^{-x}x^{-\sigma_0} +\hat{\psi}_+^{\infty}, \qquad && \psi_-^{\infty}=q_-(x) {\rm e}^x x^{\sigma_0} + \big({\rm e}^x x^{\sigma_0 }\big)^2\hat{\psi}_-^{\infty},& \label{5.14}
\end{alignat}
where $p(x)=\big[x^{-2}\big]$, $p_{\pm}(x)=\big[x^{-1}\big]$, $q_{\pm}(x)=\big[x^{-3}\big]$, and $\hat{\varphi}^{\infty}, \hat{\varphi}_{\pm}^{\infty},\hat{\psi}_{\pm}^{\infty} \in \mathfrak{A}\big(\Sigma_0\big(x^1_{\infty},\delta\big), \varepsilon\big)$ $\big({=}\, \mathfrak{A}\big(B_0, B_x, \{\sigma_0\}, \Sigma_0\big(x^1_{\infty},\delta\big), \varepsilon\big)\big)$. Note that $\hat{\varphi}^{\infty}$, $\hat{\varphi}_{\pm}^{\infty}$, $\hat{\psi}_{\pm}^{\infty} $ also belong to $\mathfrak{A}_+\big(\Sigma_{\pi}\big(\pi/2+\delta, \pi-\delta; x^1_{\infty}\big), \varepsilon\big)$. Recall that $\big(\varphi^{\infty}, \varphi^{\infty}_{\pm}, \psi^{\infty}_{\pm}\big)$ solves \eqref{5.1} with $\gamma_-^x=0$. Inserting \eqref{5.14} into this system and putting $\gamma_+^x=0$, we find that $(p(x), p_{\pm}(x), q_{\pm} (x))$ solves \eqref{5.1} with $\gamma_-^x=\gamma_+^x=0$, namely~\eqref{5.1} with
\begin{gather*}
 F_0=F_0^* (x, \varphi, \varphi_+, \psi_+,\varphi_-, \psi_-) =x^{-1}\big( (1)_x\big(\gamma_-^0+\varphi_-\big)\psi_+ + (1)_x \big(\gamma^0_+ +\varphi_+\big)\psi_-\big)\\
\hphantom{F_0=}{}-4 \mathcal{I}\bigl[ x^{-2}\varphi ( (1)_x\big(\gamma_-^0+\varphi_-\big)\psi_+- (1)_x \big(\gamma^0_+ +\varphi_+\big) \psi_-\big) \bigr]\\
\hphantom{F_0=}{} +\mathcal{I} \bigl[ x^{-3} \big(([1]+[1]\varphi) \big(\gamma^0_+ +\varphi_+\big)\big(\gamma^0_- +\varphi_-\big) + ([1]+[1]\varphi) \psi_+\psi_-\big) \bigr],\\
 F_+=F_+^* (x, \varphi, \varphi_+, \psi_+) = -2\mathcal{I}\bigl[ x^{-1}\varphi \big((1)_x \big(\gamma^0_+ +\varphi_+\big) +(1)_x \psi_+\big) \bigr],\\
 G_+=G_+^* (x, \varphi, \varphi_+, \psi_+) = 2 {\rm e}^xx^{\sigma_0} \mathcal{I}\bigl[{\rm e}^{-x}x^{-\sigma_0-1}\varphi \big((1)_x \big(\gamma^0_+ +\varphi_+\big) +(1)_x \psi_+\big) \bigr],\\
 F_-=F_-^* (x, \varphi, \varphi_-, \psi_-) = 2\mathcal{I}\bigl[ x^{-1}\varphi \big((1)_x \big(\gamma^0_- +\varphi_-\big) +(1)_x \psi_-\big) \bigr],\\
 G_-=G_-^* (x, \varphi, \varphi_-, \psi_-) =- 2 {\rm e}^{-x}x^{-\sigma_0} \mathcal{I}\bigl[{\rm e}^{x} x^{\sigma_0-1}\varphi \big((1)_x \big(\gamma^0_- +\varphi_-\big) +(1)_x \psi_-\big)\bigr].
\end{gather*}
Let $(\varphi^j, \varphi_{\pm}^j, \psi_{\pm}^j)$ be the sequence defined by
\begin{gather*}
 \varphi^0=\varphi_{\pm}^0=\psi_{\pm}^0\equiv 0,\\
 \varphi^{j+1}=F^*\big(x, \varphi^j, \varphi^j_+, \psi^j_+, \varphi^j_-, \psi^j_-\big),\\
 \varphi_{\pm}^{j+1}=F^*_{\pm}\big(x, \varphi^{j+1}, \varphi^j_{\pm}, \psi^j_{\pm}\big),\qquad \psi_{\pm}^{j+1}=G^*_{\pm}\big(x, \varphi^{j+1}, \varphi^j_{\pm}, \psi^j_{\pm}\big).
\end{gather*}
Here, for $|\arg x-\pi/2|<\pi-\delta$, we may replace the path $\gamma(x)$ by a ray tending to $\infty {\rm e}^{{\rm i}\vartheta}$ such that $|\vartheta|<\pi/2-\delta$ (respectively, $|\vartheta-\pi|<\pi/2-\delta$) in $G^*_+$ (respectively, $G^*_-$) and by one tending to $\infty {\rm e}^{{\rm i}\arg x}$ in the others. Then the sequence converges to $\big(p^{\infty}(x), p^{\infty}_{\pm}(x), q^{\infty}_{\pm}(x)\big)$ whose entries admit asymptotic expansions in the sector $|\arg x-\pi/2|<\pi -\delta$. Since~\eqref{5.1} with $\gamma^x_-=\gamma^x_+=0$ whose paths are replaced as above has a unique solution tending to $0$ in this sector, the asymptotic expression for $(p(x), p_{\pm}(x), q_{\pm}(x))$ is valid in the sector $|\arg x-\pi/2| <\pi-\delta$. By the fact that $(p(x), p_{\pm}(x), q_{\pm}(x))$ solves this system, $\big(\hat{\varphi},\hat{\varphi}_{\pm},\hat{\psi}_{\pm}\big) = \big(\hat{\varphi}^{\infty},\hat{\varphi}_{\pm}^{\infty}, \hat{\psi}_{\pm}^{\infty}\big) $ satisfies a~system of the form
\begin{gather*}
\hat{\varphi}=\big[x^{-1}\big]+x^{-1} \big( \big[x^{-2}\big] \hat{\varphi}_+ + (*) \hat{\varphi}_-+[1] \hat{\psi}_+ + [1] \hat{\psi}_- + (*)\hat{\varphi}_-\hat{\psi}_+ + (*) \hat{\varphi}_+\hat{\psi}_- \big)\\
\hphantom{\hat{\varphi}=}{} +\big({\rm e}^x x^{\sigma_0}\big)^{-1} \mathcal{I} \bigl[ {\rm e}^x x^{\sigma_0-1}\bigl( (*) \hat{\varphi} + \big[x^{-1}\big]\hat{\varphi}_+ +(*) \hat{\varphi}_-
+\big[x^{-1}\big] \hat{\psi}_+ + (*) \hat{\psi}_-\\
\hphantom{\hat{\varphi}=}{} +(*) \hat{\varphi}_+\hat{\varphi}_- +(*) \hat{\psi}_+\hat{\psi}_- +(*) \hat{\varphi}_-\hat{\psi}_+ +(*) \hat{\varphi}_+\hat{\psi}_-+ \hat{\varphi} \big( (*) \hat{\varphi}_+ +(*) \hat{\varphi}_-\\
\hphantom{\hat{\varphi}=}{} +(*) \hat{\psi}_+ + (*) \hat{\psi}_- +(*) \hat{\varphi}_+\hat{\varphi}_- +(*) \hat{\psi}_+\hat{\psi}_- +(*) \hat{\varphi}_-\hat{\psi}_+ +(*) \hat{\varphi}_+\hat{\psi}_- \big)\bigr) \bigr],\\
 \hat{\varphi}_+=\big[x^{-1}\big] +\big({\rm e}^x x^{\sigma_0}\big)^{-1} \\
 \hphantom{\hat{\varphi}_+=}{}\times \mathcal{I} \bigl[ {\rm e}^x x^{\sigma_0-1}
\bigl( ([1]+(*)) \hat{\varphi} + \big[x^{-2}\big]\hat{\varphi}_+ + \big[x^{-2}\big]\hat{\psi}_++ \hat{\varphi} \big( (*) \hat{\varphi}_+ +(*)\hat{\psi}_+\big) \bigr) \bigr],\\
 \hat{\psi}_+=\big[x^{-1}\big]+ \mathcal{I} \bigl[ x^{-1}\bigl( ([1]+(*)) \hat{\varphi} + \big[x^{-2}\big]\hat{\varphi}_+ + \big[x^{-2}\big]\hat{\psi}_++ \hat{\varphi} ( (*) \hat{\varphi}_+ +(*)\hat{\psi}_+) \bigr) \bigr],\\
 \hat{\varphi}_-= \big({\rm e}^x x^{\sigma_0}\big)^{-1} \mathcal{I} \bigl[ {\rm e}^x x^{\sigma_0-1}\bigl( [1] \hat{\varphi} + \big[x^{-2}\big]\hat{\varphi}_- + \big[x^{-2}\big]\hat{\psi}_-+ \hat{\varphi} \big( (*) \hat{\varphi}_- +(*)\hat{\psi}_-\big) \bigr) \bigr],\\
 \hat{\psi}_-= \big({\rm e}^x x^{\sigma_0}\big)^{-2} \mathcal{I} \bigl[ \big({\rm e}^x x^{\sigma_0}\big)^2 x^{-1}\bigl( [1] \hat{\varphi} + \big[x^{-2}\big]\hat{\varphi}_- + \big[x^{-2}\big]\hat{\psi}_-
+ \hat{\varphi} \big( (*) \hat{\varphi}_- +(*)\hat{\psi}_-\big) \bigr) \bigr],
\end{gather*}
where every asymptotic coefficient is valid in the sector $|\arg x-\pi/2| <\pi-\delta$, and each $(*)$ denotes a function of the form $\big[x^{-1}\big] +[1]{\rm e}^x x^{\sigma_0} + [1] \big({\rm e}^x x^{\sigma_0}\big)^2$. For $|\arg x-\pi|<\pi/2-\delta$, replace $\gamma(x)$ by $\gamma_{\pi}(x)$ (cf.\ Section~\ref{ssc3.2}), and define the sequence $\big(\hat{\varphi}^j,\hat{\varphi}^j_{\pm},\hat{\psi}^j_{\pm}\big)$ by the same way as in~\eqref{5.2}. Then, using the facts in Section~\ref{ssc3.2}, we may construct a solution $\big(\hat{\varphi}^*,\hat{\varphi}^*_{\pm},\hat{\psi}^*_{\pm}\big)$ whose entries are in $\mathfrak{A}_+\big(\Sigma_{\pi}\big(\pi/2 +\delta, 3\pi/2-\delta, x^1_{\infty}\big), \varepsilon\big)$. This coincides with $\big(\hat{\varphi}^{\infty},\hat{\varphi}^{\infty}_{\pm},\hat{\psi}^{\infty}_{\pm}\big)$ in the sector $\pi/2 +\delta <\arg x<\pi-\delta$, since the corresponding asymptotic coefficients of these solutions satisfy the same recursive relation. This completes the proof of Theorem~\ref{thm2.2}.

\section{Proofs of the results on (V)} \label{sc6}

Let $(f_0, f_{\pm}, g_{\pm})$ be the solution given by Theorem \ref{thm2.1} or \ref{thm2.2}, which has been obtained by constructing $(\varphi^{\infty}, \varphi^{\infty}_{\pm}, \psi^{\infty}_{\pm})$ that solves~\eqref{5.1} in Section~\ref{ssc2.3}. Then, by~\eqref{1.2},
\begin{gather}\label{6.1}
y= \frac {g_+ (f_0 +\theta_0/2)} {f_+ (g_0 +\theta_x/2 )}, \qquad g_0 =-f_0 -\theta_{\infty}/2
\end{gather}
is a solution of (V).

\subsection{Proofs of Theorems \ref{thm2.6} and \ref{thm2.7}} \label{ssc6.1}

We begin with the following:
\begin{prop}\label{prop6.1}The solution $y$ depends on the parameters $\sigma$ and $c=c_x/c_0$ $($respectively, $c'=c_0/c_x)$ only.
\end{prop}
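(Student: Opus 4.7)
The plan is to exhibit a $\C^*$-symmetry of \eqref{1.3}--\eqref{1.2} under which the parameter pair $(c_0,c_x)$ is rescaled by a common factor while $y$ is left invariant.

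For $\alpha\in\C\setminus\{0\}$ set $S_\alpha:=\diag[\alpha,1]$. Since $S_\alpha$ commutes with $J$, the pair $\big(S_\alpha A_0 S_\alpha^{-1},\, S_\alpha A_x S_\alpha^{-1}\big)$ again satisfies \eqref{1.3} together with the conditions $(\mathrm{a})$ and $(\mathrm{b})$ whenever $(A_0,A_x)$ does. On entries, the action fixes $f_0$ and $g_0$ and sends $f_\pm\mapsto\alpha^{\pm 1}f_\pm$, $g_\pm\mapsto\alpha^{\pm 1}g_\pm$. Substituted into \eqref{6.1}, the two factors $\alpha$ attached to $g_+$ and $f_+$ cancel between numerator and denominator, so $y$ is $S_\alpha$-invariant.

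I next identify this symmetry with the parameter rescaling $(c_0,c_x)\mapsto(\alpha c_0,\alpha c_x)$ at fixed $\sigma$. From the definition of $\gamma^0_\pm$, $\gamma^x_\pm$ in item (ii) of Theorem \ref{thm2.1}, this rescaling induces $\gamma^0_\pm\mapsto\alpha^{\pm 1}\gamma^0_\pm$ and $\gamma^x_\pm\mapsto\alpha^{\pm 1}\gamma^x_\pm$. Consequently the leading behaviours $f_+\sim\gamma^0_+ x^{-(\sigma+\theta_\infty)/2}$, $g_+\sim\gamma^x_+ {\rm e}^x x^{(\sigma-\theta_\infty)/2}$, $f_-\sim\gamma^0_- x^{(\sigma+\theta_\infty)/2}$, $g_-\sim\gamma^x_- {\rm e}^{-x}x^{-(\sigma-\theta_\infty)/2}$ transform exactly as under $S_\alpha$-conjugation, while the leading parts of $f_0$ and $g_0$ are unaffected. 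Because the successive-approximation construction of Sections \ref{sc4}--\ref{sc5} produces a \emph{unique} solution in the family of Theorem \ref{thm2.1} with those leading terms, we conclude
\begin{gather*}
S_\alpha A_0(c_0,c_x,\sigma,x) S_\alpha^{-1}=A_0(\alpha c_0,\alpha c_x,\sigma,x),\\
S_\alpha A_x(c_0,c_x,\sigma,x) S_\alpha^{-1}=A_x(\alpha c_0,\alpha c_x,\sigma,x).
\end{gather*}
Combining this with the $S_\alpha$-invariance of $y$ gives $y(\alpha c_0,\alpha c_x,\sigma,x)=y(c_0,c_x,\sigma,x)$ for every $\alpha\in\C\setminus\{0\}$. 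Setting $\alpha=c_0^{-1}$ (respectively $\alpha=c_x^{-1}$) shows that $y$ depends only on $(\sigma,c)$ with $c=c_x/c_0$ (respectively on $(\sigma,c')$ with $c'=c_0/c_x$).

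The one delicate point is the uniqueness just invoked: the $S_\alpha$-conjugate of $(A_0,A_x)(c_0,c_x,\sigma,x)$ must actually lie in the family of Theorem \ref{thm2.1} attached to the parameters $(\alpha c_0,\alpha c_x,\sigma)$, rather than on a different analytic branch. This is straightforward since conjugation by $S_\alpha$ is algebraic at each fixed $x$, so the conjugated series still converges on the sector-like domain $\Sigma(\sigma,\varepsilon,x_\infty,\delta)$ after a small shrinking of $\varepsilon$ to absorb the $\alpha$-dependence of the smallness bound in Remark \ref{rem2.1}, and its leading coefficients then match uniquely with those of $(A_0,A_x)(\alpha c_0,\alpha c_x,\sigma,x)$. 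As a cross-check, the explicit leading term $y\sim c\,{\rm e}^x x^\sigma$ of Theorem \ref{thm2.6} is manifestly a function of $(c,\sigma)$ only.
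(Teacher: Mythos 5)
Your symmetry idea is attractive, and the first half of it is sound: conjugation by $S_\alpha=\diag[\alpha,1]$ preserves \eqref{1.3} together with (a) and (b) (since $S_\alpha$ is constant and commutes with $J$), fixes $f_0$, $g_0$, scales $f_{\pm}$, $g_{\pm}$ by $\alpha^{\pm1}$, and therefore leaves the quotient \eqref{6.1} unchanged; and the rescaling $(c_0,c_x)\mapsto(\alpha c_0,\alpha c_x)$ does send $\gamma_{\pm}^0,\gamma_{\pm}^x$ to $\alpha^{\pm1}\gamma_{\pm}^0,\alpha^{\pm1}\gamma_{\pm}^x$. The gap is the identification $S_\alpha A_0(c_0,c_x,\sigma,x)S_\alpha^{-1}=A_0(\alpha c_0,\alpha c_x,\sigma,x)$ (and likewise for $A_x$). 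You justify it by saying the conjugated series lies in the family of Theorem~\ref{thm2.1} and ``its leading coefficients then match uniquely,'' but agreement of leading coefficients does not force two solutions of the nonlinear system \eqref{1.3} to coincide, and the paper proves no uniqueness theorem of the form ``a solution of \eqref{1.3} admitting an expansion as in Theorem~\ref{thm2.1} is determined by $(\gamma_{\pm}^0,\gamma_{\pm}^x,\sigma)$.'' The family of Theorem~\ref{thm2.1} is \emph{defined} as the output of a specific successive approximation, so membership in it, with the correct higher coefficients, is not something one can read off from the shape and leading terms of the expansion alone. As written, ``we conclude'' is an assertion rather than a deduction.

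The missing ingredient is exactly what the paper's own proof supplies: one must check that the \emph{construction} is equivariant under the rescaling, not merely that the target equation admits the symmetry. Concretely, substituting $\varphi_+=\gamma_+^0\tilde{\varphi}_+$, $\psi_+=\gamma_+^x\tilde{\psi}_+$, $\varphi_-=\gamma_-^0\tilde{\varphi}_-$, $\psi_-=\gamma_-^x\tilde{\psi}_-$ into \eqref{5.1} yields the normalized system \eqref{6.2}, whose coefficients involve only $\gamma_-^0\gamma_+^x$, $\gamma_+^0\gamma_-^x$, $\gamma_+^x/\gamma_+^0$, $\gamma_-^x/\gamma_-^0$, $\gamma_+^0\gamma_-^0$ and $\gamma_+^x\gamma_-^x$ --- all functions of $(\sigma,c)$ alone --- so the normalized solution $\big(\varphi^{\infty},\tilde{\varphi}_{\pm}^{\infty},\tilde{\psi}_{\pm}^{\infty}\big)$, and with it $f_0$ and $g_+/f_+$, depends only on $(\sigma,c)$. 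Equivalently, one verifies by induction on $j$ in \eqref{5.2} that $\varphi^j$ is invariant while $\varphi_{\pm}^j,\psi_{\pm}^j$ scale by $\alpha^{\pm1}$. Once this is done your conjugation identity follows and the symmetry argument closes; but the computation one has then performed \emph{is} the paper's proof, so the $S_\alpha$ picture is best regarded as a pleasant reinterpretation of it rather than an independent shortcut.
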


\begin{proof}Note that the coefficients of each asymptotic series $\big[x^{-1}\big]$ in~\eqref{5.1} are in $\Q[\theta_0, \theta_x, \theta_{\infty}, \sigma]$. We may suppose that $\gamma^0_{\pm}, \gamma^x_{\pm} \not=0$. Set
$\varphi_+=\gamma_+^0 \tilde{\varphi}_+$, $\psi_+=\gamma^x_+ \tilde{\psi}_+$, $\varphi_-=\gamma_-^0 \tilde{\varphi}_-$, $\psi_-=\gamma^x_- \tilde{\psi}_-$. Then \eqref{5.1} becomes{\allowdisplaybreaks
\begin{gather}
 \varphi = \gamma_-^0\gamma_+^x (1)_x {\rm e}^x x^{\sigma-1} \big(1+\tilde{\varphi}_-\big)\big(1+\tilde{\psi}_+\big)+ \gamma_+^0\gamma_-^x (1)_x {\rm e}^{-x}x^{-\sigma-1} \big(1+\tilde{\varphi}_+\big)\big(1+\tilde{\psi}_-\big)\nonumber\\
\hphantom{\varphi =}{} -4 \mathcal{I}\bigl[ \gamma_-^0\gamma_+^x (1)_x {\rm e}^x x^{\sigma-2}\varphi \big(1+\tilde{\varphi}_-\big)\big(1+\tilde{\psi}_+\big)
 - \gamma_+^0\gamma_-^x (1)_x {\rm e}^{-x}x^{-\sigma-2} \varphi \big(1+\tilde{\varphi}_+\big)\big(1+\tilde{\psi}_-\big) \bigr]\nonumber\\
\hphantom{\varphi =}{} +\mathcal{I} \bigl[ \gamma_+^0\gamma_-^0 x^{-3} ([1]+[1]\varphi) \big(1+\tilde{\varphi}_+\big)\big(1+\tilde{\varphi}_-\big)
 + \gamma_+^x\gamma_-^x x^{-3} ([1]+[1]\varphi)\big(1+\tilde{\psi}_+\big)\big(1+\tilde{\psi}_-\big) \bigr],\nonumber\\
 \tilde{\varphi}_+ = -2 \mathcal{I} \bigl[ \varphi \bigl(x^{-1}(1)_x\big(1+\tilde{\varphi}_+\big) +\big(\gamma_+^x / \gamma_+^0\big) {\rm e}^x x^{\sigma-1} (1)_x\big(1+\tilde{\psi}_+\big) \bigr) \bigr],\nonumber\\
 \tilde{\psi}_+ = 2 \mathcal{I} \bigl[ \varphi \bigl(x^{-1}(1)_x\big(1+\tilde{\psi}_+\big) +\big(\gamma_+^0 / \gamma_+^x\big) {\rm e}^{-x} x^{-\sigma-1} (1)_x\big(1+\tilde{\varphi}_+\big) \bigr) \bigr],\nonumber\\
 \tilde{\varphi}_- = 2 \mathcal{I} \bigl[ \varphi \bigl(x^{-1}(1)_x\big(1+\tilde{\varphi}_-\big) +\big(\gamma_-^x / \gamma_-^0\big) {\rm e}^{-x} x^{-\sigma-1} (1)_x\big(1+\tilde{\psi}_-\big) \bigr) \bigr],\nonumber\\
 \tilde{\psi}_- = - 2 \mathcal{I} \bigl[ \varphi \bigl(x^{-1}(1)_x\big(1+\tilde{\psi}_-\big) +\big(\gamma_-^0 / \gamma_-^x\big) {\rm e}^{x} x^{\sigma-1} (1)_x\big(1+\tilde{\varphi}_-\big) \bigr) \bigr].\label{6.2}
\end{gather}
This} implies that, for $\big(\varphi^{\infty}, \varphi^{\infty}_{\pm},\psi^{\infty}_{\pm}\big)$, the corresponding solution $\big(\tilde{\varphi}^{\infty},\tilde{\varphi}^{\infty}_{\pm},\tilde{\psi}^{\infty}_{\pm}\big)$ of~\eqref{6.2} depends on $\sigma$ and $c=c_x/c_0$ only, since $\gamma_-^0\gamma_+^x$, $\gamma_+^0\gamma_-^x$, $\gamma_+^x/\gamma_+^0$, $\gamma_-^x/\gamma_-^0$ (respectively, $\gamma_+^0\gamma_-^0$, $\gamma_+^x\gamma_-^x$) are written in terms of $\sigma$ and $c$ (respectively, $\sigma$) only. From~\eqref{4.6} it follows that
\begin{gather*}
 f_0= -g_0 -\theta_{\infty}/2 =(\sigma-\theta_{\infty})/4 + \varphi^{\infty},\\
 \frac{g_+}{f_+} = \frac{(1)_x c {\rm e}^x x^{\sigma} \big(1+\tilde{\psi}_+^{\infty} \big) - \big( (\sigma +\theta_{\infty}
)/2 +\big[x^{-1}\big] \big) x^{-1} \big(1+\tilde{\varphi}_+^{\infty} \big) }
{(1)_x \big(1+\tilde{\varphi}_+^{\infty} \big) - \big( (\sigma -\theta_{\infty}
)/2 +\big[x^{-1}\big] \big)c {\rm e}^x x^{\sigma-1} \big(1+\tilde{\psi}_+^{\infty} \big) },
\end{gather*}
and hence the proposition follows immediately.
\end{proof}

Suppose $\operatorname{dist}(\{-2\theta_0 +\theta_{\infty}, 2\theta_x -\theta_{\infty} \}, B_*)=d_0 >0$, that is, $\big|\gamma_+^0/c_0\big|, \big|\gamma_+^{x}/c_x\big|\ge d_0/4$ for every $\sigma \in B_*$. Then, by Theorem~\ref{thm2.1} combined with the expression of $\varphi^{\infty}$ in Section~\ref{ssc5.4},
\begin{gather*}
\frac{f_0+\theta_0/2}{g_0+\theta_x/2} = \frac{\gamma^0_+ /c_0 +\varphi^{\infty}} {\gamma^x_+/c_x -\varphi^{\infty}} = \left( 1+\frac{c_x \gamma^0_+}{c_0 \gamma^x_+} \right) \left( 1- \frac {c_x}{\gamma_+^x}\varphi^{\infty}
 \right)^{-1}-1,\\
 x^{(\sigma +\theta_{\infty})/2} g_+ = \gamma_+^x {\rm e}^x x^{\sigma} (1)_x\big(1+ \big(\gamma_+^x\big)^{-1} \hat{g}_+ \big),\\
 \big(x^{(\sigma +\theta_{\infty})/2} f_+ \big)^{-1} = \big(\gamma_+^0\big)^{-1} (1)_x \big(1- \big(\gamma_+^0\big)^{-1} \hat{f}_+ \big)^{-1},
\end{gather*}
provided that $\big|{\rm e}^x x^{\sigma-1}\big|$ and $\big|{\rm e}^{-x} x^{-\sigma-1}\big|$ are sufficiently small, where
\begin{gather*}
\hat{g}_+ = \sum_{n=1}^{\infty} \gamma_+^x \big(\gamma_-^0 \gamma_+^x\big)^n \big[x^{-n}\big]\big({\rm e}^x x^{\sigma-1}\big)^n -\gamma_+^0 \big((\sigma+\theta_{\infty})/2+\big[x^{-1}\big]\big) {\rm e}^{-x}x^{-\sigma-1}\\
\hphantom{\hat{g}_+ =}{} - \big(\gamma_+^0\big)^2 \gamma_-^x (1)_x \big({\rm e}^{-x} x^{-\sigma-1}\big)^2 + \sum_{n=3}^{\infty} \gamma_+^0 \big(\gamma_+^0 \gamma_-^x\big)^{n-1} \big[x^{-n+2}\big]\big({\rm e}^{-x} x^{-\sigma-1}\big)^n,\\
\hat{f}_+ = \gamma_+^x \big((\sigma-\theta_{\infty})/2+\big[x^{-1}\big]\big) {\rm e}^{x}x^{\sigma-1} + \gamma_-^0\big(\gamma_+^x\big)^2 (1)_x \big({\rm e}^{x} x^{\sigma-1}\big)^2\\
\hphantom{\hat{f}_+ =}{} + \sum_{n=3}^{\infty} \gamma_+^x \big(\gamma_-^0 \gamma_+^x\big)^{n-1} \big[x^{-n+2}\big]\big({\rm e}^x x^{\sigma-1}\big)^n+ \sum_{n=1}^{\infty} \gamma_+^0 \big(\gamma_+^0 \gamma_-^x\big)^{n} \big[x^{-n}\big] \big({\rm e}^{-x} x^{-\sigma-1}\big)^n.
\end{gather*}
Furthermore
\begin{gather*}
\big(1-c_x \big(\gamma^x_+\big)^{-1} \varphi^{\infty}\big)^{-1} =(1-\chi_0)^{-1}\big(1- x^{-1}\varphi^{\infty}_* (1-\chi_0)^{-1}\big)^{-1},\\
\big(1- \big(\gamma^0_+\big)^{-1} \hat{f}_+\big)^{-1} =(1-\chi_1)^{-1}\big(1- x^{-1}\hat{f}_{+*} (1-\chi_1)^{-1}\big)^{-1},
\end{gather*}
where
\begin{gather*}
\chi_0 = c_x \big(\gamma_+^x\big)^{-1} \big(\gamma_-^0 \gamma_+^x(1)_x {\rm e}^x x^{\sigma-1}+\gamma_+^0 \gamma_-^x(1)_x {\rm e}^{-x} x^{-\sigma-1}\big),\\
 x^{-1} \varphi^{\infty}_* = c_x \big(\gamma_+^x\big)^{-1} \big( \varphi^{\infty} -\gamma_-^0 \gamma_+^x(1)_x {\rm e}^x x^{\sigma-1}- \gamma_+^0 \gamma_-^x(1)_x {\rm e}^{-x} x^{-\sigma-1}\big),\\
\chi_1 = \big(\gamma_+^0\big)^{-1} \big(\gamma_+^x \big((\sigma-\theta_{\infty})/2+\big[x^{-1}\big]\big) {\rm e}^x x^{\sigma-1}+\gamma_-^0 \big(\gamma_+^x\big)^2(1)_x \big({\rm e}^{x} x^{\sigma-1}\big)^2\big),\\
x^{-1}\hat{f}_{+*} = \big(\gamma_+^0\big)^{-1} \big(\hat{f}_+ - \gamma_+^x\big((\sigma-\theta_{\infty})/2+\big[x^{-1}\big]\big) {\rm e}^x x^{\sigma-1}-\gamma_-^0 \big(\gamma_+^x\big)^2(1)_x \big({\rm e}^{x} x^{\sigma-1}\big)^2\big).
\end{gather*}
By Proposition \ref{prop3.2} with Examples \ref{exa3.2} and \ref{exa3.3}, substitution of these expressions into \eqref{6.1} yields $y(c,\sigma, x)$ as in Theorem~\ref{thm2.6}.

If we set $\sigma=\sigma_0= -2\theta_x -\theta_{\infty}$, that is, $\gamma^x_- =0$, then the coefficients of the solution in Theo\-rem~\ref{thm2.6} are such that $b_n=0$ for $n\ge 2$, and we obtain the solution $y_+(c, x)$. If $\sigma=\sigma'_0=2\theta_0+\theta_{\infty}$, that is, $\gamma_-^0=0$, then
\begin{gather*}
 x^{(\sigma'_0+\theta_{\infty})/2}f_+= \gamma_+^0 (1)_x \big(1- \big(\gamma_+^0\big)^{-1}\hat{f}_+\big),\\
\big(x^{(\sigma'_0+\theta_{\infty})/2}g_+\big)^{-1}=\big( \gamma_+^x\big)^{-1} {\rm e}^{-x}x^{-\sigma'_0} (1)_x \big(1+ \big(\gamma_+^x\big)^{-1} \hat{g}_+\big)^{-1},
\end{gather*}
from which the solution $y_-(c',x)$ follows. Thus Theorem~\ref{thm2.7} is obtained.

\subsection{Proofs of Theorems \ref{thm2.8} and \ref{thm2.9}} \label{ssc6.2}

To discuss the poles and zeros of $y(c,\sigma, x)$, under the condition $\gamma^0_{\pm}, \gamma^x_{\pm} \not=0$ we write
\begin{gather*}
{\rm e}^{-x}x^{-(\sigma-\theta_{\infty})/2} g_+= \gamma_+^x -\big(\gamma_+^0/2\big) (\sigma +\theta_{\infty}){\rm e}^{-x}x^{-\sigma-1} - \big(\gamma_+^0\big)^2 \gamma_-^x \big({\rm e}^{-x}x^{-\sigma-1}\big)^2
+O\big(x^{-1}\big)\\
\hphantom{{\rm e}^{-x}x^{-(\sigma-\theta_{\infty})/2} g_+}{}
= - \big(\gamma_+^0\big)^2 \gamma_-^x \big({\rm e}^{-x}x^{-\sigma-1} -\varrho_1 \big) \big({\rm e}^{-x}x^{-\sigma-1} -\varrho_2 \big) + O\big(x^{-1}\big),\\
x^{(\sigma+\theta_{\infty})/2} f_+= \gamma_+^0 -\big(\gamma_+^x/2\big) (\sigma -\theta_{\infty}){\rm e}^{x}x^{\sigma-1} - \gamma_-^0\big( \gamma_+^x\big)^2 \big({\rm e}^{x}x^{\sigma-1}\big)^2+O\big(x^{-1}\big)\\
\hphantom{x^{(\sigma+\theta_{\infty})/2} f_+}{} = - \gamma_-^0\big( \gamma_+^x\big)^2 \big({\rm e}^{x}x^{\sigma-1} -\tilde{\varrho}_1 \big) \big({\rm e}^{x}x^{\sigma-1} -\tilde{\varrho}_2 \big) + O\big(x^{-1}\big)
\end{gather*}
with
\begin{gather*}
\varrho_1= -\frac{c_x}{\gamma_+^0} = \frac{-4c}{\sigma+2\theta_0 -\theta _{\infty}}, \qquad \varrho_2= \frac{\gamma^x_+}{c_x \gamma_+^0\gamma_-^x} = \frac{-4c(\sigma-2\theta_x +\theta_{\infty})}
{(\sigma +2\theta_x +\theta_{\infty}) (\sigma+2\theta_0 -\theta_{\infty}) },\\
\tilde{\varrho}_1= \frac{c_0}{\gamma_+^x} = \frac{-4}{c(\sigma-2\theta_x +\theta_{\infty})}, \qquad\tilde{\varrho}_2= -\frac{\gamma^0_+}{c_0 \gamma_-^0\gamma_+^x}= \frac{-4(\sigma+2\theta_0 -\theta_{\infty})}
{c(\sigma -2\theta_0 -\theta_{\infty}) (\sigma-2\theta_x +\theta_{\infty}) },
\end{gather*}
provided that $\big|{\rm e}^x x^{\sigma-1}\big|$, $\big|{\rm e}^{-x} x^{-\sigma-1}\big|<\varepsilon$. Furthermore, for $\varepsilon^{-1}|x|^{-1}< \big|{\rm e}^{-x}x^{-\sigma-1}\big| <\varepsilon$, $\big|{\rm e}^xx^{\sigma-1}\big|$ $<\varepsilon |x|^{-1}$,
\begin{gather*}
f_0+ \theta_0/2 = \gamma_+^0\gamma_-^x \big({\rm e}^{-x} x^{-\sigma-1} -\varrho_3 \big) +O\big(x^{-1}\big),\\
g_0+ \theta_x/2 = - \gamma_+^0\gamma_-^x \big({\rm e}^{-x} x^{-\sigma-1} -\varrho_2 \big) +O\big(x^{-1}\big),
\end{gather*}
and for $\varepsilon^{-1}|x|^{-1}< |{\rm e}^{x}x^{\sigma-1}| <\varepsilon$, $\big|{\rm e}^{-x}x^{-\sigma-1}\big|<\varepsilon |x|^{-1}$,
\begin{gather*}
f_0+ \theta_0/2 = \gamma_-^0\gamma_+^x \big({\rm e}^{x} x^{\sigma-1} -\tilde{\varrho}_2 \big) +O\big(x^{-1}\big),\\
g_0+ \theta_x/2 = - \gamma_-^0\gamma_+^x \big({\rm e}^{x} x^{\sigma-1} -\tilde{\varrho}_3 \big) +O\big(x^{-1}\big),
\end{gather*}
where
\begin{gather*}
\varrho_3= -\frac 1{c_0\gamma_-^x} = \frac{-4c}{\sigma+2\theta_x +\theta_{\infty}}, \qquad \tilde{\varrho}_3= \frac 1{c_x\gamma_-^0} = \frac{-4}{c(\sigma-2\theta_0 -\theta_{\infty})}.
\end{gather*}
If $\theta_x(\theta_0 \pm \theta_x -\theta_{\infty}) \not=0$ (respectively, $\theta_0 (\pm\theta_0 - \theta_x +\theta_{\infty}) \not=0$), $\varrho_1$, $ \varrho_2$, $\varrho_3$ (respectively, $\tilde{\varrho}_1$, $ \tilde{\varrho}_2$, $\tilde{\varrho}_3$) are distinct. By the expressions above, in the domain $\varepsilon^{-1}|x|^{-1} < \big|{\rm e}^{-x} x^{-\sigma -1} \big| <\varepsilon$, where $\varepsilon$ is a positive number such that
\begin{gather}
\nonumber \bigl(\big|\gamma^0_- \gamma_+^x \big| + \big|\gamma^0_+ \gamma_-^x \big| + \big|\gamma^0_+\big| + \big|\gamma^0_-\big| + \big|\gamma^x_+\big| + \big|\gamma^x_-\big| +1\bigr)\\
\label{6.3}
\qquad {}\times \bigl( \big|\gamma^0_+\big| + \big|\gamma^0_-\big| + \big|\gamma^x_+\big| + \big|\gamma^x_-\big| +1 \bigr)\varepsilon
\le r_0(\delta)
\end{gather}
(cf.\ Remark~\ref{rem2.1}), $y(c,\sigma, x)$ admits a zero $x^{(0)}$ such that ${\rm e}^{-x^{(0)}} \big(x^{(0)}\big)^{-\sigma-1} \sim \varrho_1$ (respectively, $\sim \varrho_3$), if $|\varrho_1|<\varepsilon$ (respectively, $|\varrho_3| <\varepsilon$). Let $(c_0, c_x)=(1,c)$ with $0<|c|< R_0$. If $|\gamma_-^x| =|(\sigma +2\theta_x +\theta_{\infty})/(4c) |< R_0/4$, then $|\sigma + 2\theta_x +\theta_{\infty}|< R_0^2$ and $\big|\gamma_{\pm}^0\big|$, $\big|\gamma_+^x\big|< R_*$, $R_*=R_*(\theta_0,\theta_x,\theta_{\infty}, R_0)$ being some number depending on $(\theta_0,\theta_x,\theta_{\infty}, R_0)$ only. Choose $\varepsilon=\varepsilon_0$ in such a way that~\eqref{6.3} is valid uniformly in $\big(\gamma_{\pm}^0, \gamma_{\pm}^x\big)$ satisfying $\big|\gamma_{\pm}^0\big|$, $\big|\gamma_+^x\big|< R_*$, $\big|\gamma_-^x\big|< R_0/4$. Then there exists a~domain consisting of $(c,\sigma)$ such that $|(\sigma +2\theta_x +\theta_{\infty})/c|< R_0$, $\sigma \not= \pm 2\theta_0 +\theta_{\infty}$, $2\theta_x -\theta_{\infty}$ and $|\varrho_1| =|4c/(\sigma +2\theta_0 -\theta_{\infty})|<\varepsilon_0$, since $\theta_0 -\theta_x -\theta_{\infty} \not=0$. For such $(c,\sigma)$, $y(c,\sigma, x)$ has a~sequence of zeros $\big\{x_m^{(0)} \big\}$ with $\rho_0(\sigma)=\varrho_1/c = -4/(\sigma+2\theta_0 -\theta_{\infty})$. For $(c_0,c_x)=(1/c, 1)$ with $0<|c|<R_0$, we obtain another sequence of zeros with $\rho_0(\sigma)=\varrho_3/c = -4/(\sigma +2\theta_x+\theta_{\infty})$. Thus the assertion (1) of Theorem~\ref{thm2.8} has been verified. The second assertion is shown by an analogous argument about a pole $x^{(\infty)}$ such that ${\rm e}^{x^{(\infty)}}\big(x^{(\infty)}\big)^{\sigma -1} \sim \tilde{\varrho}_1$ or $\sim \tilde{\varrho}_3$ in the domain $\varepsilon^{-1}|x|^{-1} < \big|{\rm e}^x x^{\sigma-1}\big|<\varepsilon$. By putting $\sigma=\sigma_0=-2\theta_x -\theta_{\infty}$ or $\sigma'_0 =2\theta_0+\theta_{\infty}$, and observing ${\rm e}^{-x}x^{-(\sigma-\theta_{\infty})/2} g_+ = \gamma^x_+ - \big(\gamma_+^0/2\big) (\sigma_0+\theta_{\infty}){\rm e}^{-x}x^{-\sigma-1} +O\big(x^{-1}\big)$ and so on, we deduce Theo\-rem~\ref{thm2.9}.

\section{Proofs of the results on the monodromy data}\label{sc7}

To show Theorem~\ref{thm2.3} we compute the monodromy matrices~$M_0$,~$M_x$ with respect to solution~\eqref{2.1} of linear system~\eqref{1.1} by
matching perturbed solutions as $x\to \infty$. Note that, by Theo\-rem~\ref{thm2.1},
\begin{gather*}
A_0(\mathbf{c},\sigma, x) \sim \frac 14 (\sigma-\theta_{\infty})J +\gamma_+^0 x^{-(\sigma +\theta_{\infty}) /2}\Delta_+ +\gamma_-^0 x^{ (\sigma +\theta_{\infty}) /2}\Delta_-, \\
A_x(\mathbf{c},\sigma, x) \sim - \frac 14 (\sigma+\theta_{\infty})J +\gamma_+^{x} {\rm e}^{x} x^{(\sigma -\theta_{\infty}) /2}\Delta_+
+\gamma_-^{x} {\rm e}^{-x} x^{-(\sigma -\theta_{\infty}) /2}\Delta_-,
\end{gather*}
if ${\rm e}^x x^{\sigma-1}$, ${\rm e}^{-x} x^{-\sigma-1} =o(1)$. In what follows we suppose that $\arg x \sim \pi/2$ and that
\begin{gather}\label{7.1}
\big|{\rm e}^x x^{\sigma}\big|, \big|{\rm e}^{-x} x^{-\sigma}\big| \ll 1
\end{gather}
as $x\to \infty$. By $Y= {\rm e}^{(x/4)J} x^{-(\theta_{\infty}/4)J} \hat{Y}$, system \eqref{1.1} is changed into
\begin{gather}\label{7.2}
\frac{{\rm d}\hat{Y}}{{\rm d}\lambda} = \left( \frac{\hat{A}_0}{\lambda} + \frac{\hat{A}_x}{\lambda-x} + \frac J 2 \right) \hat{Y}
\end{gather}
with
\begin{gather*}
 \hat{A}_0 = \big( (\sigma-\theta_{\infty})/4+ O\big(x^{-1}\big) \big)J\\
\hphantom{\hat{A}_0 =}{} +\big(\gamma_+^0 +O\big(x^{-1}\big) \big) {\rm e}^{-x/2} x^{-\sigma /2}\Delta_+
+\big(\gamma_-^0 +O\big(x^{-1}\big) \big) {\rm e}^{ x/2} x^{ \sigma /2}\Delta_-,\\
 \hat{A}_x = \big( {-}(\sigma+\theta_{\infty})/4+ O\big(x^{-1}\big) \big)J\\
\hphantom{\hat{A}_x =}{} +\big(\gamma_+^{x} +O\big(x^{-1}\big) \big) {\rm e}^{x/2} x^{\sigma /2}\Delta_+
+\big(\gamma_-^{x} +O\big(x^{-1}\big) \big) {\rm e}^{-x/2} x^{-\sigma /2}\Delta_-.
\end{gather*}

\subsection{Approximate equation}\label{ssc7.1}
As long as $|\lambda|> |x|^{1/2}$, $|\lambda- x|> |x|^{1/2}$, the eigenvalues of $A_*= \hat{A}_0/\lambda + \hat{A}_x/(\lambda-x) +J/2$ are $\pm \mu(x, \lambda)$ with
\begin{gather*}
\mu(x, \lambda)= \frac 12 + \frac{(\sigma-\theta_{\infty})/4 + O\big(x^{-1}\big) } {\lambda} - \frac{ (\sigma+\theta_{\infty})/4 + O\big(x^{-1}\big) } {\lambda-x} + O\big(|\lambda|^{-2} +|\lambda-x|^{-2} \big),
\end{gather*}
and by $\hat{Y} =( \mu(x,\lambda)J +A_* )Z =\big(J+O\big(|\lambda|^{-1} +|\lambda-x|^{-1}\big) \big)Z$ system \eqref{7.2} is reduced to
\begin{gather}\label{7.3}
\frac{{\rm d}Z}{{\rm d}\lambda}= (\mu(x, \lambda) J +H(x, \lambda) )Z, \qquad H(x, \lambda)=(h_{ij}(x, \lambda) ) \ll |\lambda|^{-2}+|\lambda-x|^{-2}.
\end{gather}

\begin{lem}\label{lem7.1}Let $\Sigma_{\pi/2}(x)$ be a sector given by
\begin{gather*}
|\arg \lambda -\pi/2|<\pi, \qquad |\arg (\lambda-x) -\pi/2|<\pi/4, \qquad |\lambda- x|>|x|^{1/2},
\end{gather*}
and $\Sigma_{3\pi/2}(0)$ one given by
\begin{gather*}
|\arg \lambda - 3\pi/2|<\pi/4, \qquad |\lambda|>|x|^{1/2}.
\end{gather*}
Then \eqref{7.2} admits the matrix solution
\begin{gather*}
Z^x_{\mathrm{WKB}}(x, \lambda) = \big(J+ O\big(|\lambda|^{-1} +|\lambda-x|^{-1}\big)\big) {\rm e}^{(\lambda/2) J} \lambda^{\alpha(x)J} (\lambda- x)^{\beta(x)J}
\end{gather*}
with
\begin{gather*}
\alpha(x)=(\sigma-\theta_{\infty})/4 +O\big(x^{-1}\big), \qquad \beta(x)=-(\sigma+\theta_{\infty})/4 +O\big(x^{-1}\big)
\end{gather*}
uniformly in sufficiently large $x$ as $\lambda \to \infty$ through $\Sigma_{\pi/2}(x)$, and the solution $Z^0_{\mathrm{WKB}}(x,\lambda)$ having an asymptotic representation of the same form as $\lambda\to \infty$ through $\Sigma_{3\pi/2}(0)$.
\end{lem}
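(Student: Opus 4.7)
The plan is to construct $Z^x_{\mathrm{WKB}}$ and $Z^0_{\mathrm{WKB}}$ as $\hat Y$-solutions of \eqref{7.2} via the diagonalising transformation $\hat Y = (\mu(x,\lambda)J + A_*)Z$, with $Z$ a solution of the perturbed diagonal system \eqref{7.3}. First I would integrate $\mu(x,\lambda)$ termwise in $\lambda$ to produce the WKB phase
\begin{gather*}
\Lambda(x,\lambda) := \int^{\lambda} \mu(x,\lambda')\,d\lambda' = \frac{\lambda}{2} + \alpha(x)\log\lambda + \beta(x)\log(\lambda-x) + M(x,\lambda),
\end{gather*}
where $\alpha(x) = (\sigma-\theta_\infty)/4 + O(x^{-1})$ and $\beta(x) = -(\sigma+\theta_\infty)/4 + O(x^{-1})$ are the residues of $\mu$ at $0$ and $x$, and $M(x,\lambda) = O(|\lambda|^{-1} + |\lambda-x|^{-1})$ is a primitive of the $O(|\lambda|^{-2}+|\lambda-x|^{-2})$ remainder. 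Then $E(x,\lambda) := \mathrm{e}^{\Lambda(x,\lambda)J}$ is a fundamental matrix for the unperturbed system $Z' = \mu J Z$, and since $\mathrm{e}^{M J} = I + O(|\lambda|^{-1} + |\lambda-x|^{-1})$, the matrix $E$ differs from $\mathrm{e}^{(\lambda/2)J}\lambda^{\alpha(x)J}(\lambda-x)^{\beta(x)J}$ by that same error.

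Next I would seek $Z = (I + R(x,\lambda))E(x,\lambda)$ with $R \to 0$. Substitution into \eqref{7.3} gives $R' = \mu[J,R] + H(I+R)$, so the diagonal entries satisfy $R_{ii}' = (H(I+R))_{ii}$ and the off-diagonal ones are first-order linear with integrating factor $\mathrm{e}^{\mp 2\Lambda}$. This leads to the integral equations
\begin{gather*}
R_{ii}(\lambda) = -\int_{\lambda}^{\infty_i}(H(I+R))_{ii}(\lambda')\,d\lambda',\\
R_{12}(\lambda) = -\int_{\lambda}^{\infty_+}\mathrm{e}^{2(\Lambda(\lambda)-\Lambda(\lambda'))}(H(I+R))_{12}(\lambda')\,d\lambda',\\
R_{21}(\lambda) = -\int_{\lambda}^{\infty_-}\mathrm{e}^{-2(\Lambda(\lambda)-\Lambda(\lambda'))}(H(I+R))_{21}(\lambda')\,d\lambda'.
\end{gather*}
Here the endpoints $\infty_\pm$ are chosen inside $\Sigma_{\pi/2}(x)$ (respectively $\Sigma_{3\pi/2}(0)$) so that $\mathrm{Re}(\Lambda(\lambda')-\Lambda(\lambda))$ is non-negative and monotone along the path of integration for $R_{12}$, and non-positive for $R_{21}$. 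Since $\Lambda \sim \lambda'/2$ at leading order, this reduces to selecting paths on which $\mathrm{Re}\lambda'$ is monotone: for $\Sigma_{\pi/2}(x)$ a path going to the right half-plane for $R_{12}$ and to the left half-plane for $R_{21}$, each staying at distance $\gtrsim |x|^{1/2}$ from both $0$ and $x$; for $\Sigma_{3\pi/2}(0)$ the analogous choices about $0$ alone. Because $H = O(|\lambda|^{-2} + |\lambda-x|^{-2})$ is integrable along such paths, the successive-approximation scheme $R^{(0)} = 0$, $R^{(k+1)} = \mathcal{T}[R^{(k)}]$ is a contraction for sufficiently large $|x|$, yielding $R(x,\lambda) = O(|\lambda|^{-1}+|\lambda-x|^{-1})$ uniformly on the sector. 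Finally, $\hat Y = (\mu J + A_*)Z = (J + O(|\lambda|^{-1}+|\lambda-x|^{-1}))(I+R)E$, and absorbing the $\mathrm{e}^{MJ}$ factor converts $E$ to $\mathrm{e}^{(\lambda/2)J}\lambda^{\alpha(x)J}(\lambda-x)^{\beta(x)J}$ up to the stated error, giving the asserted form.

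The main obstacle is the selection of the integration paths within the prescribed sectors so that the exponential weights $\mathrm{e}^{\pm 2(\Lambda(\lambda')-\Lambda(\lambda))}$ remain bounded uniformly: one must simultaneously avoid the singular loci $\lambda' = 0$ and $\lambda' = x$, stay inside the sector, and ensure monotonicity of $\mathrm{Re}\Lambda$ along the path. In $\Sigma_{\pi/2}(x)$ the vertex at $\lambda=x$ forces the paths for $R_{12}$ and $R_{21}$ to leave $\lambda$ in different directions, yet both must terminate at $\infty$ within the same sector, which is why the sector is taken with opening only $\pi/2$ about the direction $\arg(\lambda-x)=\pi/2$; a parallel constraint fixes the opening of $\Sigma_{3\pi/2}(0)$. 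Once the Stokes geometry is set up, the contraction estimate and uniformity in $x$ are routine, following standard WKB matching techniques (cf.\ the references \cite{F, O, W2} cited in the introduction).
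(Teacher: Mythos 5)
Your argument is correct, and it reaches the same conclusion by a route that is organised differently from the paper's. The paper first diagonalises the reduced system \eqref{7.3} by two successive shearing transformations $Z=(I+p\Delta_+)Z_*$ and $Z_*=(I+q\Delta_-)Z_{**}$, where $p$ and $q$ solve scalar Riccati-type equations constructed along rays inside the sector with $\re\lambda\to+\infty$ and $\re\lambda\to-\infty$ respectively (the solvability being the same mechanism as in Lemma~\ref{lem4.1} and Remark~\ref{rem4.2}); the resulting exactly diagonal system is then integrated and the solution read off. You instead keep the full matrix perturbation and solve the coupled Volterra system for $R$ in $Z=(I+R)\mathrm{e}^{\Lambda J}$ by a single contraction, with the two off-diagonal entries integrated towards $\infty_{\pm}$ along progressive paths. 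Both routes rest on exactly the same geometric input, which you correctly identify as the crux: from every point of $\Sigma_{\pi/2}(x)$ (respectively $\Sigma_{3\pi/2}(0)$) one can draw rays staying in the sector along which $\re\lambda\to+\infty$ and others along which $\re\lambda\to-\infty$, while $|\lambda'-x|$ and $|\lambda'|$ grow monotonically so that $H\ll|\lambda|^{-2}+|\lambda-x|^{-2}$ is integrable with the stated $O\big(|\lambda|^{-1}+|\lambda-x|^{-1}\big)$ tail. The paper's version localises the analytic work in two scalar Riccati equations whose treatment is already set up in Section~\ref{sc4}; yours handles the coupling in one fixed-point argument, which is more self-contained but requires simultaneous control of the two exponential weights $\mathrm{e}^{\pm2(\Lambda(\lambda)-\Lambda(\lambda'))}$ on their respective paths — a point you address adequately. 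Your closing remark purporting to explain \emph{why} the opening of $\Sigma_{\pi/2}(x)$ is $\pi/2$ is not quite the real reason (any opening $<\pi$ centred near $\pi/2$ would admit both progressive directions; the $\pi/4$ half-opening is chosen for the later matching with the local solutions), but this does not affect the validity of the proof.
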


\begin{figure}[htb]\small
\begin{center}
\unitlength=1mm
\begin{picture}(60,34)(-30,-16)
\put(0,-15){\circle*{1}}
\put(0,0){\circle*{1}}
\put(2.5,-1.5){\makebox{$x$}}
\put(2.5,-16.5){\makebox{$0$}}
\qbezier(-4,4)(0,8)(4,4)
\put(-4,4){\line(-1,1){10}}
\put(4,4){\line(1,1){7}}
\put(7,14){\makebox{$\Sigma_{\pi/2}(x)$}}
\end{picture}
\qquad\quad
\begin{picture}(60,34)(-30,-16)
\put(0,15){\circle*{1}}
\put(0,0){\circle*{1}}
\put(2.5,14.5){\makebox{$x$}}
\put(2.5,0.5){\makebox{$0$}}
\qbezier(-4,-4)(0,-8)(4,-4)
\put(-4,-4){\line(-1,-1){10}}
\put(4,-4){\line(1,-1){7}}
\put(5,-15){\makebox{$\Sigma_{3\pi/2}(0)$}}
\end{picture}
\end{center}
\caption{Sectors $\Sigma_{\pi/2}(x)$ and $\Sigma_{3\pi/2}(0)$.}\label{sectors}
\end{figure}
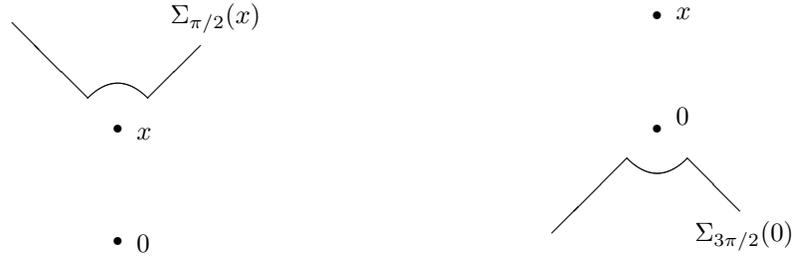

\begin{proof}By $Z=(I+ p\Delta_+)Z_*$ system \eqref{7.2} is taken to
\begin{gather*}
\frac{{\rm d}Z_*}{{\rm d}\lambda} = (\mu(x,\lambda) J + H(x,\lambda) -h_{21} pJ + h_* \Delta_+ ) Z_*
\end{gather*}
with
\begin{gather*}
h_*= 2\mu(x,\lambda) p -{\rm d}p/{\rm d}\lambda +(h_{11}-h_{22})p -h_{21}p^2.
\end{gather*}
From every point in $\Sigma_{\pi/2}(x)$ one may draw a line in $\Sigma_{\pi/2}(x)$ in such a way that $\re\lambda \to \infty $, and hence there exists $p=p(x,\lambda)$ such that $h_{12}+h_*=0$ and that $p(x,\lambda) \ll |\lambda|^{-2} +|\lambda-x|^{-2}$ (cf.\ Lemma~\ref{lem4.1} and Remark
\ref{rem4.2}). As a result the coefficient matrix becomes of lower-triangular form. We apply a suitable further transformation of the form $Z_*=(I + q\Delta_-)Z_{**}$ with $q=q(x,\lambda)\ll |\lambda|^{-2}+ |\lambda-x|^{-2}$ to get the diagonal system
\begin{gather*}
\frac{{\rm d}Z_{**}}{{\rm d}\lambda} = \big(\mu(x,\lambda)J +\operatorname{diag} \big[\tilde{h}_1 (x,\lambda) , \tilde{h}_2(x,\lambda) \big] \big)Z_{**}
\end{gather*}
with $\tilde{h}_1(x,\lambda), \tilde{h}_2(x,\lambda) \ll |\lambda|^{-2} +|\lambda-x|^{-2}$, from which the desired solution immediately follows.
\end{proof}

\begin{rem}\label{rem7.1} In the sector $\Sigma_{-\pi/2}(0)$: $|\arg \lambda +\pi/2 |<\pi/4$, $|\lambda| >|x|^{1/2}$ as well, \eqref{7.2} admits the solution $\hat{Z}^0_{\mathrm{WKB}} (x,\lambda)$ with an asymptotic representation of the same form.
\end{rem}

\begin{rem}\label{rem7.2} The matrix function $Z^x_{\mathrm{WKB}}(x,\lambda)$ or $W^0_{\mathrm{WKB}}(x,\lambda)$, which corresponds to $\Psi_q(\lambda)$ given by \cite[equation~(7.10)]{Andreev-Kitaev}, is essentially a WKB solution. The representation for it remains valid also in a suitably extended sector with opening angle $\pi-\delta$. Furthermore, e.g., the first column of $W^x_{\mathrm{WKB}}(x,\lambda)$ is a vector solution in a domain with the properties:
\begin{enumerate}\itemsep=0pt
\item[(i)] $|\arg(\lambda-x)|, |\arg \lambda|<3\pi/2 -\delta$, $|\lambda|, |\lambda-x| > |x|^{1/2}$;
\item[(ii)] from every point in the domain one may draw a line contained in it and satisfying \smash{$\re \lambda \to \infty $}.
\end{enumerate}
\end{rem}

\subsection{Local equation}\label{ssc7.2}
If $|\lambda|< 2|x|^{1/2}$, system \eqref{7.2} is written as
\begin{gather*}
\frac{{\rm d}\hat{Y}}{{\rm d}\lambda} = \left( \frac J2 + \frac{\hat{A}_0}{\lambda} +O\big(x^{-1}\big) \right) \hat{Y}
\end{gather*}
under \eqref{7.1}. This is equivalent to
\begin{gather}\label{7.4}
\frac{{\rm d}U}{{\rm d}\lambda} =\left( \frac J2 + \frac{\Lambda}{\lambda} +E(x,\lambda) \right) U, \qquad
\Lambda := \frac 14(\sigma-\theta_{\infty})J + \gamma^0_+ c_0^{-1} \Delta_+ + \gamma^0_- c_0 \Delta_-
\end{gather}
with $E(x,\lambda) \ll x^{-1}$ for $|\lambda|< 2|x|^{1/2}$, in which
\begin{gather*}
U= {\rm e}^{(x/4) J} x^{(\sigma/4)J} c_0^{-J/2} \hat{Y}.
\end{gather*}
System \eqref{7.4} is a perturbation of the Whittaker system
\begin{gather}\label{7.5}
\frac{{\rm d}W}{{\rm d}\lambda} =\left( \frac J2 + \frac{\Lambda}{\lambda} \right) W,
\end{gather}
which admits the matrix solution
\begin{gather}\label{7.6}
W_{\infty}(\lambda) =\big(I+O\big(\lambda^{-1}\big)\big) {\rm e}^{(\lambda/2)J} \lambda^{((\sigma- \theta_{\infty})/4)J}
\end{gather}
as $\lambda\to \infty$ through the sector $|\arg\lambda- \pi/2 | <\pi-\delta$. By $W_{\infty}^{*}(\lambda)$ and $W_{\infty}^{**}(\lambda)$ we denote the solutions of~\eqref{7.5} having an asymptotic representation of the same form in the neighbouring sectors $|\arg\lambda +\pi/2|<\pi-\delta$ and $|\arg\lambda -3\pi/2|<\pi-\delta$, respectively.

\begin{lem}\label{lem7.2} In the domain $|\arg \lambda-3\pi/2 |<\pi/4$, $\log |x|^{1/4} < |\lambda|< 2|x|^{1/2}$, system \eqref{7.4} admits the matrix solution
\begin{gather*}
U_{\mathrm{out}}(x,\lambda) =\big(I+U_{\mathrm{out}}^*(x,\lambda)\big) {\rm e}^{(\lambda/2)J} \lambda^{((\sigma-\theta_{\infty})/4)J}
\end{gather*}
with $U^*_{\mathrm{out}}(x,\lambda) \ll (\log |x|)^{-1}$.
\end{lem}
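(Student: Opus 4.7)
The approach is a perturbation argument using variation of constants with the explicit Whittaker solution $W^{**}_{\infty}(\lambda)$ of~\eqref{7.5} as the unperturbed fundamental matrix. The key observation is that $W^{**}_{\infty}(\lambda)=(I+R(\lambda)){\rm e}^{(\lambda/2)J}\lambda^{((\sigma-\theta_{\infty})/4)J}$ with $\|R(\lambda)\|\ll|\lambda|^{-1}$ holds throughout the validity sector $|\arg\lambda-3\pi/2|<\pi-\delta$, and in the lemma's domain $|\lambda|\geq\log|x|^{1/4}$, so $\|R(\lambda)\|\ll(\log|x|)^{-1}$ automatically. Thus the target bound is essentially the natural error of the Whittaker asymptotic at the inner boundary of the domain, and the task reduces to constructing a true solution of~\eqref{7.4} lying within an asymptotically smaller neighbourhood of $W^{**}_{\infty}$.

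I would set $U=W^{**}_{\infty}\cdot\mathcal{C}$ and derive the matrix Volterra equation
\begin{gather*}
\mathcal{C}(\lambda)=I+\int_{\lambda_*(\lambda)}^{\lambda}\tilde E(x,\xi)\,\mathcal{C}(\xi)\,{\rm d}\xi,\qquad \tilde E(x,\xi):=(W^{**}_{\infty})^{-1}(\xi)\,E(x,\xi)\,W^{**}_{\infty}(\xi),
\end{gather*}
to be solved column by column. The diagonal entries of $\tilde E$ are $O(\|E\|)=O(x^{-1})$, but the off-diagonal entries carry factors ${\rm e}^{\pm\xi}\xi^{\pm 2\alpha}$ with $\alpha=(\sigma-\theta_{\infty})/4$, which must be tamed by choosing suitable paths.

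The technical heart of the argument is a direction-selection analogous to Lemma~\ref{lem4.1} and Remark~\ref{rem4.2}: I would split the sector $|\arg\lambda-3\pi/2|<\pi/4$ into sub-sectors according to the sign of $\re\lambda$, and for each column of $\mathcal{C}$ choose a base point $\lambda_*(\lambda)$ on the outer arc $|\lambda_*|=2|x|^{1/2}$ together with a path from $\lambda_*(\lambda)$ to $\lambda$ along which the active off-diagonal exponential in $\tilde E$ is non-increasing---namely, $\re(\lambda-\xi)\geq 0$ for the first column (bounding $|{\rm e}^{\xi}|$ against $|{\rm e}^{\lambda}|$) and $\re(\lambda-\xi)\leq 0$ for the second. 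The opening $\pi/4$ of the lemma's sector together with the extended validity $|\arg\xi-3\pi/2|<\pi-\delta$ of the Whittaker asymptotic leaves enough room for such paths to stay inside the region where $W^{**}_{\infty}$ is under control, possibly at the cost of a brief excursion into the extended sector when $\lambda$ lies on the ``wrong side'' of the imaginary axis.

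With the paths fixed, $\|\tilde E(x,\xi)\|\ll x^{-1}|\xi|^{2|\re\alpha|}$ along each path and the path length is $\ll|x|^{1/2}$, so standard Picard iteration (as in \cite[Section~10]{Si} or \cite[Chapter~4]{W}) produces a unique solution $\mathcal{C}(\lambda)=I+O(|x|^{-1/2}(\log|x|)^{K_0})$ of the Volterra equation, for some $K_0$ depending only on $\sigma,\theta_{\infty}$. Setting $U^*_{\mathrm{out}}(x,\lambda):=R(\lambda)+(I+R(\lambda))(\mathcal{C}(\lambda)-I)$ then yields the claimed representation with $\|U^*_{\mathrm{out}}\|\ll(\log|x|)^{-1}$, the dominant contribution coming from $R$. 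The main obstacle is precisely the column-wise choice of integration paths that simultaneously stay in the validity region of the Whittaker asymptotic and keep every relevant off-diagonal exponential factor of $\tilde E$ bounded; once these paths are fixed, the remaining Picard/Gronwall estimates are routine.
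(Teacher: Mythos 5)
Your overall strategy---variation of constants against the full Whittaker fundamental matrix $W^{**}_{\infty}$, with the dominant error $(\log|x|)^{-1}$ correctly identified as the prefactor $R(\lambda)=O\big(\lambda^{-1}\big)$ evaluated at the inner boundary $|\lambda|\sim\log|x|$---is reasonable and genuinely different from the paper's route. The paper conjugates only by the bounded prefactor $\bar W(\lambda)=I+O\big(\lambda^{-1}\big)$, so that \eqref{7.4} becomes the exact diagonal model ${\rm d}\hat U/{\rm d}\lambda=(1/2+(\sigma-\theta_\infty)/(4\lambda))J\hat U$ plus an $O\big(x^{-1}\big)$ perturbation carrying no exponential factors; it then removes the off-diagonal perturbation by the shearing transformations $(I+p\Delta_+)(I+q\Delta_-)$ as in Lemma~\ref{lem7.1}, and only afterwards integrates the residual diagonal terms, which gives $\ll\lambda x^{-1}\ll x^{-1/2}$.

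However, your Picard estimate has a genuine quantitative gap. First, a bookkeeping point: since $U=W^{**}_\infty\mathcal C=(I+R)D\mathcal C$ with $D={\rm e}^{(\lambda/2)J}\lambda^{\alpha J}$, $\alpha=(\sigma-\theta_\infty)/4$, the quantity that must be $o(1)$ is $D\mathcal CD^{-1}-I$, not $\mathcal C-I$; so the $(2,1)$ entry of $\mathcal C-I$ must be measured against the weight $\big|{\rm e}^{\lambda}\lambda^{2\alpha}\big|$. In that weighted norm the kernel for the first column is $x^{-1}\big|{\rm e}^{\xi-\lambda}\big|\big|(\xi/\lambda)^{2\alpha}\big|$, and your path condition $\re(\lambda-\xi)\ge0$ only makes the exponential $\le1$---it does not tame the polynomial factor. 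On the worst admissible path (take $\lambda$ and the base point $\lambda_*$ both on the ray $\arg\xi=3\pi/2$, so $\re\xi\equiv\re\lambda=0$) the integral is of order $x^{-1}\int_{\log|x|/4}^{2|x|^{1/2}}(t/|\lambda|)^{2\re\alpha}{\rm d}t\asymp|x|^{\re\alpha-1/2}$ up to logarithms, which is $o(1)$ only when $\re(\sigma-\theta_\infty)<2$---a restriction nowhere assumed. Hence the claimed bound $\mathcal C=I+O\big(|x|^{-1/2}(\log|x|)^{K_0}\big)$ does not follow from the stated path choice (and the asserted matrix bound $\|\tilde E\|\ll x^{-1}|\xi|^{2|\re\alpha|}$ cannot hold for both off-diagonal entries at once). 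The gap is fixable: either force genuine exponential decay on the outer portion of the path, e.g.\ require $\re(\lambda-\xi)\ge c\min(|\xi-\lambda|, K\log|x|)$ so that ${\rm e}^{\re(\xi-\lambda)}$ beats any fixed power of $|x|$ wherever $|\xi|\gg|\lambda|$; or follow the paper and diagonalize first, where the off-diagonal Riccati integrals carry the genuinely decaying factor ${\rm e}^{\mp(\lambda-\xi)}$, against which $\int{\rm e}^{-t}t^{N}{\rm d}t=O(1)$ absorbs all polynomial growth, and the only integral extended over the full path length $|x|^{1/2}$ is the purely diagonal one with kernel $O\big(x^{-1}\big)$.
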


\begin{proof}Write $W^{**}_{\infty}(\lambda) = \bar{W}(\lambda) {\rm e}^{(\lambda/2)J} \lambda^{((\sigma-\theta_{\infty})/4)J}$ with $\bar{W}(\lambda) =I+O\big(\lambda^{-1}\big)$ as $\lambda \to\infty$. Since $W=\bar{W}(\lambda) \hat{W}$ reduces \eqref{7.5} to ${\rm d}\hat{W}/{\rm d}\lambda =(1/2 +(\sigma-\theta_{\infty}) /(4\lambda) )J\hat{W}$, it is easy to see that, by $U=\bar{W}(\lambda)\hat{U}$, \eqref{7.4} becomes
\begin{gather*}
\frac{{\rm d}\hat{U}}{{\rm d}\lambda} = \left( \left(\frac 12 + \frac{\sigma-\theta _{\infty} }{4\lambda} \right) J + O\big(x^{-1}\big)\right) \hat{U},
\end{gather*}
and $\bar{W}(\lambda)= I+ O\big((\log|x|)^{-1}\big)$, if $|\arg \lambda -3\pi/2|<\pi/4$, $\log|x|^{1/4}<|\lambda|<2|x|^{1/2}$. By the same argument as in the proof of Lemma~\ref{lem7.1}, we may find a transformation of the form $\hat{U}=\big(I+O\big(x^{-1}\big)\big)\tilde{U}$ such that this system is changed into
\begin{gather*}
\frac{{\rm d}\tilde{U}}{{\rm d}\lambda} = \left( \left(\frac 12 + \frac{\sigma-\theta _{\infty} }{4\lambda} \right) J + \operatorname{diag}[e_1(x,\lambda), e_2(x,\lambda)] \right) \tilde{U}
\end{gather*}
with $e_1(x,\lambda), e_2(x,\lambda) \ll x^{-1} $, which has a matrix solution given by
\begin{gather*}
\tilde{U}= \operatorname{diag} \big[\hat{e}_1(x,\lambda), \hat{e}_2(x,\lambda)\big] {\rm e}^{(\lambda/2)J} \lambda^{((\sigma-\theta_{\infty})/4)J}
\end{gather*}
with $\hat{e}_1(x,\lambda)-1, \hat{e}_2(x,\lambda)-1 \ll \lambda x^{-1} \ll x^{-1/2}$. Thus we obtain the lemma.
\end{proof}

\begin{lem} \label{lem7.2a} In the domain $|\arg \lambda-3\pi/2 |<\pi/4$, $1 < |\lambda| <\log |x|^{1/3}$, system \eqref{7.4} admits the matrix solution
\begin{gather*}
U_{\mathrm{in}}(x,\lambda) =\big(I+U_{\mathrm{in}}^*(x,\lambda)\big) W_{\infty}^{**}(\lambda)
\end{gather*}
with $U^*_{\mathrm{in}}(x,\lambda) \ll x^{-1/4}$.
\end{lem}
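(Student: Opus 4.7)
The plan is to seek $U_{\mathrm{in}}(x,\lambda) = (I + U_{\mathrm{in}}^*(x,\lambda))\,W_\infty^{**}(\lambda)$ and reduce the search for $U_{\mathrm{in}}^*$ to a matrix integral equation that can be solved by Picard iteration. Substituting this ansatz into \eqref{7.4} and using $(W_\infty^{**})' = (J/2 + \Lambda/\lambda)W_\infty^{**}$ yields
\[
(U_{\mathrm{in}}^*)' = \bigl[J/2 + \Lambda/\lambda,\ U_{\mathrm{in}}^*\bigr] + E(x,\lambda)\,(I + U_{\mathrm{in}}^*).
\]
Setting $\tilde U := (W_\infty^{**})^{-1} U_{\mathrm{in}}^* W_\infty^{**}$ removes the commutator term, so that $\tilde U$ satisfies $\tilde U' = (W_\infty^{**})^{-1} E\, W_\infty^{**}\,(I + \tilde U)$. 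Imposing $\tilde U(\lambda_0) = 0$ at a base point $\lambda_0$ on the outer arc $|\lambda_0| = \log|x|^{1/3}$ of the region, and integrating along a radial contour inside the sector, I arrive at
\[
\tilde U(\lambda) = \int_{\lambda_0}^{\lambda}\! (W_\infty^{**})^{-1}(s)\, E(x,s)\, W_\infty^{**}(s)\,(I + \tilde U(s))\,{\rm d}s.
\]

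For the estimates I exploit the asymptotic form \eqref{7.6}, writing $W_\infty^{**}(s) = (I + O(s^{-1}))\,D(s)$ with $D(s) = e^{(s/2)J} s^{((\sigma-\theta_\infty)/4)J}$. The prefactor and its inverse are bounded on $|s|>1$, while conjugation of the off-diagonal entries of $E$ by $D(s)$ produces factors $e^{\mp s}\, s^{\mp(\sigma-\theta_\infty)/2}$. In the sector $|\arg s - 3\pi/2|<\pi/4$ one has $|\re s|\le |s|/\sqrt{2}$, so on $1<|s|<\log|x|^{1/3}$ the exponential factor is controlled by $|e^{\pm s}|\le |x|^{1/(3\sqrt 2)}$. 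Combined with $|E|\ll x^{-1}$, this gives
\[
\bigl|(W_\infty^{**})^{-1}(s)\, E(x,s)\, W_\infty^{**}(s)\bigr| \le C (\log|x|)^{C'}\, x^{-1 + 1/(3\sqrt 2)},
\]
which tends to $0$ as $|x|\to\infty$. Since the contour has length $O(\log|x|)$, Picard iteration contracts and converges to a unique $\tilde U$ with $|\tilde U| \le C(\log|x|)^{C'+1}\, x^{-1 + 1/(3\sqrt 2)}$. Reverting by $U_{\mathrm{in}}^* = W_\infty^{**}\,\tilde U\,(W_\infty^{**})^{-1}$ adds at most one further factor $|x|^{1/(3\sqrt 2)}$, producing $|U_{\mathrm{in}}^*| \le C(\log|x|)^{C''}\, x^{-1 + \sqrt 2/3}$, which is $\ll x^{-1/4}$ once $|x|$ is sufficiently large.

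The main technical obstacle is controlling $(W_\infty^{**})^{-1} E\, W_\infty^{**}$: the off-diagonal entries of $E$ pick up exponential factors $e^{\pm s}$, and outside the region $|\lambda|<\log|x|^{1/3}$ these would overwhelm the $x^{-1}$ smallness of $E$. The upper bound $|\lambda|<\log|x|^{1/3}$ in the lemma is precisely the threshold at which the exponentials stay subdominant to $|E|$; once it is respected, the contraction argument is routine. The claimed bound $\ll x^{-1/4}$ is therefore not sharp (one in fact gets $\ll x^{-1 + \sqrt 2/3}$ up to logs) but is convenient, and it meshes with the $O((\log|x|)^{-1})$ error term in $U_{\mathrm{out}}$ of Lemma~\ref{lem7.2} during the subsequent matching in the overlap $\log|x|^{1/4}<|\lambda|<\log|x|^{1/3}$.
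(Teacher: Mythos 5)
Your proposal is correct and follows essentially the same route as the paper: the substitution $U = W_\infty^{**}\,(I+\tilde U)$ (equivalently the paper's $U=W_\infty^{**}\bar U$), the observation that on $1<|\lambda|<\log|x|^{1/3}$ conjugation by $W_\infty^{**}$ costs only a small power of $|x|$ so that $(W_\infty^{**})^{-1}E\,W_\infty^{**}$ is still $o(1)$, a Gronwall/Picard contraction to solve the resulting integral equation, and a final conjugation back that loses one more such factor while remaining $\ll x^{-1/4}$. Your exponent $-1+\sqrt2/3$ is marginally sharper than the paper's $-1/3+\epsilon$ because you exploit $|\re s|\le|s|/\sqrt2$ in the sector, but the argument is the same.
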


\begin{proof}By $U= W^{**}_{\infty}(\lambda)\bar{U}$ system \eqref{7.4} is reduced to
\begin{gather*}
\frac{{\rm d}\bar{U}}{{\rm d}\lambda} = W^{**}_{\infty}(\lambda)^{-1} E(x,\lambda) W^{**}_{\infty}(\lambda) \bar{U}.
\end{gather*}
Here $W^{**}_{\infty}(\lambda)^{-1} E(x,\lambda) W^{**}_{\infty}(\lambda) \ll x^{-2/3 +\epsilon}$ for $1<|\lambda|<\log |x|^{1/3}$, $\epsilon$ being any positive number, since $W^{**}_{\infty}(\lambda)^{\pm}\ll x^{1/6+\epsilon}$. Using Gronwall's inequality, we may show that there exists a matrix solution such that $\bar{U}=I+\bar{U}^*(x,\lambda)$ with $\bar{U}^*(x,\lambda) \ll x^{-2/3+\epsilon}$. Then
\begin{gather*}
U=W^{**}_{\infty}(\lambda) \big(I+\bar{U}^*(x,\lambda)\big) = \big(I+W^{**}_{\infty}(\lambda)\bar{U}^*(x,\lambda)W^{**}_{\infty}(\lambda)^{-1}\big) W^{**}_{\infty}(\lambda)
\end{gather*}
solves \eqref{7.4}. Since $W^{**}_{\infty}(\lambda)\bar{U}^*(x,\lambda)W^{**}_{\infty}(\lambda)^{-1} \ll x^{-1/3+\epsilon}$, this is a desired solution as in the lemma.
\end{proof}

\begin{rem}\label{rem7.3} In the domain $|\arg \lambda +\pi/2|<\pi/4$, $\log |x|^{1/4}<|\lambda| <2|x|^{1/2}$ (respectively, $1<|\lambda|<\log|x|^{1/3}$) as well, we have the solution $\hat{U}_{\mathrm{out}}(x,\lambda)$ (respectively, $\hat{U}_{\mathrm{in}}(x,\lambda)$) with an analogous property, which is obtained by using $W^*_{\infty}(\lambda)$.
\end{rem}

\subsection{Whittaker system}\label{ssc7.3}

The right-hand side of $W_{\infty}(\lambda)$ (cf.~\eqref{7.6}) is given by
\begin{gather*}
\begin{pmatrix}
{\rm e}^{\pi {\rm i}(\sigma-\theta_{\infty}+2)/4} W_{(\sigma-\theta_{\infty}+2)/4,
\theta_0/2}\big({\rm e}^{-\pi {\rm i}}\lambda\big) & -\vartheta_+ W_{-(\sigma-\theta_{\infty}+2)/4, \theta_0/2}(\lambda)
\vspace{1mm}\\
\vartheta_{-} {\rm e}^{\pi {\rm i}(\sigma-\theta_{\infty}+2)/4}
 W_{(\sigma-\theta_{\infty}-2)/4,\theta_0/2}\big({\rm e}^{-\pi {\rm i}}\lambda\big) &
 W_{-(\sigma-\theta_{\infty}-2)/4,\theta_0/2}(\lambda)
\end{pmatrix}
\lambda^{-1/2},
\end{gather*}
where $\vartheta_{+}= (\sigma-\theta_{\infty}+2\theta_0) /4$, $\vartheta_{-}= (\sigma-\theta_{\infty}-2\theta_0) /4$, and $W_{\kappa, \nu}(z)$ is the Whittaker function such that $W_{\kappa,\nu} (z)\sim {\rm e}^{-z/2}z^{\kappa}$ as $z\to\infty$ through the sector $|\arg z|<3\pi/2$ (cf.\ \cite[formula~(13.1.33)]{AS}, \cite[Section~6.9]{HTF}, \cite[equation~(3.10)]{Jimbo}).
Around $\lambda=0$, \eqref{7.5} admits the matrix solution
\begin{gather}\label{7.7}
W_0(\lambda) =G_0(I+O(\lambda)) \lambda^{(\theta_0/2)J} \lambda^{\Delta_*},
\end{gather}
where $G_0\in {\rm GL}_2(\C)$, and $\Delta_*$ denotes $0$ if $\theta_0\not\in \Z$, $\Delta_+$ if $\theta_0\in \N\cup\{0\}$, and $\Delta_-$ if $-\theta_0\in \N$.

Let us compute connection formulas and Stokes multipliers. Using the formula
\begin{gather*}
z^{-1/2} W_{\kappa, \nu}(z) = \frac{\Gamma(-2\nu) z^{\nu}} {\Gamma(1/2-\nu-\kappa)} (1+O(z)) + \frac{\Gamma(2\nu) z^{-\nu} } {\Gamma(1/2+\nu-\kappa)} (1+O(z))
\end{gather*}
near $z=0$ (cf.\ \cite[formulas (13.1.2), (13.1.32), (13.1.34)]{AS}), we have

\begin{prop}\label{prop7.3} If $\theta_0\not\in \Z$, then $W_{\infty}(\lambda)=W_0(\lambda) V_0$, where $V_0$ is the matrix as in Theo\-rem~{\rm \ref{thm2.3}}.
\end{prop}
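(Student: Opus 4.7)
Since $W_\infty(\lambda)$ and $W_0(\lambda)$ are both fundamental matrix solutions of the Whittaker system \eqref{7.5}, there exists a constant matrix $V_0 \in {\rm GL}_2(\C)$ such that $W_\infty(\lambda) = W_0(\lambda) V_0$. The task is to identify the entries of $V_0$ by matching asymptotic behaviours at $\lambda = 0$. Because $\theta_0 \notin \Z$, we have $\Delta_* = 0$ in \eqref{7.7}, so
\begin{gather*}
W_0(\lambda) V_0 = G_0\bigl(I + O(\lambda)\bigr)\lambda^{(\theta_0/2)J} V_0
= G_0 \begin{pmatrix} \lambda^{\theta_0/2}(V_0)_{11}+O(\lambda^{\theta_0/2 +1}) & \lambda^{\theta_0/2}(V_0)_{12}+\cdots \\
\lambda^{-\theta_0/2}(V_0)_{21}+\cdots & \lambda^{-\theta_0/2}(V_0)_{22}+\cdots \end{pmatrix}
\end{gather*}
as $\lambda\to 0$. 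Thus, once we determine the leading coefficients of $\lambda^{\pm\theta_0/2}$ in each entry of $W_\infty(\lambda)$, we can read off $G_0$ and $V_0$ simultaneously.

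The first step is to rewrite the explicit Whittaker representation of $W_\infty(\lambda)$ given just before the proposition. Each entry is of the form $\lambda^{-1/2}W_{\kappa,\theta_0/2}(\pm\lambda)$ (with a branch factor ${\rm e}^{\pi{\rm i}(\sigma-\theta_\infty+2)/4}$ appearing in the first column due to $W_{\kappa,\nu}({\rm e}^{-\pi{\rm i}}\lambda)$). I will apply the connection formula
\begin{gather*}
z^{-1/2}W_{\kappa,\nu}(z) = \frac{\Gamma(-2\nu)\,z^{\nu}}{\Gamma(1/2-\nu-\kappa)}(1+O(z)) + \frac{\Gamma(2\nu)\,z^{-\nu}}{\Gamma(1/2+\nu-\kappa)}(1+O(z))
\end{gather*}
quoted in the text, with $\nu=\theta_0/2$ and $\kappa$ equal to $\pm(\sigma-\theta_\infty\pm 2)/4$ according to which entry we are examining. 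For the first column I substitute $z = {\rm e}^{-\pi{\rm i}}\lambda$, producing the extra branch factors ${\rm e}^{\pm\pi{\rm i}\theta_0/2}$ on the $\lambda^{\pm\theta_0/2}$ terms; combining these with the prefactor ${\rm e}^{\pi{\rm i}(\sigma-\theta_\infty+2)/4}$ out front yields precisely the phase ${\rm e}^{\pi{\rm i}(\sigma\mp 2\theta_0-\theta_\infty)/4}$ appearing in the first column of $V_0$ in Theorem~\ref{thm2.3}. For the second column there are no such branch factors and the relevant ratios $\Gamma(\mp\theta_0)/\Gamma(1\pm(\sigma-2\theta_0-\theta_\infty)/4)$ fall out directly; the minus sign on the $(2,2)$-entry of $V_0$ comes from the minus sign in front of $\vartheta_+W_{-(\sigma-\theta_\infty+2)/4,\theta_0/2}(\lambda)$ together with the identity $\Gamma(1+t) = t\Gamma(t)$ applied to one of the denominator Gammas.

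Having done this, the leading behaviour of the $(i,j)$-entry of $W_\infty(\lambda)$ as $\lambda\to 0$ is $\lambda^{(-1)^{i+1}\theta_0/2}\,K_{ij}$ for explicit constants $K_{ij}$ built from the Gamma factors and exponential phases above. Matching with $G_0\,\lambda^{(\theta_0/2)J}V_0$ forces $G_0\cdot\operatorname{diag}[(V_0)_{11},(V_0)_{21}]$ to equal the column of leading coefficients $(K_{i1})$ and likewise for the second column. Normalising $G_0$ by pulling out the two common diagonal factors on the right of $V_0$ in Theorem~\ref{thm2.3}, namely $\operatorname{diag}\bigl[{\rm e}^{\pi{\rm i}(\sigma-2\theta_0-\theta_\infty)/4}\Gamma(-\theta_0)/\Gamma(1-(\sigma+2\theta_0-\theta_\infty)/4),\,\ldots\bigr]$ (up to Stokes normalisation of $W_0$), we obtain $V_0$ exactly as claimed.

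The main obstacle, as often in Stokes-type calculations of this kind, is the careful bookkeeping of branches: one must consistently interpret $(e^{-\pi i}\lambda)^{\pm\theta_0/2}$, the prefactor $e^{\pi i(\sigma-\theta_\infty+2)/4}$ in the first column of $W_\infty$, and the power $\lambda^{(\sigma-\theta_\infty)/4}$ implicit in the $W_\infty$ normalisation \eqref{7.6} along a fixed determination of $\arg\lambda$ near the sector $|\arg\lambda-\pi/2|<\pi-\delta$. The remaining computation is the routine manipulation of Gamma-function identities to match the Jimbo--Andreev--Kitaev form of $V_0$.
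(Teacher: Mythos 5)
Your proposal is correct and follows the same route as the paper, which simply applies the quoted small-$z$ connection formula for $z^{-1/2}W_{\kappa,\nu}(z)$ (with $\nu=\theta_0/2$, the four values of $\kappa$, and $z={\rm e}^{-\pi{\rm i}}\lambda$ in the first column) to the explicit Whittaker representation of $W_\infty(\lambda)$ and matches the result against $G_0(I+O(\lambda))\lambda^{(\theta_0/2)J}$. One small caveat: the residual diagonal freedom is absorbed entirely into the unconstrained matrix $G_0$ of \eqref{7.7} (one finds $G_0=\begin{pmatrix}-\vartheta_+&1\\ \vartheta_-&-1\end{pmatrix}$ up to normalisation), so nothing needs to be "pulled out of $V_0$ on the right"; with that bookkeeping the entries of $V_0$ come out exactly as in Theorem~\ref{thm2.3}.
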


Furthermore, if $2\nu \in \Z\setminus\{0\}$,
\begin{gather*}
W_{\kappa, \nu}(z) = \frac{(-1)^{1+|2\nu|}z^{1/2+|\nu|} } {|2\nu|! \Gamma(1/2-|\nu|-\kappa)} \bigl( (1+O(z))\log z+ \psi(1/2+ |\nu|-\kappa)\\
\hphantom{W_{\kappa, \nu}(z) =}{} -\psi(1) -\psi(1+|2\nu|) +O(z) \bigr) + \frac{(|2\nu|-1)! z^{1/2-|\nu|} (1+O(z)) }{ \Gamma(1/2+|\nu|-\kappa)},
\end{gather*}
and
\begin{gather*}
W_{\kappa, 0}(z) = -\frac{z^{1/2} }{\Gamma(1/2-\kappa)} \bigl( (1+O(z))\log z+ \psi(1/2-\kappa) -2\psi(1) +O(z) \bigr)
\end{gather*}
near $z=0$ (cf.\ \cite[formulas (13.1.6), (13.1.7), (13.1.33)]{AS}). From these formulas we have

\begin{prop}\label{prop7.4} If $\theta_0 \in \Z$, then $W_{\infty}(\lambda)=W_0(\lambda) \hat{V}_0$, where $\hat{V}_0$ is the matrix as in Theo\-rem~{\rm \ref{thm2.4}}.
\end{prop}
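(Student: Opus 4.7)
The plan is to follow the strategy of Proposition~\ref{prop7.3}, with the ordinary connection formula for $W_{\kappa,\nu}(z)$ at $z=0$ replaced by the logarithmic one recalled just above the statement, which is valid precisely when $2\nu = \theta_0 \in \Z$. Concretely, I will substitute into each entry of $W_\infty(\lambda)$ the expansion of the relevant $W_{\kappa,\theta_0/2}(z)$ near the origin, and then read off the factorisation $W_\infty(\lambda) = W_0(\lambda)\hat V_0$ by matching these expansions column by column against the basis of solutions given by the two columns of $W_0(\lambda)$.

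Assume first $\theta_0 \in \N\cup\{0\}$, so that $\Delta_* = \Delta_+$ and, from~\eqref{7.7},
\begin{gather*}
W_0(\lambda) = G_0(I+O(\lambda))\begin{pmatrix}\lambda^{\theta_0/2} & \lambda^{\theta_0/2}\log\lambda \\ 0 & \lambda^{-\theta_0/2}\end{pmatrix}.
\end{gather*}
The two basis vectors at $\lambda = 0$ are therefore $\lambda^{\theta_0/2}\mathbf{g}_1(1+O(\lambda))$ and $\lambda^{-\theta_0/2}\mathbf{g}_2(1+O(\lambda)) + \lambda^{\theta_0/2}\log\lambda\cdot\mathbf{g}_1(1+O(\lambda))$, where $\mathbf{g}_1,\mathbf{g}_2$ are the columns of $G_0$. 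For each column of $W_\infty(\lambda)$, the logarithmic Whittaker formula produces exactly this structure: the coefficient of $\lambda^{-\theta_0/2}$ is $(\theta_0-1)!/\Gamma(1/2+\theta_0/2-\kappa)$, the coefficient of $\lambda^{\theta_0/2}\log\lambda$ is $(-1)^{1+\theta_0}/[\theta_0!\,\Gamma(1/2-\theta_0/2-\kappa)]$, and the coefficient of the $\lambda^{\theta_0/2}$ part carries the $\psi$-combination $\psi(1/2+\theta_0/2-\kappa)-\psi(1)-\psi(1+\theta_0)$. Taking the ratio of the $\lambda^{\theta_0/2}\log\lambda$ and $\lambda^{-\theta_0/2}$ coefficients for the two columns yields the bottom row $(1,1)$ of the first factor of $\hat V_0$ after pulling out the diagonal factor, while the constant summands in the $\lambda^{\theta_0/2}$ contribution yield $(\hat V_0)_{11}$ and $(\hat V_0)_{12}$. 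The identities $\vartheta_\pm\Gamma(\vartheta_\pm)=\Gamma(1+\vartheta_\pm)$ and $1/2\mp\theta_0/2-\kappa = 1+(\sigma\mp 2\theta_0-\theta_\infty)/4$ (for the various $\kappa$ occurring in $W_\infty(\lambda)$) then reshape the prefactors into the diagonal matrix displayed in Theorem~\ref{thm2.4}.

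The case $-\theta_0\in \N$ is symmetric: with $\Delta_*=\Delta_-$ the logarithmic term now sits in the first column of $W_0(\lambda)$ and the roles of $\mathbf{g}_1,\mathbf{g}_2$ are interchanged, producing the second form of $\hat V_0$; alternatively one can deduce it from the $\theta_0\ge 0$ case using the $\nu\mapsto-\nu$ symmetry of~\eqref{7.5}. The hard part is bookkeeping rather than principle: one must (i) track the branch $\log(\mathrm{e}^{-\pi\mathrm{i}}\lambda) = \log\lambda - \pi\mathrm{i}$ arising in the first column of $W_\infty(\lambda)$, which is the sole origin of the $-\pi\mathrm{i}$ in $(\hat V_0)_{11}$ (and its absence in $(\hat V_0)_{12}$); (ii) handle the sign factors $(-1)^{\theta_0}$ and $(-1)^{1+\theta_0}$ consistently; and (iii) verify that the non-logarithmic pieces of both columns of $W_\infty(\lambda)$ attach to the \emph{same} singular direction $\mathbf{g}_2$, which forces the bottom row of the first factor of $\hat V_0$ to be $(1,1)$. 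The degenerate subcase $\theta_0=0$ is covered by the dedicated formula for $W_{\kappa,0}(z)$ quoted above; with $0!=1$ and $\psi(1+\theta_0)=\psi(1)$ it matches the $\theta_0\ge 1$ output (the constant $-2\psi(1)$ in that formula being exactly $-\psi(1)-\psi(1+\theta_0)$ at $\theta_0=0$), so the statement holds uniformly.
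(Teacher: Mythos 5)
Your proposal is correct and follows essentially the same route as the paper, which derives Proposition~\ref{prop7.4} directly from the two logarithmic expansions of $W_{\kappa,\nu}(z)$ (for $2\nu\in\Z\setminus\{0\}$ and for $\nu=0$) quoted just before the statement, substituted entrywise into the explicit Whittaker-function form of $W_{\infty}(\lambda)$ and matched against $W_0(\lambda)$. Your additional bookkeeping points — the branch $\log\big({\rm e}^{-\pi {\rm i}}\lambda\big)=\log\lambda-\pi {\rm i}$ as the source of the $-\pi {\rm i}$ in $\big(\hat V_0\big)_{11}$, and the uniform treatment of $\theta_0=0$ via the dedicated $W_{\kappa,0}$ formula — are exactly the details the paper leaves implicit.
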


To calculate the relation between $W_{\infty}(\lambda)$ and $W_{\infty}^*(\lambda)$, we use
\begin{gather*}
W_{\kappa, \nu}\big({\rm e}^{-\pi {\rm i}}\lambda\big) = {\rm e}^{-\pi {\rm i}(\nu+1/2)} {\rm e}^{\lambda/2} \lambda^{\nu+1/2}
 \bigg( \frac{(1- {\rm e}^{4\pi {\rm i}\nu})\Gamma(-2\nu)}{\Gamma(1/2-\nu-\kappa)} M\big(\nu-\kappa+1/2, 2\nu+1, {\rm e}^{\pi {\rm i}}\lambda\big)\\
\hphantom{W_{\kappa, \nu}\big({\rm e}^{-\pi {\rm i}}\lambda\big) =}{} + {\rm e}^{4\pi {\rm i}\nu} U\big(\nu-\kappa+1/2, 2\nu+1, {\rm e}^{\pi {\rm i}}\lambda\big)\bigg),
\end{gather*}
which is obtained from \cite[formulas (13.1.10), (13.1.33)]{AS} with $n=-1$. Here, by \cite[formulas~(13.5.1), (13.5.2)]{AS}
\begin{gather*}
M\big(\nu-\kappa +1/2, 2\nu +1, {\rm e}^{\pi {\rm i}}\lambda\big) = \frac{\Gamma(2\nu+1) \lambda^{-(\nu-\kappa +1/2)} }{\Gamma(\nu +\kappa+1/2)}
\big(1+O\big(\lambda^{-1}\big)\big)\\
\hphantom{M\big(\nu-\kappa +1/2, 2\nu +1, {\rm e}^{\pi {\rm i}}\lambda\big) =}{} + \frac{\Gamma(2\nu+1) }{\Gamma(\nu-\kappa +1/2)} {\rm e}^{-\lambda} \big({\rm e}^{\pi {\rm i}} \lambda\big)^{-(\nu+\kappa +1/2)}\big(1+O\big(\lambda^{-1}\big)\big),\\
 U\big(\nu-\kappa +1/2, 2\nu +1, {\rm e}^{\pi {\rm i}}\lambda\big) =\big({\rm e}^{\pi {\rm i}}\lambda\big)^{-(\nu-\kappa +1/2)} \big(1+O\big(\lambda^{-1}\big)\big),
\end{gather*}
in the sector $-3\pi/2 <\arg \lambda <\pi/2$. From \cite[formulas~(13.1.10), (13.1.33)]{AS} with $n=1$, it follows that
\begin{gather*}
W_{\kappa, \nu}(\lambda) = {\rm e}^{-\lambda/2} \lambda^{\nu+1/2} \bigg( \frac{(1- {\rm e}^{-4\pi {\rm i}\nu})\Gamma(-2\nu)}{\Gamma(1/2-\nu-\kappa)} M(\nu-\kappa+1/2, 2\nu+1, {\rm e}^{-2\pi {\rm i}}\lambda)\\
\hphantom{W_{\kappa, \nu}(\lambda) =}{} + {\rm e}^{-4\pi {\rm i}\nu} U(\nu-\kappa+1/2, 2\nu+1,{\rm e}^{-2\pi {\rm i}}\lambda) \bigg),
\end{gather*}
which yields the relation between $W_{\infty}(\lambda)$ and $W_{\infty}^{**}(\lambda)$.

\begin{prop}\label{prop7.5} We have $W_{\infty}(\lambda)=W_{\infty}^*(\lambda) S_*$ and $W_{\infty}(\lambda)=W_{\infty}^{**}(\lambda) S_{**}$, where~$S_*$ and~$S_{**}$ are the matrices as in Theo\-rem~{\rm \ref{thm2.3}}.
\end{prop}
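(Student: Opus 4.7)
The plan is to derive both identities entry-by-entry from the explicit Whittaker-function representation of $W_{\infty}(\lambda)$ already displayed before Proposition~\ref{prop7.3} and the two connection formulas for $W_{\kappa,\nu}$ that the authors quote just before Proposition~\ref{prop7.5}. Since $W_{\infty}^{*}(\lambda)$ (respectively $W_{\infty}^{**}(\lambda)$) is the unique matrix solution of~\eqref{7.5} with the prescribed WKB normalisation in its own sector, it suffices to take each column of $W_{\infty}(\lambda)$, insert the connection formula, read off the asymptotic behaviour in the new sector, and then identify the resulting linear combination with the two columns of $W_{\infty}^{*}(\lambda)$ or $W_{\infty}^{**}(\lambda)$.

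For the first identity I would rotate arguments. The $(1,1)$- and $(2,1)$-entries of $W_{\infty}(\lambda)$ both contain $W_{\kappa,\theta_0/2}({\rm e}^{-\pi{\rm i}}\lambda)$ with $\kappa=(\sigma-\theta_{\infty}\pm 2)/4$. Substituting the stated expansion
\[
W_{\kappa,\nu}\big({\rm e}^{-\pi{\rm i}}\lambda\big)={\rm e}^{-\pi{\rm i}(\nu+1/2)}{\rm e}^{\lambda/2}\lambda^{\nu+1/2}\!\left(\tfrac{(1-{\rm e}^{4\pi{\rm i}\nu})\Gamma(-2\nu)}{\Gamma(1/2-\nu-\kappa)}M\big(\nu-\kappa+\tfrac12,2\nu+1,{\rm e}^{\pi{\rm i}}\lambda\big)+{\rm e}^{4\pi{\rm i}\nu}U\big(\nu-\kappa+\tfrac12,2\nu+1,{\rm e}^{\pi{\rm i}}\lambda\big)\!\right),
\]
together with the two-term asymptotics of $M$ and $U$ valid on $-3\pi/2<\arg\lambda<\pi/2$, splits each such entry into a recessive piece of the form $c_{\rm rec}\,{\rm e}^{\lambda/2}\lambda^{(\sigma-\theta_\infty\pm 2)/4-1/2}$ and a dominant piece $c_{\rm dom}\,{\rm e}^{-\lambda/2}\lambda^{-(\sigma-\theta_\infty\pm2)/4-1/2}$. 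The recessive piece already matches the first column of $W_{\infty}^{*}(\lambda)$ (after accounting for the diagonal normalising factor $\lambda^{((\sigma-\theta_\infty)/4)J}$), while the dominant piece is proportional to the second column. The $(1,2)$- and $(2,2)$-entries of $W_{\infty}(\lambda)$ are already of pure second-column type. Putting these observations together gives $W_{\infty}(\lambda)=W_{\infty}^{*}(\lambda)\, T$ where the unique off-diagonal correction is in the $(2,1)$-slot, so $T=I+t_{21}\Delta_-$.

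The computation of $t_{21}$ reduces to a ratio of the explicit Gamma prefactors. One finds
\[
t_{21}=\big(1-{\rm e}^{4\pi{\rm i}(\theta_0/2)}\big)\Gamma(-\theta_0)\Gamma(1+\theta_0)^{-1}\cdot\frac{\Gamma(1+\theta_0)}{\Gamma(1/2-\theta_0/2-\kappa)\,\Gamma(1/2+\theta_0/2-\kappa)}\Big|_{\kappa=(\sigma-\theta_\infty+2)/4},
\]
after combining the ${\rm e}^{\pi{\rm i}(\sigma-\theta_\infty+2)/4}$ prefactor sitting in front of each $W_{\kappa,\theta_0/2}({\rm e}^{-\pi{\rm i}}\lambda)$ with the ${\rm e}^{-\pi{\rm i}(\theta_0/2+1/2)}$ produced by the connection formula. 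The reflection identity $\Gamma(-\theta_0)\Gamma(1+\theta_0)=-\pi/\sin(\pi\theta_0)$ together with $1-{\rm e}^{2\pi{\rm i}\theta_0}=-2{\rm i}\sin(\pi\theta_0)\,{\rm e}^{\pi{\rm i}\theta_0}$ collapses this to the single expression $-2\pi{\rm i}/(\Gamma(-(\sigma-2\theta_0-\theta_\infty)/4)\Gamma(1-(\sigma+2\theta_0-\theta_\infty)/4))$, which is exactly the coefficient of $\Delta_-$ in $S_*$. The integer case $\theta_0\in\Z$ is handled by the same argument; no poles occur in the final expression because $(1-{\rm e}^{4\pi{\rm i}\nu})\Gamma(-2\nu)$ is regular there.

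For the second identity I would use the companion formula for $W_{\kappa,\nu}(\lambda)$ expressed through $M$ and $U$ with argument ${\rm e}^{-2\pi{\rm i}}\lambda$, apply exactly the same recessive/dominant split on $\pi/2<\arg\lambda<5\pi/2$, and read off the $(1,2)$-entry of the resulting unipotent upper-triangular $S_{**}$. The only substantive new ingredient is the extra factor ${\rm e}^{-\pi{\rm i}(\sigma-\theta_\infty)/2}$ coming from the second normalising exponential in $W_{\infty}^{**}(\lambda)$, which is precisely what produces the ${\rm e}^{-\pi{\rm i}(\sigma-\theta_\infty)/2}$ in the stated formula for $S_{**}$. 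The main bookkeeping obstacle throughout is simply keeping all sign and phase conventions straight while converting the quotient of four Gamma functions into the simplified $2\pi{\rm i}/(\Gamma\cdot\Gamma)$ form; once that reduction is done consistently, Propositions~\ref{prop7.3}, \ref{prop7.4} and~\ref{prop7.5} together furnish the complete connection data for the Whittaker model used in Section~\ref{sc7}.
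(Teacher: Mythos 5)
Your proposal is correct and follows essentially the same route as the paper, which derives Proposition~\ref{prop7.5} directly from the two quoted connection formulas for $W_{\kappa,\nu}\big({\rm e}^{-\pi{\rm i}}\lambda\big)$ and $W_{\kappa,\nu}(\lambda)$ in terms of $M$ and $U$, together with their large-$\lambda$ asymptotics, reading off the dominant/recessive split in the overlap sectors exactly as you describe. Your intermediate expression for $t_{21}$ is written a little loosely (the $\Gamma(1+\theta_0)\Gamma(1+\theta_0)^{-1}$ cancels trivially and the Gamma arguments need the shift $\kappa=(\sigma-\theta_\infty+2)/4$ tracked carefully), but the final reduction to $-2\pi{\rm i}/\big(\Gamma(-(\sigma-2\theta_0-\theta_\infty)/4)\Gamma(1-(\sigma+2\theta_0-\theta_\infty)/4)\big)$ and the triangular structure of $S_*$, $S_{**}$ match the paper.
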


\subsection{Completion of the proofs of Theorems \ref{thm2.3} and \ref{thm2.4}}\label{ssc7.4}

Recall the solution $Y(x,\lambda)=\big(I+O\big(\lambda^{-1}\big)\big) {\rm e}^{(\lambda/2)J} \lambda^{-(\theta_{\infty}/2)J}$ of \eqref{1.1} as $\lambda\to\infty$ through the sector $|\arg\lambda-\pi/2|<\pi$ and the monodromy matrices $M_0$, $M_x$ defined by the analytic continuation of~$Y(x,\lambda)$ along the loops $l_0$, $l_x$ as described in Section~\ref{ssc2.2}. Furthermore, for the solutions $Y_1(x,\lambda)$ and $Y_2(x,\lambda)$, respectively, in $|\arg\lambda +\pi/2|<\pi$ and $|\arg \lambda -3\pi/2|<\pi$, the Stokes multipliers~$S_1$ and~$S_2$ are given by $Y(x,\lambda)=Y_1(x,\lambda)S_1$, $Y_2(x,\lambda)=Y(x,\lambda)S_2$.

\subsubsection[Derivation of $M_0$]{Derivation of $\boldsymbol{M_0}$}\label{sssc7.4.1}

To compute $M_0$ let us examine the analytic continuation for $Y(x,\lambda)$ along $l_0$ by the matching procedure carried out according to the following scheme:
\begin{gather*}
Y(x,\lambda)=Y_2(x,\lambda)S^{-1}_2 \longleftrightarrow Z^0_{\mathrm{WKB}} (x,\lambda) \longleftrightarrow U_{\mathrm{out}}(x,\lambda) \longleftrightarrow U_{\mathrm{in}}(x,\lambda)
\end{gather*}
(cf.\ Lemmas \ref{lem7.1}, \ref{lem7.2} and \ref{lem7.2a}).

Suppose that $x$ satisfies $\arg x \sim \pi/2$ and \eqref{7.1}, and that the starting point $\lambda_{\mathrm{st}}$ of $l_0$ has the properties $\arg\lambda_{\mathrm{st}} \sim \pi/2$, $\arg(\lambda_{\mathrm{st}}-x)\sim \pi/2$ and $|\lambda_{\mathrm{st}}|>2 |x|$. Then the part of $l_0$ from $\lambda_{\mathrm{st}}$ up to a point near $\lambda=0$ may be regarded as $\Gamma_{\mathrm{left}} \cup L_-$ with $\Gamma_{\mathrm{left}}$: $\lambda=|\lambda_{\mathrm{st}}|{\rm e}^{{\rm i}t}$ $(\pi/2 \le t \le 3\pi/2)$ and~$L_-$: $\lambda ={\rm i}t$ ($-|\lambda_{\mathrm{st}}| \le t \le -1$).

{\it $1$-st step}: Continue $Y(x,\lambda)$ along the arc $\Gamma_{\mathrm{left}}$ entering into $\Sigma_{3\pi/2}(0) \cap \{ |\lambda|> 2 |x| \}$, in which $|\arg (\lambda - x) - 3\pi/ 2|<\pi/4$ (cf.\ Lemma~\ref{lem7.1} and Fig.~\ref{loops1}(a)).

\begin{figure}[htb]\small
\begin{center}
\unitlength=0.75mm
\begin{picture}(60,55)(-30,-30)
\put(0,13){\circle*{1}}
\put(0,0){\circle*{1}}
\put(0,0){\circle{4}}
\put(2.5,12.5){\makebox{$x$}}
\put(4,0.5){\makebox{$0$}}
\qbezier(-5,-5)(0,-10)(5,-5)
\put(-5,-5){\line(-1,-1){16}}
\put(5,-5){\line(1,-1){14}}
\put(0,-23){\line(0,1){21}}
\put(10,-23){\makebox{$\Sigma_{3\pi/2}(0)$}}
\put(0,23){\circle*{1.5}}
\put(3,22){\makebox{$\lambda_{\mathrm{st}}$}}
\put(-19,5){\makebox{$\Gamma_{\mathrm{left}}$}}
\put(1,-15){\makebox{$L_-$}}
\thicklines
\qbezier(0,23)(-9.52,23)(-16.26,16.26)
\qbezier(-23,0)(-23,9.52)(-16.26,16.26)
\qbezier(0,-23)(-9.52,-23)(-16.26,-16.26)
\qbezier(-23,0)(-23,-9.52)(-16.26,-16.26)
\put(-6,-34){\makebox{(a) $l_0$}}
\end{picture}
\quad
\begin{picture}(60,55)(-30,-30)
\put(0,-18){\circle*{1}}
\put(0,-5){\circle*{1}}
\put(0,-5){\circle{4}}
\put(4,-7){\makebox{$x$}}
\put(2.5,-21){\makebox{$0$}}
\qbezier(-5,0)(0,5)(5,0)
\put(-5,0){\line(-1,1){16}}
\put(5,0){\line(1,1){14}}
\put(0,12){\line(0,-1){15}}
\put(10,16){\makebox{$\Sigma_{\pi/2}(x)$}}
\put(0,23){\circle*{1.5}}
\put(-8,22){\makebox{$\lambda_{\mathrm{st}}$}}
\thicklines
\put(-0.2,23){\line(0,-1){11}}
\put(0,23){\line(0,-1){11}}
\put(0.2,23){\line(0,-1){11}}
\put(-6,-34){\makebox{(b) $l_x$}}
\end{picture}
\quad
\begin{picture}(60,55)(-30,-30)
\put(0,13){\circle*{1}}
\put(0,0){\circle*{1}}
\put(0,0){\circle{4}}
\put(-5,12.5){\makebox{$x$}}
\put(-6,0.5){\makebox{$0$}}
\qbezier(-5,-5)(0,-10)(5,-5)
\put(-5,-5){\line(-1,-1){14}}
\put(5,-5){\line(1,-1){16}}
\put(0,-23){\line(0,1){21}}
\put(-27,-23){\makebox{$\Sigma_{-\pi/2}(0)$}}
\put(0,23){\circle*{1.5}}
\put(-8,22){\makebox{$\lambda_{\mathrm{st}}$}}
\put(9,5){\makebox{$\Gamma_{\mathrm{right}}$}}
\put(-7,-15){\makebox{$L'_-$}}
\thicklines
\qbezier(0,23)(9.52,23)(16.26,16.26)
\qbezier(23,0)(23,9.52)(16.26,16.26)
\qbezier(0,-23)(9.52,-23)(16.26,-16.26)
\qbezier(23,0)(23,-9.52)(16.26,-16.26)
\put(-6,-34){\makebox{(c) $l_x^{-1}l_0l_x$}}
\end{picture}
\end{center}
\caption{$l_0$, $l_x$ and $l_x^{-1}l_0l_x$.}\label{loops1}
\end{figure}
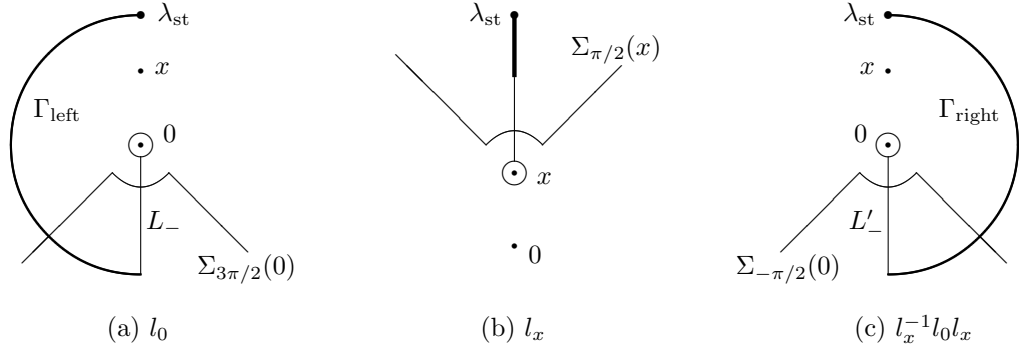

Let us match $Y_2(x,\lambda)$ with $Z^0_{\mathrm{WKB}}(x,\lambda)$ in this domain. Since $Z^0_{\mathrm{WKB}}(x,\lambda)$ solves \eqref{7.2} that follows from \eqref{1.1} by $Y={\rm e}^{(x/4)J} x^{-(\theta_{\infty}/4)J} \hat{Y}$, we have
\begin{gather}\label{7.8}
Y_2(x,\lambda) = {\rm e}^{(x/4)J} x^{-(\theta_{\infty}/4)J} Z^0_{\mathrm{WKB}}(x,\lambda) \Upsilon_1(x)
\end{gather}
with $\Upsilon_1(x) \in {\rm SL}_2(\C)$. By Lemma~\ref{lem7.1}, the right-hand side is
\begin{gather*}
{\rm e}^{(x/4)J} x^{-(\theta_{\infty}/4)J} \big(J+O\big(\lambda^{-1}\big) \big) {\rm e}^{(\lambda/2)J} \lambda^{\alpha(x)J} (\lambda-x)^{\beta(x) J} \Upsilon_1(x)\\
\qquad{} =\big(I+O\big(\lambda^{-1/2}\big) \big) {\rm e}^{(\lambda/2)J} \lambda^{-(\theta_{\infty}/2 +O\big(x^{-1}\big) )J} {\rm e}^{(x/4)J} x^{-(\theta_{\infty}/4)J} J \Upsilon_1(x),
\end{gather*}
provided that, e.g., $|\lambda|^{1/2} \gg |x|^{(|\sigma| +|\theta_{\infty}|)/2} \gg \big|{\rm e}^{x/2} x^{-\theta_{\infty}/2}\big|^{\pm 1}$ (cf.~\eqref{7.1}). Under $|\lambda|\!\ll\! \exp\big(|x|^{1/2}\big)$, which implies $\lambda^{O\big(x^{-1}\big)}=1+o(1)$, from~\eqref{7.8} we conclude
\begin{gather*}
\Upsilon_1(x) ={\rm e}^{-(x/4)J} x^{(\theta_{\infty}/4)J} (J+o(1)).
\end{gather*}

{\it $2$-nd step}: The line $L_-$ is contained in the sector $|\lambda-3\pi/2|<\pi/4$. Recall that $U= {\rm e}^{(x/4)J} x^{(\sigma/4)J} c_0^{-J/2} \hat{Y}$ takes \eqref{7.2} to \eqref{7.4}. Suppose that
\begin{gather}\label{7.9}
 {\rm e}^{(x/4)J} x^{(\sigma/4)J} c_0^{-J/2} Z^0_{\mathrm{WKB}}(x,\lambda)= U_{\mathrm{out}}(x,\lambda) \Upsilon_2(x)
\end{gather}
in the domain $|\arg\lambda -3\pi/2|<\pi/4$, $|x|^{1/2}<|\lambda|<2|x|^{1/2}$. By \eqref{7.1} the left-hand side is
\begin{gather*}
 {\rm e}^{(x/4)J} x^{(\sigma/4)J} c_0^{-J/2} \big(J+O\big(\lambda^{-1}\big)\big) {\rm e}^{(\lambda/2)J}
\lambda^{\alpha(x)J} \big({\rm e}^{\pi {\rm i}} x(1-\lambda/x)\big)^{\beta(x) J}\\
\qquad{} =(I+O\big(\lambda^{-1}\big) ) {\rm e}^{(\lambda/2)J} \lambda^{((\sigma-\theta_{\infty})/4 +
O\big(x^{-1}\big) )J} (1-\lambda/x)^{\beta(x) J} {\rm e}^{\beta(x)\pi {\rm i} J}\\
\qquad\quad{} \times {\rm e}^{(x/4)J} x^{(-\theta_{\infty}/4 +O\big(x^{-1}\big) )J} c_0^{-J/2} J,
\end{gather*}
since $|\arg(\lambda-x) -3\pi/2|<\pi/4$, $\arg x \sim \pi/2$. Using Lemma
\ref{lem7.2}, we derive
\begin{gather*}
\Upsilon_2(x)= {\rm e}^{-((\sigma+\theta_{\infty})\pi {\rm i}/4)J} {\rm e}^{(x/4)J} x^{-(\theta_{\infty}/4)J} c_0^{-J/2} (J+o(1)).
\end{gather*}

{\it $3$-rd step}: In the domain $|\arg\lambda-3\pi/2|<\pi/4$, $\log |x|^{1/4} <|\lambda|< \log |x|^{1/3}$, by Lemmas~\ref{lem7.2} and~\ref{lem7.2a}, we have
\begin{gather}\label{7.9a}
U_{\mathrm{out}}(x,\lambda)=U_{\mathrm{in}}(x,\lambda) \Upsilon_3(x)
\end{gather}
with $\Upsilon_3(x) =I+ o(1)$. By Lemma \ref{lem7.2} and Propositions \ref{prop7.3} through \ref{prop7.5},
\begin{gather}\label{7.10}
U_{\mathrm{in}}(x,\lambda) = \big(I+ O\big(x^{-1}\big)\big) W_{\infty}(\lambda) S_{**}^{-1} = \big(I+O\big(x^{-1}\big)\big) W_0(\lambda) V_*S_{**}^{-1}
\end{gather}
in the domain $|\arg\lambda -3\pi/2|<\pi/4$, $1<|\lambda|< 2$, where $V_*= V_0$ if $\theta_0\not\in \Z$ (respectively, $V_*=\hat{V}_0$ if $\theta_0\in \Z$). From \eqref{7.8}, \eqref{7.9}, \eqref{7.9a} and \eqref{7.10}, as a result of the matching procedure we obtain the following connection formula:
\begin{gather*}
Y(x,\lambda)= x^{-((\sigma+\theta_{\infty})/4)J} c_0^{J/2} \big(I+O\big(x^{-1}\big)\big) G_0 (I+O(\lambda)) \lambda ^{(\theta_{\infty}/2)J}\lambda^{\Delta_*} \Upsilon_0(x)
\end{gather*}
with
\begin{gather*}
\Upsilon_0(x)= V_* S_{**}^{-1} \Upsilon_3(x) \Upsilon_2(x) \Upsilon_1(x) S_2^{-1} = V_* S_{**}^{-1} {\rm e}^{-\pi {\rm i}((\sigma+\theta_{\infty})/4)J} c_0^{-J/2} (I+o(1)) S_2^{-1}
\end{gather*}
around $\lambda=0$ as $|x|\to \infty$, $\arg x\sim \pi/2$. Since $M_0$ does not depend on $x$, we derive
\begin{gather*}
M_0 =S_2(C^2_0)^{-1} {\rm e}^{\pi {\rm i} \theta_0 J} C_0^2 S_2^{-1}, \qquad
C_0^2 =V_0 S_{**}^{-1} {\rm e}^{-\pi {\rm i}((\sigma+\theta_{\infty})/4)J} c_0^{-J/2}
\end{gather*}
if $\theta_0\not\in \Z$, which is the second relation in~\eqref{2.3}. The case $\theta_0\in \Z$ is treated similarly, and that of~\eqref{2.5} follows.

\subsubsection[Derivation of $M_xM_0M_x^{-1}$]{Derivation of $\boldsymbol{M_xM_0M_x^{-1}}$}\label{sssc7.4.2}

The curve $\Gamma_{\mathrm{right}} \cup L_{-}'$ issuing from $\lambda_{\mathrm {st}}$, where $\Gamma_{\mathrm{right}}$: $\lambda=|\lambda_{\mathrm{st}}| {\rm e}^{{\rm i}(\pi/2-t)}$ $(0\le t\le \pi)$ and $L'_{-}$: $\lambda={\rm i}t$ $(-|\lambda_{\mathrm{st}}| \le t \le -1)$, corresponds to the part of $l_x^{-1} l_0l_x$ from $\lambda_{\mathrm{st}}$ up to $\lambda=-1$ (cf.\ Fig.~\ref{loops1}(c)). In this case the matching scheme
\begin{gather*}
Y(x,\lambda)=Y_1(x,\lambda)S_1 \longleftrightarrow \hat{Z}^0_{\mathrm{WKB}}
(x,\lambda) \longleftrightarrow \hat{U}_{\mathrm{out}}(x,\lambda) \longleftrightarrow \hat{U}_{\mathrm{in}}(x,\lambda)
\end{gather*}
(cf. Remarks \ref{rem7.1} and \ref{rem7.3}) yields the monodromy matrix $M_xM_0 M_x^{-1}$. Note that $\hat{U}_{\mathrm{out}}(x,\lambda)=(I+o(1)) W^*_{\infty}(\lambda) (I+o(1))$, and in matching $\hat{Z}^0_{\mathrm {WKB}}(x,\lambda)$ with $\hat{U}_{\mathrm{out}}(x,\lambda)$, that $ \lambda-x = {\rm e}^{-\pi {\rm i}} x(1-\lambda/x)$, since $|\arg(\lambda-x) +\pi/2|< \pi/4$, $\arg x \sim \pi/2$. Then we obtain
\begin{gather*}
M_xM_0 M_x^{-1} =S_1^{-1} \big(C_0^1\big)^{-1} {\rm e}^{\pi {\rm i}\theta_0 J}C_0^1 S_1, \qquad
C_0^1 =V_0 S_*^{-1} {\rm e}^{\pi {\rm i}((\sigma+\theta_{\infty})/4)J } c_0^{-J/2}
\end{gather*}
if $\theta_0\not\in \Z$. In this way the first relations in~\eqref{2.3} and~\eqref{2.5} are verified.

\subsubsection[Derivation of $M_x$]{Derivation of $\boldsymbol{M_x}$}\label{sssc7.4.3}
In the domain $|\lambda-x|<2|x|^{1/2}$, we write \eqref{7.2} in the form
\begin{gather*}
\frac{{\rm d}\hat{Y}}{{\rm d}\lambda} = \left( \frac J2 + \frac{\hat{A}_x}{\lambda-x}+O\big(x^{-1}\big) \right) \hat{Y},
\end{gather*}
which is changed into
\begin{gather}\label{7.11}
\frac{{\rm d}U}{{\rm d}\lambda} =\left( \frac J2 +\frac{ \tilde{\Lambda}}{\lambda-x} +O\big(x^{-1}\big) \right)U,
\qquad \tilde{\Lambda}= -\frac 14(\sigma+\theta_{\infty})J +\gamma^{x}_+c_x^{-1} \Delta_+ + \gamma^{x}_- c_x \Delta_-
\end{gather}
by $U= {\rm e}^{-(x/4)J} x^{-(\sigma/4)J} c_x^{-J/2} \hat{Y}$. Then instead of~\eqref{7.5} we treat
\begin{gather*}
\frac{{\rm d}W}{{\rm d}\lambda} = \left(\frac J2 +\frac{\tilde{\Lambda}}{\lambda-x} \right)W,
\end{gather*}
which has the matrix solution
\begin{gather}\label{7.12}
W^x_{\infty}(\lambda) =\big(I+O\big((\lambda-x)^{-1}\big) \big) {\rm e}^{(\lambda/2)J} (\lambda-x)^{-((\sigma+\theta_{\infty})/4)J}
\end{gather}
as $\lambda\to\infty$ through the sector $|\arg(\lambda-x)-\pi/2|<\pi-\delta$. Around $\lambda=x $ there exists the matrix solution
\begin{gather*}
W^x_0(\lambda) = G_x (I+O(\lambda-x)) (\lambda-x)^{(\theta_x/2)J} (\lambda-x) ^{\Delta_*}
\end{gather*}
with $G_x\in {\rm GL}_2(\C)$ and $\Delta_*$ as of~\eqref{7.7}. Then the connection formula is given by $W^x_{\infty}(\lambda) =W_0^x(\lambda)V_x$ (respectively, $ =W_0^x(\lambda)\hat{V}_x$) if $\theta_x \not\in \Z$ (respectively, $\theta_x \in \Z$). In the sector $|\arg(\lambda-x) -\pi/2 |<\pi/4$ equation \eqref{7.11} has the solution $U^x_{\mathrm{out}}(x,\lambda) =(I+U^{x*}_{\mathrm{out}}(x,\lambda)) {\rm e}^{(\lambda/2)J} \lambda^{((\sigma-\theta_{\infty})/4)J}$ with $U^{x*} _{\mathrm{out}}(x,\lambda) \ll (\log |x|)^{-1}$ for $\log|x|^{1/4}<|\lambda-x|<2|x|^{1/2}$, and $U^x_{\mathrm{in}}(x,\lambda) =(I+U^{x*}_{\mathrm{in}}(x,\lambda)) W_{\infty}^x(\lambda)$ with $U^{x*}_{\mathrm{in}}(x,\lambda) \ll x^{-1/4}$
for $1<|\lambda -x|< \log|x|^{1/3}$.

Consider the line joining $\lambda_{\mathrm{st}}$ with a point near $x$ contained in this sector (cf.\ Fig.~\ref{loops1}(b)). Then $M_x$ is obtained by the matching scheme
\begin{gather*}
Y(x,\lambda) \longleftrightarrow Z^x_{\mathrm{WKB}}(x,\lambda)
\longleftrightarrow U^x_{\mathrm{out}}(x,\lambda)
\longleftrightarrow U^x_{\mathrm{in}}(x,\lambda)
\end{gather*}
(cf. Lemma \ref{lem7.1}). Since $\arg x$, $\arg\lambda \sim \pi/2$, we may
write $\lambda=x (1+(\lambda-x)/x )$ in the domain $|x|^{1/2} <|\lambda-x|<
2|x|^{1/2}$. Using this fact we derive $M_x$ as in Theorem \ref{thm2.3}
or \ref{thm2.4}.

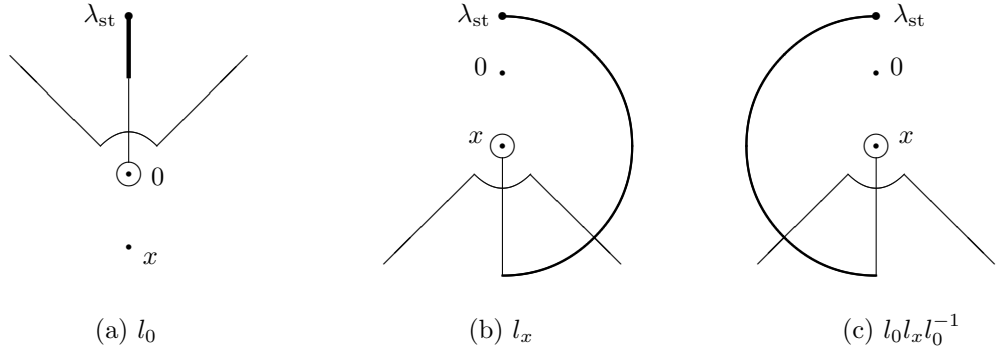
\begin{figure}[htb]\small
\begin{center}
\unitlength=0.75mm
\begin{picture}(60,55)(-30,-30)
\put(0,-18){\circle*{1}}
\put(0,-5){\circle*{1}}
\put(0,-5){\circle{4}}
\put(4,-7){\makebox{$0$}}
\put(2.5,-21){\makebox{$x$}}
\qbezier(-5,0)(0,5)(5,0)
\put(-5,0){\line(-1,1){16}}
\put(5,0){\line(1,1){16}}
\put(0,12){\line(0,-1){15}}
\put(0,23){\circle*{1.5}}
\put(-8,22){\makebox{$\lambda_{\mathrm{st}}$}}
\thicklines
\put(-0.2,23){\line(0,-1){11}}
\put(0,23){\line(0,-1){11}}
\put(0.2,23){\line(0,-1){11}}
\put(-6,-34){\makebox{(a) $l_0$}}
\end{picture}
\quad
\begin{picture}(60,55)(-30,-30)
\put(0,13){\circle*{1}}
\put(0,0){\circle*{1}}
\put(0,0){\circle{4}}
\put(-5,12.5){\makebox{$0$}}
\put(-6,0.5){\makebox{$x$}}
\qbezier(-5,-5)(0,-10)(5,-5)
\put(-5,-5){\line(-1,-1){16}}
\put(5,-5){\line(1,-1){16}}
\put(0,-23){\line(0,1){21}}
\put(0,23){\circle*{1.5}}
\put(-8,22){\makebox{$\lambda_{\mathrm{st}}$}}
\thicklines
\qbezier(0,23)(9.52,23)(16.26,16.26)
\qbezier(23,0)(23,9.52)(16.26,16.26)
\qbezier(0,-23)(9.52,-23)(16.26,-16.26)
\qbezier(23,0)(23,-9.52)(16.26,-16.26)
\put(-6,-34){\makebox{(b) $l_x$}}
\end{picture}
\quad
\begin{picture}(60,55)(-30,-30)
\put(0,13){\circle*{1}}
\put(0,0){\circle*{1}}
\put(0,0){\circle{4}}
\put(2.5,12.5){\makebox{$0$}}
\put(4,0.5){\makebox{$x$}}
\qbezier(-5,-5)(0,-10)(5,-5)
\put(-5,-5){\line(-1,-1){16}}
\put(5,-5){\line(1,-1){16}}
\put(0,-23){\line(0,1){21}}
\put(0,23){\circle*{1.5}}
\put(3,22){\makebox{$\lambda_{\mathrm{st}}$}}
\thicklines
\qbezier(0,23)(-9.52,23)(-16.26,16.26)
\qbezier(-23,0)(-23,9.52)(-16.26,16.26)
\qbezier(0,-23)(-9.52,-23)(-16.26,-16.26)
\qbezier(-23,0)(-23,-9.52)(-16.26,-16.26)
\put(-6,-34){\makebox{(c)} $l_0l_xl_0^{-1}$}
\end{picture}
\end{center}
\caption{$l_0$, $l_x$ and $l_0l_xl_0^{-1}$.}\label{loops2}
\end{figure}

\subsection{On Remark \ref{rem2.7}}\label{ssc7.5}

In the case $\arg x \sim -\pi/2$, the monodromy matrices are obtained in the same way as above. In the matching procedure to compute $M_0^{(-1)}$, we note the fact that $\lambda-x ={\rm e}^{\pi {\rm i}} x(1-\lambda/x)$ in the domain $|\arg\lambda -\pi/2|< \pi/4$, $|x|^{1/2} <|\lambda| < 2|x|^{1/2}$, since $\arg(\lambda-x) \sim \pi/2$ (cf.\ Fig.~\ref{loops2}(a)). The matrix $M_x^{(-1)}$ is obtained by using a~curve on the right-hand side of $\lambda=0$ entering into the domain $|\arg(\lambda-x)+\pi/2|<\pi/4$, $|x|^{1/2}<|\lambda-x|<2|x|^{1/2}$, in which $\lambda=x(1+(\lambda-x)/x)$, since $|\arg\lambda+\pi/2|<\pi/4$ (cf.\ Fig.~\ref{loops2}(b)). A curve on the left-hand side of $\lambda=0$ entering into the domain $|\arg(\lambda-x) -3\pi/2|<\pi/4$, $|x|^{1/2} <|\lambda-x|<2|x|^{1/2}$ (cf.\ Fig.~\ref{loops2}(c)) corresponds to the expression of $(M_0^{(-1)})^{-1} M_x^{(-1)}M_0^{(-1)}$, which is derived by using $\lambda={\rm e}^{2\pi {\rm i}}x(1+(\lambda-x)/x)$
in this domain, since $|\arg\lambda -3\pi/2 |<\pi/4$.

\subsection*{Acknowledgements} The author is grateful to the referees for valuable comments and for bringing the literature~\cite{L} to attention; and also appreciation goes to the editor for works of Andrei Kapaev.

\pdfbookmark[1]{References}{ref}
\LastPageEnding

\end{document}